\newtheorem{theorem}{Theorem} [section]
\newtheorem{proposition}[theorem]{Proposition}
\newtheorem{corollary}[theorem]{Corollary} 
\newtheorem{lemma}[theorem]{Lemma}
\newtheorem{conjecture}{Conjecture}
\newtheorem{open}{Open Question}
\newcounter{defno}
\newcommand{\degrees}{^\circ}
\newcommand{\relbox}[1]{\medskip\fboxrule 1.5pt\fcolorbox{red}{yellow}{#1}\smallskip}
\long\def\void#1{}
\begin{document}
International Journal of  Computer Discovered Mathematics (IJCDM) \\
ISSN 2367-7775 \copyright IJCDM \\
Volume 7, 2022, pp. 214--287  \\
web: \url{http://www.journal-1.eu/} \\
Received 11 May 2022. Published on-line 27 July 2022 \\ 

\copyright The Author(s) This article is published 
with open access.\footnote{This article is distributed under the terms of the Creative Commons Attribution License which permits any use, distribution, and reproduction in any medium, provided the original author(s) and the source are credited.} \\
\bigskip
\bigskip

\begin{center}
	{\Large \textbf{Relationships between a Central Quadrilateral and its Reference Quadrilateral}} \\
	\medskip
	\bigskip
        \bigskip

	\textsc{Stanley Rabinowitz$^a$ and Ercole Suppa$^b$} \\

	$^a$ 545 Elm St Unit 1,  Milford, New Hampshire 03055, USA \\
	e-mail: \href{mailto:stan.rabinowitz@comcast.net}{stan.rabinowitz@comcast.net}\footnote{Corresponding author} \\
	web: \url{http://www.StanleyRabinowitz.com/} \\
	
	$^b$ Via B. Croce 54, 64100 Teramo, Italia \\
	e-mail: \href{mailto:ercolesuppa@gmail.com}{ercolesuppa@gmail.com} \\
	web: \url{https://www.esuppa.it} \\

\bigskip

\end{center}
\bigskip
\bigskip

\textbf{Abstract.}
Let $P$ be a point inside a convex quadrilateral $ABCD$.
The lines from $P$ to the vertices of the quadrilateral divide the quadrilateral
into four triangles.
If we locate a triangle center in each of these triangles, the four triangle
centers form another quadrilateral called a central quadrilateral.
For each of various shaped quadrilaterals, and each of 1000 different triangle
centers, we compare the reference quadrilateral to the central quadrilateral.
Using a computer, we determine how the two quadrilaterals are related.
For example, we test to see if the two quadrilaterals are congruent, similar, have the same area,
or have the same perimeter.
We also look for such relationships when $P$ is a special point associated with
the reference quadrilateral, such as being the diagonal point, Steiner point, or Poncelet point.

\medskip
\textbf{Keywords.} triangle centers, quadrilaterals, computer-discovered mathematics, Euclidean geometry. GeometricExplorer.

\medskip
\textbf{Mathematics Subject Classification (2020).} 51M04, 51-08.

\newcommand{\ru}{\rule[-7pt]{0pt}{20pt}}


\bigskip
\bigskip
%
\section{Introduction}
\label{section:introduction}

In this study, $ABCD$ always represents a convex quadrilateral known as the \emph{reference quadrilateral}.
A point $E$ in the plane of the quadrilateral is chosen and will be called the \emph{radiator}.
The radiator can be an arbitrary point or it can be a notable point associated with the quadrilateral.
Lines are drawn from the radiator to the vertices of the reference quadrilateral forming four triangles
with the sides of the quadrilateral as shown in Figure \ref{fig:centerPointTriangles}.
These triangles will be called the \emph{radial triangles}.

\begin{figure}[h!t]
\centering
\includegraphics[width=0.4\linewidth]{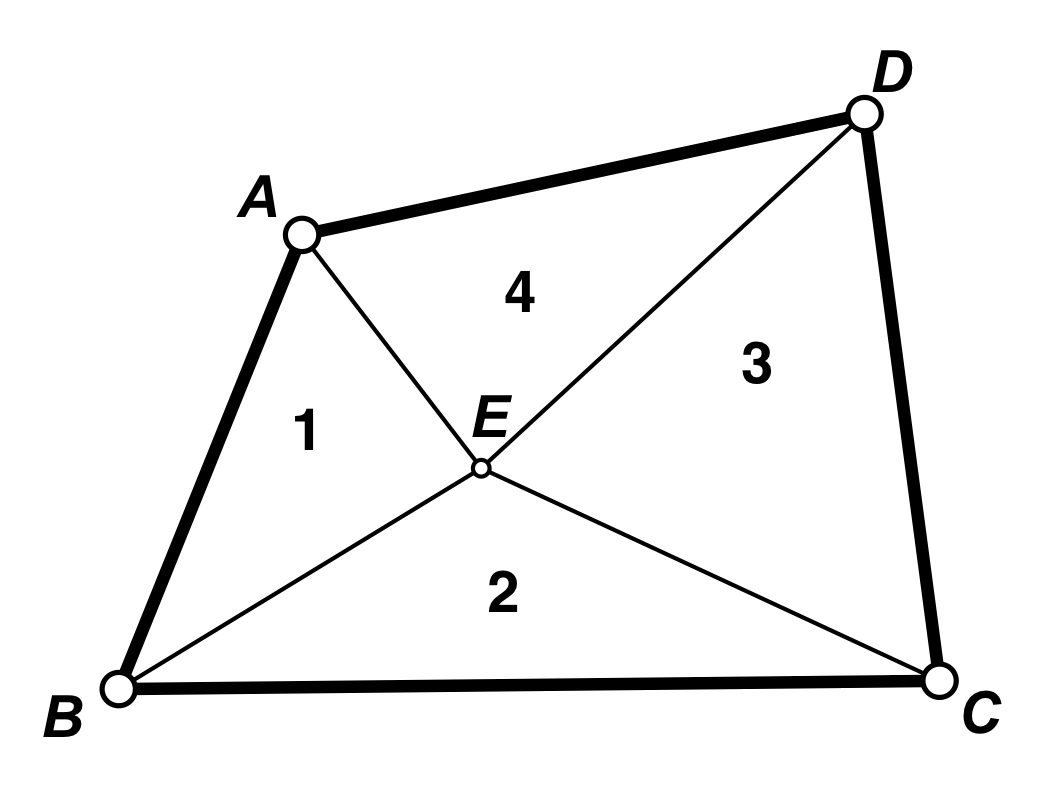}
\caption{Radial Triangles}
\label{fig:centerPointTriangles}
\end{figure}

In the figure, the radial triangles have been numbered in a counterclockwise order starting with side $AB$:
$\triangle ABE$, $\triangle BCE$, $\triangle CDE$, $\triangle DAE$.
Triangle centers (such as the incenter, centroid, or circumcenter) are selected in each triangle.
The same type of triangle center is used with each radial triangle.
In order, the names of these points are $F$, $G$, $H$, and $I$ as shown in Figure \ref{fig:centralQuadrilateral}.
These four centers form a quadrilateral $FGHI$ that will be called the \emph{central quadrilateral} (of quadrilateral $ABCD$
with respect to $E$). Quadrilateral $FGHI$ need not be convex.

\begin{figure}[h!t]
\centering
\includegraphics[width=0.4\linewidth]{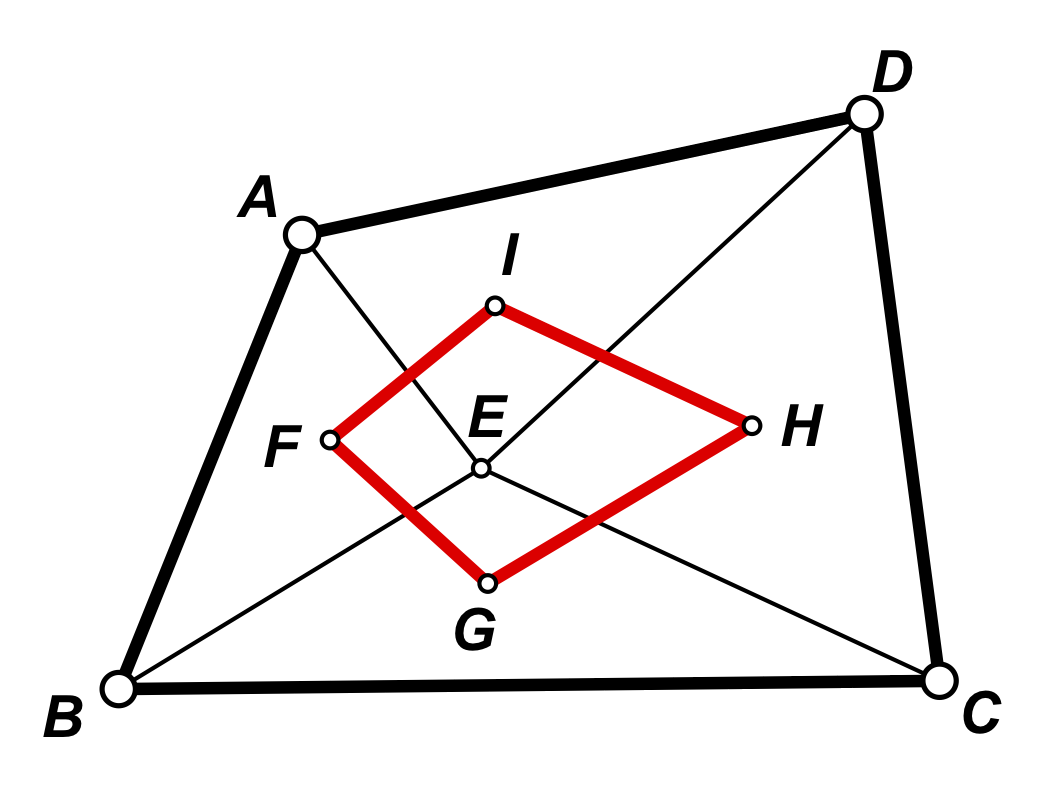}
\caption{Central Quadrilateral}
\label{fig:centralQuadrilateral}
\end{figure}

The purpose of this paper is to determine interesting relationships between a reference quadrilateral
and its central quadrilateral.

\newpage

\section{Types of Quadrilaterals Studied}
\label{section:quadrilaterals}

We are only interested in reference quadrilaterals that have a certain amount of symmetry.
For example, we excluded bilateral quadrilaterals (those with two equal sides),
bisect-diagonal quadrilaterals (where one diagonal bisects another), right kites,
right trapezoids, and golden rectangles.
The types of quadrilaterals we studied are shown in Table \ref{table:quadrilaterals}.
The sides of the quadrilateral, in order, have lengths $a$, $b$, $c$, and $d$.
The diagonals have lengths $p$ and $q$.
The measures of the angles of the quadrilateral, in order, are $A$, $B$, $C$, and $D$.

\begin{table}[ht!]
\caption{}
\label{table:quadrilaterals}
\begin{center}
\footnotesize
\begin{tabular}{|l|l|l|}\hline
\multicolumn{3}{|c|}{\textbf{\color{blue}\Large \strut Types of Quadrilaterals Considered}}\\ \hline
\textbf{Quadrilateral Type}&\textbf{Geometric Definition}&\textbf{Algebraic Condition}\\ \hline
general&convex&none\\ \hline
cyclic&has a circumcircle&$A+C=B+D$\\ \hline
tangential&has an incircle&$a+c=b+d$\\ \hline
extangential&has an excircle&$a+b=c+d$\\ \hline
parallelogram&opposite sides parallel&$a=c$, $b=d$\\ \hline
equalProdOpp&product of opposite sides equal&$ac=bd$\\ \hline
equalProdAdj&product of adjacent sides equal&$ab=cd$\\ \hline
orthodiagonal&diagonals are perpendicular&$a^2+c^2=b^2+d^2$\\ \hline
equidiagonal&diagonals have the same length&$p=q$\\ \hline
Pythagorean&equal sum of squares, adjacent sides&$a^2+b^2=c^2+d^2$\\ \hline
kite&two pair adjacent equal sides&$a=b$, $c=d$\\ \hline
trapezoid&one pair of opposite sides parallel&$A+B=C+D$\\ \hline
rhombus&equilateral&$a=b=c=d$\\ \hline
rectangle&equiangular&$A=B=C=D$\\ \hline
Hjelmslev&two opposite right angles&$A=C=90^\circ$\\ \hline
isosceles trapezoid&trapezoid with two equal sides&$A=B$, $C=D$\\ \hline
APquad&sides in arithmetic progression&$d-c=c-b=b-a$\\ \hline
\end{tabular}
\end{center}
\end{table}

The following combinations of entries in the above list were also considered:
bicentric quadrilaterals (cyclic and tangential), exbicentric quadrilaterals (cyclic and extangential),
bicentric trapezoids, cyclic orthodiagonal quadrilaterals, equidiagonal kites,
equidiagonal orthodiagonal quadrilaterals, equidiagonal orthodiagonal trapezoids,
harmonic quadrilaterals (cyclic and equalProdOpp), orthodiagonal trapezoids, tangential trapezoids,
and squares (equiangular rhombi).

So, in addition to the general convex quadrilateral, a total of 27 other types of quadrilaterals
were considered in this study.

A graph of the types of quadrilaterals considered is shown in Figure \ref{fig:quadShapes}.
An arrow from A to B means that any quadrilateral of type B is also of type A.
For example: all squares are rectangles and all kites are orthodiagonal.
If a directed path leads from a quadrilateral of type A to a quadrilateral of type B, then we will
say that A is an \emph{ancestor} of B. For example, an equidiagonal quadrilateral is an ancestor of a rectangle.
In other words, all rectangles are equidiagonal.

\begin{figure}[h!t]
\centering
\scalebox{1}[1.5]{\includegraphics[width=1\linewidth]{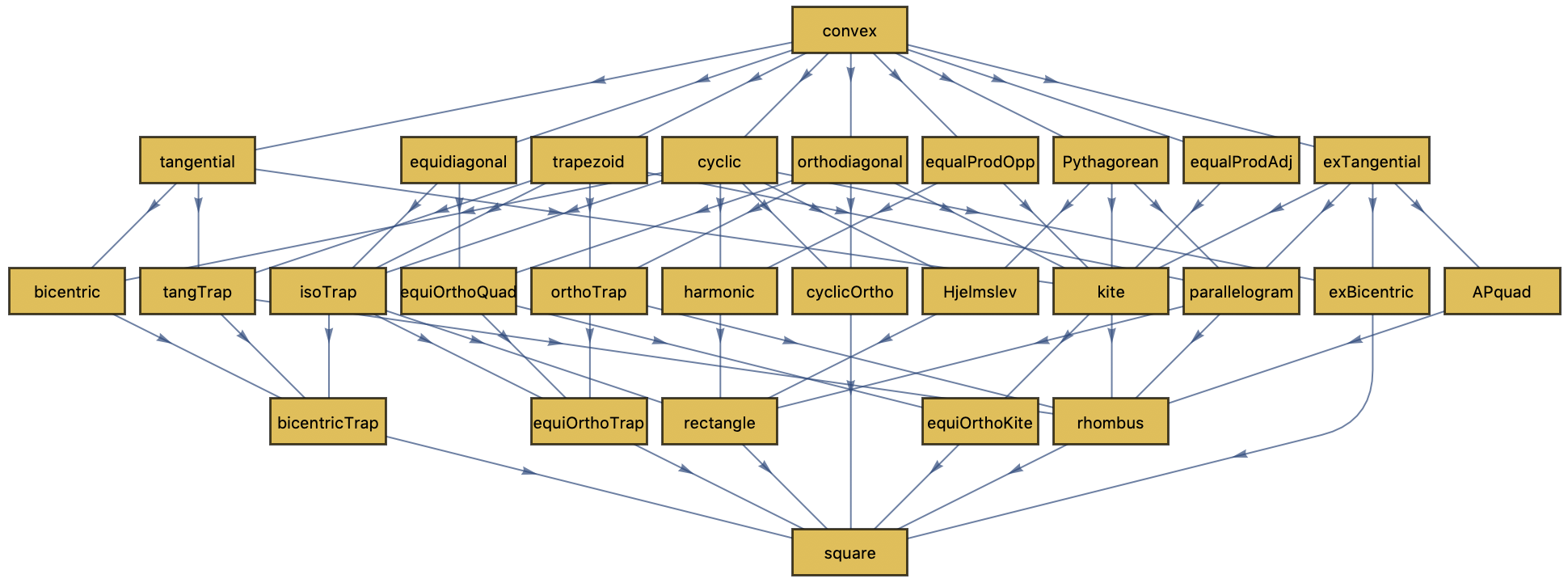}}
\caption{Quadrilateral Shapes}
\label{fig:quadShapes}
\end{figure}

Unless otherwise specified, when we give a theorem or table of properties of a quadrilateral, we will omit an entry
for a particular shape quadrilateral if the property is known to be true for an ancestor of that quadrilateral.

\newpage

\section{Centers}
\label{section:centers}

In this study, we will place triangle centers in the four radial
triangles. We use Clark Kimberling's definition of a triangle center \cite{KimberlingA}.

A \emph{center function} is a nonzero function $f(a,b,c)$
homogeneous in $a$, $b$, and $c$ and symmetric in $b$ and $c$.
\emph{Homogeneous} in $a$, $b$, and $c$ means that
$$f(ta,tb,tc)=t^nf(a,b,c)$$
for some nonnegative integer $n$,
all $t>0$, and all positive real numbers $(a,b,c)$ satisfying $a<b+c$, $b<c+a$, and $c<a+b$.
\emph{Symmetric} in $b$ and $c$ means that
$$f(a,c,b)=f(a,b,c)$$
for all $a$, $b$, and $c$.

A \emph{triangle center} is an equivalence class $x:y:z$ of ordered triples $(x,y,z)$
given by
$$x=f(a,b,c),\quad y=f(b,c,a),\quad z=f(c,a,b).$$

Tens of thousands of interesting triangle centers have been cataloged in the Encyclopedia of Triangle Centers \cite{ETC}. We use $X_n$ to denote the nth named center in this encyclopedia.

Note that if the center function of a certain center is $f(a,b,c)$,
then the trilinear coordinates of that point with respect to a triangle with sides $a$, $b$, and $c$ are
$$\Bigl(f(a,b,c):f(b,c,a):f(c,a,b)\Bigr).$$
The barycentric coordinates for that point would then be
$$\Bigl(af(a,b,c):bf(b,c,a):cf(c,a,b)\Bigr).$$

\newpage

\section{Methodology}
\label{section:methodology}

We used a computer program called GeometricExplorer to compare quadrilaterals
with their central quadrilateral. Starting with each type of quadrilateral listed in
Figure~\ref{fig:quadShapes} for the reference quadrilateral, we picked various choices
for point $E$, the radiator. The types of radiators studied are shown in Table \ref{table:radiators}.

\begin{table}[ht!]
\caption{}
\label{table:radiators}
\begin{center}
\begin{tabular}{|l|l|}
\hline
\multicolumn{2}{|c|}{\Large \strut \textbf{\color{blue}Points Used as Radiators}}\\
\hline
\textbf{name}&\textbf{description}\\ \hline
arbitrary point&any point in the plane of $ABCD$\\ \hline
diagonal point&intersection of the diagonals (QG--P1)\\ \hline
Poncelet point&(QA--P2)\\ \hline
Steiner point&(QA--P3)\\ \hline
circumcenter&center of circumscribed circle\\ \hline
incenter&center of inscribed circle\\ \hline
anticenter&(QA--P2 in a cyclic quadrilateral)\\ \hline
vertex centroid&(QA-P1)\\ \hline
midpoint of 3rd diagonal&(QG-P2)\\ \hline
\void{
quasi centroid$\dagger$&(QG-P4)\\ \hline
quasi circumcenter$\dagger$&(QG-P5)\\ \hline
quasi orthocenter$\dagger$&(QG-P6)\\ \hline
quasi nine-point center$\dagger$&(QG-P7)\\ \hline
quasi incenter$\dagger$&\\ \hline
Kirikami center$\dagger$&(QG-P15)\\ \hline
Miquel point$\dagger$&(QL-P1)\\ \hline
\multicolumn{2}{|l|}{$\dagger$ \small This point might be omitted from the final paper.}\\ \hline
}
\end{tabular}
\end{center}
\end{table}

Some notable points only exist for certain shape quadrilaterals.
For example, the circumcenter only applies to cyclic quadrilaterals.
A code in parentheses represents the name for the point as listed in
the Encyclopedia of Quadri-Figures~\cite{EQF}.
These will be defined in the section that reports relationships using these points.

For each $n$ from 1 to 1000, we placed center $X_n$
in each of the radial triangles of the reference quadrilateral.
The program then analyzes the central quadrilateral formed by these four centers
and reports if the central quadrilateral is related to the reference quadrilateral.
Points at infinity were omitted.
The types of relationships checked for are shown in Table \ref{table:relationships}.

\begin{table}[ht!]
\caption{}
\label{table:relationships}
\begin{center}
\begin{tabular}{|l|l|}
\hline
\multicolumn{2}{|c|}{\Large \strut \textbf{\color{blue}Relationships Checked For}}\\
\hline
\textbf{notation}&\textbf{description}\\ \hline
\ru $[ABCD]=[FGHI]$&the quadrilaterals have the same area\\ \hline
\ru $[ABCD]=k[FGHI]$&the area of $ABCD$ is $k$ times the area of $FGHI$ $\dagger$\\ \hline
\ru $ABCD\cong FGHI$&the quadrilaterals are congruent\\ \hline
\ru $ABCD\sim FGHI$&the quadrilaterals are similar\\ \hline
\ru $\partial ABCD=\partial FGHI$&the quadrilaterals have the same perimeter\\ \hline
\ru $\odot ABCD\cong\odot FGHI$&the quadrilaterals have congruent circumcircles\\ \hline
\ru $\odot ABCD\equiv \odot FGHI$&the quadrilaterals have the same circumcircle\\ \hline
\multicolumn{2}{|l|}{$\dagger$ \small Only rational values of $k$ were checked for with denominators less than 6.}\\
\hline
\end{tabular}
\end{center}
\end{table}

\newpage

\section{Barycentric Coordinates and Quadrilaterals}

The program we used to find results about central quadrilaterals (GeometricExplorer) is a useful
tool for discovering results, but it does not prove that these results are true.
GeometricExplorer uses numerical coordinates (to 15 digits of precision) for locating
all the points. Thus, a relationship found by this program does not constitute a proof that the result is correct,
but gives us compelling evidence for the validity of the result.

If a theorem in this paper is accompanied by a figure, this means that the
figure was drawn using either Geometer's Sketchpad or GeoGebra.
In either case, we used the drawing program to dynamically vary the points
in the figure. Noticing that the result remains true as the points vary offers
further evidence that the theorem is true. But again, this does not constitute a proof.

To prove the results that we have discovered, we use geometric methods, when possible.
If we could not find a purely geometrical proof, we turned to analytic methods using
barycentric coordinates and performing exact symbolic computation using Mathematica.

We assume the reader is familiar with barycentric coordinates. We give below some
useful results that will be used when providing proofs of some of the theorems that we found.

The following result about the centroid  of a triangle is well known \cite[p.~65]{Altshiller-Court}.

\begin{lemma}
\label{lemma:centroid}
The centroid of a triangle divides a median in the ratio $2:1$.
\end{lemma}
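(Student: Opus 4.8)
The plan is to give the standard synthetic argument based on the midsegment theorem together with a single pair of similar triangles, which simultaneously establishes that two medians meet in the claimed $2:1$ ratio; concurrency of all three medians then follows by symmetry. Let $\triangle ABC$ have $M$ the midpoint of $BC$ and $N$ the midpoint of $CA$, so that $AM$ and $BN$ are two of the medians, and let $G$ denote their point of intersection.

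First I would invoke the midsegment theorem: since $M$ and $N$ are midpoints of two sides, the segment $MN$ is parallel to $AB$ and has length $\tfrac12 AB$. The parallelism gives $\angle GAB = \angle GMN$ and $\angle GBA = \angle GNM$ (alternate interior angles cut by the transversals $AM$ and $BN$), so $\triangle GAB \sim \triangle GMN$ by AA. The ratio of similitude is $AB/MN = 2$, and reading off the corresponding sides that lie along the two medians yields $AG/GM = BG/GN = 2$; that is, $G$ divides each of $AM$ and $BN$ in the ratio $2:1$ measured from the vertex.

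Next I would address the point that is often glossed over, namely that this pins down a single centroid. Repeating the argument with a different pair of medians shows that $AM$ is likewise divided $2:1$ by its intersection with the third median from $C$; since a segment contains a unique point dividing it in a prescribed ratio, that intersection must coincide with $G$. Hence all three medians pass through $G$, the centroid is well defined, and it divides every median in the ratio $2:1$.

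As a cleaner alternative fully in keeping with the barycentric methods used later in the paper, I would place position vectors $\mathbf{a},\mathbf{b},\mathbf{c}$ at the vertices and set $G=\tfrac13(\mathbf{a}+\mathbf{b}+\mathbf{c})$. With $M=\tfrac12(\mathbf{b}+\mathbf{c})$ one computes $\overrightarrow{AG}=\tfrac13(\mathbf{b}+\mathbf{c}-2\mathbf{a})$ and $\overrightarrow{GM}=\tfrac16(\mathbf{b}+\mathbf{c}-2\mathbf{a})$, so that $\overrightarrow{AG}=2\,\overrightarrow{GM}$ at once, and the symmetry of the expression for $G$ makes concurrency automatic. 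The only step requiring genuine care in either route is exactly this concurrency bookkeeping — the ratio computation itself is routine — so I would state the uniqueness-of-the-dividing-point argument explicitly rather than leave it implicit.
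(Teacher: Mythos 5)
Your proof is correct. Note, however, that the paper does not prove this lemma at all: it is stated as a well-known fact with a citation to Altshiller-Court, so there is no argument in the paper to compare yours against. Your midsegment/similar-triangles derivation of $AG/GM=2$, the uniqueness-of-the-dividing-point argument for concurrency, and the vector verification are all standard and sound; any one of them would suffice if a proof were wanted.
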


The following result about the area of a quadrilateral is well known \cite[p.~124]{Altshiller-Court}.

\begin{lemma}[Varignon Parallelogram]
\label{lemma:Varignon}
The midpoints of the sides of a convex quadrilateral form a parallelogram.
The area of the parallelogram is half the area of the quadrilateral.
The sides of the parallelogram are parallel to the diagonals of the quadrilateral.
\end{lemma}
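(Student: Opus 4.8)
The plan is to reduce all three assertions to the elementary midsegment theorem for triangles, which states that the segment joining the midpoints of two sides of a triangle is parallel to the third side and equal to half its length. Let $M$, $N$, $P$, and $Q$ denote the midpoints of the sides $AB$, $BC$, $CD$, and $DA$ of the reference quadrilateral, respectively; the assertion is that $MNPQ$ is the parallelogram in question.

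First I would draw the diagonal $AC$, which splits $ABCD$ into the triangles $ABC$ and $ACD$. Applying the midsegment theorem to $\triangle ABC$ shows that $MN$ is parallel to $AC$ with $MN = \tfrac12 AC$, and applying it to $\triangle ACD$ shows that $QP$ is parallel to $AC$ with $QP = \tfrac12 AC$. Hence $MN$ and $QP$ are parallel and of equal length, which forces $MNPQ$ to be a parallelogram and simultaneously establishes that one pair of its sides is parallel to the diagonal $AC$. Repeating the argument with the diagonal $BD$ shows that the remaining pair of sides, $MQ$ and $NP$, is parallel to $BD$, completing the claim that the sides of the parallelogram are parallel to the diagonals.

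For the area, the cleanest route is to introduce position vectors $\vec a$, $\vec b$, $\vec c$, $\vec d$ for the four vertices. A direct computation gives $\vec{MN} = \tfrac12(\vec c - \vec a)$ and $\vec{MQ} = \tfrac12(\vec d - \vec b)$, so the area of the parallelogram is $\bigl|\vec{MN}\times\vec{MQ}\bigr| = \tfrac14\bigl|(\vec c - \vec a)\times(\vec d - \vec b)\bigr|$. Since the area of a convex quadrilateral equals one half the magnitude of the cross product of its diagonals, $[ABCD] = \tfrac12\bigl|(\vec c - \vec a)\times(\vec d - \vec b)\bigr|$, and comparing the two expressions yields $[MNPQ] = \tfrac12[ABCD]$.

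The one step that requires genuine care — and the point I expect to be the main obstacle — is justifying the diagonal area formula $[ABCD] = \tfrac12\bigl|\vec{AC}\times\vec{BD}\bigr|$ with the correct orientation, since a careless treatment of signs could make the two triangular contributions cancel rather than add. I would handle this by splitting $ABCD$ along $AC$ into $\triangle ABC$ and $\triangle ACD$, expressing each triangle's area as a cross product, and verifying that convexity guarantees the two contributions have the same sign. A fully synthetic alternative that sidesteps vectors is to observe that the parallelogram is the complement in $ABCD$ of the four corner triangles, and that these corner triangles, each a copy scaled by the factor $\tfrac12$ of one of the two triangles into which a diagonal divides $ABCD$, have combined area $\tfrac12[ABCD]$; hence the parallelogram also has area $\tfrac12[ABCD]$. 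The vector computation, however, is shorter and less prone to sign errors.
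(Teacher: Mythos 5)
Your proof is correct. Note that the paper does not actually prove this lemma: it is stated as a well-known result with a citation to Altshiller-Court, so there is no in-paper argument to compare against. Your route is the standard one --- the midsegment theorem applied to the two triangles cut off by each diagonal gives both the parallelogram property and the parallelism to the diagonals, and the vector computation $[MNPQ]=\bigl|\tfrac12(\vec c-\vec a)\times\tfrac12(\vec d-\vec b)\bigr|=\tfrac12[ABCD]$ handles the area cleanly. Your identification of the one delicate point (the sign convention in $[ABCD]=\tfrac12\bigl|\vec{AC}\times\vec{BD}\bigr|$, which holds for convex quadrilaterals because the two triangles $ABC$ and $ACD$ carry the same orientation) is exactly right, and the synthetic fallback via the four corner triangles is also sound, provided one notes that convexity guarantees those corner triangles are disjoint and lie inside $ABCD$.
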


This parallelogram is called the \emph{Varignon paralellogram} of the given quadrilateral.

If $(u:v:w)$ are barycentric coordinates with the property that $u+v+w=1$,
then we call the coordinates \emph{normalized}.

The formula for the distance between two points in terms of their normalized barycentric coordinates
is well known \cite[Section~2]{Grozdev}.

\begin{lemma}[Distance Formula]
\label{lemma:distanceFormula}
Let $ABC$ be a triangle with sides of lengths $a$, $b$, and $c$.
Relative to $\triangle ABC$, let the normalized barycentric coordinates for points $P$ and $Q$ be $(u_1:v_1:w_1)$ and $(u_2:v_2:w_2)$,
respectively. Let $x=u_1-u_2$, $y=v_1-v_2$, and $z=w_1-w_2$.
Then the distance from $P$ to $Q$ is
$$PQ=\sqrt{-a^2yz-b^2zx-c^2xy}.$$
\end{lemma}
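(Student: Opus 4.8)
The plan is to realize the two points as affine combinations of the vertex position vectors and reduce the distance to a short vector computation. First I would fix an arbitrary origin and identify each vertex with its position vector $A$, $B$, $C$. Because the barycentric coordinates are normalized, $u_1+v_1+w_1=1$ and $u_2+v_2+w_2=1$, so $P=u_1A+v_1B+w_1C$ and $Q=u_2A+v_2B+w_2C$, and subtracting gives
$$P-Q=xA+yB+zC, \qquad x+y+z=0.$$
The condition $x+y+z=0$ is the crucial structural fact: a linear combination of position vectors whose coefficients sum to zero is a genuine displacement vector, independent of the choice of origin, which is exactly what makes $|P-Q|$ well defined from barycentric data.

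Next I would use $x+y+z=0$ to write $z=-(x+y)$ and rewrite the displacement as $P-Q=x(A-C)+y(B-C)$, expressing everything in terms of the edge vectors emanating from $C$. Expanding the squared length yields
$$|P-Q|^2=x^2\,|A-C|^2+y^2\,|B-C|^2+2xy\,(A-C)\cdot(B-C).$$
Here $|A-C|=b$ and $|B-C|=a$, and the cross term is handled by the polarization identity $|A-B|^2=|A-C|^2-2(A-C)\cdot(B-C)+|B-C|^2$, i.e.\ the law of cosines, which gives $(A-C)\cdot(B-C)=\tfrac12(a^2+b^2-c^2)$.

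Substituting these values produces $|P-Q|^2=b^2x^2+a^2y^2+(a^2+b^2-c^2)xy$. The final step is purely algebraic: substituting $z=-(x+y)$ into the target expression $-a^2yz-b^2zx-c^2xy$ and expanding shows it equals the same quantity, so that taking the square root completes the proof. I do not expect any serious obstacle here; the only points requiring care are the justification that $P-Q$ is origin-independent (guaranteed by $x+y+z=0$) and the bookkeeping of which side length matches which edge vector.
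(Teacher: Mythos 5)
Your proof is correct: the decomposition $P-Q=xA+yB+zC$ with $x+y+z=0$, the reduction to edge vectors from $C$, the law-of-cosines evaluation of the dot product, and the final algebraic identification with $-a^2yz-b^2zx-c^2xy$ all check out. The paper itself gives no proof of this lemma, citing it as well known from the reference on barycentric coordinate formulas, and your argument is the standard derivation one would find there, so there is nothing to compare beyond noting that you have supplied the missing details correctly.
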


\newpage
The formula for the area of a triangle is also well known \cite[Equation~2]{Grozdev}.

\begin{lemma}[Area Formula]
\label{lemma:areaFormula}
Let $ABC$ be a triangle with area $K$.
Relative to $\triangle ABC$, let the normalized barycentric coordinates for points $P$, $Q$, and $R$
be $(u_1:v_1:w_1)$, $(u_2:v_2:w_2)$, and $(u_3:v_3:w_3)$ respectively.
Then the area of $\triangle PQR$ is
$$[PQR]=\left|
\begin{array}{ccc}
u_1&v_1&w_1\\
u_2&v_2&w_2\\
u_3&v_3&w_3\\
\end{array}
\right|K.$$
\end{lemma}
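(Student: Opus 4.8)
The plan is to deduce the formula from the classical Cartesian expression for the area of a triangle as a $3\times 3$ determinant, using the fact that \emph{normalized} barycentric coordinates realize each point as an affine combination of the vertices $A$, $B$, $C$. This reduces the entire statement to the multiplicativity of the determinant.

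First I would introduce Cartesian coordinates, writing $A=(x_A,y_A)$, $B=(x_B,y_B)$, $C=(x_C,y_C)$, and recall the standard fact that a triangle whose vertices have Cartesian coordinates $(x_i,y_i)$, $i=1,2,3$, has signed area
$$\tfrac12\det\begin{pmatrix} x_1&y_1&1\\ x_2&y_2&1\\ x_3&y_3&1\end{pmatrix}.$$
Applied to the vertices $A,B,C$, this shows $K=\tfrac12\det N$, where $N$ is the $3\times3$ matrix whose rows are $(x_A,y_A,1)$, $(x_B,y_B,1)$, and $(x_C,y_C,1)$.

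Next, since $u_i+v_i+w_i=1$, the point with normalized coordinates $(u_i:v_i:w_i)$ is the affine combination $u_iA+v_iB+w_iC$; hence its Cartesian coordinates satisfy $x_i=u_ix_A+v_ix_B+w_ix_C$ and $y_i=u_iy_A+v_iy_B+w_iy_C$, together with the identity $1=u_i+v_i+w_i$. The key step is to observe that these relations, taken for $P$, $Q$, $R$ at once, assemble into the single matrix identity
$$\begin{pmatrix} x_1&y_1&1\\ x_2&y_2&1\\ x_3&y_3&1\end{pmatrix}=M\,N,\qquad M=\begin{pmatrix} u_1&v_1&w_1\\ u_2&v_2&w_2\\ u_3&v_3&w_3\end{pmatrix}.$$
Taking determinants and using $\det(MN)=\det M\,\det N$, the left-hand determinant equals $2[PQR]$ while $\det N=2K$, so $2[PQR]=(\det M)(2K)$, and therefore $[PQR]=(\det M)\,K$, which is the asserted formula. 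As a sanity check, taking $P=A$, $Q=B$, $R=C$ makes $M$ the identity and recovers $[PQR]=K$.

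The only genuine subtlety, and the point I expect to need the most care, is orientation: the determinant area formula is signed, so the identity holds literally for signed areas and the statement should be read with that convention (or with matching absolute values placed on both the area and the determinant). Fixing $A,B,C$ to be positively oriented keeps $K>0$ and makes the sign bookkeeping transparent. Apart from this, every step is routine linear algebra, so I anticipate no real analytic obstacle.
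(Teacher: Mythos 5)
The paper does not prove this lemma at all: it is quoted as a known result with a citation to Grozdev--Dekov's barycentric formula sheet, so there is no in-paper argument to compare against. Your proof is correct and is the standard one. The factorization of the Cartesian vertex matrix as $MN$ is exactly right --- the third column works out because $u_i+v_i+w_i=1$, which is where the normalization hypothesis is used --- and $\det(MN)=\det M\det N$ finishes it. Your remark on orientation matches the paper's own convention, since immediately after the lemma the authors note that the area is signed and is negative when $\triangle PQR$ has the opposite orientation to $\triangle ABC$; so the identity should indeed be read for signed areas with no absolute values, and your sanity check $P=A$, $Q=B$, $R=C$ confirms the normalization of the constant. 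The only cosmetic point worth adding is that the formula for a degenerate (collinear) triple still holds, both sides being zero, so no nondegeneracy hypothesis is needed.
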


Note that the area is signed. It is positive if the triangle has the same orientation as $\triangle ABC$
and negative if it has the opposite orientation.

The signed area of a quadrilateral $PQRS$ is defined as
$$[PQRS]=[PQR]+[RSP].$$
This definition is used even when the quadrilateral is self-intersecting.

Given the barycentric coordinates for a point $P$ with respect to $\triangle ABC$,
we sometimes want to find the corresponding point in some other triangle, $\triangle DEF$.
This is accomplished using the well-known Change of Coordinates Formula \cite[Section~3]{Grozdev}.

\begin{lemma}[Change of Coordinates Formula]
\label{lemma:changeOfCoordinates}
Relative to $\triangle ABC$, let the normalized barycentric coordinates for points $D$, $E$, and $F$
be $(u_1:v_1:w_1)$, $(u_2:v_2:w_2)$, and $(u_3:v_3:w_3)$ respectively.
Let the normalized barycentric coordinates for point $P$ with respect to $\triangle DEF$ be $(p:q:r)$.
Then the barycentric coordinates for $P$ with respect to $\triangle ABC$ are $(u:v:w)$
where
$$
\begin{aligned}
u&=u_1p+u_2q+u_3r\\
v&=v_1p+v_2q+v_3r\\
w&=w_1p+w_2q+w_3r.\\
\end{aligned}
$$
\end{lemma}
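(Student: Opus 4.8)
The plan is to work entirely with the position-vector interpretation of normalized barycentric coordinates. The defining property I would invoke is that if $(u:v:w)$ are the normalized barycentric coordinates of a point $P$ with respect to a reference triangle, so that $u+v+w=1$, then, regarding the vertices as position vectors, $P$ equals the affine combination of the vertices weighted by those coordinates. Applied to $P$ relative to $\triangle DEF$, this reads $P = pD + qE + rF$, while the hypotheses on $D$, $E$, $F$ relative to $\triangle ABC$ give $D = u_1 A + v_1 B + w_1 C$, $E = u_2 A + v_2 B + w_2 C$, and $F = u_3 A + v_3 B + w_3 C$.

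First I would substitute the three expressions for $D$, $E$, and $F$ into $P = pD + qE + rF$ and regroup the right-hand side as a combination of $A$, $B$, and $C$. Collecting the coefficient of each vertex yields exactly $P = (u_1 p + u_2 q + u_3 r)A + (v_1 p + v_2 q + v_3 r)B + (w_1 p + w_2 q + w_3 r)C$, which are precisely the claimed formulas for $u$, $v$, and $w$. In matrix language this is the statement that the column $(u,v,w)^{\mathsf T}$ is the product of the matrix whose columns are $(u_i,v_i,w_i)^{\mathsf T}$ with the column $(p,q,r)^{\mathsf T}$, but the vector substitution is all that is really needed.

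The one point that needs care is confirming that the triple $(u,v,w)$ so obtained is genuinely a set of barycentric coordinates for $P$ relative to $\triangle ABC$, rather than merely a formal linear combination. For this I would compute $u+v+w = p(u_1+v_1+w_1) + q(u_2+v_2+w_2) + r(u_3+v_3+w_3)$ and then use the normalization of each of the three input triples together with $p+q+r=1$ to conclude that $u+v+w = p+q+r = 1$. Hence the combination is affine, the position-vector identity $P = uA + vB + wC$ holds with coefficients summing to $1$, and $(u:v:w)$ are exactly the normalized barycentric coordinates of $P$ with respect to $\triangle ABC$.

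I do not anticipate a genuine obstacle here: the argument is a single linear substitution followed by a one-line normalization check. The only subtlety worth flagging is that one should not assume the output is normalized in advance; it is precisely the verification that $u+v+w=1$ that upgrades the formal computation into a proof that the resulting triple really represents $P$ in the new coordinate system.
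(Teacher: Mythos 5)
Your proof is correct. The paper itself offers no proof of this lemma; it is stated as well known and attributed to \cite[Section~3]{Grozdev}, so there is no argument in the paper to compare against. Your position-vector argument is the standard one: writing $P=pD+qE+rF$ as an affine combination, substituting $D=u_1A+v_1B+w_1C$ (and similarly for $E$, $F$), and collecting coefficients of $A$, $B$, $C$ gives exactly the stated formulas, and your closing check that $u+v+w=p(u_1+v_1+w_1)+q(u_2+v_2+w_2)+r(u_3+v_3+w_3)=p+q+r=1$ is precisely the step needed to confirm that the resulting triple is again a normalized barycentric representation of $P$ with respect to $\triangle ABC$ rather than a merely formal combination. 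This is a complete, self-contained justification of a fact the authors chose to import by citation.
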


\begin{lemma}
\label{lemma:hypotenuseMidpoint}
Let $ABC$ be a right triangle with right angle at $A$.
Let $f$ be a center function with the properties
$$f(a,b,c)=0\qquad\mathrm{and}\qquad f(b,c,a)=f(c,a,b)$$
when $c^2=a^2+b^2$. Then the center of $\triangle ABC$ corresponding to this center function
coincides with the midpoint of the hypotenuse.
\end{lemma}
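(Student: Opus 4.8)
The plan is to work in barycentric coordinates relative to $\triangle ABC$, using the conversion recorded in the excerpt: the center with center function $f$ has trilinear coordinates $(f(a,b,c):f(b,c,a):f(c,a,b))$ and hence barycentric coordinates $(a\,f(a,b,c):b\,f(b,c,a):c\,f(c,a,b))$. Since the right angle is at $A$, the hypotenuse is $BC$, whose midpoint has barycentric coordinates $(0:1:1)$, so the geometric claim reduces to a single coordinate identity. Because the triangle is right-angled at $A$, the relevant Pythagorean relation holds, and this is exactly the relation under which $f$ is assumed to vanish in its first slot; I would therefore first use the vanishing hypothesis to kill the $A$-coordinate, and then use the symmetry hypothesis to pin the surviving point to $(0:1:1)$. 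As a cross-check, and as a route that sidesteps the conversion, I would also reformulate the goal as an equidistance statement, since the midpoint of $BC$ is the unique point of the segment $BC$ equidistant from $B$ and $C$.

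Concretely, the first step is to note that the hypothesis forces $f(a,b,c)=0$ for a triangle right-angled at $A$, so the center has trilinear coordinates $(0:f(b,c,a):f(c,a,b))$ and barycentric coordinates $(0:b\,f(b,c,a):c\,f(c,a,b))$; the vanishing of the first coordinate places the center $P$ on the line $BC$, i.e. on the hypotenuse. The second step pins $P$ to the midpoint. Here I would either (i) show directly that the two nonzero barycentric coordinates coincide, so that $P=(0:1:1)$; or (ii) write the normalized barycentric coordinates of $P$ as $(0:s:1-s)$ and apply the Distance Formula (Lemma~\ref{lemma:distanceFormula}) to the pairs $P,B$ and $P,C$. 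A short computation with that formula gives $PB=a(1-s)$ and $PC=as$, so $PB=PC$ exactly when $s=\tfrac12$; thus equidistance is equivalent to $P$ being the midpoint, and it suffices to establish $PB=PC$.

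The main obstacle is this second step: showing that the two surviving coordinates force $s=\tfrac12$, equivalently the identity $b\,f(b,c,a)=c\,f(c,a,b)$, equivalently $PB=PC$. This is where the symmetry hypothesis on $f$ must be combined with the Pythagorean relation coming from the right angle at $A$. The delicate point is that the passage from trilinear to barycentric coordinates multiplies the two coordinates by the unequal factors $b$ and $c$, so one cannot read the required equality off the hypothesis directly; instead one must first reduce $f(b,c,a)$ and $f(c,a,b)$ using the Pythagorean relation (together with the symmetry of $f$ in its last two arguments) and only then compare. Once that reduction is carried out and the two coordinates are shown to agree, $P$ coincides with the midpoint of the hypotenuse, completing the proof. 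The circumcenter, the prototypical center whose first trilinear coordinate vanishes at a right angle, provides a convenient sanity check on the computation, since it is the classical instance of this statement.
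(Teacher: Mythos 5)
Your setup is sound, but the proposal stops exactly at the step you yourself flag as ``the main obstacle,'' and that step is never carried out --- nor can it be carried out by the route you describe. After using $f(a,b,c)=0$ to place the center on line $BC$, everything reduces to the single identity $b\,f(b,c,a)=c\,f(c,a,b)$ (equivalently $s=\tfrac12$, equivalently $PB=PC$). The hypothesis supplies only $f(b,c,a)=f(c,a,b)$, and you correctly observe that the unequal factors $b$ and $c$ block a direct deduction; but the ``reduction using the Pythagorean relation together with the symmetry of $f$'' that you then promise does not exist. The symmetry $f(a,c,b)=f(a,b,c)$ says nothing about how $f(b,c,a)$ and $f(c,a,b)$ compare beyond what is already assumed, and the Pythagorean relation has already been spent in invoking the hypothesis. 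Taken at face value, $f(b,c,a)=f(c,a,b)$ pins the trilinears to $(0:1:1)$, i.e.\ the barycentric point $(0:b:c)$, which is where the internal bisector from the right-angle vertex meets the hypotenuse; that coincides with the midpoint only when $b=c$. So the deferred step is the entire content of the lemma, and under your (trilinear) reading of the center function it is not merely hard but false in general. Your own sanity check would have exposed this: for the circumcenter the trilinear center function gives $f(b,c,a)=2bc^{2}$ and $f(c,a,b)=2b^{2}c$ in a right triangle, which are unequal, while $b\,f(b,c,a)=c\,f(c,a,b)$ does hold.

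The paper's proof is a one-line computation that never leaves trilinears: it substitutes the hypotheses to obtain $(0:f(b,c,a):f(b,c,a))=(0:1:1)$ and identifies this with the midpoint of the hypotenuse. That identification is the one valid for \emph{barycentric} coordinates, and the way the lemma is used downstream (the proof of Theorem~\ref{thm:onMedian} tests $u=0$ and $v=w$ on the barycentric coordinates $(u:v:w)$, and the listed examples such as $X_{3}$ satisfy the stated conditions for their barycentric, not trilinear, center functions) shows that the barycentric reading is the intended one. Under that reading the hypothesis hands you $(0:1:1)$ immediately and there is nothing left to prove; under the trilinear reading your conversion is correct but the conclusion no longer follows from the hypotheses. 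Either way, the proposal as written does not prove the decisive step.
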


\begin{proof}
Since $\triangle ABC$ is a right triangle with hypotenuse $BC$, we must have $c^2=a^2+b^2$.
The trilinear coordinates for the center then are
$$
\begin{aligned}
\Bigl(f(a,b,c):f(b,c,a):f(c,a,b)\Bigr)&=\Bigl(0:f(b,c,a):f(b,c,a)\Bigr)\\
&=(0:1:1)
\end{aligned}
$$
which corresponds to the trilinear coordinates for the midpoint of the hypotenuse.
\end{proof}

\newpage

\begin{lemma}
\label{lemma:midpointMedian}
The nine-point center of a right triangle coincides with the midpoint of
the median to the hypotenuse.
\end{lemma}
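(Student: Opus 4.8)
The plan is to reduce the statement to two classical facts about right triangles together with one standard description of the nine-point center. Throughout I take the right angle to be at $A$, so that the hypotenuse is $BC$ and the median to the hypotenuse is the segment $AM$, where $M$ denotes the midpoint of $BC$. The goal is then to show that the nine-point center $N$ of $\triangle ABC$ is the midpoint of $AM$.

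First I would recall that the nine-point center is the midpoint of the segment joining the orthocenter $H$ and the circumcenter $O$. Next I would locate these two points in a right triangle. The altitudes from $B$ and $C$ are the legs $BA$ and $CA$, which meet at $A$, so the orthocenter is $H=A$. Since $\angle A=90\degrees$, the hypotenuse $BC$ subtends a right angle and is therefore a diameter of the circumcircle, so the circumcenter is $O=M$. Substituting into $N=\tfrac12(O+H)$ gives $N=\tfrac12(M+A)$; that is, $N$ is the midpoint of $AM$, which is exactly the median to the hypotenuse. This yields the claim.

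As a self-contained alternative that avoids quoting the midpoint-of-$OH$ property, I would argue directly from the definition of the nine-point circle. Placing $A$ at the origin with the legs along the coordinate axes, the three side-midpoints are immediate to write down, and one checks that the medial triangle they form is again right-angled. Its circumcenter—equivalently the nine-point center of $\triangle ABC$, since the nine-point circle passes through the side-midpoints—is then the midpoint of the corresponding hypotenuse, which computes to the midpoint of $AM$. This route uses only that the nine-point circle passes through the midpoints of the sides and, once more, that the circumcenter of a right triangle is the midpoint of its hypotenuse.

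There is essentially no hard step here: the locations of $H$ and $O$ in a right triangle and the description of $N$ as the midpoint of $OH$ are all standard. The only point needing care is the bookkeeping of which vertex carries the right angle, and hence which segment is "the median to the hypotenuse"; once the right angle is fixed at $A$, every identification above is forced. If a fully barycentric argument were preferred to match the style of the paper's other proofs, one could instead substitute $c^2=a^2+b^2$ into the known barycentric coordinates of the nine-point center $X_5$ and compare the result with the midpoint of $AM$ via the Change of Coordinates Formula, but the synthetic argument above is considerably shorter and cleaner.
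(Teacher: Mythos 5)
Your proof is correct, and your primary argument takes a genuinely different route from the paper's. The paper argues synthetically from the defining points of the nine-point circle: it lets $X$, $Y$, $Z$ be the side-midpoints, observes that $AYXZ$ is a rectangle because the right angle at $A$ forces the two midpoint-parallels to be perpendicular, and concludes that the nine-point circle is the circumcircle of that rectangle, whose center is the midpoint of the diagonal $AX$ (the median to the hypotenuse). Your main argument instead quotes the standard fact that the nine-point center is the midpoint of the segment $OH$, locates $H$ at the right-angle vertex and $O$ at the hypotenuse midpoint, and reads off the conclusion. Your route is shorter but leans on an external characterization of $X_5$; the paper's is self-contained, using only that the nine-point circle passes through the side-midpoints, which fits the elementary tone of the surrounding lemmas. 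Your sketched alternative (the medial triangle is right-angled, so its circumcenter is the midpoint of its hypotenuse $YZ$, which coincides with the midpoint of $AX$ since the diagonals of the rectangle bisect each other) is essentially the paper's argument in different clothing. Both of your versions are sound.
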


\begin{figure}[h!t]
\centering
\includegraphics[width=0.4\linewidth]{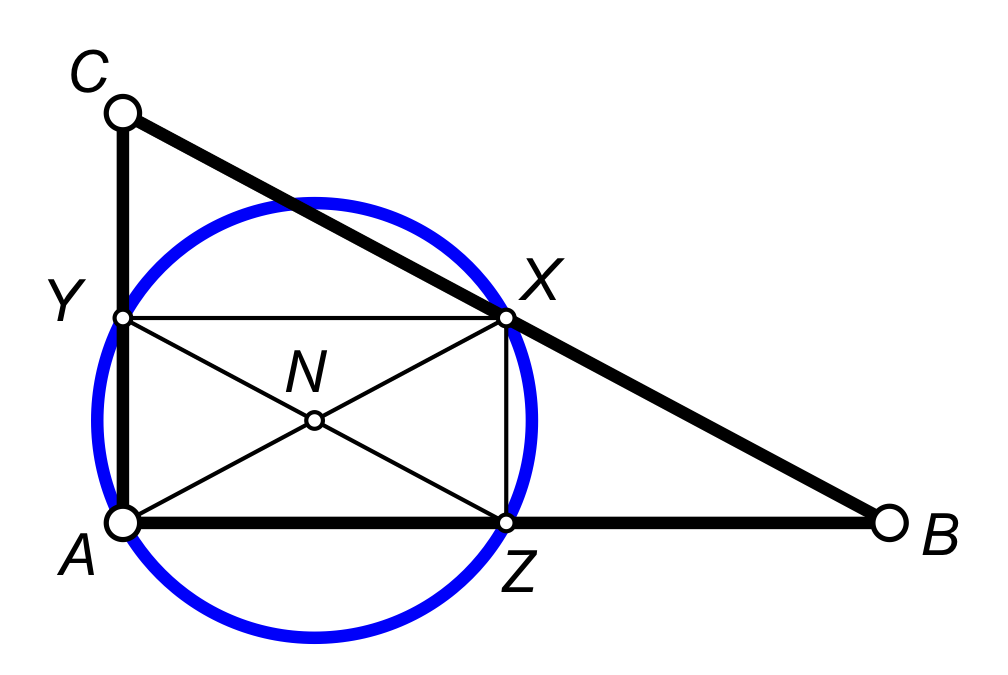}
\caption{Nine point center of a right triangle}
\label{fig:dp9pt}
\end{figure}

\begin{proof}
Let $X$, $Y$, and $Z$ be the midpoints of the sides of right triangle $ABC$ as shown
in Figure \ref{fig:dp9pt}.
Since $X$, $Y$, and $Z$ are midpoints, $XZ\parallel CA$ and $XY\parallel BA$.
Since $\angle BAC$ is a right angle, $AYXZ$ must be a rectangle.
The nine-point circle of $\triangle ABC$ passes through $X$, $Y$, and $Z$ and is therefore
the circumcircle of this rectangle. The nine-point center is the center of this rectangle
and is therefore the midpoint of $AX$.
\end{proof}

\begin{lemma}
\label{lemma:rightTriangle}
Let $M$ be midpoint of the hypotenuse $BC$ of right triangle $ABC$.
Let $X$ be any point on $AM$.
Let $Y$ and $Z$ be the feet of the perpendiculars dropped from $X$ to $AC$ and $AB$,
respectively (Figure \ref{fig:dpRightTriangle}).
Then $$\frac{[ABC]}{[AZXY]}=2\left(\frac{AM}{AX}\right)^2.$$
\end{lemma}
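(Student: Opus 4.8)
The plan is to use Cartesian coordinates adapted to the right angle at $A$. Since $\angle A = 90^\circ$, I would place $A$ at the origin with the two legs along the coordinate axes, setting $B=(c,0)$ and $C=(0,b)$, where $b=CA$ and $c=AB$ in the usual labeling (so the hypotenuse has length $a$ with $a^2=b^2+c^2$). Then $[ABC]=\tfrac12 bc$, and the midpoint of the hypotenuse is $M=(c/2,\,b/2)$.

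The first key observation is that $AZXY$ is a rectangle. Because $\angle A=90^\circ$, the legs $AB$ and $AC$ are perpendicular; since $Z$ is the foot of the perpendicular from $X$ to $AB$ and $Y$ is the foot of the perpendicular from $X$ to $AC$, the sides $AZ$ and $AY$ lie along these perpendicular legs while $XZ\perp AB$ and $XY\perp AC$. Hence $AZXY$ has three right angles and is a rectangle, so its area is simply the product $AZ\cdot AY$.

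The second step is to exploit that $X$ lies on the median $AM$. Writing $X=t\,M$ as a point of segment $AM$ (so that $AX/AM=t$ and therefore $AM/AX=1/t$), I note that orthogonal projection onto the line $AB$ fixes $A$ and is linear, hence sends $X=tM$ to $t$ times the projection of $M$. The projection of $M=(c/2,b/2)$ onto $AB$ is $(c/2,0)$, the midpoint of $AB$; likewise its projection onto $AC$ is the midpoint of $AC$. Consequently $Z=(tc/2,0)$ and $Y=(0,tb/2)$, giving $AZ=tc/2$ and $AY=tb/2$.

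Combining these, $[AZXY]=AZ\cdot AY=t^2 bc/4$, while $[ABC]=bc/2$, so the ratio is $\dfrac{[ABC]}{[AZXY]}=\dfrac{bc/2}{t^2bc/4}=\dfrac{2}{t^2}=2(AM/AX)^2$, as claimed. The computation is entirely routine; the only points needing care are the recognition that $AZXY$ is a rectangle and the linearity-of-projection argument that pins down $Z$ and $Y$ as scaled midpoints, so I do not anticipate any genuine obstacle beyond bookkeeping. The special case $X=M$, where $AZXY$ becomes the corner rectangle on the two leg-midpoints and has exactly half the area of $\triangle ABC$ (yielding ratio $2$), serves as a useful sanity check.
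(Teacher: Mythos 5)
Your proof is correct. It rests on the same two geometric facts as the paper's: that $AZXY$ is a rectangle, and that this rectangle scales linearly with the position of $X$ along the median $AM$. The execution, however, is different. The paper argues synthetically: it reflects $A$ about $M$ to obtain $D$, so that $ACDB$ is a rectangle with diagonal $AD=2AM$ along the same line as $AX$; the rectangles $AYXZ$ and $ACDB$ are then similar with ratio $AX:AD$, and squaring that ratio together with $[ACDB]=2[ABC]$ gives the result in two lines. You instead set up Cartesian coordinates at the right angle, write $X=tM$, and use the linearity of orthogonal projection onto the legs to pin down $Z$ and $Y$ as $t$ times the leg midpoints, after which the area ratio is a one-line computation. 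Your route is slightly longer in the setup but entirely mechanical and self-verifying; the paper's is shorter and coordinate-free but requires the small flash of insight of introducing the reflected point $D$. Your sanity check at $X=M$ (ratio $2$, consistent with the Varignon-type fact that the corner rectangle on the leg midpoints has half the triangle's area) is a nice touch that the paper does not include.
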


\begin{figure}[h!t]
\centering
\includegraphics[width=0.4\linewidth]{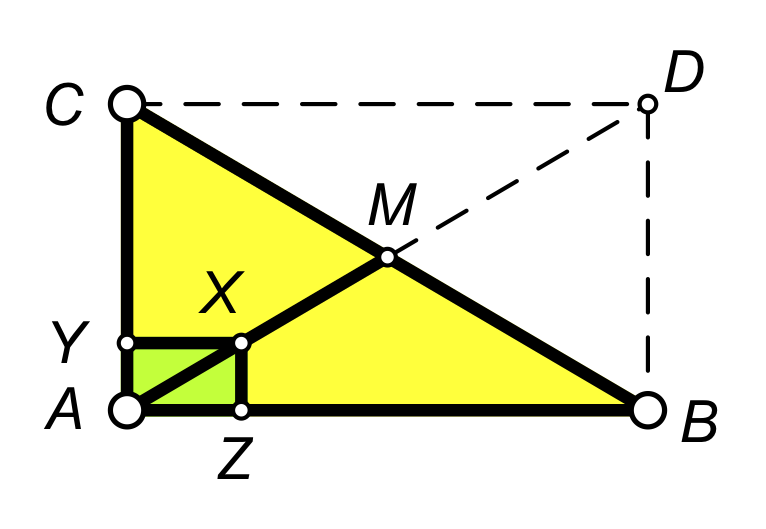}
\caption{right triangle, $\frac{[ABC]}{[AZXY]}=2\left(\frac{AM}{AX}\right)^2.$}
\label{fig:dpRightTriangle}
\end{figure}

\begin{proof}
Reflect $A$ about $M$ to get point $D$, making $ACDB$ a rectangle.
Rectangles $AYXZ$ and $ACDB$ are similar. The ratio of the area of two similar figures
is the square of the ratio of similarity. So
$$\frac{[ABDC]}{[AZXY]}=\left(\frac{AD}{AX}\right)^2.$$
Thus,
$$\frac{[ABC]}{[AZXY]}=2\left(\frac{AM}{AX}\right)^2$$
since $AD=2AM$ and $[ABCD]=2[ABC]$.
\end{proof}

The following result comes from \cite{shapes}.

\begin{lemma}
\label{lemma:angleBisector}
The condition for a triangle center with center function $f(x,y,z)$ to lie
on the angle bisector at vertex $A$ in right triangle $ABC$ (with right angle at $A$) is
$$f(x,y,z)=f(y,x,z)$$
for all $x$, $y$, and $z$ satisfying $x^2+y^2=z^2$.
\end{lemma}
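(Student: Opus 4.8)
The plan is to translate the geometric condition ``lies on the internal angle bisector at $A$'' into an algebraic identity on the center function $f$, working entirely in trilinear coordinates.

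First I would recall the standard characterization of the internal angle bisector in trilinear coordinates. If a point has trilinear coordinates $(\alpha:\beta:\gamma)$---that is, directed distances proportional to $\alpha,\beta,\gamma$ from the sidelines $BC$, $CA$, $AB$ respectively---then the two sidelines meeting at $A$ are $CA$ and $AB$, whose associated coordinates are $\beta$ and $\gamma$. A point is equidistant from these two lines, and hence lies on the internal bisector at $A$, precisely when $\beta=\gamma$. As a sanity check, the incenter $(1:1:1)$ satisfies this, as it should.

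Next I would write down the trilinear coordinates of the center attached to $f$, which by the definition in Section~\ref{section:centers} are
$$\Bigl(f(a,b,c):f(b,c,a):f(c,a,b)\Bigr).$$
Imposing $\beta=\gamma$ gives the condition $f(b,c,a)=f(c,a,b)$. The one bookkeeping step is to reconcile this with the claimed form $f(x,y,z)=f(y,x,z)$. I would apply the symmetry of $f$ in its last two arguments to the right-hand side, $f(c,a,b)=f(c,b,a)$, so that the condition becomes $f(b,c,a)=f(c,b,a)$.

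Finally I would relabel $x=b$, $y=c$, $z=a$, whereupon the two sides read exactly $f(x,y,z)$ and $f(y,x,z)$. Since the right angle is at $A$ the hypotenuse is $BC=a$, so the Pythagorean relation $a^2=b^2+c^2$ becomes $x^2+y^2=z^2$; and as $(b,c)$ ranges over all admissible leg lengths the triple $(x,y,z)$ sweeps out every solution of $x^2+y^2=z^2$, which is why the condition is stated universally over such triples. I expect no deep obstacle here: the only things to watch are keeping the argument permutations straight and noting that $\beta=\gamma$ picks out the internal bisector (the external one would give $\beta=-\gamma$), so that once the trilinear characterization is in hand the result follows by a short symbolic rewrite.
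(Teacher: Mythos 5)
Your argument is correct. Note that the paper itself gives no proof of this lemma, citing its companion paper \cite{shapes} instead; your derivation --- internal bisector at $A$ is $\beta=\gamma$ in trilinears, so the condition is $f(b,c,a)=f(c,a,b)$, which the built-in symmetry $f(c,a,b)=f(c,b,a)$ and the relabeling $(x,y,z)=(b,c,a)$ turn into $f(x,y,z)=f(y,x,z)$ subject to $x^2+y^2=z^2$ --- is exactly the standard argument one would expect the cited source to use, and you have handled the two relevant subtleties (internal versus external bisector, and why the condition is quantified over all Pythagorean triples) correctly.
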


\begin{lemma}
\label{lemma:dpIsoscelesTriangle}
Let $ABC$ be an isosceles triangle with $AB=AC=b$ and $BC=a$.
Let $M$ be the midpoint of $BC$.
Let $X$ be any triangle center of $\triangle ABC$.
Suppose the barycentric coordinates for $X$ are $(u:v:w)$ with respect to $\triangle ABC$
(Figure~\ref{fig:dpIsoscelesTriangle}).
Then $X$ lies on $AM$, the median to side $BC$, $v=w$, and
$$\frac{XM}{AM}=\frac{u}{u+2v}.$$
\end{lemma}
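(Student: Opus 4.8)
The plan is to read off the barycentric coordinates of $X$ directly from the center function and then exploit the symmetry built into the definition of a triangle center. First I would record the side lengths of $\triangle ABC$ in the standard labeling: the side opposite $A$ is $BC=a$, while the sides opposite $B$ and $C$ are $CA=AB=b$. By the formula for the barycentric coordinates of a center (Section~\ref{section:centers}), a triangle center with center function $f$ then has coordinates
$$(u:v:w)=\bigl(a\,f(a,b,b):b\,f(b,b,a):b\,f(b,a,b)\bigr).$$

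Since a center function is symmetric in its last two arguments, $f(b,a,b)=f(b,b,a)$, and therefore $v=b\,f(b,b,a)=w$. This establishes the second assertion, $v=w$, and it immediately yields the first: the midpoint $M$ of $BC$ has barycentric coordinates $(0:1:1)$, so the line $AM$ joining $A=(1:0:0)$ to $M$ is precisely the locus $\{(x:y:z):y=z\}$; because $X$ satisfies $v=w$, it lies on $AM$.

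For the ratio I would pass to normalized coordinates. Writing $X=(u:v:v)$, its normalized coordinates are $\frac{1}{u+2v}(u,v,v)$, while $A=(1,0,0)$ and $M=(0,\tfrac12,\tfrac12)$ are already normalized. Expressing $X$ as the affine combination $X=\alpha A+\beta M$ with $\alpha+\beta=1$ and matching coordinates gives $\alpha=\frac{u}{u+2v}$ and $\beta=\frac{2v}{u+2v}$. From $X=\alpha A+\beta M$ one computes $\vec{MX}=\alpha\,\vec{MA}$, so that $\frac{XM}{AM}=\alpha=\frac{u}{u+2v}$, as claimed.

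The computation is short, so there is no serious obstacle; the only point requiring care is the bookkeeping of the side labels, since the isosceles data name the two equal sides $b$ and the base $a$, and one must feed these into the center function in the correct cyclic order $f(a,b,b)$, $f(b,b,a)$, $f(b,a,b)$ before invoking symmetry. Conceptually, the result merely reflects that the reflection across the perpendicular bisector of $BC$ is a symmetry of the isosceles triangle fixing every triangle center, thereby forcing each center onto the axis $AM$; the barycentric argument is the quantitative version of this observation.
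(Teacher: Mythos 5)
Your proof is correct, and it reaches the two key facts by a different route than the paper. For $v=w$, the paper argues geometrically: barycentric coordinates are proportional to the areas $[AXB]$ and $[AXC]$, and these triangles are congruent (because the reflection in the perpendicular bisector of $BC$ is a symmetry of the isosceles triangle fixing every center); you instead derive $v=w$ algebraically from the defining formula $(af(a,b,c):bf(b,c,a):cf(c,a,b))$ together with the symmetry of the center function in its last two arguments, which is closer to first principles and avoids the slightly delicate point of justifying the congruence before one knows $X$ lies on the axis. For the ratio, the paper again uses areas, writing $\frac{[BXC]}{[ABC]}=\frac{u}{u+v+w}$ and identifying this with $\frac{XM}{AM}$ by a base--height argument, whereas you normalize and write $X=\alpha A+\beta M$ with $\alpha+\beta=1$, reading off $\alpha=\frac{u}{u+2v}$ and concluding $\vec{MX}=\alpha\,\vec{MA}$. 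Your affine-combination version has the mild advantage of producing the ratio as a signed quantity with no extra work, which is exactly how the lemma is later used (Table~\ref{table:diagonalPointIsoscelesRatios} contains negative ratios such as $-1$ and $-\tfrac13$); the paper's version is more visual but treats the ratio as a quotient of areas and lengths. Both arguments are complete and equally short.
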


\begin{figure}[h!t]
\centering
\includegraphics[width=0.4\linewidth]{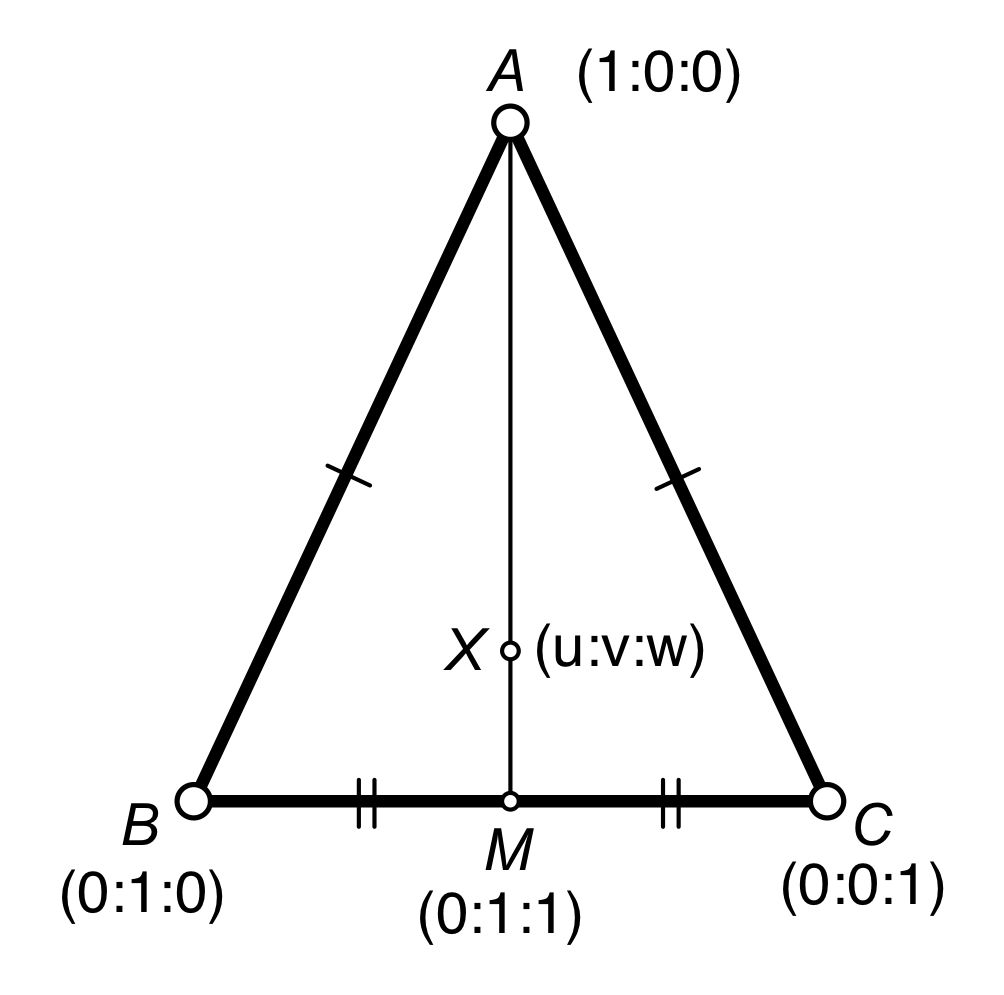}
\caption{Center $X$ of an isosceles triangle}
\label{fig:dpIsoscelesTriangle}
\end{figure}

\begin{proof}
Since the barycentric coordinates of a point are proportional to the areas of the triangles
formed by that point and the sides of a triangle, we must have
$$\frac{[AXB]}{[AXC]}=\frac{v}{w}.$$
But triangles $AXB$ and $AXC$ are congruent. Therefore $v=w$.

The equation of line $BC$ is $x=0$ and the equation of line $AM$ is $y=z$, so the barycentric coordinates for $M$ are $(0:1:1)$.

Again, by the area property,
$$\frac{[BXC]}{[ABC]}=\frac{u}{u+v+w}.$$
But the area of a triangle is half the base times the height, so
$$\frac{[BXC]}{[ABC]}=\frac{XM}{AM}.$$
Thus, 
$$\frac{XM}{AM}=\frac{u}{u+v+w}=\frac{u}{u+2v}.$$
\void{
and
$$\frac{AX}{AM}=\frac{AM-AX}{AM}=1-\frac{u}{u+2v}=\frac{2v}{u+2v}$$
}
\end{proof}

\newpage

\begin{lemma}
\label{lemma:dpIsoscelesTriangleA}
Let $ABC$ be an isosceles triangle with $AB=AC$.
Then the center $X_n$ coincides with $A$ for the following values of $n$:
\end{lemma}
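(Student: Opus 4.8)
The plan is to reduce the geometric statement ``$X_n$ coincides with $A$'' to a single algebraic condition on the center function $f$ of $X_n$, and then to verify that condition for each value of $n$ in the list. Set the side lengths in the usual way, $a=BC$, $b=CA$, $c=AB$, so that the hypothesis $AB=AC$ becomes $c=b$. By the definition of a triangle center, $X_n$ has barycentric coordinates $\bigl(af(a,b,c):bf(b,c,a):cf(c,a,b)\bigr)$, which upon setting $c=b$ become $\bigl(af(a,b,b):bf(b,b,a):bf(b,a,b)\bigr)$. Because $f$ is symmetric in its last two arguments, $f(b,b,a)=f(b,a,b)$, so the second and third coordinates are equal, in agreement with Lemma~\ref{lemma:dpIsoscelesTriangle}.

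First I would apply Lemma~\ref{lemma:dpIsoscelesTriangle} directly. Writing the barycentrics of $X_n$ as $(u:v:w)$ with $u=af(a,b,b)$ and $v=w=bf(b,b,a)$, that lemma gives $XM/AM=u/(u+2v)$, where $M$ is the midpoint of $BC$. Since $X_n$ lies on segment $AM$, it coincides with $A$ precisely when $XM/AM=1$, that is, when $u=u+2v$, i.e. when $v=0$. As $b\neq 0$, this is equivalent to
$$f(b,b,a)=0,$$
together with the nondegeneracy requirement $f(a,b,b)\neq 0$ (so that the coordinates are not all zero and the point is genuinely $A=(1:0:0)$). This is the criterion I would use: \emph{$X_n=A$ in every isosceles triangle with $AB=AC$ if and only if the center function satisfies $f(b,b,a)=0$ identically in the admissible pairs $(a,b)$, while $f(a,b,b)\not\equiv 0$.}

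With the criterion in hand, the proof of the stated list is a case-by-case verification: for each $n$ appearing in the lemma, I would substitute the known center function $f_n$ from the Encyclopedia of Triangle Centers, specialize to $c=b$, and confirm symbolically that $f_n(b,b,a)=0$ while $f_n(a,b,b)\neq 0$. In many cases the vanishing is transparent because $f_n$ carries a factor that is symmetric in $b,c$ and annihilated when a side equals $a$ --- for instance a factor of $(a-b)(a-c)$, which gives $f_n(b,b,a)=(b-b)(b-a)\cdot(\,\cdots)=0$ --- but for centers defined by more elaborate radical or trigonometric expressions the reduction must be carried out explicitly.

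The conceptual step is short; the main obstacle is the verification stage. The center functions in the list are not all polynomial, so checking $f_n(b,b,a)=0$ may require simplifying expressions involving $\sqrt{\cdot}$ or angle functions and confirming that the vanishing is an exact identity in $(a,b)$ rather than a numerical coincidence. Equally important, one must certify in each case that the leading coordinate $f_n(a,b,b)$ does \emph{not} vanish, since otherwise the formal coordinates would degenerate to $(0:0:0)$ and would fail to locate the point $A$; this is the check most easily overlooked.
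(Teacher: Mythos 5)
Your criterion is correct: with $c=b$ the barycentrics of $X_n$ become $\bigl(af(a,b,b):bf(b,b,a):bf(b,b,a)\bigr)$, so $X_n=A=(1:0:0)$ exactly when $f(b,b,a)=0$ and $f(a,b,b)\neq 0$, and you rightly flag the nondegeneracy check as the easy thing to miss. The paper itself offers no proof of this lemma — it is presented as a computer-verified catalog, the verification being precisely the case-by-case substitution into the ETC center functions that you describe — so your proposal supplies the intended argument rather than an alternative to it.
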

59, 99, 100, 101, 107, 108, 109, 110, 112, 162, 163, 190, 249, 250, \
476, 643, 644, 645, 646, 648, 651, 653, 655, 658, 660, 662, 664, 666, \
668, 670, 677, 681, 685, 687, 689, 691, 692, 765, 769, 771, 773, 777, \
779, 781, 783, 785, 787, 789, 791, 793, 795, 797, 799, 803, 805, 807, \
809, 811, 813, 815, 817, 819, 823, 825, 827, 831, 833, 835, 839, 874, \
877, 880, 883, 886, 889, 892, 898, 901, 906, 907, 919, 925, 927, 929, \
930, 931, 932, 933, 934, 935.

\begin{lemma}
\label{lemma:dpIsoscelesTriangleMidpoint}
Let $ABC$ be an isosceles triangle with $AB=AC$.
Let $M$ be the midpoint of $BC$.
Then the center $X_n$ coincides with $M$ for the following values of $n$:
\end{lemma}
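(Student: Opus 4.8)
The plan is to reduce the assertion to a single identity satisfied by the center function and then to verify that identity for each index $n$ in the displayed list.

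First I would appeal to Lemma~\ref{lemma:dpIsoscelesTriangle}. Writing $AB=AC=b$ and $BC=a$, that lemma guarantees that any center $X=(u:v:w)$ of $\triangle ABC$ satisfies $v=w$, lies on the median $AM$, and obeys
$$\frac{XM}{AM}=\frac{u}{u+2v}.$$
Since $X$ is a genuine (finite) point we have $u+2v\neq 0$, so $X$ coincides with $M$ exactly when $u=0$.

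Next I would rewrite $u$ in terms of the center function $f$ of $X_n$. With the side lengths $a$, $b$, $c=b$, the barycentric coordinates of $X_n$ are
$$\bigl(af(a,b,c):bf(b,c,a):cf(c,a,b)\bigr)=\bigl(af(a,b,b):bf(b,b,a):bf(b,a,b)\bigr),$$
and the symmetry $f(a,c,b)=f(a,b,c)$ yields $f(b,a,b)=f(b,b,a)$, which re-confirms $v=w$. The first coordinate is $u=af(a,b,b)$, and $a\neq 0$, so
$$X_n=M\quad\Longleftrightarrow\quad f(a,b,b)=0\ \text{identically in }a,b.$$
Geometrically this is immediate: the center already lies on the axis of symmetry $AM$, and $f(a,b,b)=0$ says its first trilinear coordinate vanishes, i.e.\ it also lies on the base $BC$; the unique common point is $M$. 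This is the exact dual of the criterion behind Lemma~\ref{lemma:dpIsoscelesTriangleA}, where a center lands on the apex $A$ precisely when $f(b,b,a)=0$.

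It then remains to confirm, for each listed $n$, that the center function recorded for $X_n$ in \cite{ETC} satisfies $f(a,b,b)=0$. Following the approach of Section~\ref{section:methodology}, I would substitute $c=b$ into each function and simplify symbolically in Mathematica. I expect the main difficulty to be logistical rather than conceptual: the list comprises many distinct centers whose functions range from simple rational expressions to ones built from Conway symmetric functions and radicals, so each must be transcribed correctly from the encyclopedia and reduced with care, and a handful are defined only off the isosceles locus and call for a limiting argument. Completeness of the list---that no other $X_n$ with $n\le 1000$ coincides with $M$---would follow from running the same check over the remaining indices.
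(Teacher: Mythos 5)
Your reduction via Lemma~\ref{lemma:dpIsoscelesTriangle} to the single condition $u=0$, i.e.\ $f(a,b,b)=0$, followed by symbolic verification over the first 1000 centers, is correct and is essentially the paper's own method: the paper states this lemma without a printed proof, but its proof of the analogous right-triangle statements (Theorems~\ref{thm:atE} and \ref{thm:onMedian}) proceeds in exactly this way, checking which centers satisfy $v=w$ and $u=0$ under the degeneracy condition. Nothing further is needed.
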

11, 115, 116, 122, 123, 124, 125, 127, 130, 134, 135, 136, 137, 139, \
244, 245, 246, 247, 338, 339, 865, 866, 867, 868.

\begin{lemma}
\label{lemma:dpIsoscelesTriangleInfinity}
The center $X_n$
lies on the line at infinity for all isosceles triangles, but not for all triangles,
for the following values of $n$:
\end{lemma}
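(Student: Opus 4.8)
The plan is to reduce the geometric statement ``lies on the line at infinity'' to an algebraic identity in the side lengths, and then to observe that for each listed $n$ this identity holds precisely for isosceles triangles. Recall that in barycentric coordinates a point $(x:y:z)$ lies on the line at infinity exactly when $x+y+z=0$. Combined with the barycentric description of a triangle center from Section~\ref{section:centers}, namely $\bigl(af(a,b,c):bf(b,c,a):cf(c,a,b)\bigr)$, the center $X_n$ with center function $f$ lies at infinity for a given triangle if and only if
$$af(a,b,c)+bf(b,c,a)+cf(c,a,b)=0.$$

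First I would specialize to an isosceles triangle. Taking $AB=AC$, that is $b=c$, the symmetry of $f$ in its last two arguments gives $f(b,c,a)=f(b,b,a)=f(b,a,b)=f(c,a,b)$, so the second and third barycentric coordinates coincide (as they must, by Lemma~\ref{lemma:dpIsoscelesTriangle}) and the infinity condition collapses to
$$g(a,b):=af(a,b,b)+2bf(b,b,a)=0.$$
Thus the first half of the claim---that $X_n$ is at infinity for \emph{every} isosceles triangle---is equivalent to the identical vanishing of $g$ as a function of the two free lengths $a$ and $b$ (subject only to $a<2b$). Geometrically this forces the center to the infinite point $(-2:1:1)$ in the direction of the axis of symmetry, the unique point at infinity on the median line $y=z$. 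The second half---that $X_n$ is \emph{not} at infinity for all triangles---is equivalent to the statement that
$$h(a,b,c):=af(a,b,c)+bf(b,c,a)+cf(c,a,b)$$
does not vanish identically; this is exactly the condition excluding those centers (the genuine infinite points) that lie on the line at infinity for every triangle.

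With this criterion in hand, the remaining work is a verification carried out center by center. For each $n$ in the list I would take the center function $f_n$ recorded in the Encyclopedia of Triangle Centers~\cite{ETC}, substitute it into $g$ and into $h$, and check symbolically in Mathematica that $g\equiv 0$ while $h\not\equiv 0$. Care is needed because ETC records many center functions in terms of angles or of expressions such as $\cos A$ and the area; each must first be rewritten as an algebraic function of $a$, $b$, and $c$ via the law of cosines before the two identities can be tested, and denominators must be cleared consistently so that the homogeneity of $f_n$ is preserved.

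The main obstacle is that there is no single uniform argument covering the whole list: the listed indices correspond to structurally different center functions, so the proof is an enumeration of symbolic checks rather than one clean computation. The delicate points are the bookkeeping ones---translating each ETC center function into $a$, $b$, $c$ form without introducing spurious common factors that could mask a nonzero $g$, and confirming that the non-vanishing of $h$ is genuine, for instance by exhibiting a single scalene triangle with $h\neq 0$, so that $X_n$ is not merely an infinite point in disguise.
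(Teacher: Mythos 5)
Your criterion is correct: in barycentrics a point lies on the line at infinity iff its coordinates sum to zero, and specializing $b=c$ together with the symmetry of the center function reduces the claim to the identity $af(a,b,b)+2bf(b,b,a)=0$, with non-identical vanishing of the general sum handling the ``but not for all triangles'' clause. The paper states this lemma without an explicit proof, but your reduction to a symbolic, center-by-center verification is exactly the methodology the authors use for the analogous results (e.g.\ Theorems~\ref{thm:atE} and~\ref{thm:onMedian}), so your proposal is essentially the same approach, made explicit.
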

351, 647, 649, 650, 652, 654, 656, 657, 659, 661, 663, 665, 667, 669, \
676, 684, 686, 693, 764, 770, 798, 810, 822, 850, 875, 876, 878, 879, \
881, 882, 884, 885, 887, 890, 905.

For reference,
$X_n$ lies on the line at infinity for all triangles
for the following values of $n$:
30, 511, 512, 513, 514, 515, 516, 517, 518, 519, 520, 521, 522, 523, \
524, 525, 526, 527, 528, 529, 530, 531, 532, 533, 534, 535, 536, 537, \
538, 539, 540, 541, 542, 543, 544, 545, 674, 680, 688, 690, 696, 698, \
700, 702, 704, 706, 708, 710, 712, 714, 716, 718, 720, 722, 724, 726, \
730, 732, 734, 736, 740, 742, 744, 746, 752, 754, 758, 760, 766, 768, \
772, 776, 778, 780, 782, 784, 786, 788, 790, 792, 794, 796, 802, 804, \
806, 808, 812, 814, 816, 818, 824, 826, 830, 832, 834, 838, 888, 891, \
900, 912, 916, 918, 924, 926, 928, 952, 971.


\newpage

\section{Results Using an Arbitrary Point}
\label{section:arbitraryPoint}

In this configuration, the radiator, $E$, is any point in the plane of
the reference quadrilateral $ABCD$.

Our computer analysis found only two relationships that hold for all quadrilaterals
when $E$ is an arbitrary point in the plane. We examined all the types of
quadrilaterals listed in Table 1 and all triangle centers from $X_1$ to $X_{1000}$.
The two relationships only occur when the chosen center is $X_2$, the centroid.
The relationships are shown in Table \ref{table:arbitrary}.

\begin{table}[ht!]
\caption{}
\label{table:arbitrary}
\begin{center}
\begin{tabular}{|l|l|p{2.2in}|}
\hline
\multicolumn{3}{|c|}{\textbf{\color{blue}\large \strut Central Quadrilaterals formed by an Arbitrary Point}}\\ \hline
\textbf{Quadrilateral Type}&\textbf{Relationship}&\textbf{centers}\\ \hline
\ru general&$[ABCD]=\frac92[FGHI]$&2\\
\hline
\ru square&$ABCD\sim FGHI$&2\\
\hline
\end{tabular}
\end{center}
\end{table}


We were able to find geometric proofs for these relationships.

\relbox{Relationship $[ABCD]=\frac92[FGHI]$}

\begin{theorem}
\label{theorem:cpCentroids}
Let $E$ be any point in the plane of convex quadrilateral $ABCD$ not on a sideline of the quadrilateral.
Let $F$, $G$, $H$, and $I$ be the centroids of $\triangle EAB$, $\triangle EBC$, 
$\triangle ECD$, and $\triangle EDA$, respectively (Figure~\ref{fig:cpCentroids}).
Then $FGHI$ is a parallelogram
and
$$[ABCD]=\frac92[FGHI].$$
\end{theorem}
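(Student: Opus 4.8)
The plan is to realize the central quadrilateral $FGHI$ as a scaled copy of the Varignon parallelogram of $ABCD$, dilated about the radiator $E$. Let $M_{AB}$, $M_{BC}$, $M_{CD}$, and $M_{DA}$ denote the midpoints of sides $AB$, $BC$, $CD$, and $DA$. By Lemma~\ref{lemma:Varignon} these four midpoints form a parallelogram whose area is $\tfrac12[ABCD]$, and whose sides are parallel to the diagonals of $ABCD$.

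First I would locate each centroid relative to $E$. The centroid $F$ of $\triangle EAB$ lies on the median from $E$ to the midpoint $M_{AB}$ of the opposite side, and by Lemma~\ref{lemma:centroid} it divides that median so that $EF:FM_{AB}=2:1$. Hence $F$ is the image of $M_{AB}$ under the homothety $h$ centered at $E$ with ratio $\tfrac23$. The same reasoning applied to the other three radial triangles shows that $G$, $H$, and $I$ are the images of $M_{BC}$, $M_{CD}$, and $M_{DA}$ under the very same homothety $h$.

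The key step is then immediate. A homothety is a similarity, so it carries the Varignon parallelogram $M_{AB}M_{BC}M_{CD}M_{DA}$ to a parallelogram $FGHI$, which establishes the first claim. For the area, a homothety of ratio $\tfrac23$ multiplies areas by $\left(\tfrac23\right)^2=\tfrac49$, so
$$[FGHI]=\tfrac49\,[M_{AB}M_{BC}M_{CD}M_{DA}]=\tfrac49\cdot\tfrac12\,[ABCD]=\tfrac29\,[ABCD],$$
which rearranges to $[ABCD]=\tfrac92[FGHI]$.

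I anticipate no serious obstacle; the entire argument rests on spotting the single homothety $h$ that simultaneously sends all four midpoints to all four centroids. The only point needing care is that the hypothesis that $E$ not lie on a sideline guarantees each radial triangle is nondegenerate, so every centroid is well defined. As a purely computational alternative, assigning position vectors $\mathbf a,\mathbf b,\mathbf c,\mathbf d,\mathbf e$ to the five points gives $\overrightarrow{FG}=\tfrac13(\mathbf c-\mathbf a)$ and $\overrightarrow{FI}=\tfrac13(\mathbf d-\mathbf b)$; the equality $\overrightarrow{FG}=\overrightarrow{IH}$ proves the parallelogram claim, while $\overrightarrow{FG}\times\overrightarrow{FI}=\tfrac19\,\overrightarrow{AC}\times\overrightarrow{BD}$ yields the area ratio since $|\overrightarrow{AC}\times\overrightarrow{BD}|=2[ABCD]$.
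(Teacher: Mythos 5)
Your proposal is correct and is essentially the paper's own argument: both rest on Lemma~\ref{lemma:centroid} to place each centroid two-thirds of the way from $E$ to the corresponding side-midpoint, and on Lemma~\ref{lemma:Varignon} to relate the midpoint parallelogram to $[ABCD]$; the paper's ``$PQRS$ similar to $FGHI$ with ratio $3:2$ since $EQ/EH=3/2$'' is exactly your homothety of ratio $\tfrac23$ centered at $E$. Your packaging is slightly cleaner in that the single homothety delivers the parallelogram claim and the area ratio simultaneously, whereas the paper verifies the parallel sides separately first.
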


\begin{figure}[h!t]
\centering
\includegraphics[width=0.3\linewidth]{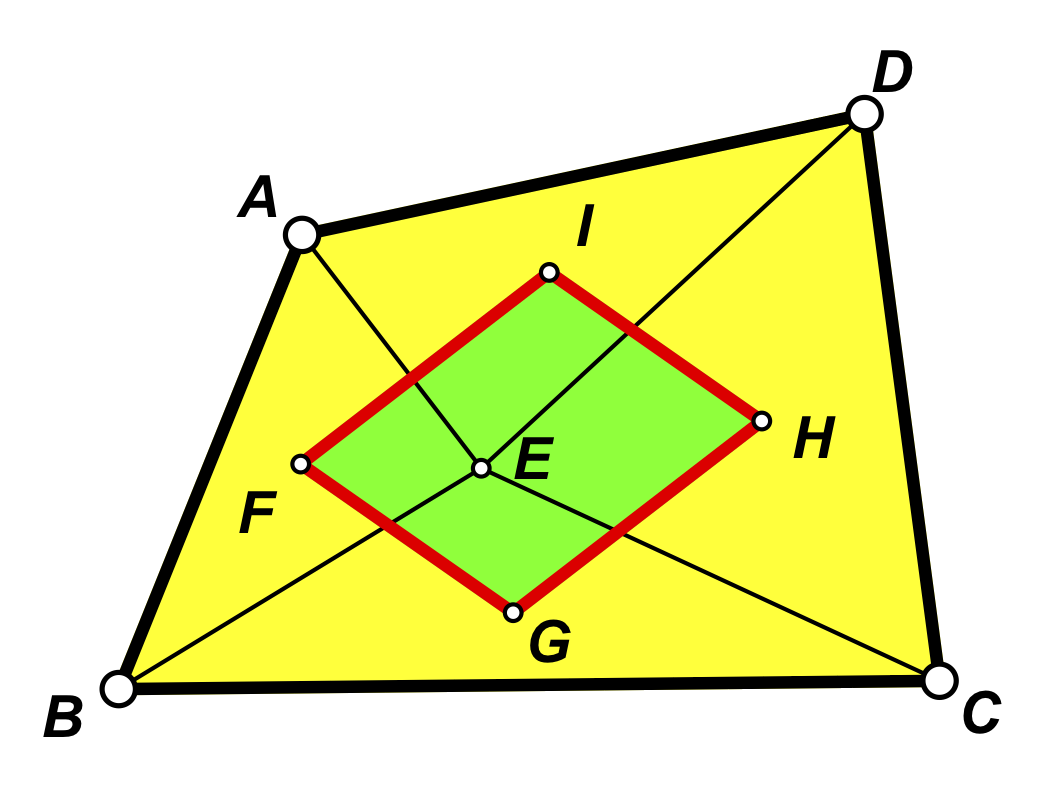}
\caption{$E$ arbitrary, centroids $\implies$ $\displaystyle\frac{[ABCD]}{[FGHI]}=\frac92$}
\label{fig:cpCentroids}
\end{figure}

\begin{proof}

Let $P$ be the midpoint of $BC$ and let $Q$ be the midpoint of $CD$ (Figure~\ref{fig:cpCentroidsProof}).

\begin{figure}[h!t]
\centering
\includegraphics[width=0.27\linewidth]{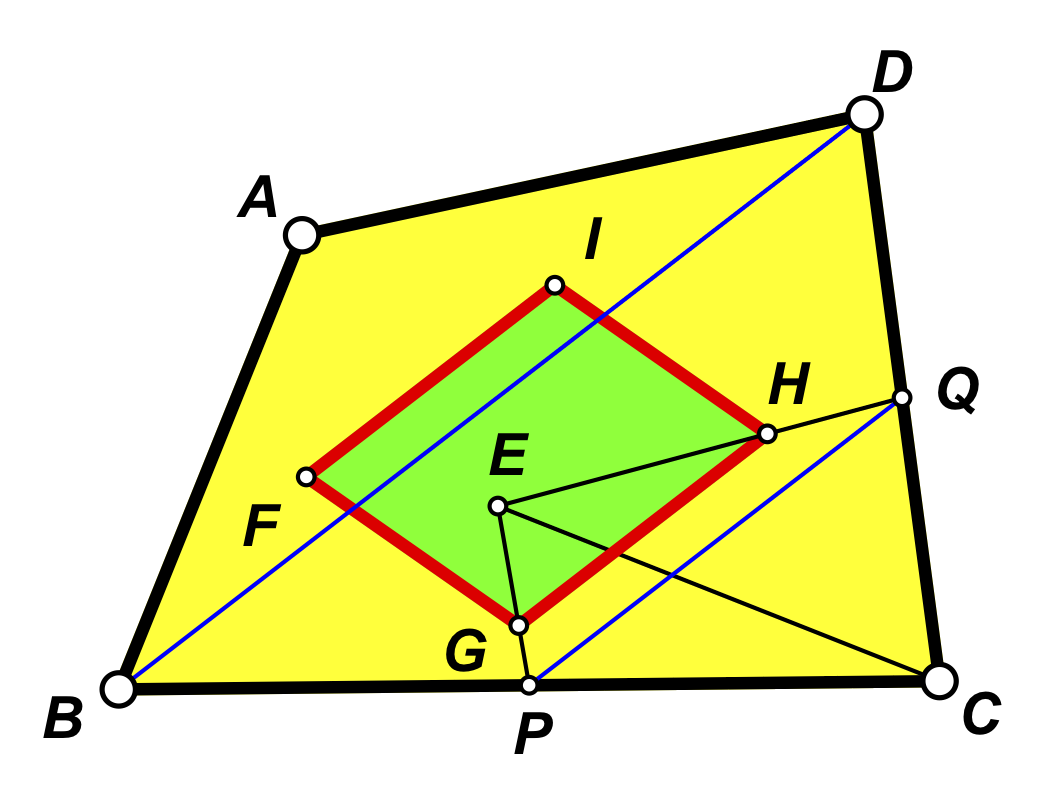}
\caption{}
\label{fig:cpCentroidsProof}
\end{figure}

Since $G$ is the centroid of $\triangle BEC$, $EP$ is a median of $\triangle BEC$
and $EG/GP=2$ by Lemma \ref{lemma:centroid}. Similarly, $EH/HQ=2$. Thus, $GH\parallel PQ$.
But $PQ\parallel BD$, so $GH\parallel BD$.
In the same manner, $FI\parallel BD$. Hence, $GH\parallel FI$.
Likewise, $FG\parallel IH$. Thus, $FGHI$ is a parallelogram.

Now, let $R$ be the midpoint of $DA$ and let $S$ be the midpoint of $AB$ (Figure~\ref{fig:cpCentroidsProof2}).

\begin{figure}[h!t]
\centering
\includegraphics[width=0.35\linewidth]{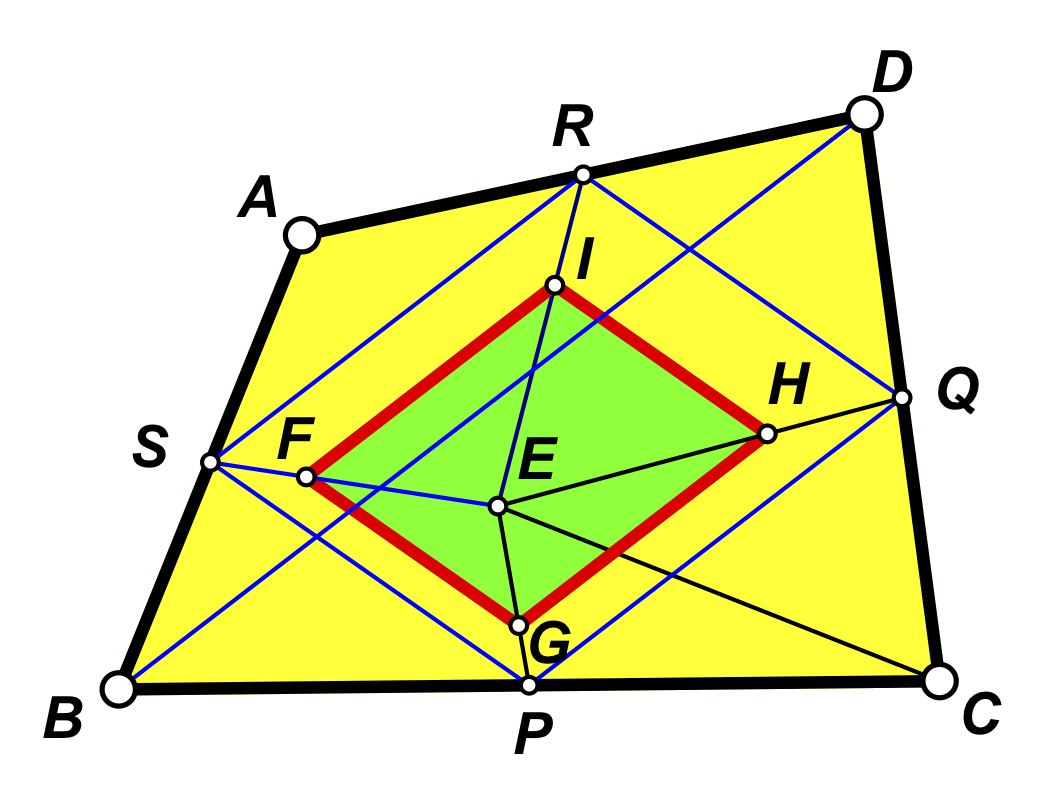}
\caption{}
\label{fig:cpCentroidsProof2}
\end{figure}

Then $PQRS$ is a parallelogram similar to parallelogram $FGHI$ with ratio of
similarity $3:2$ since $EQ/EH=3/2$. Thus,
\begin{equation}
\label{eq:cp1}
\frac{[PQRS]}{[FGHI]}=\frac94.
\end{equation}

Now
\begin{equation}
\label{eq:cp2}
\frac{[ABCD]}{[PQRS]}=2
\end{equation}
by Lemma \ref{lemma:Varignon}.
Combining equations (\ref{eq:cp1}) and (\ref{eq:cp2}) gives
$$\frac{[ABCD]}{[FGHI]}=\frac92$$
and we are done.
\end{proof}

\newpage

\relbox{Relationship $ABCD\sim FGHI$}

\begin{theorem}
\label{theorem:cpSquareCentroids}
Let $E$ be any point in the plane of square $ABCD$ not on a sideline of the square.
Let $F$, $G$, $H$, and $I$ be the centroids of $\triangle EAB$, $\triangle EBC$, 
$\triangle ECD$,  and $\triangle EDA$, respectively (Figure~\ref{fig:cpSquareCentroids}).
Then $FGHI$ is a square.
\end{theorem}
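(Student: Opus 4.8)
The plan is to build directly on Theorem~\ref{theorem:cpCentroids} and its proof, which already does most of the work. From that proof we know more about $FGHI$ than its bare statement records: not only is $FGHI$ a parallelogram, but its sides run in the directions of the two diagonals of $ABCD$. Concretely, with $P$, $Q$, $R$, $S$ the midpoints of $BC$, $CD$, $DA$, $AB$, the argument there shows $GH\parallel FI\parallel BD$ and, by the identical midsegment reasoning, $FG\parallel IH\parallel AC$, and moreover that $FGHI$ is similar to the Varignon parallelogram $PQRS$ with ratio $2:3$. I would reuse exactly this setup, so the only genuinely new ingredient is the special geometry of a square.

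The key extra input is that in a square the diagonals $AC$ and $BD$ are perpendicular and of equal length. First I would use perpendicularity. Since one pair of sides of the parallelogram $FGHI$ is parallel to $AC$ and the adjacent pair is parallel to $BD$, and $AC\perp BD$, the adjacent sides of $FGHI$ are perpendicular. Hence $FGHI$ is a rectangle.

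Next I would pin down the side lengths. By Lemma~\ref{lemma:Varignon} the sides of $PQRS$ are parallel to, and half the length of, the diagonals, so two adjacent sides have lengths $\tfrac12|AC|$ and $\tfrac12|BD|$. For a square these are equal, so $PQRS$ is a rhombus; because the $2:3$ similarity between $FGHI$ and $PQRS$ scales both directions by the same factor, the adjacent sides of $FGHI$ are likewise equal. A rectangle with equal adjacent sides is a square, which is the claim. One could also make the similarity transparent by noting that $F$, $G$, $H$, $I$ are the images of $S$, $P$, $Q$, $R$ under the homothety centered at $E$ with ratio $\tfrac23$; since a homothety carries a square to a square, $FGHI$ is a square as soon as $PQRS$ is shown to be one.

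I do not expect a serious obstacle here. The only points requiring care are to extract the ``sides parallel to the diagonals'' and the similarity-to-Varignon facts from the proof of Theorem~\ref{theorem:cpCentroids} rather than from its statement, and to note nondegeneracy (the ratio $2:3$ is fixed and independent of $E$, so $FGHI$ never collapses for a genuine square). The two square-specific facts, perpendicular diagonals and equal diagonals, are precisely what upgrade the already-known parallelogram to a square.
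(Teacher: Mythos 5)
Your proposal is correct and follows essentially the same route as the paper: both build on Theorem~\ref{theorem:cpCentroids} and its proof, using that the sides of the parallelogram $FGHI$ are parallel to (and a fixed fraction of) the diagonals of $ABCD$, so that the equal, perpendicular diagonals of a square force $FGHI$ to be a square. If anything, your write-up is slightly more careful than the paper's, which invokes only the equality of the diagonals (by itself yielding a rhombus) and leaves the perpendicularity step implicit, whereas you make both ingredients explicit and also offer the clean homothety-at-$E$-with-ratio-$\tfrac23$ formulation.
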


\begin{figure}[h!t]
\centering
\includegraphics[width=0.3\linewidth]{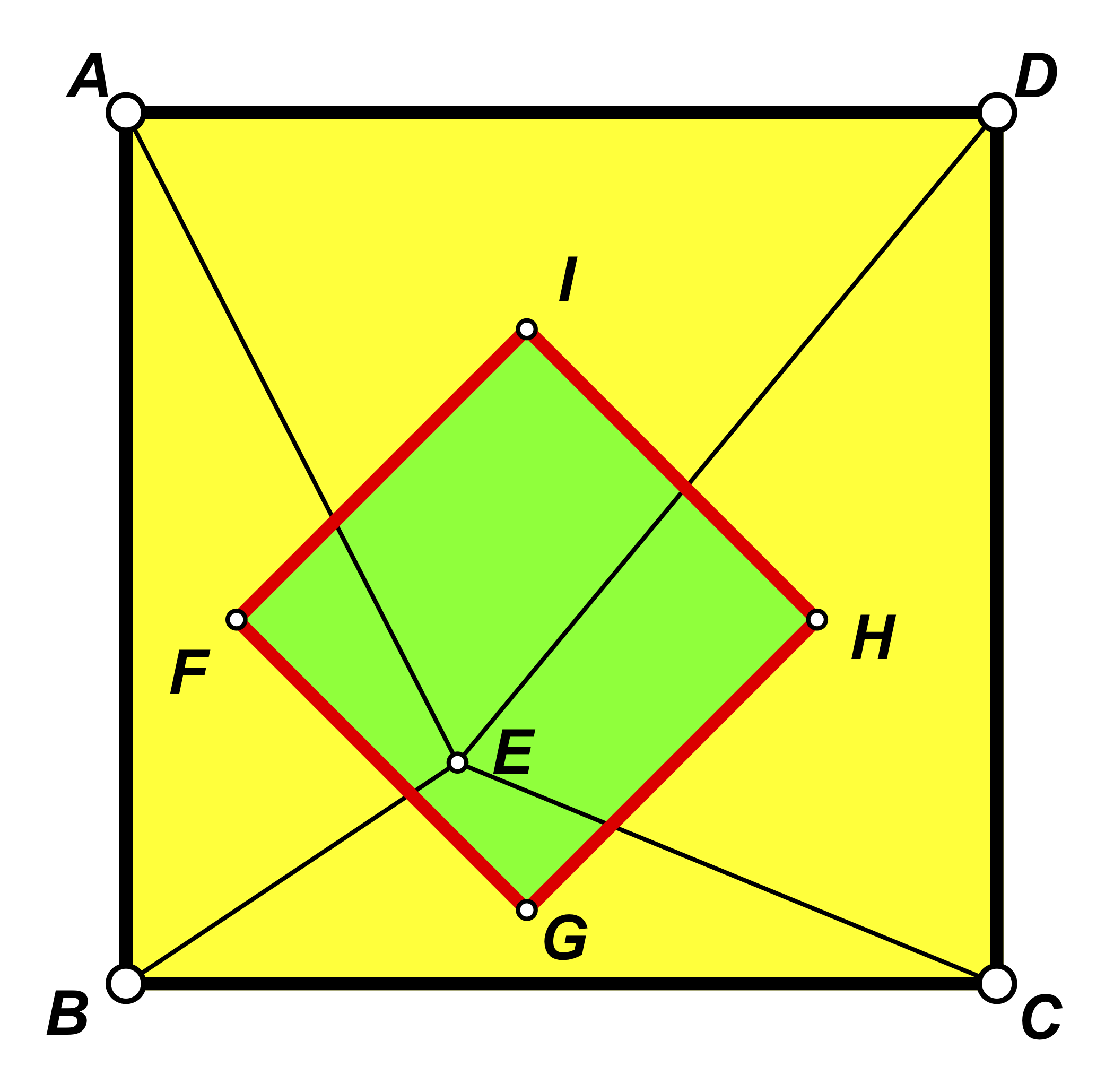}
\caption{$E$ arbitrary, square, centroids $\implies$ square}
\label{fig:cpSquareCentroids}
\end{figure}

\begin{proof}
By Theorem \ref{theorem:cpCentroids}, $FGHI$ is a parallelogram. From the proof of Theorem \ref{theorem:cpCentroids},
we see that each side of this parallelogram has length equal to half the length of one of the diagonals of
the square. Since the diagonals of a square are equal, the parallelogram must also be a square.
\end{proof}

\begin{conjecture}
Let $E$ be any point inside square $ABCD$.
Let $F$, $G$, $H$, and $I$ be centers of $\triangle EAB$, $\triangle EBC$, 
$\triangle ECD$, and $\triangle EDA$, respectively, with the same center function.
If $FGHI$ is a square independent of point $E$, then the four centers must be centroids.
\end{conjecture}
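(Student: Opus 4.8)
The plan is to set up Cartesian coordinates with the square centered at the origin, $A=(-1,-1)$, $B=(1,-1)$, $C=(1,1)$, $D=(-1,1)$, and to exploit the fact that the whole construction is equivariant under the dihedral symmetry group of the square. Writing $\rho$ for the $90^\circ$ rotation about the center $O$ (so that $\rho$ cyclically permutes $A,B,C,D$), rotating the radiator sends $\triangle EAB$ to $\triangle(\rho E)BC$, and since triangle centers transform equivariantly, the four centers are all governed by the single vector function $F(E)$, the center of $\triangle EAB$, via $G(E)=\rho F(\rho^{-1}E)$, $H(E)=\rho^2F(-E)$, and $I(E)=\rho^{-1}F(\rho E)$. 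This already explains why $FGHI$ is automatically a square when $E=O$ (the four centers are then the $\rho$-orbit of one point), so all the content lives in how $F$ varies with $E$.

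I would then convert ``$FGHI$ is a square for every $E$'' into functional equations for $F$. With $K(E)=\tfrac14(F+G+H+I)(E)$ the common center, the equivariance above gives $K(\rho E)=\rho K(E)$, and one checks that the full square condition is equivalent to the single identity $G(E)-K(E)=\rho\bigl(F(E)-K(E)\bigr)$ holding for all $E$; the remaining vertex relations then follow by replacing $E$ with $\rho E$ and using $1+\rho+\rho^2+\rho^3=0$. Equivalently, it is convenient to split the requirement into the three scalar conditions that $FGHI$ be a parallelogram ($F+H=G+I$), that its diagonals be equal ($|FH|=|GI|$), and that they be perpendicular ($(H-F)\cdot(I-G)=0$), each holding identically in $E$.

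To turn these into conditions on the center function $f$, I would evaluate the centers along the symmetry axes, where the radial triangles degenerate to isosceles triangles and Lemma~\ref{lemma:dpIsoscelesTriangle} applies. On the midline $x=0$, with $E=(0,t)$, the triangles $\triangle EAB$ and $\triangle ECD$ are isosceles, so $F$ and $H$ lie on that axis at heights expressible through $\lambda=u/(u+2v)$ with $u=2f(2,b,b)$ and $v=b\,f(b,2,b)$, where $b$ is the common leg length; the reflection across $x=0$ simultaneously forces $G$ and $I$ to be mirror images. The square condition on this family then reduces to the two scalar equations $F_y+H_y=2G_y$ and $|H_y-F_y|=2|G_x|$. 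The one piece this does not hand over for free is the scalene triangle $\triangle EBC$ producing $G$; for that I would use the barycentric weights $a\,f(a,b,c)$ in $\triangle EBC$ together with the Change of Coordinates Formula (Lemma~\ref{lemma:changeOfCoordinates}) and the Distance Formula (Lemma~\ref{lemma:distanceFormula}) to express $G$ through $f$ evaluated at the side lengths $|BC|,|CE|,|EB|$.

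The main obstacle is exactly this scalene contribution: because $|EB|$ and $|EC|$ are square roots of quadratics in $t$, the equations involve $f$ at irrational arguments, so the functional equation for $f$ is transcendental rather than algebraic. My plan to overcome this is to treat the identity along the one-parameter isosceles family first, using the homogeneity of $f$ and its symmetry in the last two arguments to reduce it to a single-variable identity in the leg-to-base ratio, to solve that identity and conclude $f$ agrees with the centroid function (barycentric $(1:1:1)$, trilinear $1/a:1/b:1/c$) on all isosceles triangles, and then to promote this to all triangles by an analyticity/continuation argument together with the rotated family coming from the second midline $y=0$. I expect the cleanest conceptual reason that only the centroid survives to be that the parallelogram requirement $F+H=G+I$ for all $E$ essentially forces $F$ to depend affinely on the vertices, and the centroid is the unique triangle center with constant (hence affine) barycentric coordinates; making this ``essentially affine'' step rigorous from the weaker parallelogram-plus-equal-diagonal conditions is the crux, and is where the detailed computation with Lemmas~\ref{lemma:changeOfCoordinates}, \ref{lemma:distanceFormula}, and \ref{lemma:dpIsoscelesTriangle} would be spent.
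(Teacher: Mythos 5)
This statement is labeled as a \emph{conjecture} in the paper: the authors prove only the forward direction (Theorem~\ref{theorem:cpSquareCentroids}, that centroids do give a square) and explicitly leave the converse open, so there is no proof in the paper to compare yours against. Your submission is also not a proof but a strategy outline, and by your own admission the crux --- turning ``$F+H=G+I$ for all $E$'' plus the diagonal conditions into the statement that $F$ depends affinely on the vertices --- is left unestablished. The equivariance framework in your first two paragraphs is correct and genuinely useful (it is the right way to see why the condition collapses to a single functional identity for $F(E)$), but everything after that is a description of work to be done rather than work done.

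Beyond the acknowledged gap, one step of the plan is unsound as stated: you propose to pin down $f$ on isosceles triangles via the midline family $E=(0,t)$ and then ``promote this to all triangles by an analyticity/continuation argument.'' Kimberling's definition of a center function, which the paper adopts in Section~3, imposes only homogeneity and symmetry in the last two arguments --- no continuity, let alone analyticity --- so continuation off the isosceles locus is not available. The paper's own Lemmas~\ref{lemma:dpIsoscelesTriangleA} and~\ref{lemma:dpIsoscelesTriangleMidpoint} illustrate the problem concretely: dozens of pairwise distinct centers become indistinguishable on every isosceles triangle, so knowing a center's restriction to isosceles triangles determines essentially nothing about it elsewhere. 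The real constraint therefore has to come from scalene positions of $E$, i.e.\ from the term you flag as ``transcendental'' and set aside; note also that the conclusion you need is only that $F$ coincides with the centroid of $\triangle EAB$ for the two-parameter family of triangles actually arising in the configuration, not that $f$ is the centroid function on all triangles, which slightly lowers the bar but does not remove the need to handle the scalene case directly.
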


\newpage


\section{Results Using the Diagonal Point}
\label{section:quarterTriangles}

In this configuration, the radiator, $E$, is the diagonal point of
the reference quadrilateral $ABCD$ (the point of intersection of the diagonals).
In this case, the radial triangles are also called \emph{quarter triangles}.

\begin{figure}[h!t]
\centering
\includegraphics[width=0.4\linewidth]{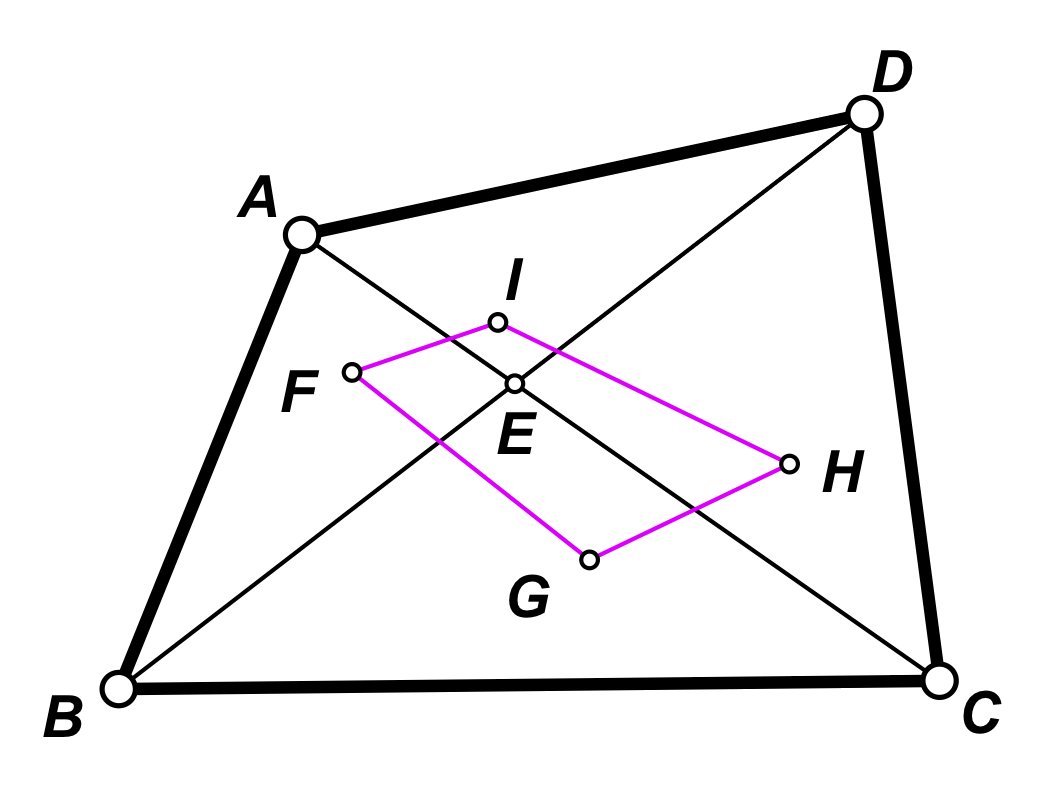}
\caption{Central quadrilateral formed using the diagonal point}
\label{fig:diagonalPointCenters}
\end{figure}

Our computer analysis found a number of relationships
between the reference quadrilateral $ABCD$ and the central quadrilateral $FGHI$.
Table~\ref{table:dp0} shows the relationship found for an arbitrary quadrilateral.
Table~\ref{table:dp1} on page \pageref{table:dp1} shows the relationships found for specific shaped quadrilaterals,
other than a square.

\begin{table}[ht!]
\caption{}
\label{table:dp0}
\begin{center}
\begin{tabular}{|l|l|p{2.2in}|}
\hline
\multicolumn{3}{|c|}{\textbf{\color{blue}\large \strut Central Quadrilaterals formed by the Diagonal Point}}\\ \hline
\textbf{Quadrilateral Type}&\textbf{Relationship}&\textbf{centers}\\ \hline
\ru general&$[ABCD]=\frac12[FGHI]$&20\\
\hline
\end{tabular}
\end{center}
\end{table}

\subsection{Proofs for General Quadrilaterals}\ 

We now give a proof for the result listed in Table~\ref{table:dp0} for a general quadrilateral.

\relbox{Relationship $[ABCD]=\frac12[FGHI]$}

\begin{theorem}
\label{theorem:dpX20}
Let $E$ be the diagonal point of convex quadrilateral $ABCD$.
Let $F$, $G$, $H$, and $I$ be the $X_{20}$ points of $\triangle EAB$, $\triangle EBC$, 
$\triangle ECD$, and $\triangle EDA$, respectively (Figure~\ref{fig:dpX20}).
Then $FGHI$ is a parallelogram
and
$$[ABCD]=\frac12[FGHI].$$
\end{theorem}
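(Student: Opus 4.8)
The plan is to set up vectors with the diagonal point $E$ at the origin and exploit the defining property of the de Longchamps point $X_{20}$, namely that it is the reflection of the orthocenter in the circumcenter. Write $\vec a=\vec{EA}$ and $\vec b=\vec{EB}$. Since $A,E,C$ are collinear and $B,E,D$ are collinear with $E$ between the respective endpoints (convexity), I would write $\vec{EC}=-t\,\vec a$ and $\vec{ED}=-s\,\vec b$ for positive reals $s,t$. Because $\vec a,\vec b$ are linearly independent, every point is an explicit combination $x\vec a+y\vec b$, and I set $\alpha=\vec a\cdot\vec a$, $\beta=\vec b\cdot\vec b$, $\gamma=\vec a\cdot\vec b$, $\Delta=\alpha\beta-\gamma^2>0$.

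For each radial triangle I would use the identity $X_{20}=4O-(\text{sum of the three vertices})$, which follows from $L=2O-H$ together with $H=(\text{sum of vertices})-2O$. Since each radial triangle has $E=\vec 0$ as a vertex, this reduces to $L_i=4O_i-(\text{sum of the two nonzero vertices})$, so the only nontrivial ingredient is the circumcenter $O_i$. I would obtain $O_i$ by solving $O_i\cdot\vec v=\tfrac12|\vec v|^2$ for the two nonzero vertices $\vec v$; in the basis $\{\vec a,\vec b\}$ this is a $2\times 2$ system with determinant $\Delta$, giving closed forms for $L_1,L_2,L_3,L_4$, the centers in $\triangle EAB,\triangle EBC,\triangle ECD,\triangle EDA$. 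To show $FGHI$ is a parallelogram I would verify $L_1+L_3=L_2+L_4$ (the diagonals $FH$ and $GI$ share a midpoint); in the coordinate computation the shape-dependent terms collapse so that both $\vec a$-coefficients and both $\vec b$-coefficients agree.

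For the area I would compute the edge vectors $\vec{FG}=L_2-L_1$ and $\vec{FI}=L_4-L_1$. Each factors cleanly: $\vec{FG}=(t+1)(P\vec a+Q\vec b)$ and $\vec{FI}=(s+1)(R\vec a+S\vec b)$, where $P=1-\tfrac{2\alpha\beta}{\Delta}=S$, $Q=\tfrac{2\alpha\gamma}{\Delta}$, and $R=\tfrac{2\beta\gamma}{\Delta}$. Since $FGHI$ is a parallelogram, $[FGHI]=\vec{FG}\times\vec{FI}=(s+1)(t+1)\bigl(PS-QR\bigr)(\vec a\times\vec b)$. On the other side, the diagonal-area formula gives $[ABCD]=\tfrac12\,\vec{AC}\times\vec{BD}=\tfrac12(s+1)(t+1)(\vec a\times\vec b)$, using $\vec{AC}=-(t+1)\vec a$ and $\vec{BD}=-(s+1)\vec b$. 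Thus the whole claim reduces to the scalar identity $PS-QR=1$.

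This last identity is the main obstacle, and it is where the computation must cooperate. Expanding, $PS-QR=\bigl(1-\tfrac{2\alpha\beta}{\Delta}\bigr)^2-\tfrac{4\alpha\beta\gamma^2}{\Delta^2}$, and the key is that the $\gamma^2$ term recombines with $\Delta$: the quadratic part is $\tfrac{4\alpha^2\beta^2-4\alpha\beta\gamma^2}{\Delta^2}=\tfrac{4\alpha\beta(\alpha\beta-\gamma^2)}{\Delta^2}=\tfrac{4\alpha\beta}{\Delta}$, which cancels the cross term $-\tfrac{4\alpha\beta}{\Delta}$ and leaves $PS-QR=1$. I would finish by checking the degenerate cases (each radial triangle is nondegenerate, so $O_i$ exists; the diagonals are not parallel, so $\Delta\neq 0$ and $\vec a\times\vec b\neq 0$) and by confirming the orientation signs, so that the signed-area conclusion holds with the stated positive constant $[ABCD]=\tfrac12[FGHI]$.
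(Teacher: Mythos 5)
Your proposal is correct, and it takes a genuinely different route from the paper's proof. The paper works in barycentric coordinates relative to $\triangle ABC$, imports the degree-four polynomial barycentrics of $X_{20}$ from the ETC, pushes them through the Change of Coordinates Formula for each radial triangle, cites Theorem~5.1 of \cite{shapes} for the parallelogram claim, and relies on Mathematica to simplify the resulting area determinant to $K(1-q)$. You instead place the origin at $E$, use the affine characterization $X_{20}=4O-(A+B+C)$ (valid since $H=A+B+C-2O$ and $X_{20}=2O-H$), and exploit the fact that every radial triangle has $E=\vec 0$ as a vertex, so each circumcenter comes from a $2\times2$ linear system with determinant $\Delta=\alpha\beta-\gamma^2$. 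I checked the computation: $\vec{FG}=(t+1)\bigl(P\vec a+Q\vec b\bigr)$ and $\vec{FI}=(s+1)\bigl(R\vec a+S\vec b\bigr)$ with $P=S=1-\tfrac{2\alpha\beta}{\Delta}$, $Q=\tfrac{2\alpha\gamma}{\Delta}$, $R=\tfrac{2\beta\gamma}{\Delta}$ are as you claim, the midpoint identity $L_1+L_3=L_2+L_4$ does hold (so you prove the parallelogram statement directly rather than citing it), and $PS-QR=1$ follows exactly by the recombination $\tfrac{4\alpha\beta(\alpha\beta-\gamma^2)}{\Delta^2}=\tfrac{4\alpha\beta}{\Delta}$; the common factor $(s+1)(t+1)(\vec a\times\vec b)$ then cancels in the ratio, so orientation is not an issue. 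What your approach buys is substantial: it is self-contained, machine-free, and makes transparent why the ratio is independent of the shape of $ABCD$ (the parameters $s,t$ factor out of the edge vectors and the metric data collapses to the single identity $PS-QR=1$). It is, in effect, an affirmative answer to the paper's Open Question~1 asking for a simpler proof of this theorem.
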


\begin{figure}[h!t]
\centering
\includegraphics[width=0.4\linewidth]{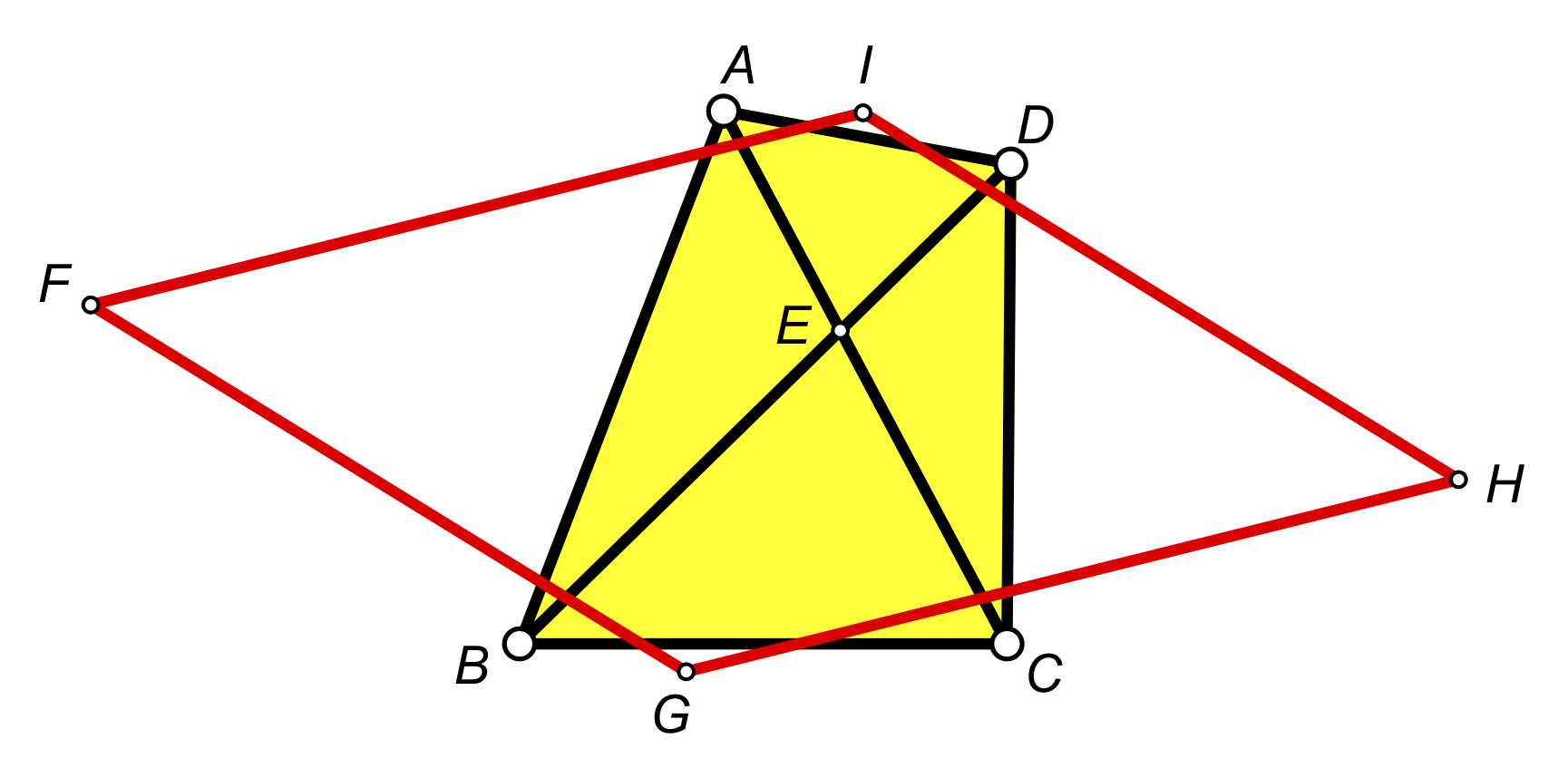}
\caption{$X_{20}\implies [ABCD]=\frac12[FGHI]$}
\label{fig:dpX20}
\end{figure}

\begin{proof}
We set up a barycentric coordinate system using $\triangle ABC$ as the reference triangle,
so that
$$
\begin{aligned}
A&=(1:0:0)\\
B&=(0:1:0)\\
C&=(0:0:1).
\end{aligned}
$$
We let the barycentric coordinates of $D$ be $(p:q:r)$ with $p+q+r=1$
and without loss of generality, assume $p>0$, $q<0$, and $r>0$
(Figure~\ref{fig:dpQA-CT}).

\begin{figure}[h!t]
\centering
\includegraphics[width=0.5\linewidth]{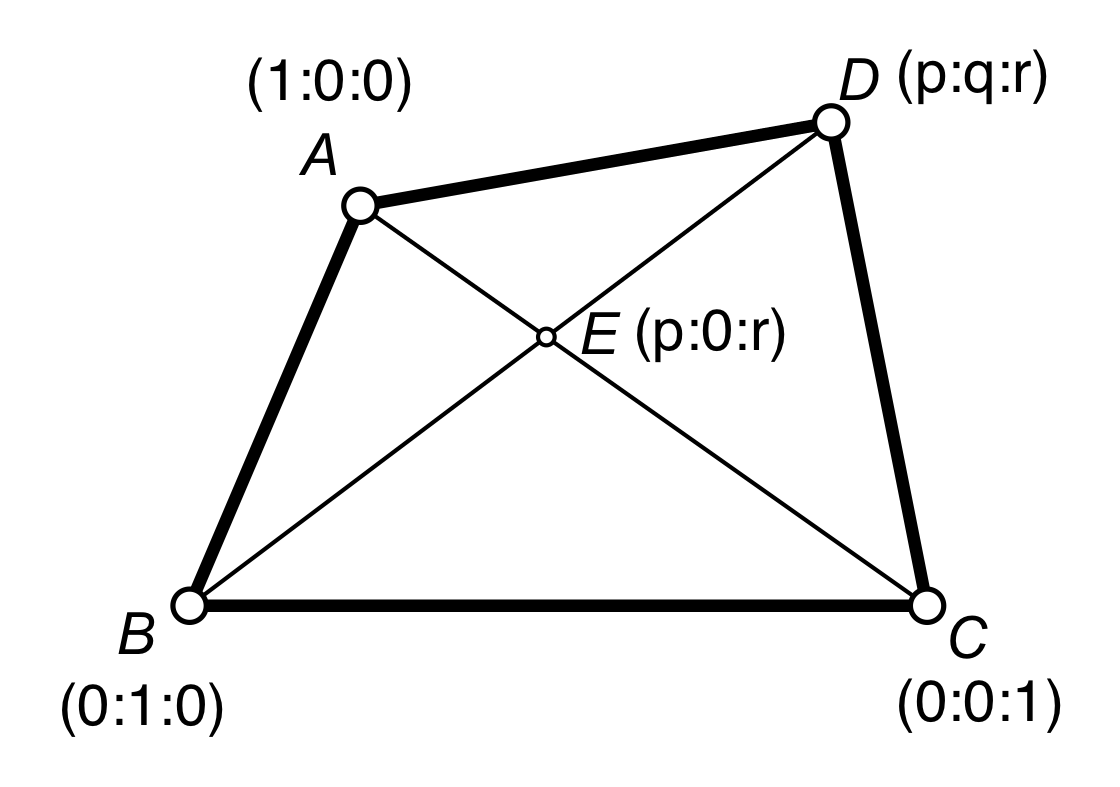}
\caption{Set-up for Quadrilateral coordinate system}
\label{fig:dpQA-CT}
\end{figure}

The equation for line $AC$ is $y=0$.
The equation for line $BD$ is $rx=pz$.
Therefore, the barycentric coordinates for the diagonal point $E$ are $(p:0:r)$.
Note that the barycentric coordinates for $A$, $B$, $C$, and $D$ are already normalized.
The normalized barycentric coordinates for $E$ are
$$E=\left(\frac{p}{p+r}:0:\frac{r}{p+r}\right).$$

From \cite{ETC20}, we find that barycentric coordinates for the $X_{20}$ point of
a triangle $ABC$ with sides $a$, $b$, and $c$ are
$$
\begin{aligned}
X_{20}=&\bigl(3 a^4-2 a^2 b^2-2 a^2 c^2-b^4+2 b^2 c^2-c^4:\\
&-a^4-2 a^2 b^2+2 a^2 c^2+3 b^4-2 b^2c^2-c^4:\\
&-a^4+2 a^2 b^2-2 a^2 c^2-b^4-2 b^2 c^2+3 c^4\bigr).
\end{aligned}
$$
To convert these to normalized barycentric coordinates, divide each coordinate by their sum,
$$a^4+b^4+c^4-2 a^2 b^2-2 b^2 c^2-2 c^2 a^2.$$

Now we want to find the barycentric coordinates (with respect to $\triangle ABC$) for the $X_{20}$ point of $\triangle ABE$. First, we compute the lengths of the sides of $\triangle ABE$ using the Distance Formula (Lemma~\ref{lemma:distanceFormula}). We find that
$$
\begin{aligned}
AB&=c\\
BE&=\frac{\sqrt{a^2 r (p+r)-b^2 p r+c^2 p (p+r)}}{p+r}\\
CE&=\frac{b r}{p+r}
\end{aligned}
$$
If we call these lengths $a_1$, $b_1$, and $c_1$, respectively, then we can use the Change of Coordinates
Formula (Lemma~\ref{lemma:changeOfCoordinates}) to find the coordinates for point $F$, the $X_{20}$ point of $\triangle ABE$, by substituting $a_1$, $b_1$, and $c_1$ for $a$, $b$, and $c$ in the expression for the
normalized barycentric coordinates for $X_{20}$. We find that the barycentric coordinates for $F$
before normalization (with respect to $\triangle ABC$) are
$$
F=\Bigl(a^4 (2 p+3 r)-2 a^2 \left(b^2 (2 p+r)+c^2 r\right)+\left(b^2-c^2\right)
   \left(b^2 (2 p-r)+c^2 (2 p+r)\right):$$
$$-a^4 (p+r)+2 a^2 \left(b^2
   (p-r)+c^2 (p+r)\right)-b^4 (p-3 r)-2 b^2 c^2 (3 p+r)-c^4 (p+r):$$
$$a^4 (-r)-2 a^2 \left(c^2 (2
   p+r)-b^2 r\right)-b^4 r+2 b^2 c^2 (2 p-r)+c^4 (4 p+3 r)\Bigr).
$$
Using the same procedure, similar expressions are found for the coordinates of points $G$, $H$, and $I$,
the $X_{20}$ points of triangles $BCE$, $CDE$, and $DAE$, respectively.

Next, we want to compare the area of quadrilaterals $ABCD$ and $FGHI$. Letting $K$ be the area of $\triangle ABC$,
we can use the Area Formula (Lemma~\ref{lemma:areaFormula}) to find the area of $\triangle CDA$. We find that
$$[CDA]=-qK.$$
Note that this area is positive since we assumed $q<0$.
Thus,
$$[ABCD]=[ABC]+[CDA]=K-qK=K(1-q).$$

To compute the area of quadrilateral $FGHI$, we take a shortcut. By Theorem~5.1 of \cite{shapes}, quadrilateral
$FGHI$ is a parallelogram. Thus,
$$[FGHI]=2[FGH].$$
Computing the area of $\triangle FGH$ using the Area Formula, we find (after simplifying and using the fact that $p+q+r=1$):
$$[FGH]=K(1-q).$$
Consequently, $[ABCD]=[FGH]=\frac12[FGHI]$.
\end{proof}

\begin{open}
Is there a simpler or purely geometric proof for Theorem~\ref{theorem:dpX20}?
\end{open}

\newpage

\subsection{Proofs for Orthodiagonal Quadrilaterals}\ 

We now give proofs for the results listed in Table~\ref{table:dp1} for orthodiagonal quadrilaterals.

\begin{table}[ht!]
\caption{}
\label{table:dp1}
\begin{center}
\begin{tabular}{|l|l|p{2.2in}|}
\hline
\multicolumn{3}{|c|}{\textbf{\color{blue}\large \strut Central Quadrilaterals formed by the Diagonal Point}}\\ \hline
\textbf{Quadrilateral Type}&\textbf{Relationship}&\textbf{centers}\\ \hline
\ru orthodiagonal&$[ABCD]=32[FGHI]$&546\\
\cline{2-3}
\ru &$[ABCD]=18[FGHI]$&381\\
\cline{2-3}
\ru &$[ABCD]=8[FGHI]$&5, 402\\
\cline{2-3}
\ru &$[ABCD]=2[FGHI]$&3, 97, 122, 123, 127, 131, 216, 268,
339, 382, 408, 417, 418, 426, 440, 441, 454, 464--466, 577, 828, 852, 856\\
\cline{2-3}
\ru &$[ABCD]=\frac12[FGHI]$&22, 23, 151, 175, 253, 280, 347, 401, 858, 925\\
\hline
\ru equiorthodiagonal&$[ABCD]=2[FGHI]$&124\\
\cline{2-3}
\ru &$[ABCD]=\frac12[FGHI]$&102\\
\hline
\ru rhombus&$[ABCD]=4[FGHI]$&10\\
\cline{2-3}
\ru &$[ABCD]=[FGHI]$&40, 84\\
\cline{2-3}
\ru &$\partial ABCD=\partial FGHI$&40, 84\\
\hline
\ru rectangle&$[ABCD]=8[FGHI]$&402, 620\\
\cline{2-3}
\ru &$[ABCD]=6[FGHI]$&395, 396\\
\cline{2-3}
\ru &$[ABCD]=2[FGHI]$&11, 115, 116, 122--125, 127, 130, 134--137, 139, 244--247, 338, 339, 865--868\\
\cline{2-3}
\ru &$[ABCD]=\frac32[FGHI]$&616, 617\\
\cline{2-3}
\ru &$[ABCD]=\frac12[FGHI]$&146--153\\
\hline
\end{tabular}
\end{center}
\end{table}

\relbox{Relationship $[ABCD]=32[FGHI]$}

\begin{proposition}[$X_{546}$ Property of a Right Triangle]
\label{proposition:rightTriangleX546}
Let $P$ be the $X_{546}$ point of right triangle $ABC$, with $A$ being the vertex of the right angle. Then $BC=8AP$.
\end{proposition}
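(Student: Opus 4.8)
The plan is to use the classical description, recorded in the Encyclopedia of Triangle Centers \cite{ETC}, that $X_{546}$ is the midpoint of the orthocenter $X_4$ and the nine-point center $X_5$. Once that is granted, the problem reduces to locating those two centers in a right triangle, which the lemmas above make immediate.

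First I would observe that in right triangle $ABC$ with the right angle at $A$, each leg is perpendicular to the opposite side, so the altitudes from $B$ and $C$ are the lines $BA$ and $CA$; these meet at $A$, and therefore the orthocenter satisfies $X_4=A$. Next, writing $M$ for the midpoint of the hypotenuse $BC$, Lemma~\ref{lemma:midpointMedian} gives that the nine-point center $X_5$ is the midpoint of the median $AM$. Hence $P=X_{546}$, being the midpoint of $X_4=A$ and the point $X_5$ on $AM$, is the midpoint of $A$ and the midpoint of $AM$; in particular $P$ lies on $AM$ with $AP=\tfrac14\,AM$.

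To finish I would invoke the standard fact that the circumcenter of a right triangle is the midpoint $M$ of its hypotenuse, so that $MA=MB=MC$ equals the circumradius $\tfrac12\,BC$. Thus $AM=\tfrac12\,BC$, and combining this with $AP=\tfrac14\,AM$ yields $AP=\tfrac18\,BC$, that is $BC=8\,AP$, as claimed.

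The only genuine obstacle is the input that $X_{546}$ is the midpoint of $X_4$ and $X_5$, since this does not follow from the center-function machinery developed earlier and must be cited from \cite{ETC}. If a self-contained argument were preferred, I would instead verify everything by coordinates: taking $A=(0,0)$, $B=(c,0)$, and $C=(0,b)$ gives $X_4=(0,0)$, circumcenter $M=(c/2,b/2)$, nine-point center $X_5=(c/4,b/4)$, and hence $X_{546}=(c/8,b/8)$, whose distance from $A$ is $\tfrac18\sqrt{b^2+c^2}=\tfrac18\,BC$. That computation is entirely routine and confirms the synthetic argument.
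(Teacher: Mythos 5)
Your argument is correct and is essentially identical to the paper's proof: both cite from the Encyclopedia of Triangle Centers that $X_{546}$ is the midpoint of the orthocenter and the nine-point center, identify the orthocenter of a right triangle with the right-angle vertex $A$, use Lemma~\ref{lemma:midpointMedian} to place the nine-point center at the midpoint of the median $AM$, and conclude $AP=\tfrac14 AM=\tfrac18 BC$. The coordinate check you append is a harmless bonus but not needed.
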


\begin{figure}[h!t]
\centering
\includegraphics[width=0.4\linewidth]{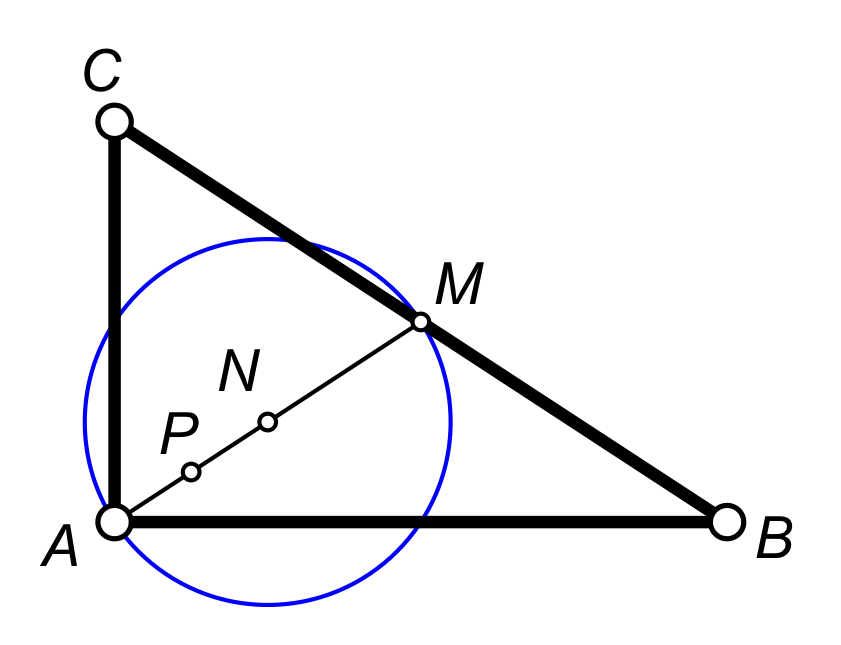}
\caption{right triangle, $P=X_{546}\implies BC=8AP$}
\label{fig:rightTriangleX546}
\end{figure}

\begin{proof}
Let $M$ be the midpoint of the hypotenuse of $\triangle ABC$.
Let $N$ be the nine-point center of $\triangle ABC$.
Let $P$ be the $X_{546}$ point of $\triangle ABC$ (Figure~\ref{fig:rightTriangleX546}).
By Lemma \ref{lemma:midpointMedian}, $N$ is the midpoint of $AM$.
Since $AM=\frac12BC$, this means $AN=\frac14BC$.
According to \cite{ETC546}, $P$ is the midpoint of $HN$ where $H$ is the
orthocenter of $\triangle ABC$. But the orthocenter of a right triangle
is the vertex of the right angle, so $P$ is the midpoint of $AN$
and $AP=\frac12AN$. Thus $AP=\frac18BC$.
\end{proof}

\begin{theorem}
\label{thm:dpX546}
Let $E$ be the diagonal point of orthodiagonal quadrilateral $ABCD$.
Let $F$, $G$, $H$, and $I$ be the $X_{546}$ point of $\triangle EAB$, $\triangle EBC$, 
$\triangle ECD$, and $\triangle EDA$, respectively.
Then
$$[ABCD]=32[FGHI].$$
\end{theorem}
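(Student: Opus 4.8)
The plan is to exploit the fact that, because $ABCD$ is orthodiagonal, its diagonals meet at right angles at the diagonal point $E$. Consequently each quarter triangle $\triangle EAB$, $\triangle EBC$, $\triangle ECD$, and $\triangle EDA$ is a right triangle whose right angle is located at $E$, so Proposition~\ref{proposition:rightTriangleX546} applies to each of them with $E$ playing the role of the right-angle vertex $A$. This observation is what converts a statement about $X_{546}$ points into an elementary statement about midpoints.

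First I would pin down the exact location of each center, not merely its distance from $E$. The proof of Proposition~\ref{proposition:rightTriangleX546} gives more than the stated length relation: the point $X_{546}$ lies on the median drawn from the right-angle vertex to the midpoint of the hypotenuse, and it divides that median so that its distance from the vertex is one quarter of the median's length (since $P$ is the midpoint of $AN$ while $N$ is the midpoint of $AM$). Taking $E$ as origin and writing $A,B,C,D$ for the position vectors of the vertices, the hypotenuse of $\triangle EAB$ is $AB$, its midpoint is $\tfrac12(A+B)$, and the median from $E$ runs from the origin to this midpoint; hence $F=\tfrac14\cdot\tfrac12(A+B)=\tfrac18(A+B)$. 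The same computation in the remaining three triangles yields $G=\tfrac18(B+C)$, $H=\tfrac18(C+D)$, and $I=\tfrac18(D+A)$.

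Next I would recognize the resulting quadrilateral. The midpoints $M_1=\tfrac12(A+B)$, $M_2=\tfrac12(B+C)$, $M_3=\tfrac12(C+D)$, $M_4=\tfrac12(D+A)$ are precisely the vertices of the Varignon parallelogram of $ABCD$, and $F=\tfrac14 M_1$, $G=\tfrac14 M_2$, $H=\tfrac14 M_3$, $I=\tfrac14 M_4$. Thus $FGHI$ is the image of the Varignon parallelogram under the homothety centered at $E$ with ratio $\tfrac14$; in particular $FGHI$ is itself a parallelogram, and its area is $(\tfrac14)^2=\tfrac1{16}$ times the area of the Varignon parallelogram. By Lemma~\ref{lemma:Varignon} the Varignon parallelogram has area $\tfrac12[ABCD]$, so $[FGHI]=\tfrac1{16}\cdot\tfrac12[ABCD]=\tfrac1{32}[ABCD]$, which is exactly the claim $[ABCD]=32[FGHI]$.

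The main obstacle is the first step: Proposition~\ref{proposition:rightTriangleX546} as stated records only the scalar relation $BC=8AP$, whereas the argument needs the vector position of $X_{546}$ along the median to the hypotenuse. I would therefore either strengthen the proposition to record that $X_{546}$ is the point dividing this median in the ratio $1:3$ from the right-angle vertex, or simply cite that refinement, which is already implicit in the proposition's proof. A secondary point to verify is orientation: since the homothety ratio $\tfrac14$ is positive it preserves orientation, so the signed areas agree in sign and the passage from the signed computation to the stated unsigned relation is immediate.
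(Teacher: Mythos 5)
Your proposal is correct, and it reaches the conclusion by a route that differs from the paper's in its final step. Both arguments begin identically: each radial triangle is right-angled at $E$, and Proposition~\ref{proposition:rightTriangleX546} (really, its proof, which places $X_{546}$ on the median from the right-angle vertex at one quarter of the distance to the hypotenuse midpoint) locates $F$, $G$, $H$, $I$. From there the paper works locally: it applies Lemma~\ref{lemma:rightTriangle} in each radial triangle to show that the rectangle with diagonal $EH$ (and sides along the diagonals of $ABCD$) has $\tfrac1{32}$ the area of $\triangle ECD$, and then sums the four pieces, the four rectangles tiling $FGHI$. You instead work globally: you observe that $F=\tfrac14 M_1,\dots,I=\tfrac14 M_4$ where $M_1M_2M_3M_4$ is the Varignon parallelogram, so $FGHI$ is its homothetic image with center $E$ and ratio $\tfrac14$, and Lemma~\ref{lemma:Varignon} finishes the computation. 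Your route is arguably cleaner in that it identifies the shape of $FGHI$ outright and replaces the rectangle-tiling step with a single homothety; it is also the same template the paper itself uses for the $X_5$ case in Theorem~\ref{thm:dpX5}. The paper's local argument generalizes more directly to the other ratio entries via Table~\ref{table:diagonalPointRatios} without naming the Varignon parallelogram each time. Your caveat about needing the positional refinement of Proposition~\ref{proposition:rightTriangleX546} rather than just the scalar relation $BC=8AP$ is well taken, but note that the paper's own proof has the same implicit dependence (Lemma~\ref{lemma:rightTriangle} requires the center to lie on the median), and the needed fact is independently recorded in Theorem~\ref{thm:onMedian} and Table~\ref{table:diagonalPointRatios}, where $n=546$ appears with ratio $AM:AX_n=4$.
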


\begin{figure}[h!t]
\centering
\includegraphics[width=0.4\linewidth]{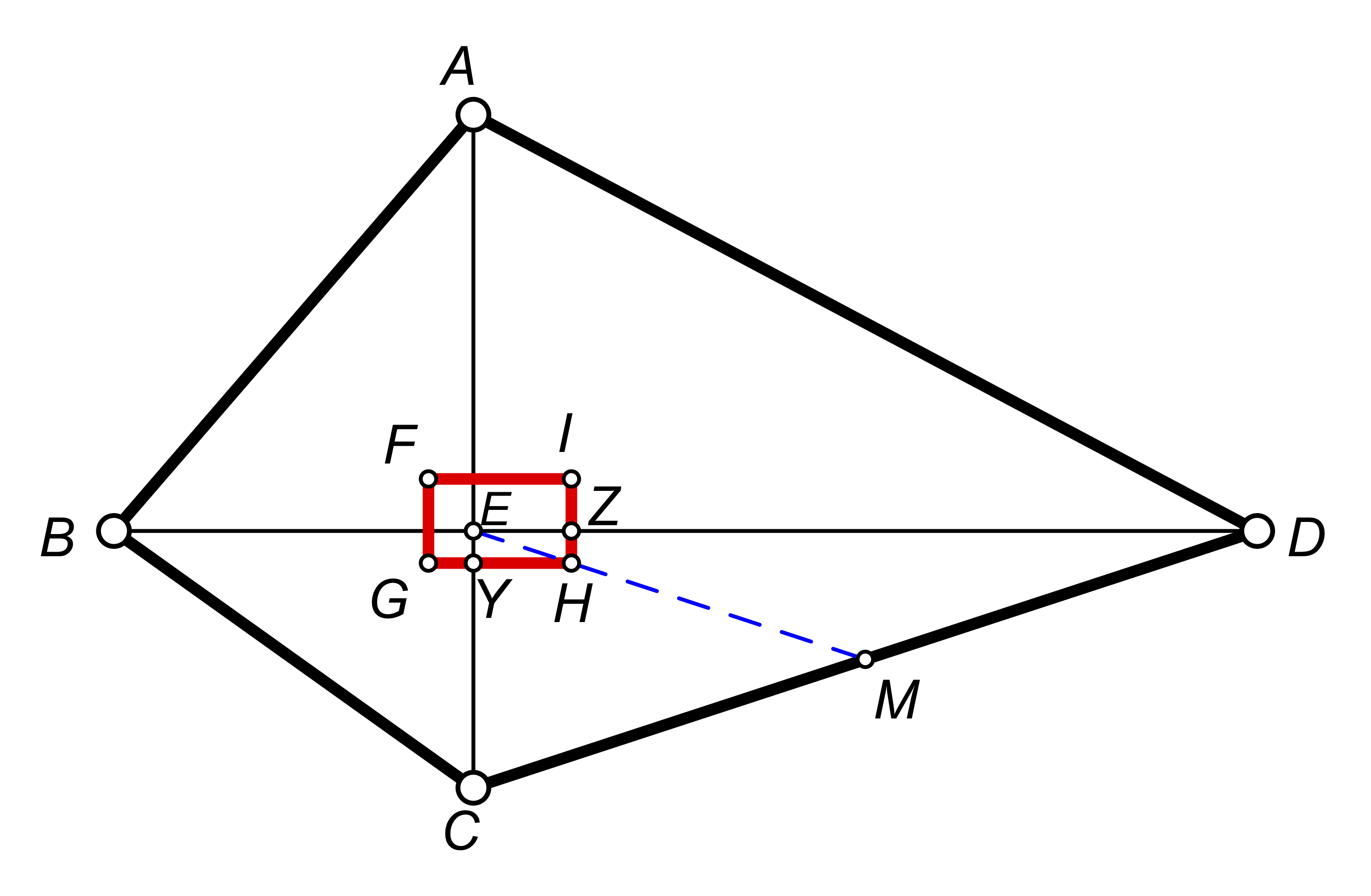}
\caption{orthodiagonal, $X_{546}\implies [ABCD]=32[FGHI]$}
\label{fig:dpOrthoX546}
\end{figure}

\begin{proof}
Since $ABCD$ is orthodiagonal, $\triangle ECD$ is a right triangle.
By Proposition~\ref{proposition:rightTriangleX546}, $EH=\frac18CD$.
Let $M$ be the midpoint of $CD$ and let $Y$ and $Z$ be the feet of the perpendiculars
from $H$ to $EC$ and $ED$, respectively.
By Lemma~\ref{lemma:rightTriangle},
$$\frac{[ECD]}{[EYHZ]}=2\left(\frac{EM}{EH}\right)^2=2\left(\frac{CD/2}{CD/8}\right)^2=32.$$
By symmetry, the same is true for the other three radial triangles.
Thus, $$[ABCD]=32[FGHI].$$
\end{proof}

\relbox{Relationship $[ABCD]=18[FGHI]$}

\begin{theorem}
\label{thm:dpOrthoX381}
Let $E$ be the diagonal point of orthodiagonal quadrilateral $ABCD$.
Let $F$, $G$, $H$, and $I$ be the $X_{381}$ point of $\triangle EAB$, $\triangle EBC$, 
$\triangle ECD$, and $\triangle EDA$, respectively (Figure~\ref{fig:dpOrthoX381}).
Then
$$[ABCD]=18[FGHI].$$
\end{theorem}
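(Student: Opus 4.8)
The plan is to follow the template of Theorem~\ref{thm:dpX546}: first determine exactly where the $X_{381}$ point sits in a right triangle, and then use orthodiagonality to make $E$ the right-angle vertex of all four radial triangles. For the right-triangle fact, note that according to \cite{ETC}, $X_{381}$ is the midpoint of the centroid $X_2$ and the orthocenter $X_4$, so it lies on the Euler line with $\vec{OX_{381}}=\tfrac23\,\vec{OH}$, where $O$ and $H$ denote the circumcenter and orthocenter. In a right triangle $ABC$ with the right angle at $A$, the orthocenter is $A$ and the circumcenter is the midpoint $M$ of the hypotenuse $BC$, so the Euler line coincides with the median $AM$. The relation above then gives $AP=\tfrac13\,AM=\tfrac16\,BC$, that is $BC=6\,AP$ where $P=X_{381}$. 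This is the exact analogue of Proposition~\ref{proposition:rightTriangleX546}, with the factor $8$ there replaced by $6$.

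For the assembly of $FGHI$, recall that since $ABCD$ is orthodiagonal its diagonals meet perpendicularly at $E$, so $E$ is the right-angle vertex of each radial triangle. Applying the previous step with $A\to E$, the $X_{381}$ point of $\triangle EBC$ lies on the segment from $E$ to the midpoint $M_{BC}$ of $BC$, exactly one third of the way from $E$; placing $E$ at the origin, it therefore equals $\tfrac13 M_{BC}$, and likewise $F=\tfrac13 M_{AB}$, $H=\tfrac13 M_{CD}$, and $I=\tfrac13 M_{DA}$. Hence $FGHI$ is the image of the Varignon parallelogram $M_{AB}M_{BC}M_{CD}M_{DA}$ under the homothety centered at $E$ with ratio $\tfrac13$, so $[FGHI]=\tfrac19\,[M_{AB}M_{BC}M_{CD}M_{DA}]$. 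By Lemma~\ref{lemma:Varignon} the Varignon parallelogram has area $\tfrac12[ABCD]$, and combining the two factors yields $[FGHI]=\tfrac1{18}[ABCD]$, which is the assertion.

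One may instead imitate Theorem~\ref{thm:dpX546} step by step: the right-triangle fact gives $EM/EH=3$ for $\triangle ECD$ (with $M$ the midpoint of $CD$), so Lemma~\ref{lemma:rightTriangle} yields $[ECD]/[EYHZ]=2\cdot 3^2=18$, and similarly for the other three, where $EYHZ$ is the pedal rectangle of the center on the legs of the right angle. The summation step then needs the observation that the four pedal rectangles exactly tile $FGHI$ — the parts of the two diagonals lying inside $FGHI$ partition it into precisely these four rectangles — which is the point most easily glossed over. I expect the only genuine work to be the clean justification of the $\tfrac13$ ratio (equivalently, the Euler-line position of $X_{381}$); once that is in hand, the homothety viewpoint delivers both the area identity and the tiling fact automatically, so I would present the right-triangle lemma together with the homothety argument as the proof.
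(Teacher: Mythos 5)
Your argument is correct and rests on the same key fact as the paper's proof: $X_{381}$ is the midpoint of the centroid and the orthocenter, and in each radial triangle the orthocenter is the right-angle vertex $E$, so $FGHI$ arises from a homothety centered at $E$. The only difference is the reference figure for that homothety. The paper takes $F$ to be the midpoint of $EF_2$ (with $F_2$ the centroid of $\triangle EAB$), so $FGHI$ is the half-scale image of the centroid quadrilateral, and then invokes Theorem~\ref{theorem:cpCentroids} to supply the factor $\frac92$, giving $18=4\cdot\frac92$. You instead locate $X_{381}$ at one third of the median from $E$ (via the Euler line, consistent with the entry $AM:AX_{381}=3$ in Table~\ref{table:diagonalPointRatios}) and scale the Varignon parallelogram, giving $18=9\cdot 2$ by Lemma~\ref{lemma:Varignon}. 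Since the centroid quadrilateral is itself the $\frac23$-scale image of the Varignon parallelogram under the same homothety center $E$, the two computations are identical in substance; yours is slightly more self-contained and has the side benefit of exhibiting $FGHI$ as a rectangle. Your third paragraph's tiling observation for the pedal-rectangle route is also correct --- because all four centers sit at the same fractional position along their medians from $E$, the feet of their perpendiculars onto a common diagonal coincide, so the four pedal rectangles do tile $FGHI$ --- and you are right that this is precisely the step left implicit in the proof of Theorem~\ref{thm:dpX546}.
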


\begin{figure}[h!t]
\centering
\includegraphics[width=0.4\linewidth]{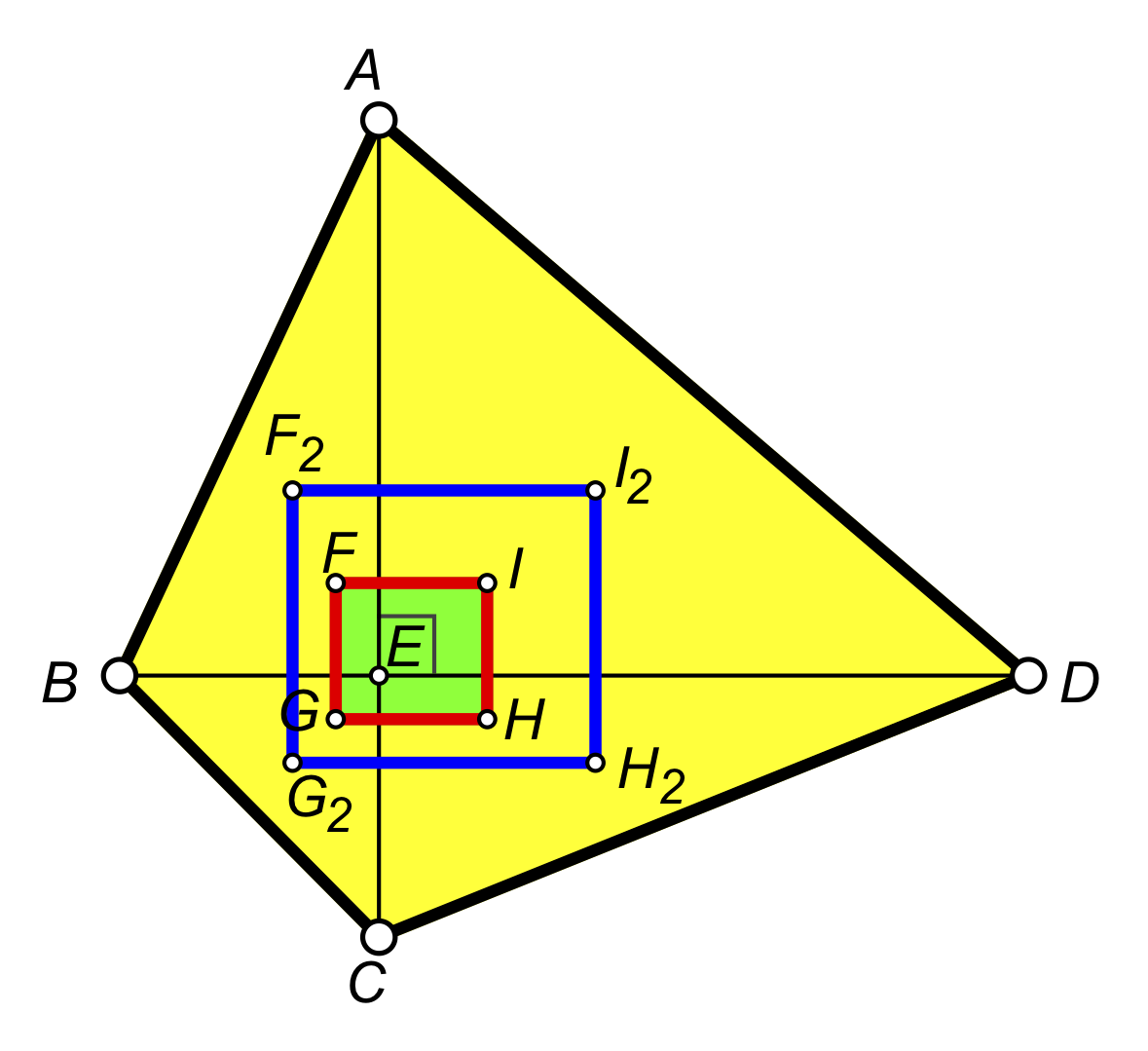}
\caption{orthodiagonal, $X_{381}\implies [ABCD]=18[FGHI]$}
\label{fig:dpOrthoX381}
\end{figure}

\begin{proof}
Let $F_2$, $G_2$, $H_2$, and $I_2$ be the $X_{2}$ point (centroid) of $\triangle EAB$, $\triangle EBC$, 
$\triangle ECD$, and $\triangle EDA$, respectively (Figure~\ref{fig:dpOrthoX381}).
By Theorem~\ref{theorem:cpCentroids},
$$[FGHI]=\frac29[ABCD].$$
From \cite{ETC381} we learn that the $X_{381}$ point of a right triangle is the midpoint of the
centroid and orthocenter of that triangle. The orthocenter of a right triangle coincides with the vertex
of the right angle. Therefore, $F$ is the midpoint of $EF_2$. The same reasoning applies to $G$, $H$, and $I$.
Therefore, quadrilateral $FGHI$ is similar to quadrilateral $F_2G_2H_2I_2$, with similarity ratio 2.
Thus,
$$[FGHI]=\frac14[F_2G_2H_2I_2]=\frac14\left(\frac29[ABCD]\right)=\frac{1}{18}[ABCD]$$
or $[ABCD]=18[FGHI]$.
\end{proof}

\relbox{Relationship $[ABCD]=8[FGHI]$}

\begin{lemma}
\label{lemma:dpX402}
In a right triangle, the $X_5$ point coincides with the $X_{402}$ point.
\end{lemma}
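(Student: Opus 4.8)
The plan is to work in barycentric coordinates and exploit the fact that both $X_5$ (the nine-point center) and $X_{402}$ (the Gossard perspector) lie on the Euler line, so that the claimed coincidence reduces to checking a single proportionality once the right-angle condition is imposed.

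First I would fix notation by placing the right angle at $C$, so that $c^2=a^2+b^2$ with $c=AB$ the hypotenuse. For the nine-point center this case is completely explicit. Starting from the standard barycentrics
$$X_5=\bigl(a^2(b^2+c^2)-(b^2-c^2)^2:b^2(c^2+a^2)-(c^2-a^2)^2:c^2(a^2+b^2)-(a^2-b^2)^2\bigr),$$
substituting $c^2=a^2+b^2$ collapses each coordinate to a multiple of $a^2b^2$, giving
$$X_5=\bigl(2a^2b^2:2a^2b^2:4a^2b^2\bigr)=(1:1:2).$$
This agrees with the geometric description in Lemma~\ref{lemma:midpointMedian}: the point $(1:1:2)$ is $\tfrac14(A+B+2C)$, which is the midpoint of the segment joining $C$ (the orthocenter, at the right-angle vertex) to the midpoint of the hypotenuse $AB$ (the circumcenter), i.e.\ the midpoint of the median to the hypotenuse.

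Next I would carry out the same substitution for the Gossard perspector: take the barycentric coordinates of $X_{402}$ as cataloged in \cite{ETC}, impose $c^2=a^2+b^2$, and simplify. The goal is to show that the resulting triple is again proportional to $(1:1:2)$ — equivalently, that under $c^2=a^2+b^2$ the first two coordinates of $X_{402}$ become equal and the third becomes twice the first. Since both $X_5$ and $X_{402}$ are known to lie on the Euler line, in principle matching the position along that single line already forces coincidence, so this proportionality is conclusive; in practice I would simply verify all three coordinates collapse to $(1:1:2)$.

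The hard part will be the algebra for $X_{402}$: unlike the nine-point center, the Gossard perspector has a high-degree center function, so the substitution and simplification are best handled by exact symbolic computation in Mathematica, in keeping with the methodology of Section~\ref{section:methodology}. A purely synthetic route through the classical Gossard construction is tempting but treacherous here, because in a right triangle the Euler line passes through the right-angle vertex $C$; the three sub-triangles that the Euler line cuts from $ABC$ then degenerate, and the usual construction of the Gossard triangle becomes singular. For this reason I would rely on the algebraic (coordinate) characterization of $X_{402}$, which remains valid throughout, rather than attempt to track the degenerate synthetic configuration.
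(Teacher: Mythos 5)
Your proposal is correct and follows essentially the same route as the paper: substitute the right-angle condition into the ETC barycentrics of both $X_5$ and $X_{402}$ and check that both collapse to the same triple (the paper puts the right angle at $A$ and gets $(2:1:1)$; your labeling gives the equivalent $(1:1:2)$), identifying the common point as the midpoint of the median to the hypotenuse via Lemma~\ref{lemma:midpointMedian}. The only difference is that the paper records the explicit simplified coordinates $\bigl(16b^6c^6:8b^6c^6:8b^6c^6\bigr)$ for $X_{402}$, whereas you defer that (routine but lengthy) simplification to symbolic computation, which is exactly how the paper obtained it as well.
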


\begin{proof}
From \cite{ETC402}, we find that the barycentric coordinates for the $X_{402}$ point
are $(p:q:r)$ where

$p=(2 a^4-a^2 b^2-a^2 c^2-b^4+2 b^2 c^2-c^4) (a^8-a^6 b^2-a^6 c^2-2 a^4
   b^4+5 a^4 b^2 c^2-2 a^4 c^4+3 a^2 b^6-3 a^2 b^4 c^2-3 a^2 b^2 c^4+3 a^2 c^6-b^8-b^6
   c^2+4 b^4 c^4-b^2 c^6-c^8)$,

$q=(-a^4-a^2 b^2+2 a^2 c^2+2 b^4-b^2 c^2-c^4)(-a^8+3 a^6 b^2-a^6 c^2-2 a^4
   b^4-3 a^4 b^2 c^2+4 a^4 c^4-a^2 b^6+5 a^2 b^4 c^2-3 a^2 b^2 c^4-a^2 c^6+b^8-b^6 c^2-2
   b^4 c^4+3 b^2 c^6-c^8)$,

and\\
$r=(-a^4+2 a^2 b^2-a^2 c^2-b^4-b^2 c^2+2 c^4)(-a^8-a^6 b^2+3 a^6 c^2+4 a^4
   b^4-3 a^4 b^2 c^2-2 a^4 c^4-a^2 b^6-3 a^2 b^4 c^2+5 a^2 b^2 c^4-a^2 c^6-b^8+3 b^6
   c^2-2 b^4 c^4-b^2 c^6+c^8)$.

When $a^2=b^2+c^2$, these coordinates simplify to
$$X_{402}=\Bigl(16 b^6 c^6:8 b^6 c^6: 8b^6 c^6\Bigr)=(2:1:1).$$

The barycentric coordinates for the $X_5$ point are
$$\Bigl(a^2 \left(b^2+c^2\right)-\left(b^2-c^2\right)^2:b^2
   \left(a^2+c^2\right)-\left(c^2-a^2\right)^2:c^2
   \left(a^2+b^2\right)-\left(a^2-b^2\right)^2\Bigr).$$
When $a^2=b^2+c^2$, these coordinates simplify to
$$X_{5}=\Bigl(4 b^2 c^2:2 b^2 c^2:2 b^2 c^2\Bigr)=(2:1:1).$$
Thus, for right triangles, $X_{402}$ and $X_{5}$ coincide.
The common point is the midpoint of the hypotenuse by Lemma \ref{lemma:midpointMedian}.
\end{proof}

\begin{theorem}
\label{thm:dpX5}
Let $E$ be the diagonal point of orthodiagonal quadrilateral $ABCD$.
Let $F$, $G$, $H$, and $I$ be the $X_5$ or $X_{402}$ point of $\triangle EAB$, $\triangle EBC$, 
$\triangle ECD$, and $\triangle EDA$, respectively (Figure~\ref{fig:dpOrthoX5}).
Then $FGHI$ is a rectangle and
$$[ABCD]=8[FGHI].$$
\end{theorem}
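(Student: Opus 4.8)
The plan is to show that $F$, $G$, $H$, and $I$ form a uniformly shrunken copy of the midpoints of the sides of $ABCD$, and then to read off the rectangle and the area ratio from the Varignon parallelogram.

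First I would observe that, since $E$ is the intersection of the diagonals $AC$ and $BD$ and $ABCD$ is orthodiagonal, the angle $\angle AEB$ between the diagonals is a right angle. Consequently each of the four radial triangles $\triangle EAB$, $\triangle EBC$, $\triangle ECD$, $\triangle EDA$ is right-angled at $E$, and the hypotenuse of each is a side of $ABCD$. By Lemma~\ref{lemma:dpX402}, in such a right triangle the $X_5$ point and the $X_{402}$ point coincide, and by Lemma~\ref{lemma:midpointMedian} this common point is the midpoint of the median to the hypotenuse. For $\triangle EAB$ that median runs from $E$ to the midpoint $M_1$ of the hypotenuse $AB$, so $F$ is the midpoint of segment $EM_1$. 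Writing $M_1$, $M_2$, $M_3$, $M_4$ for the midpoints of $AB$, $BC$, $CD$, $DA$, we obtain $F=\tfrac12(E+M_1)$, $G=\tfrac12(E+M_2)$, $H=\tfrac12(E+M_3)$, and $I=\tfrac12(E+M_4)$.

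These four relations exhibit $FGHI$ as the image of the quadrilateral $M_1M_2M_3M_4$ under the homothety $h$ centered at $E$ with ratio $\tfrac12$. Now $M_1M_2M_3M_4$ is exactly the Varignon parallelogram of $ABCD$, so by Lemma~\ref{lemma:Varignon} it has area $\tfrac12[ABCD]$ and its sides are parallel to the diagonals $AC$ and $BD$. Because $ABCD$ is orthodiagonal, those diagonals are perpendicular, so the Varignon parallelogram is in fact a rectangle; since a homothety preserves angles and carries parallelograms to parallelograms, its image $FGHI$ is a rectangle as well. Finally, a homothety of ratio $\tfrac12$ scales areas by $\tfrac14$, so $[FGHI]=\tfrac14[M_1M_2M_3M_4]=\tfrac14\cdot\tfrac12[ABCD]=\tfrac18[ABCD]$, which is the claimed relation $[ABCD]=8[FGHI]$.

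The argument involves no serious computation; the only point requiring care is the correct identification of the $X_5$/$X_{402}$ center as the midpoint of the \emph{median to the hypotenuse} rather than the midpoint of the hypotenuse itself. It is precisely this half-way point that produces the homothety ratio $\tfrac12$, and hence the factor $8$, rather than the factor $2$ that the Varignon parallelogram alone would give.
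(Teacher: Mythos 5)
Your proposal is correct and follows essentially the same route as the paper: reduce $X_{402}$ to $X_5$ via Lemma~\ref{lemma:dpX402}, identify the nine-point center of each right radial triangle as the midpoint of the median to the hypotenuse via Lemma~\ref{lemma:midpointMedian}, and conclude that $FGHI$ is the half-scale homothetic image of the Varignon parallelogram, giving the factor $\frac14\cdot\frac12=\frac18$. Your explicit justification that the Varignon parallelogram is a rectangle (sides parallel to the perpendicular diagonals) is a small, welcome addition that the paper leaves implicit.
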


\void{
\begin{figure}[h!t]
\centering
\includegraphics[width=0.4\linewidth]{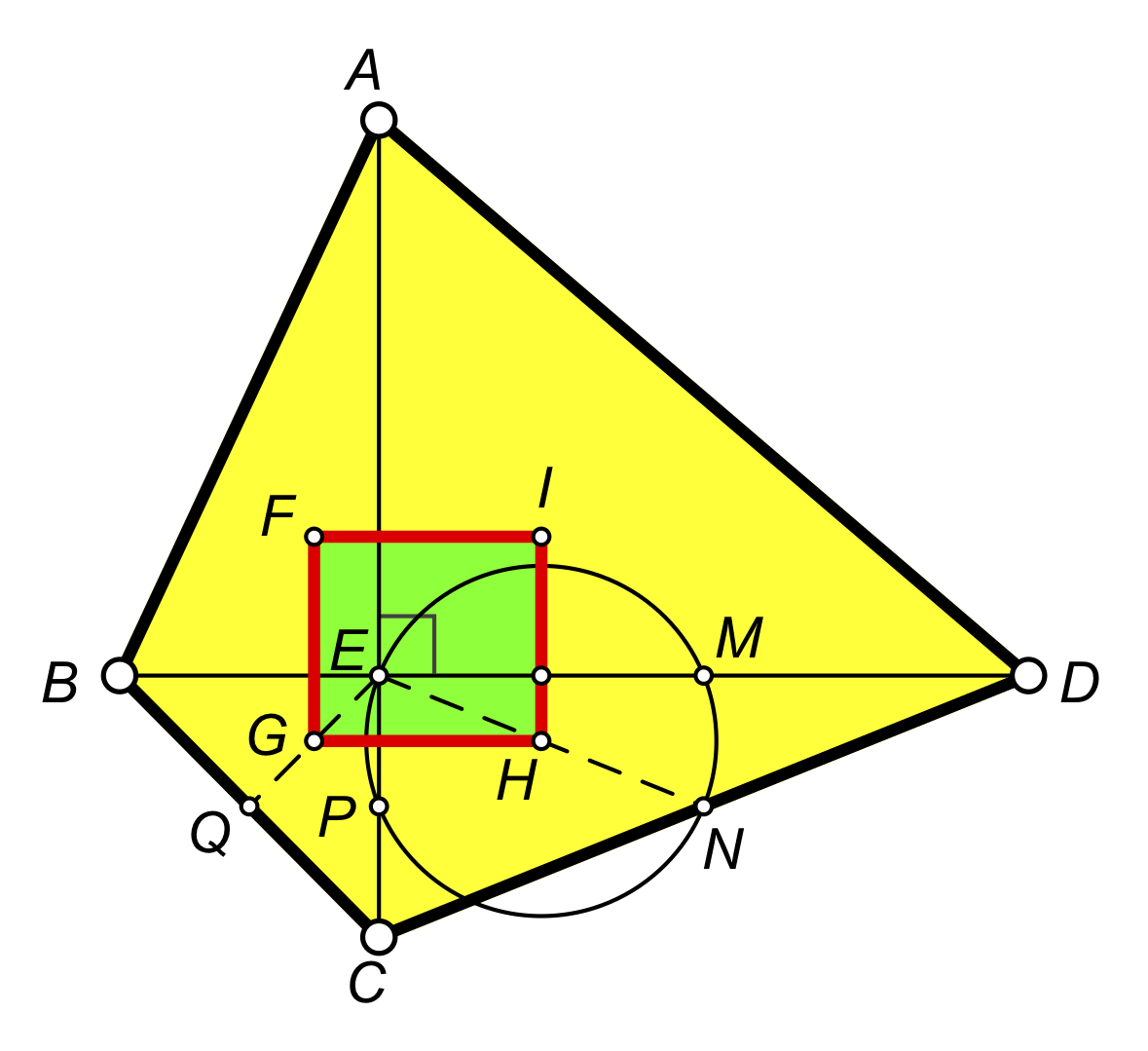}
\caption{OBSOLETE - orthodiagonal, $X_5\implies [ABCD]=8[FGHI]$}
\label{fig:dpOrthoX8}
\end{figure}
}

\begin{figure}[h!t]
\centering
\includegraphics[width=0.4\linewidth]{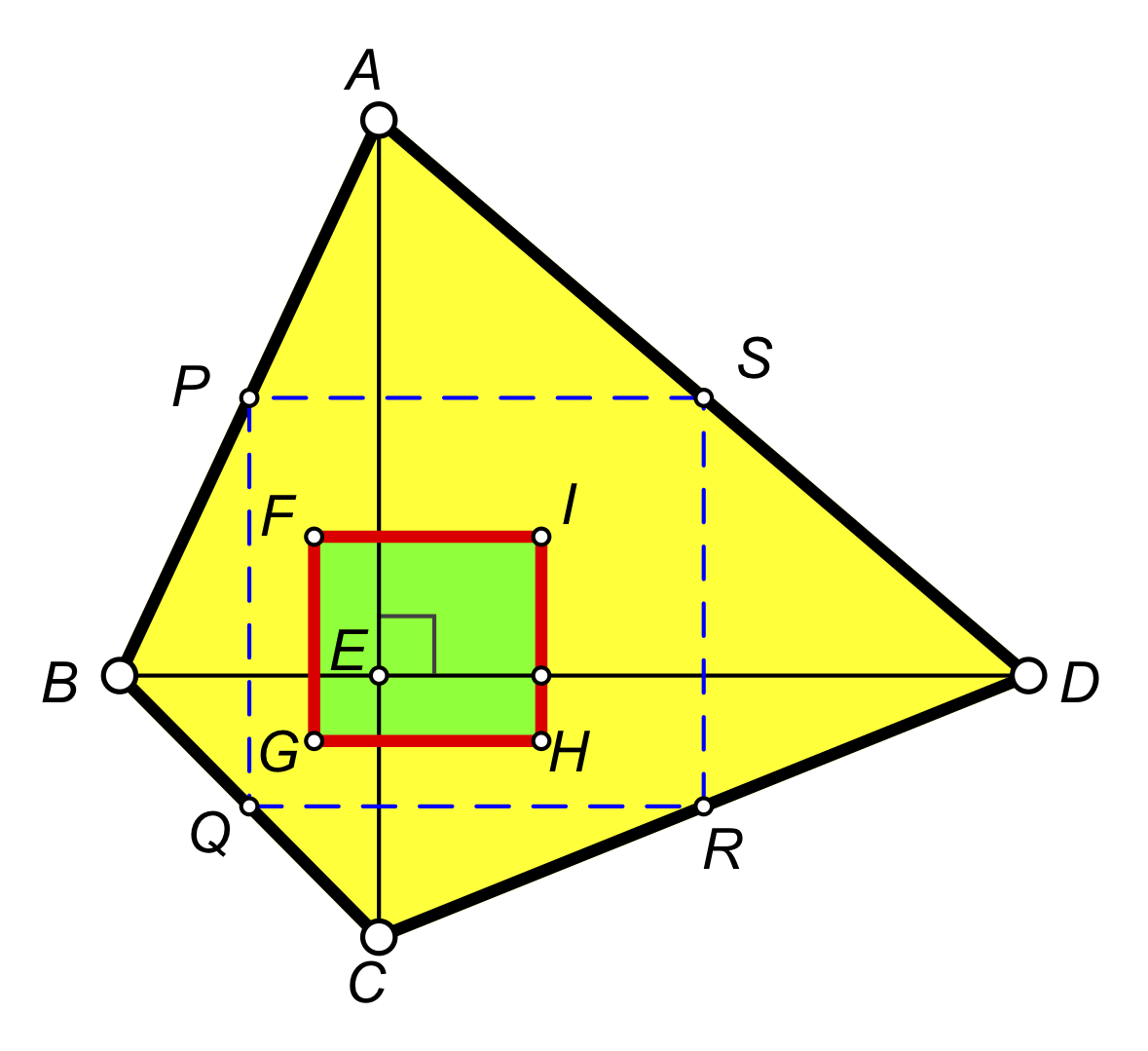}
\caption{orthodiagonal, $X_5\implies [ABCD]=8[FGHI]$}
\label{fig:dpOrthoX5}
\end{figure}

\begin{proof}
By Lemma~\ref{lemma:dpX402}, we need only prove this theorem when the chosen center is the $X_5$ point (the nine-point
center).
Since the quadrilateral is orthodiagonal, each of the radial triangles is a right triangle.
By Lemma~\ref{lemma:midpointMedian}, the $X_5$ point (nine-point center) of each of these triangles
is the midpoint of the median to the hypotenuse. Thus, $FGHI$ is similar to the Varignon parallelogram $PQRS$
of the quadrilateral with ratio of similarity $\frac12$ (Figure~\ref{fig:dpOrthoX5}). So the ratio of their areas is $1:4$.
By Lemma~\ref{lemma:Varignon}, the Varignon parallelogram has half the area of the quadrilateral.
So
$$[FGHI]=\frac14[PQRS]=\frac14\left(\frac12[ABCD]\right)=\frac18[ABCD]$$
or $[ABCD]=8[FGHI]$.
\end{proof}

\relbox{Relationship $[ABCD]=2[FGHI]$}

\begin{theorem}
\label{thm:hypotenuseMidpoint}
Let $E$ be the diagonal point of orthodiagonal quadrilateral $ABCD$.
Let $F$, $G$, $H$, and $I$ be centers of $\triangle EAB$, $\triangle EBC$, 
$\triangle ECD$, and $\triangle EDA$, respectively (Figure~\ref{fig:dpOrthoX3}).
If the center function for the four centers have
the properties $f(a,b,c)=0$ and $f(b,c,a)=f(c,a,b)$ when $c^2=a^2+b^2$,
then $FGHI$ is a rectangle and $[ABCD]=2[FGHI]$.
\end{theorem}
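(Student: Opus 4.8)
The plan is to show that the hypotheses on the center function force each of $F$, $G$, $H$, $I$ to be the midpoint of a side of $ABCD$, so that $FGHI$ is precisely the Varignon parallelogram of $ABCD$; both conclusions then follow from Lemma~\ref{lemma:Varignon} together with orthodiagonality.

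First I would record that, because $ABCD$ is orthodiagonal, its diagonals $AC$ and $BD$ are perpendicular and meet at the radiator $E$. In each radial triangle the two sides emanating from $E$ lie along these two diagonals; hence each of $\triangle EAB$, $\triangle EBC$, $\triangle ECD$, $\triangle EDA$ has a right angle at $E$, and in each triangle the side of the quadrilateral opposite $E$ is the hypotenuse.

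Next I would invoke Lemma~\ref{lemma:hypotenuseMidpoint}. Its center-function hypotheses are exactly the ones assumed in the statement, so the chosen center of each radial right triangle coincides with the midpoint of its hypotenuse. Thus $F$, $G$, $H$, $I$ are the midpoints of $AB$, $BC$, $CD$, $DA$, taken in consecutive order, i.e. $FGHI$ is the Varignon parallelogram of $ABCD$. By Lemma~\ref{lemma:Varignon}, $FGHI$ is a parallelogram whose sides are parallel to the diagonals $AC$ and $BD$ and whose area is half that of $ABCD$; the area statement gives $[ABCD]=2[FGHI]$ immediately. Finally, since $AC\perp BD$ and the sides of $FGHI$ are parallel to these diagonals, adjacent sides of $FGHI$ are perpendicular, so the parallelogram is in fact a rectangle.

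The step I expect to require the most care is the application of Lemma~\ref{lemma:hypotenuseMidpoint}. That lemma is phrased for a right triangle whose right angle sits at the distinguished vertex $A$, whereas here the right angle sits at $E$, the common vertex of all four radial triangles. I would verify that the conditions $f(a,b,c)=0$ and $f(b,c,a)=f(c,a,b)$, being intrinsic properties of the center function rather than of any one labeling, genuinely single out the midpoint of the hypotenuse once the right angle is relocated to $E$, and that the four midpoints are produced in the cyclic order $AB,BC,CD,DA$ so that $FGHI$ is the honest (non-self-intersecting) Varignon parallelogram. Once that bookkeeping is settled, the remainder is a direct appeal to the two cited lemmas.
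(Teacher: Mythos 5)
Your proposal is correct and follows essentially the same route as the paper: apply Lemma~\ref{lemma:hypotenuseMidpoint} to each radial right triangle to identify $F$, $G$, $H$, $I$ with the side midpoints, then use Lemma~\ref{lemma:Varignon} plus perpendicularity of the diagonals to conclude $FGHI$ is a rectangle of half the area. Your extra care about relocating the right-angle vertex to $E$ is sound bookkeeping but does not change the argument.
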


\begin{figure}[h!t]
\centering
\includegraphics[width=0.35\linewidth]{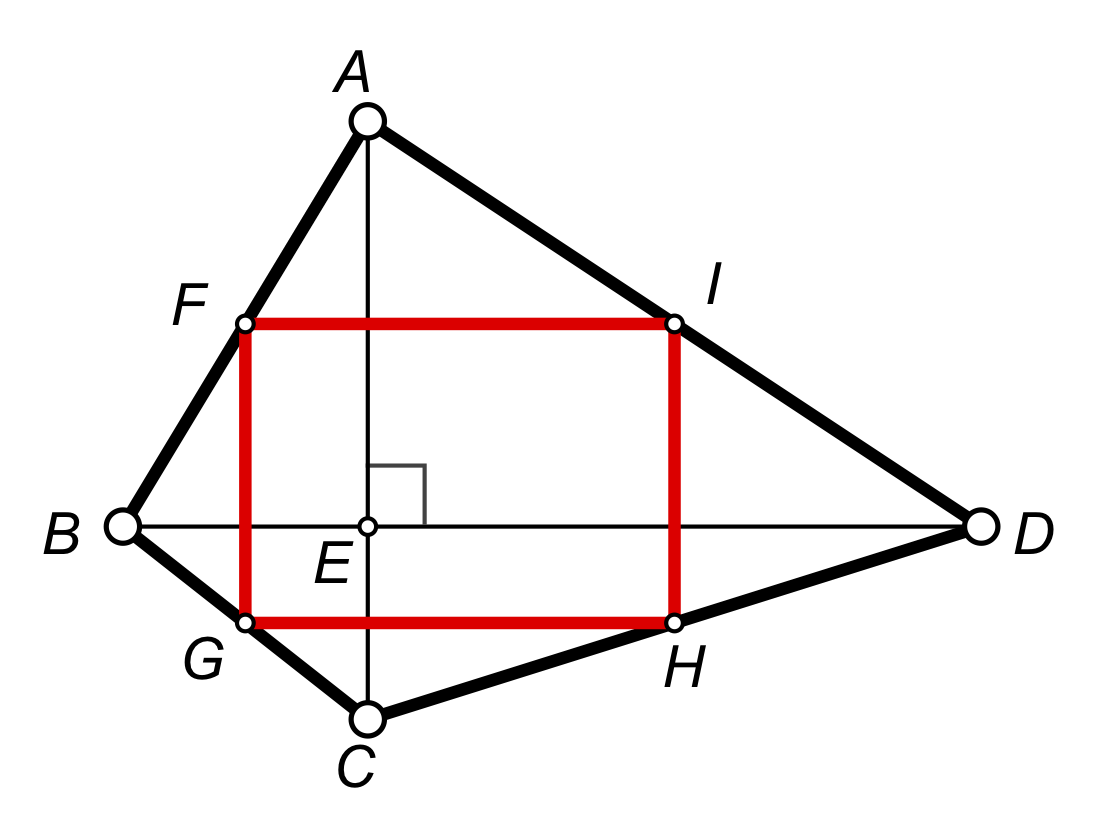}
\caption{$[ABCD]=2[FGHI]$}
\label{fig:dpOrthoX3}
\end{figure}

\begin{proof}
By Lemma \ref{lemma:hypotenuseMidpoint}, $F$, $G$, $H$, and $I$ are the midpoints of
the sides of the quadrilateral (Figure \ref{fig:dpOrthoX3}). By Lemma \ref{lemma:Varignon},
these midpoints form a parallelogram whose sides are parallel to the diagonals of the quadrilateral.
Since the diagonals of the quadrilateral are perpendicular, this parallelogram must be a rectangle.
Also by Lemma \ref{lemma:Varignon}, the area of this rectangle is half the area of the quadrilateral.
\end{proof}

\textbf{Examples.} Some examples of centers described by Theorem \ref{thm:hypotenuseMidpoint}
are $X_{3}$, $X_{97}$, $X_{122}$, $X_{123}$, $X_{127}$, $X_{131}$, $X_{216}$, $X_{268}$, $X_{339}$, $X_{382}$, $X_{408}$, $X_{417}$, $X_{418}$, $X_{426}$, $X_{440}$, $X_{441}$, $X_{454}$, $X_{464}$, $X_{465}$, $X_{466}$, $X_{577}$, $X_{828}$, $X_{852}$, and $X_{856}$. These agree with the centers found for orthodiagonal quadrilaterals
listed in Table \ref{table:dp1} with relationship $[ABCD]=2[FGHI]$.
For an orthodiagonal quadrilateral, these centers all coincide with the midpoints of
the sides of the quadrilateral.

\begin{theorem}
\label{thm:atE}
Let $ABC$ be a right triangle with right angle at $A$. Let $M$ be the midpoint of
the hypotenuse. Then the center $X_n$ coincides with $M$ for the following values of $n$:
\end{theorem}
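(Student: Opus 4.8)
The plan is to reduce the claim to a per-center verification resting on Lemma~\ref{lemma:hypotenuseMidpoint}. First I would pin down the coordinate description of the target point: with the right angle at $A$, the hypotenuse is $BC$, so its midpoint $M$ has normalized barycentric coordinates $\left(0:\tfrac12:\tfrac12\right)$, that is $(0:1:1)$, with respect to $\triangle ABC$. It is worth recording the underlying geometric fact, namely that $M$ is the circumcenter of the right triangle (the hypotenuse is a diameter of the circumcircle), so that proving $X_n=M$ is the same as proving $X_n=X_3$ on the locus $a^2=b^2+c^2$. Indeed, imposing $a^2=b^2+c^2$ in the circumcenter barycentrics $\bigl(a^2(b^2+c^2-a^2):b^2(c^2+a^2-b^2):c^2(a^2+b^2-c^2)\bigr)$ collapses them to $(0:2b^2c^2:2b^2c^2)=(0:1:1)$, confirming $X_3=M$ and fixing the coordinate target.

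Second, for each index $n$ in the stated list I would retrieve the center function $f_n$ (equivalently the barycentric coordinate triple) of $X_n$ from the Encyclopedia of Triangle Centers and check that it satisfies the hypotheses of Lemma~\ref{lemma:hypotenuseMidpoint} once the Pythagorean relation is imposed, so that the lemma forces $X_n$ to coincide with $M$. In practice this is cleanest to carry out directly in barycentrics: substitute $a^2=b^2+c^2$ into the coordinate triple of $X_n$ and simplify (using exact symbolic computation in Mathematica) to verify that it reduces to a nonzero multiple of $(0:1:1)$. The vanishing of the first coordinate is the condition $f_n(a,b,c)=0$ of the lemma, and the equality of the remaining two coordinates is its symmetry condition.

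The forward direction is thus a finite and essentially routine sweep: each listed center is disposed of by one substitute-and-simplify step, and the only labor is organizing the lookup of the center functions and trusting the symbolic reduction for those $X_n$ whose defining expressions are large (the high-index centers especially). The step I expect to be the genuine obstacle is completeness, should the list be read as exhaustive up to $X_{1000}$: certifying that no center outside the list also collapses to $M$ cannot be settled by Lemma~\ref{lemma:hypotenuseMidpoint} alone, but requires running the substitution for every index from $1$ to $1000$ and confirming that in each remaining case the reduced coordinates are \emph{not} proportional to $(0:1:1)$. This exhaustive symbolic check is exactly the part that is handled by the computer algebra pipeline rather than by a closed-form argument.
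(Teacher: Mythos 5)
Your proposal is correct and matches the paper's own proof in substance: the paper likewise identifies $M=(0:\tfrac12:\tfrac12)$, observes that a point coincides with $M$ exactly when its barycentrics satisfy $v=w$ and $u=0$ under $a^2=b^2+c^2$, and settles both membership and exhaustiveness by a computer sweep of the first $1000$ centers in the ETC. Your framing via Lemma~\ref{lemma:hypotenuseMidpoint} and the observation that $M=X_3$ are harmless reformulations of the same substitute-and-simplify check, and your caveat about completeness is exactly how the paper handles it.
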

3, 97, 122, 123, 127, 131, 216, 268, 339, 408, 417, 418, 426, 440, \
441, 454, 464, 465, 466, 577, 828, 852, 856.

\begin{theorem}
\label{thm:onMedian}
Let $ABC$ be a right triangle with right angle at $A$. Let $M$ be the midpoint of
the hypotenuse. Then the center $X_n$ lies on the line $AM$ (but does
not coincide with $A$ or $M$) for the following values of $n$:
\end{theorem}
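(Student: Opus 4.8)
The plan is to reduce the statement to a single algebraic identity in the center function, supplemented by two nondegeneracy conditions, and then to verify these for each listed $n$ by exact symbolic computation. First I would work in barycentric coordinates relative to $\triangle ABC$, so that $A=(1:0:0)$, $B=(0:1:0)$, $C=(0:0:1)$, and encode the hypothesis that the right angle is at $A$ as $a^2=b^2+c^2$ (the hypotenuse is $BC=a$). The midpoint of the hypotenuse is $M=(0:1:1)$, and, exactly as in the proof of Lemma~\ref{lemma:dpIsoscelesTriangle}, the median $AM$ is the line $y=z$. Since a center with center function $f$ has barycentric coordinates $\bigl(af(a,b,c):bf(b,c,a):cf(c,a,b)\bigr)$, the point $X_n$ lies on $AM$ exactly when its second and third coordinates coincide:
$$bf(b,c,a)=cf(c,a,b)\qquad\text{whenever}\qquad a^2=b^2+c^2.$$
I will call this the \emph{median identity}. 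It is worth emphasizing that this, and not the superficially similar trilinear identity $f(b,c,a)=f(c,a,b)$, is the correct condition: the latter describes the internal bisector at $A$, and the two loci differ unless $b=c$.

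Next I would record the two endpoint exclusions. When the median identity holds the point is $\bigl(af(a,b,c):bf(b,c,a):bf(b,c,a)\bigr)$, so, since $a,b,c>0$, it equals $M$ exactly when $f(a,b,c)=0$ and equals $A$ exactly when $f(b,c,a)=0$, both under $a^2=b^2+c^2$. Hence for each $n$ the theorem reduces to three checks on the center function $f=f_n$ read off from ETC, after eliminating $a$ via $a=\sqrt{b^2+c^2}$: the median identity; the condition $f(a,b,c)\neq0$, so that $X_n\neq M$; and the condition $f(b,c,a)\neq0$, so that $X_n\neq A$. This also makes the relation to Theorem~\ref{thm:atE} transparent, since its centers are exactly those obeying the median identity together with $f(a,b,c)=0$, namely the ones on $AM$ that collapse to $M$. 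As a sanity check, the nine-point center $X_5$ has barycentrics $(2:1:1)$ when $a^2=b^2+c^2$ (computed in the proof of Lemma~\ref{lemma:dpX402}); this satisfies the median identity and is neither $A$ nor $M$, consistent with its being the midpoint of $AM$ by Lemma~\ref{lemma:midpointMedian}.

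Finally, the remaining work is a finite but genuinely case-by-case verification: for each $n$ in the list one inserts the published coordinates of $X_n$, imposes $a^2=b^2+c^2$, and confirms the three conditions by exact computation in Mathematica. I expect the main obstacle to be precisely this absence of a uniform reason: the list was produced by a computer search over $X_1,\dots,X_{1000}$, so the argument does not collapse to one elegant proof but fans out into many short symbolic checks, each tied to a particular center function, with the only real care needed in separating the three mutually exclusive outcomes (strictly between $A$ and $M$, at $A$, or at $M$). A modest simplification of the bookkeeping is to set $\phi(b,c)=bf(b,c,\sqrt{b^2+c^2})$; using the symmetry of $f$ in its last two arguments one finds $cf(c,a,b)=\phi(c,b)$, so the median identity becomes simply the assertion that $\phi$ is symmetric, $\phi(b,c)=\phi(c,b)$. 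This repackages the test cleanly but does not remove the need to inspect each center in turn.
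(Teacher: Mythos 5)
Your proposal is correct and follows essentially the same route as the paper: both reduce the claim to checking that the barycentric coordinates $(u:v:w)$ of $X_n$ satisfy $v=w$ under the constraint $a^2=b^2+c^2$ (since line $AM$ is $y=z$), followed by a center-by-center symbolic verification over the ETC data. Your explicit separation of the two degenerate outcomes ($u=0$ giving $M$, and $v=w=0$ giving $A$) makes precise what the paper leaves implicit, but it is a refinement of the same argument rather than a different one.
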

2, 5, 20, 21, 22, 23, 95, 140, 199, 233, 237, 253, 280, 347, 376, \
377, 379, 381, 382, 383, 401, 402, 404, 405, 409, 411, 413, 416, 439, \
442, 443, 446, 448, 449, 452, 453, 474, 546, 547, 548, 549, 550, 631, \
632, 851, 853, 854, 855, 857, 858, 859, 861, 863, 864, 865, 866, 867, \
868, 925, 964.

We have excluded values of $n$ for which $X_n$ lies on the line at infinity.

\begin{proof}
The normalized barycentric coordinates for $M$ are $\left(0:\frac12:\frac12\right)$, using
Formula (12) from \cite{Grozdev}.
The barycentric equation for line $AM$ is $y=z$, using Equation (3) from \cite{Grozdev}.
If $P$ lies on line $AM$ and has barycentric coordinates $(u:v:w)$, we must have $v=w$.
Examining the first 1,000 centers in \cite{ETC}, those for which $v=w$ when $a^2=b^2+c^2$
are the ones given by Theorems \ref{thm:atE} and \ref{thm:onMedian}.
The ones for which $u=0$ are the ones given by Theorem \ref{thm:atE}.
\end{proof}


\begin{theorem}
\label{thm:ratio}
Let $ABC$ be a right triangle with right angle at $A$. Let $M$ be the midpoint of
the hypotenuse. Then the center $X_n$ lies on the line $AM$ and the ratio
of $AM$ to $AX_n$ is a constant for the values shown in
Table~\ref{table:diagonalPointRatios}:
\end{theorem}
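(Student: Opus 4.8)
The plan is to work in the barycentric framework already set up for right triangles in Theorems~\ref{thm:atE} and \ref{thm:onMedian}. I take $\triangle ABC$ as reference triangle with the right angle at $A$, so that $a^2=b^2+c^2$, $A=(1:0:0)$, and the midpoint of the hypotenuse is $M=(0:1:1)$. For each $n$ listed in Table~\ref{table:diagonalPointRatios}, Theorem~\ref{thm:onMedian} already tells us that the barycentric coordinates $(u:v:w)$ of $X_n$ satisfy $v=w$ under the condition $a^2=b^2+c^2$, so that $X_n$ lies on the line $AM$ (whose equation is $y=z$). What remains is to pin down the position of $X_n$ on this line and to show that the position does not depend on the particular right triangle chosen.

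For the position, I would first record the ratio formula
\[
\frac{AM}{AX_n}=\frac{u+2v}{2v},
\]
valid for any point with coordinates $(u:v:v)$ on line $AM$. To see this, write the normalized coordinates of $X_n$ as $\bigl(u:v:v\bigr)/(u+2v)$ and compare them with the affine combination $(1-s)A+sM=(1-s:\tfrac{s}{2}:\tfrac{s}{2})$; matching coordinates gives $s=2v/(u+2v)$, and since $s$ is exactly the directed ratio $AX_n/AM$, inverting yields the displayed formula. This is the same relation obtained in the proof of Lemma~\ref{lemma:dpIsoscelesTriangle}, whose concluding computation uses only $v=w$ and $M=(0:1:1)$ and therefore transfers verbatim from the isosceles setting to the present right-triangle one.

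The theorem then reduces to a finite, if tedious, verification. For each center $X_n$ in the table I would take its center function $f$ from \cite{ETC}, form the barycentric coordinates $\bigl(af(a,b,c):bf(b,c,a):cf(c,a,b)\bigr)$, impose $a^2=b^2+c^2$, and simplify $(u+2v)/(2v)$ with Mathematica. The crucial point to confirm in each case is that the quotient $u/v$ collapses to a number free of $b$ and $c$; the resulting constants populate Table~\ref{table:diagonalPointRatios}. A few cases can be checked by hand as a sanity test: $X_2$ gives $(1:1:1)$ and ratio $\tfrac32$; $X_5$ gives $(2:1:1)$ and ratio $2$, in agreement with Lemma~\ref{lemma:midpointMedian}; $X_{20}$ gives $(1:-1:-1)$ and ratio $\tfrac12$, reflecting the fact that the de~Longchamps point is the reflection of $A$ in $M$; and $X_{546}$ gives ratio $4$, in agreement with Proposition~\ref{proposition:rightTriangleX546}. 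The main obstacle is precisely this step: there is no single slick argument that handles every listed center at once, so the work lies in substituting each of the sometimes very lengthy ETC center functions, verifying that $u/v$ is genuinely shape-independent, and reading off the constant.
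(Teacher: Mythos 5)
Your proposal is correct and follows essentially the same route as the paper: both reduce the theorem to a center-by-center computer verification in barycentric coordinates under the constraint $a^2=b^2+c^2$, with the collinearity already supplied by the $v=w$ condition of Theorem~\ref{thm:onMedian}. The only cosmetic difference is that the paper extracts the ratio by applying the Distance Formula (Lemma~\ref{lemma:distanceFormula}) to $AM$ and $AX_n$, whereas you read it off as $(u+2v)/(2v)$ from the normalized coordinates, which avoids square roots but amounts to the same finite check.
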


\begin{table}[ht!]
\caption{Values of $n$ for which the ratio $AM:AX_n$ is a constant}
\label{table:diagonalPointRatios}
\begin{center}
\begin{tabular}{|l|c|}
\hline
$n$&\textbf{ratio}\\ \hline
\ru 2&$\frac32$\\ \hline
\ru 3&$1$\\ \hline
\ru 5&$2$\\ \hline
\ru 20&$\frac12$\\ \hline
\ru 22&$\frac12$\\ \hline
\ru 23&$\frac12$\\ \hline
\ru 95&$\frac54$\\ \hline
\ru 97&$1$\\ \hline
\ru 122&$1$\\ \hline
\ru 123&$1$\\ \hline
\ru 127&$1$\\ \hline
\ru 131&$1$\\ \hline
\end{tabular}
~
\begin{tabular}{|l|c|}
\hline
$n$&\textbf{ratio}\\ \hline
\ru 140&$\frac43$\\ \hline
\ru 216&$1$\\ \hline
\ru 233&$\frac53$\\ \hline
\ru 253&$\frac12$\\ \hline
\ru 268&$1$\\ \hline
\ru 280&$\frac12$\\ \hline
\ru 339&$1$\\ \hline
\ru 347&$\frac12$\\ \hline
\ru 376&$\frac34$\\ \hline
\ru 381&$3$\\ \hline
\ru 382&$1$\\ \hline
\ru 401&$\frac12$\\ \hline
\end{tabular}
~
\begin{tabular}{|l|c|}
\hline
$n$&\textbf{ratio}\\ \hline
\ru 402&$2$\\ \hline
\ru 408&$1$\\ \hline
\ru 417&$1$\\ \hline
\ru 418&$1$\\ \hline
\ru 426&$1$\\ \hline
\ru 440&$1$\\ \hline
\ru 441&$1$\\ \hline
\ru 454&$1$\\ \hline
\ru 464&$1$\\ \hline
\ru 465&$1$\\ \hline
\ru 466&$1$\\ \hline
\ru 546&$4$\\ \hline
\end{tabular}
~
\begin{tabular}{|l|c|}
\hline
$n$&\textbf{ratio}\\ \hline
\ru 547&$\frac{12}{7}$\\ \hline
\ru 548&$\frac45$\\ \hline
\ru 549&$\frac65$\\ \hline
\ru 550&$\frac23$\\ \hline
\ru 577&$1$\\ \hline
\ru 631&$\frac54$\\ \hline
\ru 632&$\frac{10}{7}$\\ \hline
\ru 828&$1$\\ \hline
\ru 852&$1$\\ \hline
\ru 856&$1$\\ \hline
\ru 858&$\frac12$\\ \hline
\ru 925&$\frac12$\\ \hline
\end{tabular}

\end{center}
\end{table}

\begin{proof}
The lengths of $AM$ and $AX_n$ are easily found (by computer) using the
Distance Formula (Lemma~\ref{lemma:distanceFormula}).
The resulting ratio is simplified using the constraint that $a^2=b^2+c^2$.
Values of this ratio that are not constant are discarded.
\end{proof}

Combining the data in Theorem \ref{thm:ratio} with Lemma \ref{lemma:rightTriangle}
gives cases where $\frac{[ABC]}{[AZXY]}$ is rational, which in turn gives cases
where $[ABCD]/[FGHI]$ is rational where $FGHI$ is the central quadrilateral
formed by the diagonal point using center $X_n$ in an orthodiagonal quadrilateral $ABCD$.
This therefore proves the entries in Table~\ref{table:dp1}
for the orthodiagonal quadrilateral entries.

It is interesting to note that the entries in Table~\ref{table:dp1}
for the orthodiagonal quadrilateral with $[ABCD]=2[FGHI]$
includes the point $X_{382}$ which does not appear in Theorem \ref{thm:atE}.
This is because the $X_{382}$ of a right triangle coincides with the reflection
of the hypotenuse midpoint $M$ about the vertex of the right angle, $A$.
This agrees with \cite{ETC382} which states that $X_{382}$ is the reflection
of the circumcenter about the orthocenter.
(In a right triangle, the circumcenter
is $M$ and the orthocenter is $A$.)

Table~\ref{table:diagonalPointRatios} implies additional results about central quadrilaterals
associated with orthodiagonal quadrilaterals using the diagonal point as the radiator
that do not appear in Table~\ref{table:dp1}. This is because the computer analysis that
produced Table~\ref{table:dp1} only checked area ratios where the denominator was less than 6.
These supplementary results are shown in Table~\ref{table:dp1a}. They are obtained
by applying Lemma \ref{lemma:rightTriangle} to the entries in Table~\ref{table:diagonalPointRatios}.

\begin{table}[ht!]
\caption{Supplementary Results}
\label{table:dp1a}
\begin{center}
\begin{tabular}{|l|l|p{2.2in}|}
\hline
\multicolumn{3}{|c|}{\textbf{\color{blue}\large \strut Central Quadrilaterals formed by the Diagonal Point}}\\ \hline
\textbf{Quadrilateral Type}&\textbf{Relationship}&\textbf{centers}\\ \hline
\ru orthodiagonal&$[ABCD]=\frac{288}{49}[FGHI]$&547\\
\cline{2-3}
\ru &$[ABCD]=\frac{50}{9}[FGHI]$&233\\
\cline{2-3}
\ru &$[ABCD]=\frac{200}{49}[FGHI]$&632\\
\cline{2-3}
\ru &$[ABCD]=\frac{32}{9}[FGHI]$&140\\
\cline{2-3}
\ru &$[ABCD]=\frac{25}{8}[FGHI]$&95, 631\\
\cline{2-3}
\ru &$[ABCD]=\frac{72}{25}[FGHI]$&549\\
\cline{2-3}
\ru &$[ABCD]=\frac{32}{25}[FGHI]$&548\\
\cline{2-3}
\ru &$[ABCD]=\frac98[FGHI]$&376\\
\cline{2-3}
\ru &$[ABCD]=\frac89[FGHI]$&550\\
\hline
\end{tabular}
\end{center}
\end{table}

\newpage

\subsection{Proofs for Equidiagonal Orthodiagonal Quadrilaterals}\ 

We now give proofs for the results listed in Table~\ref{table:dp1} for equidiagonal orthodiagonal quadrilaterals.

\begin{lemma}
\label{lemma:orthodiag}
The area of an orthodiagonal quadrilateral with diagonals of length $x$ and $y$
is $\frac12xy$.
\end{lemma}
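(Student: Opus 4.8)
The plan is to exploit the diagonal point $E$, the intersection of the two diagonals, which lies inside the quadrilateral because $ABCD$ is convex. The two diagonals then cut $ABCD$ into the four radial triangles $\triangle ABE$, $\triangle BCE$, $\triangle CDE$, and $\triangle DAE$, and these four triangles tile the quadrilateral with no overlap. Since $ABCD$ is orthodiagonal, the diagonals meet at right angles, so each of these four triangles has its right angle at $E$, and its two legs are adjacent segments of the diagonals.

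First I would introduce notation for the four diagonal segments: write $AE=m_1$, $EC=m_2$, $BE=n_1$, and $ED=n_2$, so that the diagonal lengths are $AC=m_1+m_2=x$ and $BD=n_1+n_2=y$. Because each radial triangle is right-angled at $E$, its area is half the product of its legs, giving $[ABE]=\tfrac12 m_1 n_1$, $[BCE]=\tfrac12 n_1 m_2$, $[CDE]=\tfrac12 m_2 n_2$, and $[DAE]=\tfrac12 n_2 m_1$.

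Summing the four areas and factoring then yields the result:
\[
[ABCD]=\tfrac12\bigl(m_1 n_1 + n_1 m_2 + m_2 n_2 + n_2 m_1\bigr)=\tfrac12(m_1+m_2)(n_1+n_2)=\tfrac12\,xy.
\]

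There is no genuinely difficult step here; the only point requiring care is that convexity is what guarantees $E$ is interior, so that the four triangles actually partition $ABCD$ rather than over- or under-counting its area. As an alternative, one could place the figure in barycentric (or Cartesian) coordinates and compute the area directly from the Area Formula (Lemma~\ref{lemma:areaFormula}), but the elementary decomposition above is both shorter and more transparent, so that is the route I would take.
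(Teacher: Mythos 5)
Your proof is correct and is essentially identical to the paper's: both decompose the quadrilateral at the diagonal point into four right triangles, compute each area as half the product of the two diagonal segments forming its legs, and factor the sum as $\frac12(x_1+x_2)(y_1+y_2)=\frac12xy$. Your added remark that convexity guarantees the intersection point is interior (so the four triangles genuinely partition the quadrilateral) is a small point of care the paper leaves implicit.
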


\begin{figure}[h!t]
\centering
\includegraphics[width=0.35\linewidth]{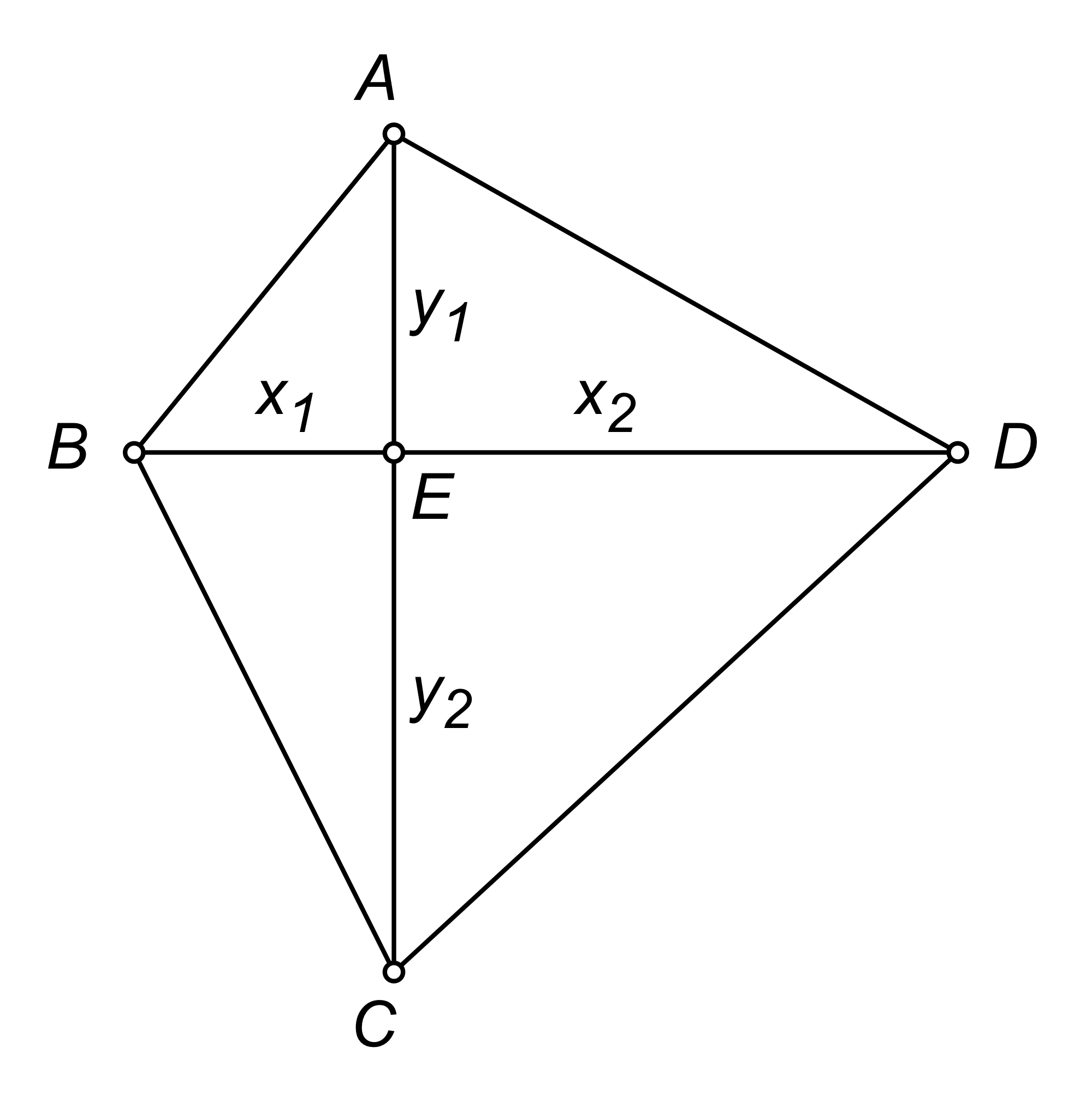}
\caption{An orthodiagonal quadrilateral}
\label{fig:dpOrthoArea}
\end{figure}

\begin{proof}
Let the segments of the diagonal of length $x$ be $x_1$ and $x_2$.
Let the segments of the diagonal of length $y$ be $y_1$ and $y_2$ (Figure~\ref{fig:dpOrthoArea}).
Then
$$
\begin{aligned}\ 
[ABCD]&=[ABE]+[BCE]+[CDE]+[DAE]\\
&=\frac12x_1y_1+\frac12x_1y_2+\frac12x_2y_2+\frac12x_2y_1\\
&=\frac12\left(x_1+x_2\right)\left(y_1+y_2\right)\\
&=\frac12xy.
\end{aligned}
$$
\end{proof}

\begin{lemma}
\label{lemma:equiorthodiag}
The area of an equidiagonal orthodiagonal quadrilateral with diagonal of length $d$
is $\frac12d^2$.
\end{lemma}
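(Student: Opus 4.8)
The plan is to obtain this as an immediate specialization of the preceding lemma. The hypothesis combines two conditions: the quadrilateral is orthodiagonal (its diagonals are perpendicular) and equidiagonal (its diagonals have equal length). Lemma \ref{lemma:orthodiag} already supplies the area of any orthodiagonal quadrilateral in terms of its two diagonal lengths, so the entire argument reduces to substituting the equidiagonal condition into that formula.

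Concretely, I would first invoke Lemma \ref{lemma:orthodiag}, which tells us that an orthodiagonal quadrilateral with diagonals of lengths $x$ and $y$ has area $\frac12 xy$. Since our quadrilateral is orthodiagonal, this formula applies directly. Next, I would apply the equidiagonal hypothesis: both diagonals have the common length $d$, so $x = y = d$. Substituting $x = y = d$ into $\frac12 xy$ yields
$$[ABCD]=\frac12 xy=\frac12 d\cdot d=\frac12 d^2,$$
which is the desired conclusion.

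There is essentially no technical obstacle here, since both needed ingredients are packaged in Lemma \ref{lemma:orthodiag} and in the definition of an equidiagonal quadrilateral; the only "step" is recognizing that the stated hypothesis is precisely the conjunction of those two conditions so that the substitution is legitimate. The lemma is best viewed as a convenient corollary recorded for later use (for instance, when comparing the area of an equidiagonal orthodiagonal reference quadrilateral with that of its central quadrilateral), rather than as a result requiring independent proof effort.
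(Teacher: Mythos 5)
Your proposal is correct and matches the paper's proof exactly: the paper likewise proves this by setting $x=y=d$ in Lemma~\ref{lemma:orthodiag}.
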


\begin{proof}
Let $x=y=d$ in Lemma~\ref{lemma:orthodiag}.
\end{proof}

\relbox{Relationship $[ABCD]=\frac12[FGHI]$}

\begin{lemma}
\label{lemma:rightTriangleX102}
Let $ABC$ be a right triangle with right angle at $A$ (Figure \ref{fig:dpRightTriangleX102}).
Then the $X_{102}$ point of $\triangle ABC$ lies on the angle bisector of $\angle BAC$
and also lies on the perpendicular bisector of $BC$.
\end{lemma}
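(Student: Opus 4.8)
The statement makes two assertions: that $X_{102}$ lies on the internal bisector of $\angle BAC$, and that it lies on the perpendicular bisector of $BC$. The plan is to settle the first assertion by a direct center-function computation via Lemma~\ref{lemma:angleBisector}, and then to obtain the second assertion almost for free from the first together with the fact that $X_{102}$ lies on the circumcircle of every triangle.

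For the angle-bisector assertion, first I would retrieve the center function $f$ of $X_{102}$ from the Encyclopedia of Triangle Centers. By Lemma~\ref{lemma:angleBisector}, it then suffices to verify that $f(x,y,z)=f(y,x,z)$ for every triple $(x,y,z)$ satisfying $x^2+y^2=z^2$, this being the side relation forced by the right angle at $A$ (with $z$ in the role of the hypotenuse $a$). Concretely, I would substitute $z^2=x^2+y^2$ into the difference $f(x,y,z)-f(y,x,z)$ and simplify, the goal being to reduce it identically to $0$ on that locus; if the expression does not vanish termwise, I would eliminate $z$ using the constraint and confirm that the resulting numerator vanishes. In the course of this computation I would also record that the simplified coordinates of $X_{102}$ are not those of $A=(1:0:0)$, a fact needed below.

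For the perpendicular-bisector assertion I would argue geometrically. Since $X_{102}$ lies on the circumcircle of $\triangle ABC$, and by the first assertion it also lies on the internal bisector of $\angle A$, it must be one of the two intersection points of that bisector with the circumcircle. These two points are $A$ itself and the midpoint $M_A$ of the arc $BC$ not containing $A$; as $X_{102}\neq A$, we conclude $X_{102}=M_A$. The equal arcs $\widehat{BM_A}$ and $\widehat{M_A C}$ subtend equal chords, so $M_A B=M_A C$, placing $M_A$—and hence $X_{102}$—on the perpendicular bisector of $BC$. Note that the right-angle hypothesis enters only through the first assertion; the arc-midpoint step is valid for any chord.

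The main obstacle is the bookkeeping in the first step: the center function of $X_{102}$ is appreciably more complicated than those of the midpoint-type centers treated earlier in this section, so verifying $f(x,y,z)=f(y,x,z)$ on the cone $x^2+y^2=z^2$ is where the real work lies and is best delegated to symbolic computation. As an alternative to the geometric second step, one could instead compute the normalized barycentric coordinates of $X_{102}$ under the constraint $a^2=b^2+c^2$ and apply the Distance Formula (Lemma~\ref{lemma:distanceFormula}) to show $X_{102}B=X_{102}C$ directly; this avoids citing circumcircle membership at the cost of a second symbolic distance computation.
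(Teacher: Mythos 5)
Your proposal is correct and follows essentially the same route as the paper: the first claim is verified by checking $f(x,y,z)=f(y,x,z)$ on the locus $x^2+y^2=z^2$ via Lemma~\ref{lemma:angleBisector} (the paper carries out the factorization of $f(a,b,c)-f(b,a,c)$ explicitly, exhibiting the factor $a^2+b^2-c^2$), and the second claim follows from the fact that $X_{102}$ lies on the circumcircle, so the angle bisector meets it at the arc midpoint of $BC$. Your added care in ruling out the intersection point $A$ itself is a small rigor improvement the paper leaves implicit.
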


\begin{figure}[h!t]
\centering
\includegraphics[width=0.35\linewidth]{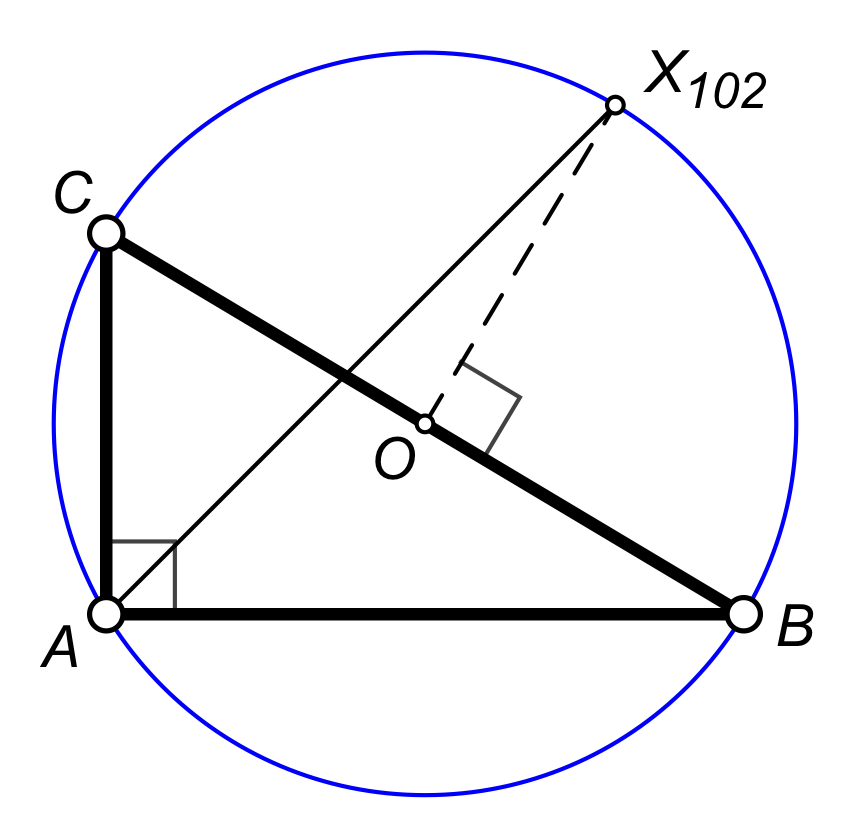}
\caption{The $X_{102}$ point of a right triangle}
\label{fig:dpRightTriangleX102}
\end{figure}

\begin{proof}
From \cite{ETC102}, we find that the center function corresponding to $X_{102}$ is
$$f(a,b,c)=a\times U\times V$$
where
$$U=a^4-a^3 c-2 a^2 b^2+a^2 b c+a^2 c^2+a b^2 c-2 a b c^2+a c^3+b^4-b^3 c+b^2 c^2+b
   c^3-2 c^4$$
and
$$V=a^4-a^3 b+a^2 b^2+a^2 b c-2 a^2 c^2+a b^3-2 a b^2 c+a b c^2-2
   b^4+b^3 c+b^2 c^2-b c^3+c^4.$$
(Recall that the center function is the first component of the \emph{trilinear} coordinates for $X_{102}$.)
A little computation shows that $f(a,b,c)-f(b,a,c)$ factors as
$$(a-b) (a+b-c) (a+b+c) \left(a^2+b^2-c^2\right)W$$
where
$$W=a^4-a^3 c+a^2 \left(-2 b^2+b c+c^2\right)+a c (b-c)^2+b^4-b^3 c+b^2 c^2+b c^3-2 c^4.$$
Thus, $f(a,b,c)=f(b,a,c)$ when $a^2+b^2=c^2$. Therefore, by Lemma \ref{lemma:angleBisector},
$X_{102}$ lies on the angle bisector of vertex $A$.

Also from \cite{ETC102}, we learn that the $X_{102}$ point of a triangle lies on its circumcircle.
Since the angle bisector of $\angle BAC$ bisects the arc from $B$ to $C$, $X_{102}$ must
lie on the perpendicular bisector of side $BC$.
\end{proof}

\begin{theorem}
\label{thm-equiorthoX102}
Let $E$ be the diagonal point of equidiagonal orthodiagonal quadrilateral $ABCD$.
Let $F$, $G$, $H$, and $I$ be the $X_{102}$ points of $\triangle EAB$, $\triangle EBC$, 
$\triangle ECD$,  and $\triangle EDA$, respectively (Figure~\ref{fig:dpEquiOrthoX102}).
Then $FGHI$ is an equidiagonal orthodiagonal quadrilateral and
$$\frac{[ABCD]}{[FGHI]}=\frac12.$$
Furthermore, quadrilaterals $ABCD$ and $FGHI$ have the same diagonal point
and centroid and the diagonals of $FGHI$ bisect the right angles
formed by the diagonals of $ABCD$. 
\end{theorem}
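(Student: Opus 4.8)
The plan is to place the diagonal point $E$ at the origin with the two (perpendicular) diagonals along the coordinate axes, writing $A=(p,0)$, $C=(-r,0)$, $B=(0,s)$, $D=(0,-t)$ with $p,r,s,t>0$. The equidiagonal hypothesis is then $p+r=s+t$, i.e. $AC=BD$. Because $ABCD$ is orthodiagonal, each radial triangle has its right angle at $E$; for instance $\triangle EAB$ has legs $EA,EB$ along the axes and hypotenuse $AB$. This is the only place the two hypotheses on $ABCD$ are recorded geometrically.

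Next I would locate the four centers using Lemma~\ref{lemma:rightTriangleX102}: in each radial triangle the $X_{102}$ point lies both on the bisector of the right angle at $E$ and on the perpendicular bisector of the hypotenuse. For $\triangle EAB$ the former is the line $y=x$ and the latter is the perpendicular bisector of $AB$; intersecting these two explicit lines gives $F=\tfrac{p+s}{2}(1,1)$. The identical computation for the remaining three triangles yields $G=\tfrac{r+s}{2}(-1,1)$, $H=\tfrac{r+t}{2}(-1,-1)$, and $I=\tfrac{p+t}{2}(1,-1)$. This is the only step that uses the earlier geometry, and it is short precisely because the lemma has already pinned each center to two lines whose intersection is unique.

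With these coordinates every assertion follows by inspection or a one-line computation. Since $F,H$ lie on $y=x$ while $G,I$ lie on $y=-x$, the diagonals $FH$ and $GI$ of $FGHI$ are perpendicular (so $FGHI$ is orthodiagonal), they cross at the origin (so $FGHI$ shares the diagonal point $E$ of $ABCD$), and they run along $y=\pm x$, which are exactly the bisectors of the four right angles cut by the diagonals of $ABCD$. The diagonal lengths are $|FH|=\tfrac{(p+r)+(s+t)}{2}\sqrt2=|GI|$, so $FGHI$ is equidiagonal; it is worth noting that this already holds for every orthodiagonal $ABCD$, not just the equidiagonal ones. Finally, averaging the four vertices gives centroid $\tfrac14(p-r,\,s-t)$ for both quadrilaterals, which settles the shared-centroid claim.

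For the area ratio I would apply Lemma~\ref{lemma:orthodiag} twice: $[ABCD]=\tfrac12(p+r)(s+t)$ and $[FGHI]=\tfrac12\,|FH|\,|GI|=\bigl(\tfrac{(p+r)+(s+t)}{2}\bigr)^2$. Writing $X=p+r$ and $Y=s+t$ for the two diagonal lengths of $ABCD$, the ratio is $[ABCD]/[FGHI]=2XY/(X+Y)^2$, which equals $\tfrac12$ exactly when $X=Y$. Thus the equidiagonal hypothesis on $ABCD$ enters the argument only here, and it is precisely the condition that forces the ratio to be $\tfrac12$. The main (indeed the only) obstacle is the center-location step of the second paragraph; once Lemma~\ref{lemma:rightTriangleX102} is invoked the rest is bookkeeping, so I would take care to present that step cleanly and leave the final algebra as routine simplification.
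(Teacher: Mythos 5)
Your proposal is correct, and it takes a genuinely different route from the paper's. The paper reproduces a synthetic argument (due to \c{C}etin): it draws auxiliary lines through $G$ and $I$ parallel to the two diagonals, chases angles to show $\triangle IDS\cong\triangle IAR$, deduces that certain parallelograms are squares, and arrives at $GI=GT\sqrt2$ with $GT=AC$, whence $[FGHI]=2[ABCD]$ via Lemma~\ref{lemma:equiorthodiag}. Your coordinate computation reaches the same endpoint but buys more: once Lemma~\ref{lemma:rightTriangleX102} pins each $X_{102}$ point to the intersection of the bisector of the right angle at $E$ with the perpendicular bisector of the hypotenuse, the explicit vertices $F=\tfrac{p+s}{2}(1,1)$, $G=\tfrac{r+s}{2}(-1,1)$, $H=\tfrac{r+t}{2}(-1,-1)$, $I=\tfrac{p+t}{2}(1,-1)$ make \emph{all} of the ``furthermore'' claims (shared diagonal point, shared vertex centroid, diagonals of $FGHI$ bisecting the right angles of $ABCD$, and equidiagonality of $FGHI$ for every orthodiagonal $ABCD$) one-line verifications, whereas the paper's proof addresses only the area ratio and leaves the remaining assertions to symmetry. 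Your formula $[ABCD]/[FGHI]=2XY/(X+Y)^2$ also isolates exactly where equidiagonality is used, which the synthetic proof obscures. One small point to patch: when $EA=EB$ (e.g.\ in a square) the angle bisector at $E$ and the perpendicular bisector of the hypotenuse coincide, so their intersection is not a single point; there you should instead take the second intersection of the bisector $y=x$ with the circumcircle of $\triangle EAB$ (the lemma's proof records that $X_{102}$ lies on the circumcircle), which still yields $F=\tfrac{p+s}{2}(1,1)$, or appeal to continuity. With that remark added, the argument is complete.
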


\begin{figure}[h!t]
\centering
\includegraphics[width=0.4\linewidth]{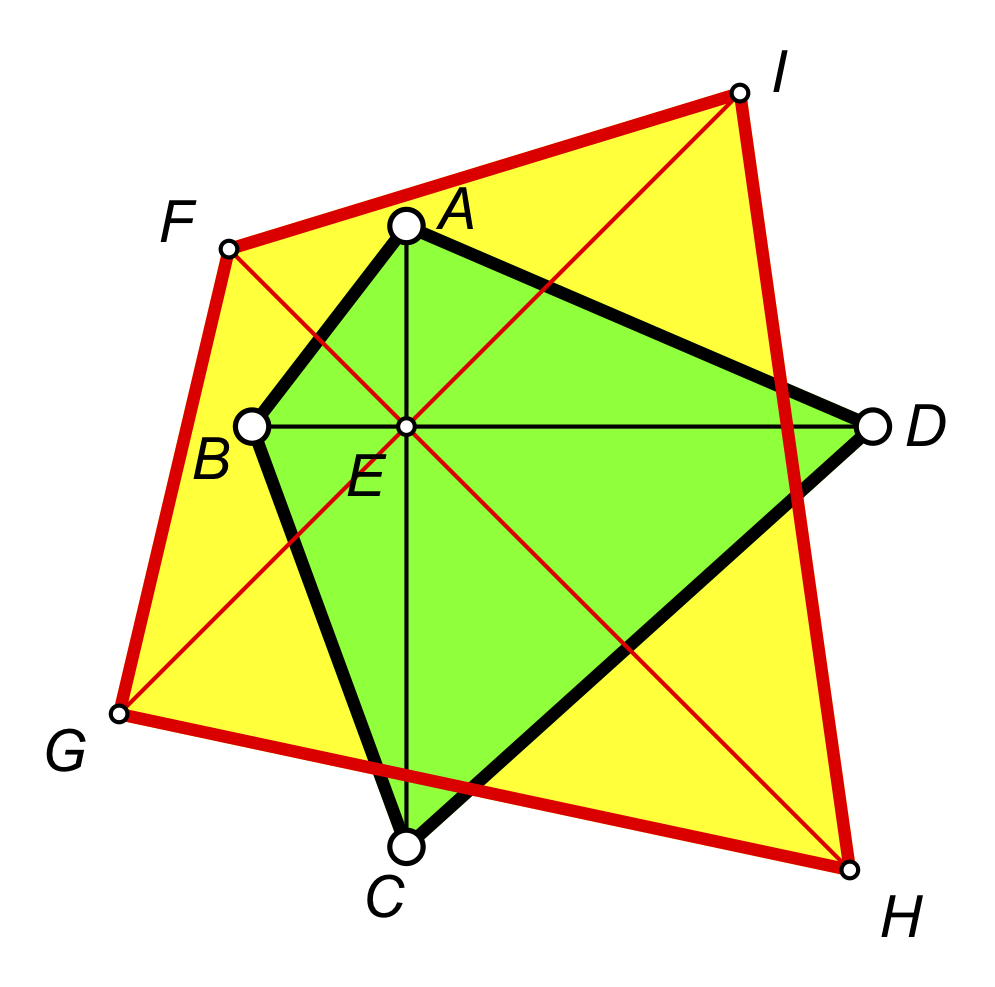}
\caption{equiortho, $X_{102}\implies \frac{[ABCD]}{[FGHI]}=\frac12$}
\label{fig:dpEquiOrthoX102}
\end{figure}


The following proof is due to Ahmet \c{C}etin \cite{Cetin}.

\begin{proof}

\begin{figure}[h!t]
\centering
\includegraphics[width=0.4\linewidth]{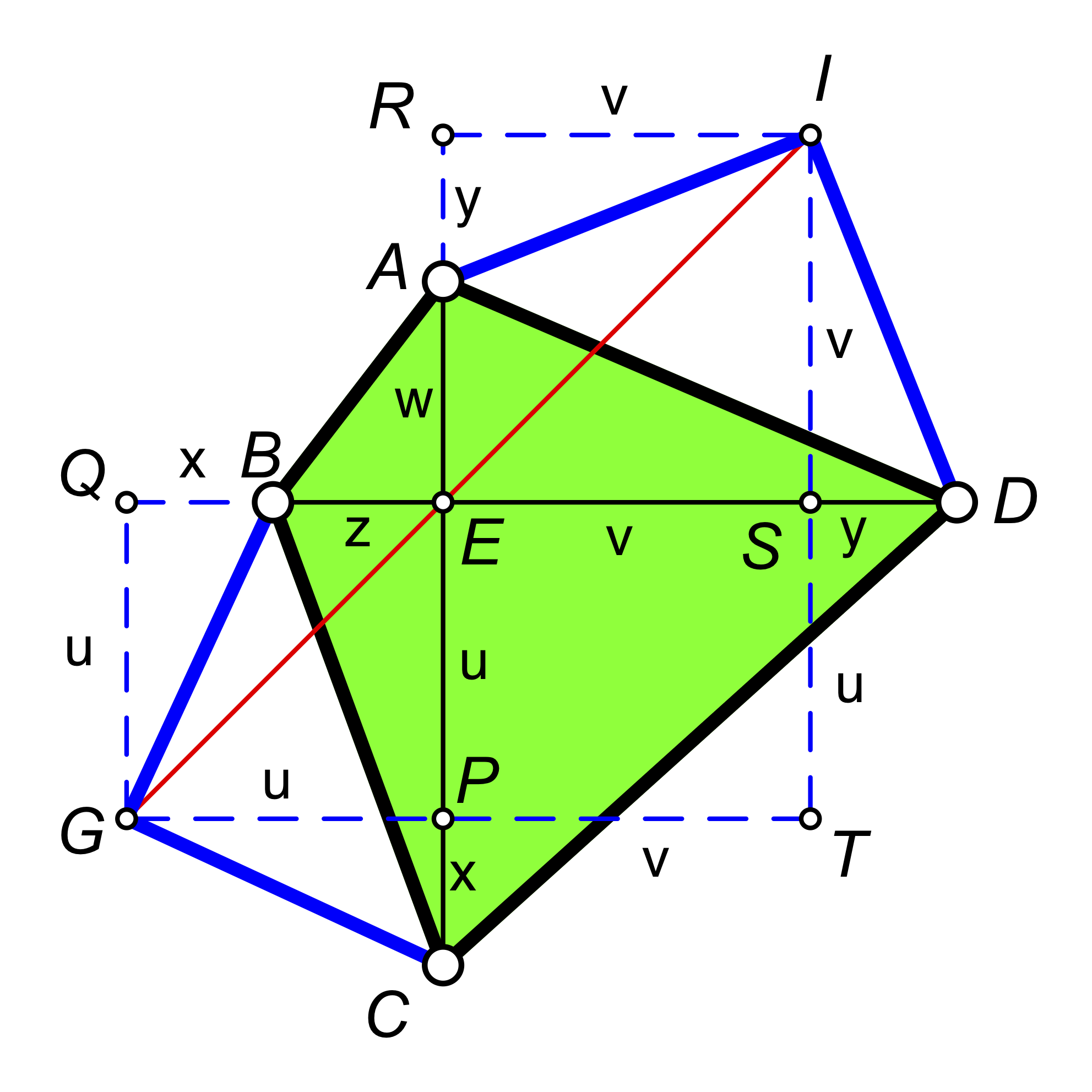}
\caption{}
\label{fig:dpEquiOrthoX102proof}
\end{figure}

Draw lines through $G$ and $I$ parallel to $AC$.
Draw lines through $G$ and $I$ parallel to $BD$.
These lines form intersection points, $P$, $Q$, $R$, $S$, and $T$ as shown in
Figure~\ref{fig:dpEquiOrthoX102proof}.
Let $\angle ADE=\theta$. By Lemma~\ref{lemma:rightTriangleX102},
$I$ lies on the perpendicular bisector of $AD$, so $\angle IDA=45\degrees$.
Hence $\angle IDS=\theta+45\degrees$.
Now
$$\angle IAR=180\degrees-\angle DAI-\angle EAD=180\degrees-45\degrees-(90\degrees-\theta)
=\theta+45\degrees.$$
Thus, $\angle IDS=\angle IAR$. Since angles $\angle DSI$ and $\angle ARI$ are right angles, and $ID=IA$,
we can conclude that $\triangle IDS\cong \triangle IAR$.
Hence, $DS=AR=y$.
Similarly, $CP=BQ=x$.

Since $ERIS$ is a parallelogram and diagonal $EI$ bisects $\angle SER$,
$ERIS$ mist be a square.
Similarly, $EPGQ$ is a square.
Thus $v=w+y$ where $ES=v$.
Similarly, $EPGQ$ is a square and $PG=PE=u=x+z$ where $BQ=x$ and $EB=z$.
Quadrilateral $PEST$ is a rectangle, so $PE=TS=u$.
Hence $GT=u+v=TI$. Thus $\triangle GTI$ is an isosceles right triangle and
therefore $GI=GT\sqrt2$.

Since $ABCD$ is equidiagonal, $$w+u+x=z+v+y.$$
Since $u=x+z$,
$$w+u+x=w+(x+z)+x.$$
Since $v=w+y$,
$$z+v+y=z+(w+y)+y.$$
Thus
$$w+(x+z)+x=z+(w+y)+y$$
which implies that $x=y$.

But $GT=u+v=u+w+y=u+w+x=AC$.
Since $GI=GT\sqrt2$, we have $GI^2=2AC^2$.
By Lemma~\ref{lemma:equiorthodiag}, $[FGHI]=2[ABCD]$.
\end{proof}


\relbox{Relationship $[ABCD]=2[FGHI]$}

\begin{theorem}
\label{thm-equiorthoX124}
Let $E$ be the diagonal point of equidiagonal orthodiagonal quadrilateral $ABCD$.
Let $F$, $G$, $H$, and $I$ be the $X_{124}$ points of $\triangle EAB$, $\triangle EBC$, 
$\triangle ECD$,  and $\triangle EDA$, respectively (Figure~\ref{fig:dpEquiOrthoX124}).
Then $FGHI$ is an equidiagonal orthodiagonal quadrilateral and
$$[ABCD]=2[FGHI].$$
\end{theorem}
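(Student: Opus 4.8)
The plan is to imitate the proof of Theorem~\ref{thm-equiorthoX102} (the $X_{102}$ case): first pin down where the $X_{124}$ point of a \emph{right} triangle sits, then read off the configuration of $F,G,H,I$ from the four radial right triangles at $E$. The analogue of Lemma~\ref{lemma:rightTriangleX102} I would establish is: \emph{in a right triangle $ABC$ with the right angle at $A$, the point $X_{124}$ lies on the internal bisector of $\angle A$ and on the nine-point circle.} The nine-point-circle membership is a standard property of $X_{124}$ (see \cite{ETC}). For the bisector claim I would invoke Lemma~\ref{lemma:angleBisector}: writing $f$ for the center function of $X_{124}$ taken from \cite{ETC}, I would factor $f(x,y,z)-f(y,x,z)$ and exhibit $x^2+y^2-z^2$ as a factor, so that $f(x,y,z)=f(y,x,z)$ whenever $x^2+y^2=z^2$. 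This is exactly the computation carried out for $X_{102}$ in Lemma~\ref{lemma:rightTriangleX102}, and it is the one genuinely mechanical step.

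Next I would convert these two facts into a clean synthetic description. By Lemma~\ref{lemma:midpointMedian} the nine-point center is the midpoint of $AM$, where $M$ is the midpoint of the hypotenuse $BC$; since the right-angle vertex $A$ also lies on the nine-point circle (it is a vertex of the rectangle in the proof of Lemma~\ref{lemma:midpointMedian}), the segment $AM$ is a diameter. The bisector of $\angle A$ meets the circle at $A$ and at one other point, which by Thales is the foot of the perpendicular from $M$ onto that bisector. Hence \emph{$X_{124}$ is the orthogonal projection of the hypotenuse-midpoint onto the internal bisector of the right angle.}

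Now apply this to the four radial triangles, all right-angled at $E$. The internal bisectors of the four right angles at $E$ lie on only two lines, $\ell_1\supseteq\{F,H\}$ and $\ell_2\supseteq\{G,I\}$ (opposite angles share a bisector line, and adjacent ones are $90\degrees$ apart), with $\ell_1\perp\ell_2$. Thus the diagonals $FH$ and $GI$ of $FGHI$ run along $\ell_1$ and $\ell_2$, so $FGHI$ is orthodiagonal and its diagonals meet at $E$. Since $F$ and $H$ are the projections onto $\ell_1$ of the midpoints of $AB$ and $CD$, a one-line projection computation gives $FH=\tfrac{\sqrt2}{4}(EA+EB+EC+ED)$, and likewise for $GI$; in particular $FH=GI$, so $FGHI$ is equidiagonal. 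Writing $p=AC=EA+EC$ and $q=BD=EB+ED$, the common diagonal of $FGHI$ is $\tfrac{\sqrt2}{4}(p+q)$, so by Lemma~\ref{lemma:equiorthodiag} $[FGHI]=\tfrac{1}{16}(p+q)^2$, while $[ABCD]=\tfrac12 pq$ by Lemma~\ref{lemma:orthodiag}. The equidiagonal hypothesis $p=q$ then collapses the ratio to $[ABCD]/[FGHI]=8pq/(p+q)^2=2$, which is the claim. (Without $p=q$ one still obtains an equidiagonal orthodiagonal $FGHI$, but the ratio $8pq/(p+q)^2$ is non-constant, which explains why $X_{124}$ appears only in the equidiagonal row of Table~\ref{table:dp1}.)

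The main obstacle is the first lemma, namely proving that $X_{124}$ lands on the right-angle bisector. Unlike the midpoint-of-hypotenuse centers of Theorem~\ref{thm:hypotenuseMidpoint}, $X_{124}$ does not lie on the median $AM$, so there is no shortcut; one must verify the center-function identity $f(x,y,z)=f(y,x,z)$ on $x^2+y^2=z^2$ directly from the (lengthy) $X_{124}$ center function, as was done for $X_{102}$ in Lemma~\ref{lemma:rightTriangleX102}. Once that and the nine-point-circle fact are in hand, the rest of the argument is elementary.
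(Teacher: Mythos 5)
Your proposal is correct, and I verified your key metric claim independently: with $E$ at the origin and legs $EA$, $EB$ along the axes, the midpoint of hypotenuse $AB$ projects onto the bisector $y=x$ at distance $\frac{\sqrt2}{4}(EA+EB)$ from $E$, which is exactly where $X_{124}$ of $\triangle EAB$ sits; the diagonal computation and the ratio $8pq/(p+q)^2$ then check out. However, your route is genuinely different from the paper's. The paper quotes from \cite{ETC124} that $X_{124}$ is the midpoint of $X_4$ and $X_{102}$; since $X_4$ of each radial right triangle is the right-angle vertex $E$, the quadrilateral $FGHI$ is the image of the $X_{102}$-quadrilateral $F'G'H'I'$ under the homothety centered at $E$ with ratio $\frac12$, and both the shape claim and $[FGHI]=\frac14[F'G'H'I']=\frac12[ABCD]$ fall out of Theorem~\ref{thm-equiorthoX102} in three lines. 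Your argument is longer but self-contained: it does not lean on Theorem~\ref{thm-equiorthoX102} (whose proof is the most intricate synthetic argument in this section), it produces the explicit diagonal length $\frac{\sqrt2}{4}(p+q)$, and it explains why $X_{124}$ appears only in the equidiagonal row of Table~\ref{table:dp1}. The price is the symbolic verification that the $X_{124}$ center function satisfies $f(x,y,z)=f(y,x,z)$ on $x^2+y^2=z^2$, which you defer; note that you could eliminate that computation (and the separate appeal to the nine-point-circle property) by importing the same ETC midpoint relation the paper uses, since the midpoint of $A$ and a point on the bisector through $A$ is automatically on the bisector, and Lemma~\ref{lemma:9pt} then places it on the nine-point circle. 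One small loose end either way: you should note that $X_{124}\neq A$ (e.g.\ because $X_{102}\neq A$ by Lemma~\ref{lemma:rightTriangleX102}), since the bisector meets the nine-point circle at $A$ as well as at the projection of $M$.
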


\begin{figure}[h!t]
\centering
\includegraphics[width=0.4\linewidth]{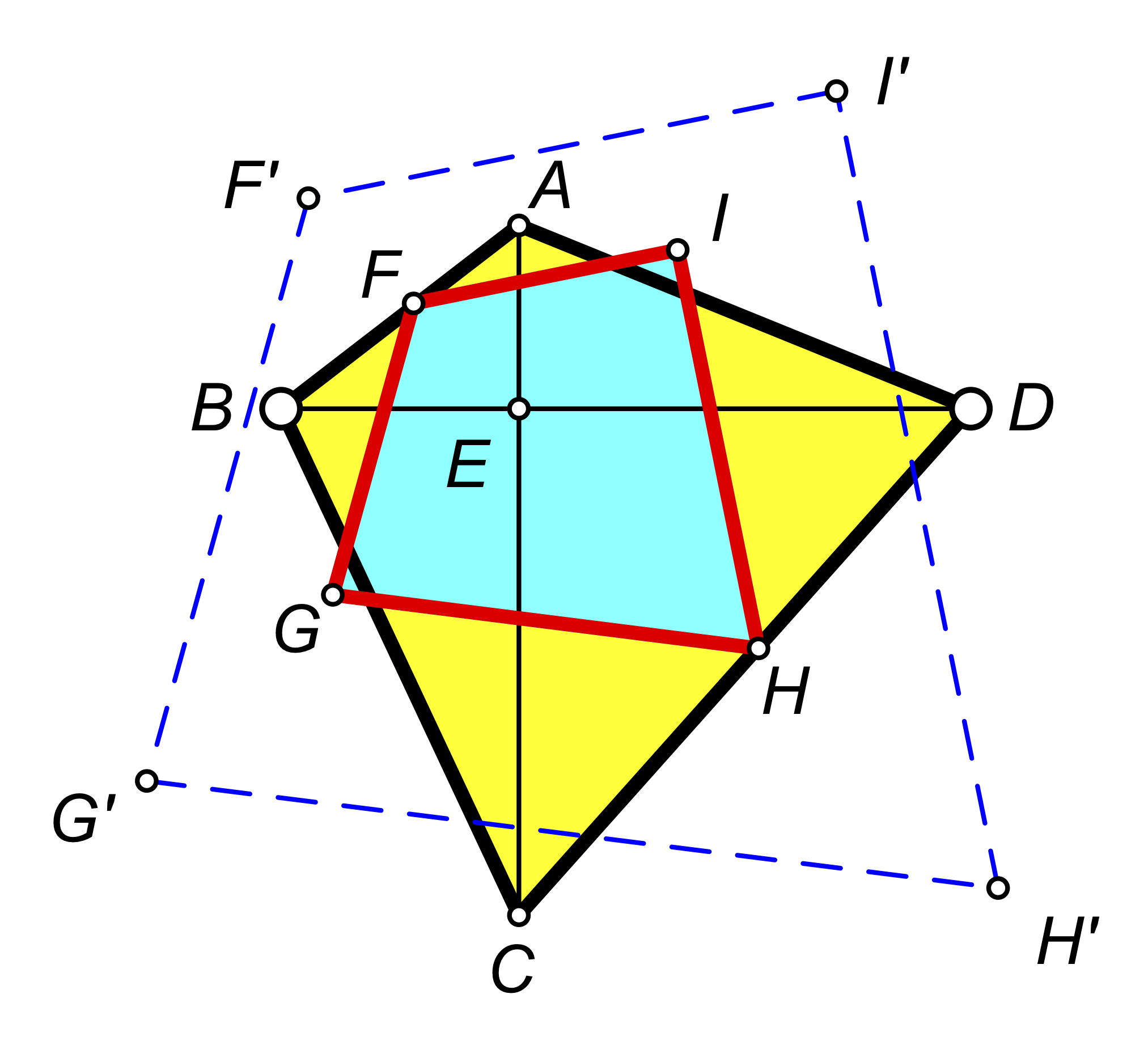}
\caption{equiortho, $X_{124}\implies [ABCD]=2[FGHI]$}
\label{fig:dpEquiOrthoX124}
\end{figure}

\begin{proof}
Let $F'$, $G'$, $H'$, and $I'$ be the $X_{102}$ points of $\triangle EAB$, $\triangle EBC$, 
$\triangle ECD$, and $\triangle EDA$, respectively (Figure~\ref{fig:dpEquiOrthoX124}).
According to \cite{ETC124}, the $X_{124}$ point of a triangle is the midpoint of its $X_4$
point and its $X_{102}$ point. But the $X_4$ point of a right triangle is the vertex
of the right angle,
so the $X_4$ points of all four radial triangles is point $E$.
Thus, $F$ is the midpoint of $EF'$ with similar results for $G$, $H$, and $I$.
Hence quadrilateral $FGHI$ is homothetic to quadrilateral $F'G'H'I'$ with ratio
of similitude 2.
But $[F'G'H'I']=2[ABCD]$ by Theorem~\ref{thm-equiorthoX102}.
Therefore, $[FGHI]=\frac12[ABCD]$. Also $F'G'H'I'$ is equidiagonal and orthodiagonal.
Since the quadrilaterals are homothetic, $FGHI$ must also be an equidiagonal orthodiagonal quadrilateral.
\end{proof}

\newpage

\subsection{Proofs for Rhombi}\ 

We now give proofs for the results listed in Table~\ref{table:dp1} for rhombi.

\relbox{Relationship $[ABCD]=4[FGHI]$}

\begin{lemma}
\label{lemma:rightTriangleX10}
Let $ABC$ be a right triangle with right angle at $A$.
Let $S$ be the Spieker center ($X_{10}$ point) of $\triangle ABC$.
Let $J$ and $K$ be the feet of perpendiculars from $S$ to $AC$ and $AB$, respectively,
so that $AKSJ$ is a rectangle (Figure \ref{fig:dpRightTriangleX10}).
Then $[ABC]=4[AKSJ]$.
\end{lemma}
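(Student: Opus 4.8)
The plan is to compute both areas in terms of the side lengths of $\triangle ABC$ and reduce the claimed ratio to the Pythagorean relation. Recall that the Spieker center $X_{10}$ has barycentric coordinates $(b+c:c+a:a+b)$ (see \cite{ETC}), so its normalized coordinates are $\frac{1}{2(a+b+c)}(b+c,\,c+a,\,a+b)$. Since barycentric coordinates are proportional to the areas of the triangles that $S$ forms with the sides, we obtain
$$[SAB]=\frac{a+b}{2(a+b+c)}\,[ABC]\qquad\text{and}\qquad [SCA]=\frac{c+a}{2(a+b+c)}\,[ABC].$$

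Next I would express the sides of the rectangle through these areas. Because $SK\perp AB$ and $SJ\perp AC$, the two perpendicular distances satisfy $SK=2[SAB]/AB=2[SAB]/c$ and $SJ=2[SCA]/AC=2[SCA]/b$. As $AKSJ$ is a rectangle with $AK=SJ$ and $AJ=SK$, its area is $[AKSJ]=SK\cdot SJ=\dfrac{4\,[SAB]\,[SCA]}{bc}$. Substituting the two area expressions above and using $[ABC]=\tfrac12 bc$ (the legs being $AB=c$ and $AC=b$, with right angle at $A$) yields $[AKSJ]=\dfrac{bc\,(a+b)(a+c)}{4(a+b+c)^2}$, and hence
$$\frac{[ABC]}{[AKSJ]}=\frac{2(a+b+c)^2}{(a+b)(a+c)}.$$

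Finally I would invoke the right-angle hypothesis. The target ratio of $4$ is equivalent to $(a+b+c)^2=2(a+b)(a+c)$, and expanding both sides cancels every mixed term and leaves precisely $a^2=b^2+c^2$, the Pythagorean relation for the hypotenuse $BC=a$. This finishes the proof. The computation is short, so there is no deep obstacle here; the only point demanding care is the bookkeeping of which leg has length $b$ versus $c$ and the observation that the rectangle's two sides are exactly the perpendicular distances $SJ$ and $SK$ from $S$ to the legs. A purely synthetic alternative is available via the description of the Spieker center as the centroid of the perimeter (equivalently the incenter of the medial triangle), but the area-ratio route above is the most direct and is the one I would present.
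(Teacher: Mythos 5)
Your proof is correct, but it takes a genuinely different route from the paper's. You compute everything in barycentric coordinates: from $X_{10}=(b+c:c+a:a+b)$ you read off $[SAB]$ and $[SCA]$ as fractions of $[ABC]$, convert these to the perpendicular distances $SK$ and $SJ$, multiply to get $[AKSJ]=\frac{bc(a+b)(a+c)}{4(a+b+c)^2}$, and reduce the claimed ratio of $4$ to the identity $(a+b+c)^2=2(a+b)(a+c)$, which is exactly $a^2=b^2+c^2$. All of these steps check out, including the bookkeeping of which normalized coordinate corresponds to which sub-triangle. The paper instead argues synthetically: it uses the characterization of $S$ as the incenter of the medial triangle $DEF$, drops the perpendiculars from $S$ to two sides of that medial triangle (which are radii of its incircle), and uses two pairs of congruent triangles to show that the rectangle $AKSJ$ has the same area as the corner triangle $AFE$, which is a quarter of $[ABC]$. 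The synthetic argument explains \emph{why} the factor is exactly $4$ (the rectangle is an area-preserving rearrangement of a medial corner triangle), while your computation is shorter to verify, requires no auxiliary construction, and makes transparent precisely where the right-angle hypothesis enters — it is needed only at the very last step, to collapse $\frac{2(a+b+c)^2}{(a+b)(a+c)}$ to $4$. Either proof is acceptable.
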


\begin{figure}[h!t]
\centering
\includegraphics[width=0.4\linewidth]{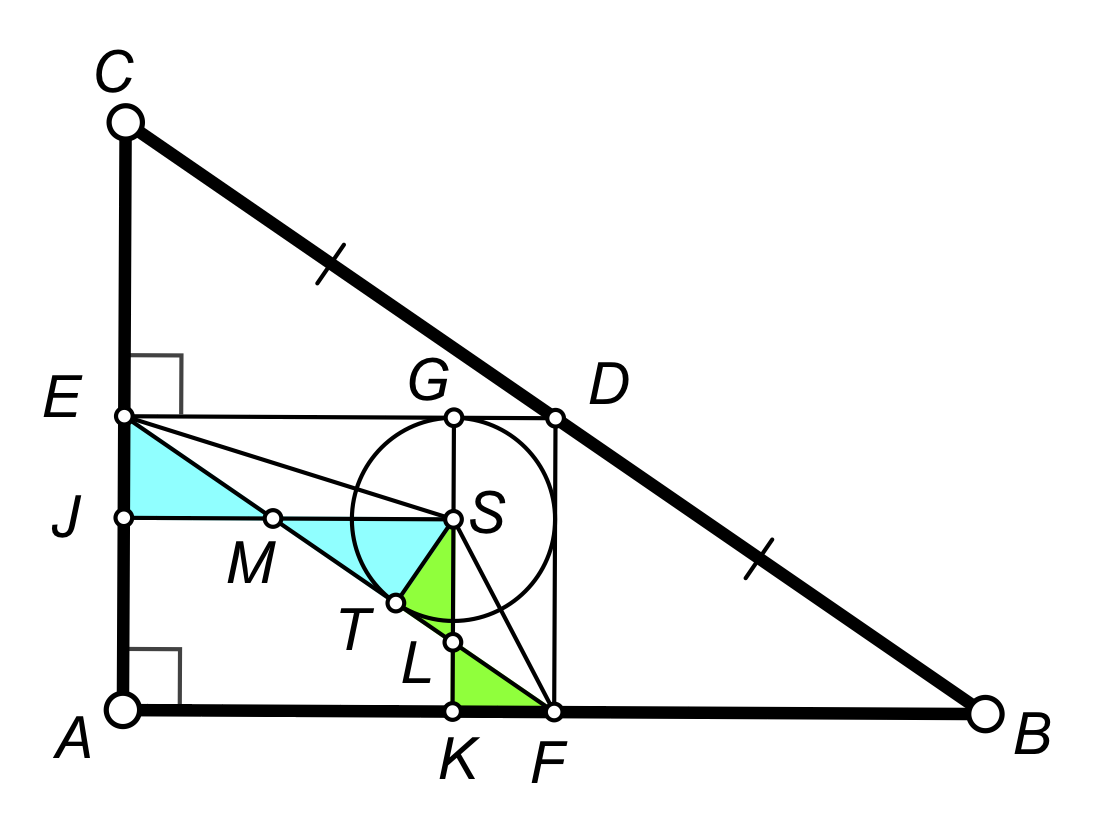}
\caption{$S$ is the Spieker center of $\triangle ABC$}
\label{fig:dpRightTriangleX10}
\end{figure}

\begin{proof}
Let $D$, $E$, and $F$ be the midpoints of the sides of $\triangle ABC$
as shown in Figure \ref{fig:dpRightTriangleX10}. From \cite{ETC10},
it is known that $S$ is the incenter of the medial triangle $DEF$.
Let $G$ and $T$ be the feet of the perpendiculars from $S$ to $DE$
and $EF$, respectively. Segments $SG$ and $ST$ are radii of the incircle
of $\triangle DEF$, so $ST=SG$. Since $SG\perp DE$, $SGEJ$ is a rectangle
and $SG=EJ$. Therefore, $EJ=ST$ and $\triangle EJM\cong \triangle STM$.
Similarly, $\triangle FKL\cong \triangle STL$.

So the green triangles
have the same area and the blue triangles have the same area.
Therefore, $[AKSJ]=[AFE]=\frac14[ABC]$.
\end{proof}

\begin{theorem}
Let $E$ be the diagonal point of rhombus $ABCD$.
Let $F$, $G$, $H$, and $I$ be the $X_{10}$ points of $\triangle EAB$, $\triangle EBC$, 
$\triangle ECD$,  and $\triangle EDA$, respectively (Figure~\ref{fig:dpRhombusX10}).
Then $FGHI$ is a rectangle and
$$[ABCD]=4[FGHI].$$
\end{theorem}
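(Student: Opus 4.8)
The plan is to exploit the $X_{10}$ (Spieker center) property established in Lemma~\ref{lemma:rightTriangleX10} in combination with the orthodiagonal structure of a rhombus, proceeding exactly as in the proofs of Theorem~\ref{thm:dpX546} and Theorem~\ref{thm:dpX5}. First I would observe that a rhombus is an orthodiagonal quadrilateral (indeed $a^2+c^2=b^2+d^2$ holds trivially since all sides are equal), so the diagonals meet at right angles at the diagonal point $E$. Consequently each of the four radial triangles $\triangle EAB$, $\triangle EBC$, $\triangle ECD$, $\triangle EDA$ is a right triangle with its right angle at $E$. This is the structural fact that makes Lemma~\ref{lemma:rightTriangleX10} directly applicable to each radial triangle, with $E$ playing the role of the right-angle vertex $A$ in that lemma.

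Next I would apply Lemma~\ref{lemma:rightTriangleX10} to each radial triangle in turn. For $\triangle ECD$, say, the Spieker center $H$ is the $X_{10}$ point, and dropping perpendiculars from $H$ to the two legs $EC$ and $ED$ produces a rectangle $EY_HHZ_H$ (the analogue of $AKSJ$) whose area is one quarter of $[ECD]$. By symmetry the same holds for each of the other three radial triangles, giving four rectangles, one at each corner $E$ of the four right angles. Summing, the combined area of these four corner rectangles equals $\tfrac14([EAB]+[EBC]+[ECD]+[EDA]) = \tfrac14[ABCD]$. The key remaining point is to identify $[FGHI]$ with the total area of these four rectangles, which follows because each $X_{10}$ point has its feet of perpendiculars lying on the legs emanating from $E$ along the diagonals, so the four rectangles tile (without overlap) the quadrilateral region bounded by $F$, $G$, $H$, $I$; this gives $[FGHI]=4[FGHI]/4$, that is, $[ABCD]=4[FGHI]$.

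To prove that $FGHI$ is in fact a rectangle, I would argue as in Theorem~\ref{thm:dpX5}: since each $X_{10}$ point lies at a fixed affine position inside its right triangle and its projections onto the two legs (which lie along the diagonals $AC$ and $BD$) determine its displacement from $E$ in directions parallel to the two mutually perpendicular diagonals, the four points $F$, $G$, $H$, $I$ are situated symmetrically with sides parallel to the two diagonals. Because the diagonals of a rhombus are perpendicular, the resulting parallelogram $FGHI$ has perpendicular adjacent sides and is therefore a rectangle. I would make this precise by computing, using Lemma~\ref{lemma:dpIsoscelesTriangle} or direct projection, the signed displacements of each $X_{10}$ point along the two diagonal directions and checking that $FGHI$ closes up as a rectangle.

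The main obstacle I anticipate is the bookkeeping in the second step: verifying that the four corner rectangles genuinely partition the region $FGHI$ with no gaps or overlaps, rather than merely summing to the right total area by coincidence. The cleanest route is probably to bypass this geometric tiling claim and instead establish the rectangle claim first, then use the Varignon-style similarity argument from Theorem~\ref{thm:dpX5}: show that $FGHI$ is similar to the Varignon parallelogram of $ABCD$ with a fixed ratio of similarity determined by Lemma~\ref{lemma:rightTriangleX10}, and then combine that ratio with Lemma~\ref{lemma:Varignon} to extract $[ABCD]=4[FGHI]$ without needing to argue about tiling at all. That substitution of the area-ratio argument for the tiling argument is the step I would be most careful to get right.
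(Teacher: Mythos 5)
Your proposal is correct and follows essentially the same route as the paper: the paper's proof likewise applies Lemma~\ref{lemma:rightTriangleX10} to each right-angled radial triangle and concludes by symmetry that the four corner rectangles with diagonals $EF$, $EG$, $EH$, $EI$ account for $\frac14[ABCD]$ and make up $[FGHI]$. The tiling step you worry about is immediate from the reflective symmetry of the rhombus about its diagonals (which places $F$, $G$, $H$, $I$ at the four symmetric positions $(\pm p,\pm q)$ in diagonal coordinates centered at $E$), so no detour through the Varignon parallelogram is needed.
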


\begin{figure}[h!t]
\centering
\includegraphics[width=0.35\linewidth]{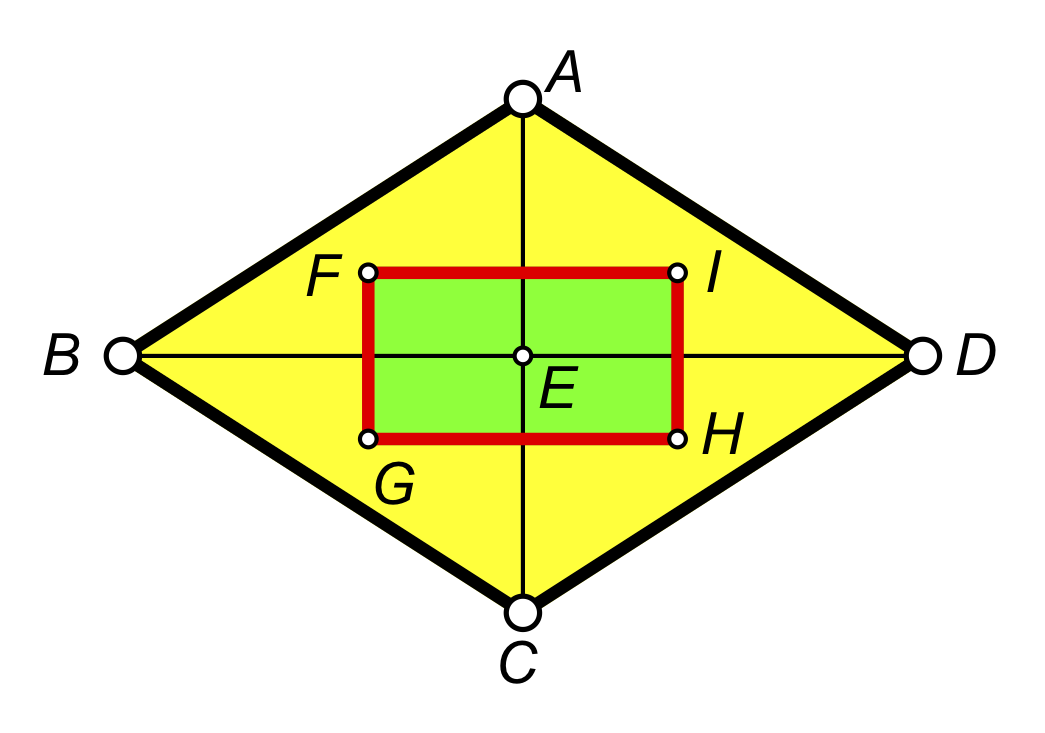}
\caption{rhombus, $X_{10}\implies [ABCD]=4[FGHI]$}
\label{fig:dpRhombusX10}
\end{figure}

\begin{proof}
Since $ABCD$ is a rhombus, $AE\perp ED$.
By Lemma \ref{lemma:rightTriangleX10}, the rectangle with diagonal $EI$
has one-fourth the area of $\triangle AED$.
By symmetry, the same is true for the rectangles with diagonals, $EF$, $EG$,
and $EH$. Thus $[ABCD]=4[FGHI]$.
\end{proof}


\relbox{Relationship $[ABCD]=[FGHI]$}

The following result is well known \cite{MathWorld-RightTriangle}.
\begin{lemma}
\label{lemma:inradius}
If $r$ is the inradius of a right triangle with hypotenuse $c$ and legs $a$ and $b$, then
$$r=\frac{a+b-c}{2}.$$
\end{lemma}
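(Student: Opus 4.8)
The plan is to give a short geometric proof using the equal-tangent-length property of the incircle, which matches the style of the surrounding lemmas. Let the right angle be at the vertex $V$ where the two legs meet, and let the incircle, with center $I$ and radius $r$, touch the two legs at points $P$ and $Q$ and touch the hypotenuse at $R$. The perpendiculars $IP$ and $IQ$ dropped to the legs each have length $r$, and since the legs meet at a right angle at $V$, the quadrilateral $VPIQ$ has right angles at $V$, $P$, and $Q$ and two adjacent sides equal to $r$; hence it is a square with side $r$. In particular, the two tangent segments $VP$ and $VQ$ from the right-angle vertex both equal $r$.

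Next I would invoke the standard fact that the tangent length from a vertex of a triangle to its incircle equals the semiperimeter minus the side opposite that vertex. The right-angle vertex $V$ is opposite the hypotenuse $c$, so this tangent length is $s-c$, where $s=\frac{a+b+c}{2}$. Combining this with the previous paragraph gives $r=s-c=\frac{a+b+c}{2}-c=\frac{a+b-c}{2}$, which is the desired formula.

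An alternative, purely algebraic route would start from the universal identity $r=K/s$, where $K$ is the area. For a right triangle $K=\frac12 ab$, so $r=\frac{ab}{a+b+c}$; the claim $r=\frac{a+b-c}{2}$ is then equivalent to $2ab=(a+b)^2-c^2$, which reduces to $c^2=a^2+b^2$, i.e. the Pythagorean theorem. Either approach is elementary; the only point requiring care is the bookkeeping that matches the right-angle vertex to the hypotenuse (its opposite side), so that the tangent length is correctly identified as $s-c$ rather than $s-a$ or $s-b$. There is no genuine obstacle here, as the lemma is classical, so the real task is simply selecting the cleanest of these two presentations.
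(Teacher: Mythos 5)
Your proposal is correct; both the tangent-length argument (the square $VPIQ$ gives $r=s-c$) and the algebraic route via $r=K/s$ are sound, and the only delicate point you flag — matching the right-angle vertex to the opposite side $c$ — is handled properly. For comparison, the paper does not prove this lemma at all: it simply states it as well known and cites MathWorld, so either of your presentations would be a fine supplement; the tangent-length version is the more self-contained and matches the synthetic style of the neighboring lemmas.
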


\begin{proposition}[$X_{40}$ Property of a Right Triangle]
\label{proposition:dpRightTriangleX40}
Let $\triangle ABC$ be a right triangle with right angle at C.
Let $F$ be its $X_{40}$ point.
Let $BC=a$, $AC=b$, and $AB=c$.
Let the distance from the $F$ to $BC$ be $p$
and let the distance from $F$ to $AC$ be $q$ as shown in Figure~\ref{fig:dpRightTriangleX40}.
Then
$$p+q=c\qquad\hbox{and}\qquad 2pq=ab.$$
\end{proposition}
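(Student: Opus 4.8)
The plan is to use the standard characterization of $X_{40}$ (the Bevan point) as the reflection of the incenter $I$ in the circumcenter $O$, combined with the special structure of a right triangle. I would set up coordinates with $C$ at the origin and the two legs along the axes, so that $A=(b,0)$ and $B=(0,a)$, and the lines $AC$ and $BC$ are precisely the coordinate axes. Then the distance $q$ from any point to $AC$ equals the absolute value of its $y$-coordinate, and the distance $p$ to $BC$ equals the absolute value of its $x$-coordinate. First I would record the two ingredients needed: the circumcenter of a right triangle is the midpoint of the hypotenuse $AB$, so $O=(b/2,\,a/2)$; and the incenter sits at $I=(r,r)$, where $r=\tfrac{a+b-c}{2}$ is the inradius by Lemma~\ref{lemma:inradius}.

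Next I would locate $F=X_{40}$. Since $F$ is the reflection of $I$ in $O$, we have $F=2O-I=(b-r,\,a-r)$. Because $b>r$ and $a>r$ (each inequality follows from a triangle inequality, e.g. $2b>a+b-c\iff b+c>a$), both coordinates are positive, so $F$ lies in the same quadrant as the interior of the triangle and the perpendicular distances are $p=b-r$ and $q=a-r$ with no sign ambiguity. Equivalently, and more synthetically, projecting $I$, $O$, $F$ onto each leg and using that point-reflection in $O$ sends a distance $d$ to $2\,\mathrm{dist}(O,\cdot)-d$ gives the same values, since $\mathrm{dist}(O,AC)=a/2$ and $\mathrm{dist}(O,BC)=b/2$.

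The first identity is then immediate: $p+q=(a+b)-2r=(a+b)-(a+b-c)=c$. For the second, I would expand
$$2pq=2(a-r)(b-r)=2\bigl(ab-r(a+b)+r^2\bigr)$$
and reduce using $a+b=c+2r$, which turns this into $2\bigl(ab-r(c+r)\bigr)=2ab-2r(c+r)$. The computation collapses once one spots the clean factorization
$$r(r+c)=\frac{(a+b-c)(a+b+c)}{4}=\frac{(a+b)^2-c^2}{4}=\frac{ab}{2},$$
where the last step invokes the Pythagorean relation $c^2=a^2+b^2$ (the right angle is at $C$). Substituting back yields $2pq=2ab-ab=ab$, as claimed.

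The only genuine obstacle here is foundational rather than computational: one must be certain that $X_{40}$ is exactly the reflection of the incenter in the circumcenter (a fact to be cited for $X_{40}$) and that the reflected point lands in the quadrant making $p=b-r$ and $q=a-r$ the stated positive perpendicular distances. If one preferred not to quote this characterization, a purely mechanical alternative is available: take the barycentric coordinates of $X_{40}$ directly, specialize them to $c^2=a^2+b^2$, and apply the Distance Formula (Lemma~\ref{lemma:distanceFormula}) to compute the distances from $F$ to the two legs. That route needs no external fact beyond the coordinates of $X_{40}$, but it is less illuminating than the reflection argument above.
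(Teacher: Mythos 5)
Your proposal is correct and follows essentially the same route as the paper: both identify $X_{40}$ as the reflection of the incenter in the circumcenter (citing the ETC entry), use the facts that $O$ is the hypotenuse midpoint and $I$ is at inradius distance from each leg to get $p=b-r$ and $q=a-r$, and then finish with Lemma~\ref{lemma:inradius} and the Pythagorean relation. The paper phrases the reflection step via projections onto the legs rather than Cartesian coordinates, but that is only a presentational difference.
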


\begin{figure}[h!t]
\centering
\includegraphics[width=0.45\linewidth]{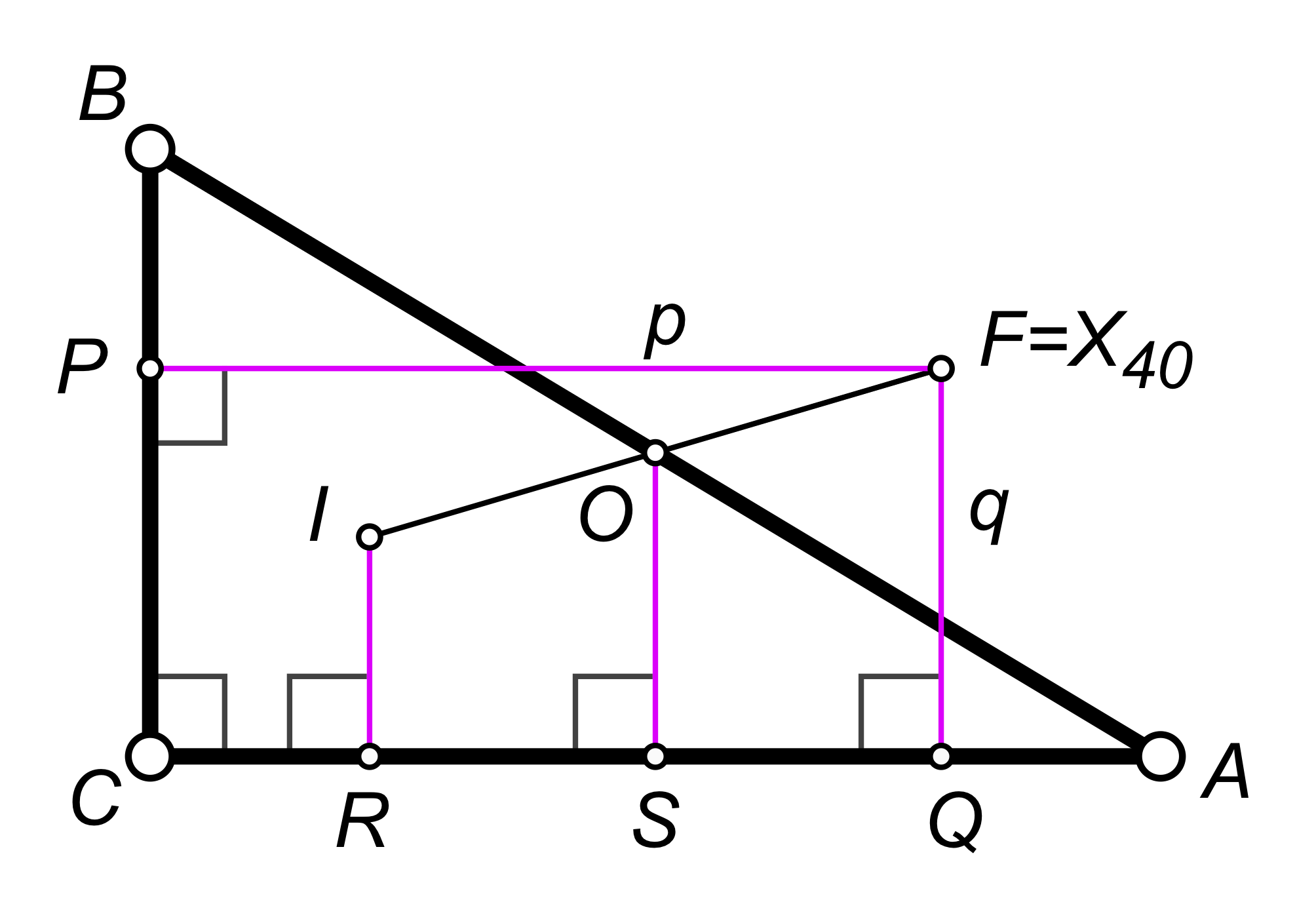}
\caption{$X_{40}$ point of a right triangle}
\label{fig:dpRightTriangleX40}
\end{figure}

\begin{proof}
Let $F$ be the Bevan point of $\triangle ABC$.
According to \cite{ETC40}, the Bevan point, $F=X_{40}$ is the reflection
of $I=X_1$ about $O=X_3$.
Let $R$ and $S$ be the projections of $I$ and $O$, respectively, on $AC$.
Since $\triangle ABC$ is a right triangle, $O$ is the midpoint of $AB$,
and $OS=a/2$.
Since $I$ is the center of the incircle of $\triangle ABC$, $IR=r$, where
$r$ is the inradius.
Since $O$ is the midpoint of $IF$, we have
$$OS=\frac{IR+FQ}{2}$$
or $a=q+r$. Thus, $q=a-r$.
Similarly, $p=b-r$.
Thus, using Lemma~\ref{lemma:inradius}, we have
$$p+q=a+b-2r=c$$
and
$$
\begin{aligned}
pq&=(a-r)(b-r)\\
&=\left(a-\frac{a+b-c}{2}\right)\left(b-\frac{a+b-c}{2}\right)\\
&=\left(\frac{a-b+c}{2}\right)\left(\frac{b-a+c}{2}\right)\\
&=\frac{c^2-(a-b)^2}{4}\\
&=\frac{a^2+b^2-(a-b)^2}{4}\\
&=\frac{ab}{2}.
\end{aligned}
$$
\end{proof}

\begin{proposition}[$X_{84}$ Property of a Right Triangle]
\label{proposition:dpRightTriangleX84}
Let $\triangle ABC$ be a right triangle with right angle at C.
Let $F$ be its $X_{84}$ point.
Let $BC=a$, $AC=b$, and $AB=c$.
Let the distance from the $F$ to $BC$ be $p$
and let the distance from $F$ to $AC$ be $q$ as shown in Figure~\ref{fig:dpRightTriangleX84}.
Then
$$p+q=c\qquad\hbox{and}\qquad 2pq=ab.$$
\end{proposition}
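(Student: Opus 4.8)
The plan is to reduce this proposition to Proposition~\ref{proposition:dpRightTriangleX40} by using the fact that $X_{84}$ is the isogonal conjugate of the Bevan point $X_{40}$ (see \cite{ETC}), together with the observation that the two target relations $p+q=c$ and $2pq=ab$ are symmetric under interchanging $p$ and $q$. Write $s$ for the semiperimeter. From the computation behind Proposition~\ref{proposition:dpRightTriangleX40}, in a right triangle with the right angle at $C$ the distances from $X_{40}$ to the two legs $BC$ and $CA$ are $p_{40}=s-a$ and $q_{40}=s-b$ (these are the quantities $b-r$ and $a-r$ appearing there), and its signed distance to the hypotenuse is $-(s-c)$; hence its trilinears are proportional to $\big(s-a:s-b:-(s-c)\big)$. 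Since isogonal conjugation inverts trilinears coordinatewise, the trilinears of $X_{84}$ in the same triangle are $\big(\tfrac{1}{s-a}:\tfrac{1}{s-b}:-\tfrac{1}{s-c}\big)$.

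Next I would convert these homogeneous trilinears into actual perpendicular distances using the normalization $a\,x+b\,y+c\,z=2[ABC]=ab$, where $x,y,z$ are the directed distances to $BC$, $CA$, $AB$. The key right-triangle identities here are $(s-a)(s-b)=s(s-c)=\tfrac12 ab$, each of which follows immediately from $c^2=a^2+b^2$. Carrying out the normalization, and keeping track of the sign (since $X_{84}$ lies outside the triangle, beyond the vertex $C$), produces the normalizing constant $-(s-a)(s-b)$, and therefore leg-distances $p=s-b$ to $BC$ and $q=s-a$ to $CA$. Thus the unordered pair of leg-distances of $X_{84}$ is exactly the pair $\{s-a,\,s-b\}$ of $X_{40}$, merely interchanged.

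Finally, because $p+q=c$ and $2pq=ab$ are symmetric in $p$ and $q$, and Proposition~\ref{proposition:dpRightTriangleX40} already establishes them for the values $(s-a,s-b)$, they hold verbatim for the interchanged values $(s-b,s-a)$: indeed $p+q=(s-b)+(s-a)=2s-a-b=c$ and $2pq=2(s-a)(s-b)=ab$. I expect the main obstacle to be the normalization step, where one must verify the identity $\tfrac{a}{s-a}+\tfrac{b}{s-b}-\tfrac{c}{s-c}=-2$ (equivalently $s(s-c)=(s-a)(s-b)$) in a right triangle and correctly resolve the sign, so that the leg-distances emerge as the interchanged pair rather than as reciprocal-scaled values. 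A purely computational alternative, mirroring the barycentric method used for Lemma~\ref{lemma:dpX402}, would instead substitute $c^2=a^2+b^2$ directly into the ETC coordinates for $X_{84}$ and read off $p$ and $q$; but the isogonal-conjugate reduction above is shorter and explains transparently why $X_{40}$ and $X_{84}$ yield the very same relations.
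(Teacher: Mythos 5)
Your proof is correct, but it takes a genuinely different route from the paper's. The paper works directly from the ETC barycentrics of $X_{84}$: under $c^2=a^2+b^2$ they collapse to $(b-c:a-c:c)$, and the distances are then extracted as $p=\tfrac{2}{a}[BFC]=\tfrac{b(c-b)}{a+b-c}$ and $q=\tfrac{a(c-a)}{a+b-c}$ via the Area Formula, after which $p+q=c$ and $2pq=ab$ are verified by algebra. You instead derive the result from Proposition~\ref{proposition:dpRightTriangleX40} by isogonal conjugation: since $X_{84}$ is the isogonal conjugate of $X_{40}$ and, in a right triangle, $(s-a)(s-b)=s(s-c)=\tfrac12 ab$, inverting the trilinears $\bigl(s-a:s-b:-(s-c)\bigr)$ and renormalizing simply swaps the two leg-distances; the symmetry of $p+q=c$ and $2pq=ab$ in $p$ and $q$ then finishes the argument. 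I checked your key computations: the trilinears $(2:3:-6)$ for a $3$--$4$--$5$ triangle agree with the paper's $(b-c:a-c:c)$, the normalizing sum $\tfrac{a}{s-a}+\tfrac{b}{s-b}-\tfrac{c}{s-c}=-2$ is indeed equivalent to $s(s-c)=(s-a)(s-b)$, and the resulting leg-distances $s-b$ and $s-a$ match the paper's $p$ and $q$. Your approach buys a conceptual explanation of why $X_{40}$ and $X_{84}$ satisfy identical relations (which the paper leaves as a coincidence of two separate computations), at the cost of importing the external fact that $X_{84}=X_{40}^{*}$ and of one small unproved assertion: the signed distance $-(s-c)=-r$ from $X_{40}$ to the hypotenuse is not part of Proposition~\ref{proposition:dpRightTriangleX40} as stated, though it follows from the same reflection-about-$O$ argument used there by projecting onto $AB$ instead of onto a leg. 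You should add that one line when writing the proof out in full.
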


\begin{figure}[h!t]
\centering
\includegraphics[width=0.35\linewidth]{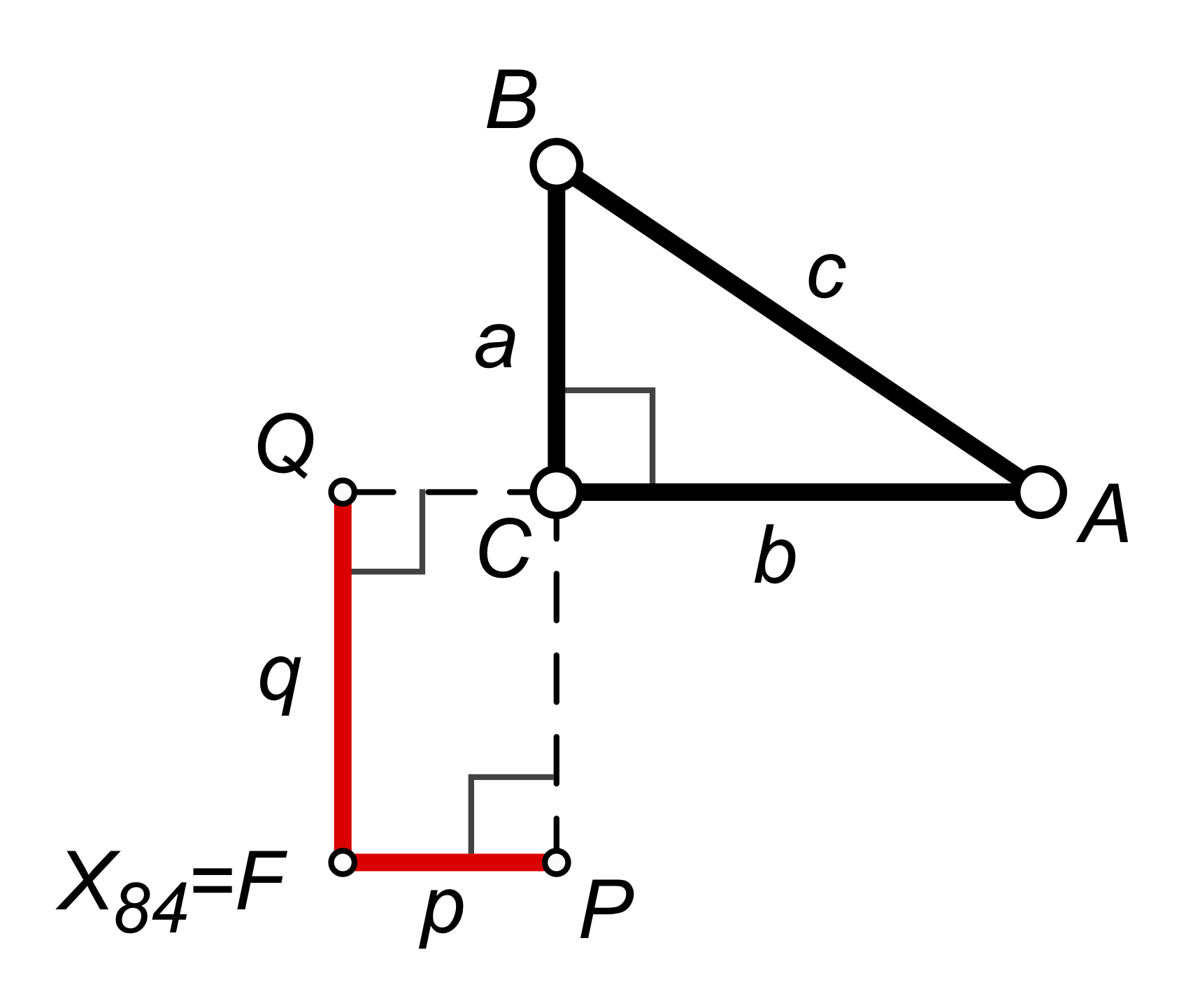}
\caption{$X_{84}$ point of a right triangle}
\label{fig:dpRightTriangleX84}
\end{figure}

\begin{proof}
According to \cite{ETC84}, the barycentric coordinates for the $X_{84}$ point of a triangle are
$$\left(a^3 \left(a^2-(b-c)^2\right)^2-a(b-c)^2 \left(a^2-(b+c)^2\right)^2:\,:\right).$$
With the condition $a^2+b^2=c^2$, this simplifies to
$$(b-c:a-c:c).$$
Note that
$$[BFC]=\frac12BC\times PF$$
so
$$p=\frac2a[BFC].$$
Using the Area Formula, we find that
$$[BFC]=\frac{(c-b)}{(a+b-c)}K$$
where $K$ is the area of $\triangle ABC$ (which, in this case, is $ab/2$).
Therefore,
$$p=\frac2a[BFC]=\frac{2(c-b)}{a(a+b-c)}K=\frac{2(c-b)}{a(a+b-c)}\left(\frac{ab}{2}\right)=
\frac{b(c-b)}{a+b-c}.$$
In the same way, we find
$$q=\frac{a(c-a)}{a+b-c}.$$
Thus,
$$p+q=\frac{ac+bc-(a^2+b^2)}{a+b-c}=\frac{ac+bc-c^2}{a+b-c}=c$$
and
$$pq=\frac{ab(c-b)(c-a)}{(a+b-c)^2}=\frac{a^2 b^2-a^2 b c-a b^2 c+a b c^2}{a^2+2 a b-2 a c+b^2-2 b c+c^2}$$
which simplifies to $ab/2$ when we substitute $a^2+b^2$ for $c^2$.
\end{proof}

\newpage

\begin{theorem}
\label{theorem:dpX84}
Let $E$ be the diagonal point of rhombus $ABCD$.
Let $F$, $G$, $H$, and $I$ be the $X_{40}$ points of $\triangle EAB$, $\triangle EBC$, 
$\triangle ECD$,  and $\triangle EDA$, respectively.
Let $F'$, $G'$, $H'$, and $I'$ be the $X_{84}$ points of $\triangle EAB$, $\triangle EBC$, 
$\triangle ECD$,  and $\triangle EDA$, respectively (Figure~\ref{fig:dpRhombusX84}).
Then $FGHI$ and $F'G'H'I'$ are congruent rectangles
and the three quadrilaterals have the same perimeter and the same area.
\end{theorem}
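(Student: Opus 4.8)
The plan is to lean entirely on the two propositions just established, which show that in a right triangle the points $X_{40}$ and $X_{84}$ behave almost identically: each sits at prescribed perpendicular distances from the two legs. First I would place Cartesian coordinates with the diagonal point $E$ at the origin and the (perpendicular) diagonals of the rhombus along the axes, say $A=(-s,0)$, $C=(s,0)$, $B=(0,-t)$, $D=(0,t)$, which is without loss of generality since the diagonals of a rhombus perpendicularly bisect each other. Every radial triangle then has its right angle at $E$, with one leg of length $s$ lying along the $x$-axis and one leg of length $t$ along the $y$-axis. In particular all four radial triangles share the same pair of legs, hence the same inradius $r$ and the same hypotenuse length $\ell=\sqrt{s^2+t^2}$ (which is also the side length of the rhombus).

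Next I would locate $F,G,H,I$. By Proposition~\ref{proposition:dpRightTriangleX40}, the $X_{40}$ point of each radial triangle has perpendicular distance equal to \emph{the other leg minus $r$} from each leg; writing $u:=s-r$ and $v:=t-r$, this means every center lies at horizontal distance $u$ from the $y$-axis and vertical distance $v$ from the $x$-axis. Placing each point in the quadrant occupied by its own triangle forces $F=(-u,-v)$, $G=(u,-v)$, $H=(u,v)$, $I=(-u,v)$, so $FGHI$ is an axis-aligned rectangle of dimensions $2u\times2v$. Running the identical argument for $X_{84}$ with Proposition~\ref{proposition:dpRightTriangleX84} gives the two distances interchanged: under $c^2=a^2+b^2$ one checks $\frac{b(c-b)}{a+b-c}=\tfrac12(a-b+c)=a-r$ (and symmetrically for the other leg). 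Hence $F'=(-v,-u)$, $G'=(v,-u)$, $H'=(v,u)$, $I'=(-v,u)$, a rectangle of dimensions $2v\times2u$. Thus $FGHI$ and $F'G'H'I'$ are rectangles with the same pair of side lengths $\{2u,2v\}$ and are therefore congruent.

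Finally I would read off areas and perimeters from the two identities the propositions supply, namely $u+v=\ell$ and $2uv=st$ (these are $p+q=c$ and $2pq=ab$ with the legs $a,b$ being $s,t$). The rhombus has area $\tfrac12(2s)(2t)=2st$ by Lemma~\ref{lemma:orthodiag} and perimeter $4\ell$, while $[FGHI]=(2u)(2v)=4uv=2st$ and $\partial FGHI=2(2u+2v)=4\ell$, with the very same values for $F'G'H'I'$. This delivers all claims at once: equal areas and equal perimeters for the three quadrilaterals, together with the congruence of the two rectangles.

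The only genuinely delicate points are bookkeeping. One must confirm that in \emph{every} radial triangle it is the $s$-leg that is horizontal and the $t$-leg that is vertical, so that the horizontal offset is always $u$ and the vertical offset always $v$; this is precisely what collapses the four centers onto an axis-aligned rectangle rather than a generic parallelogram, and it is also what makes the $X_{84}$ configuration the $90^\circ$-rotation-sized partner of the $X_{40}$ one. The remaining subtlety is justifying the interchange of the two leg-distances between $X_{40}$ and $X_{84}$, which reduces to the factorization $2b(c-b)=(a-b+c)(a+b-c)$ valid when $c^2=a^2+b^2$. Both points follow directly from the explicit formulas already proved in the two propositions, so no new computation of substance is required.
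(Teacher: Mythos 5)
Your proposal is correct in all of its conclusions and rests on the same two key facts as the paper, namely Propositions~\ref{proposition:dpRightTriangleX40} and \ref{proposition:dpRightTriangleX84} ($p+q=c$ and $2pq=ab$). Where you diverge is in the finish: the paper establishes that each central quadrilateral is a rectangle by symmetry, shows separately that each has the same area and the same perimeter as the rhombus, and then deduces $FGHI\cong F'G'H'I'$ from the fact that two rectangles with equal areas and equal perimeters are congruent. You instead compute explicit coordinates and observe that the two leg-distances are literally interchanged between $X_{40}$ and $X_{84}$ (via the factorization $2b(c-b)=(a-b+c)(a+b-c)$ under $c^2=a^2+b^2$), so the two rectangles have the same side lengths $\{2u,2v\}$ outright. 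That is a cleaner and more informative route to the congruence, and the interchange identity $\frac{b(c-b)}{a+b-c}=a-r$ is a nice observation the paper does not make explicit.

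One factual slip: your phrase ``placing each point in the quadrant occupied by its own triangle'' is justified for $X_{40}$ (the Bevan point of the right triangle with legs on the axes sits at $(b-r,a-r)$, in the triangle's quadrant), but it is false for $X_{84}$. The simplified barycentrics $(b-c:a-c:c)$ have two negative components, and in Cartesian terms the point lands at $\bigl(-(a-r),-(b-r)\bigr)$, i.e.\ in the quadrant \emph{opposite} its triangle. This happens to be harmless: sending each of the four points to the antipodal quadrant permutes the vertex set $\{(\pm v,\pm u)\}$ by a half-turn about $E$, so $F'G'H'I'$ is the very same $2v\times 2u$ rectangle (with labels cyclically shifted) and every area, perimeter, and congruence claim survives. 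But as written the placement step is unverified and, for $X_{84}$, wrong, so you should either check the signs of the normalized barycentrics or note the central symmetry that makes the sign ambiguity irrelevant.
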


\begin{figure}[h!t]
\centering
\includegraphics[width=0.4\linewidth]{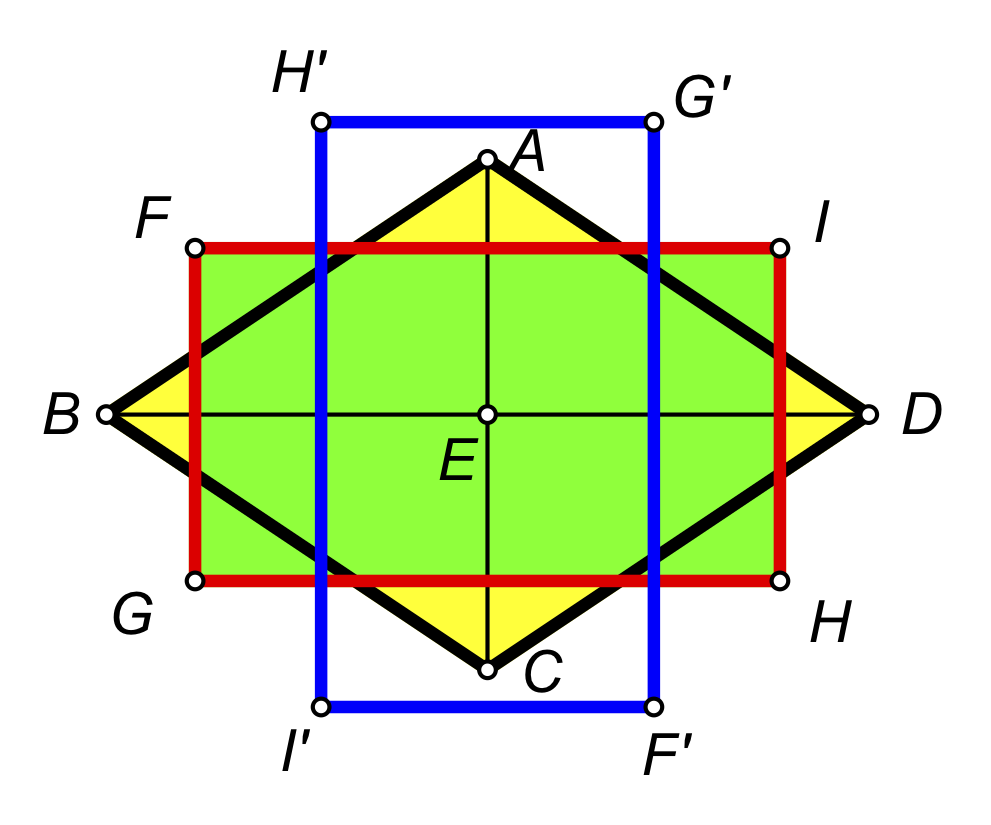}
\caption{$\displaystyle X_{40}\implies \partial ABCD=\partial FGHI$}
\label{fig:dpRhombusX84}
\end{figure}

\begin{proof}
By symmetry considerations (or using Theorem~5.50 from \cite{shapes}), we have
that the central quadrilaterals are rectangles with diagonal point $E$. The sides of
these rectangles are parallel to the diagonals of $ABCD$.
The diagonals of the rhombus divide each of the three figures into four congruent pieces.
We therefore only need to prove the appropriate result for one of these pieces.

\textbf{Part 1}: $X_{40}$

Let $P$ and $Q$ be the projections of $F$, the $X_{40}$ point of $\triangle ABE$, on
sides $BE$ and $CE$ of right triangle $ABE$, respectively, as shown
in Figure~\ref{fig:dpRhombusX40proof}.

\begin{figure}[h!t]
\centering
\includegraphics[width=0.4\linewidth]{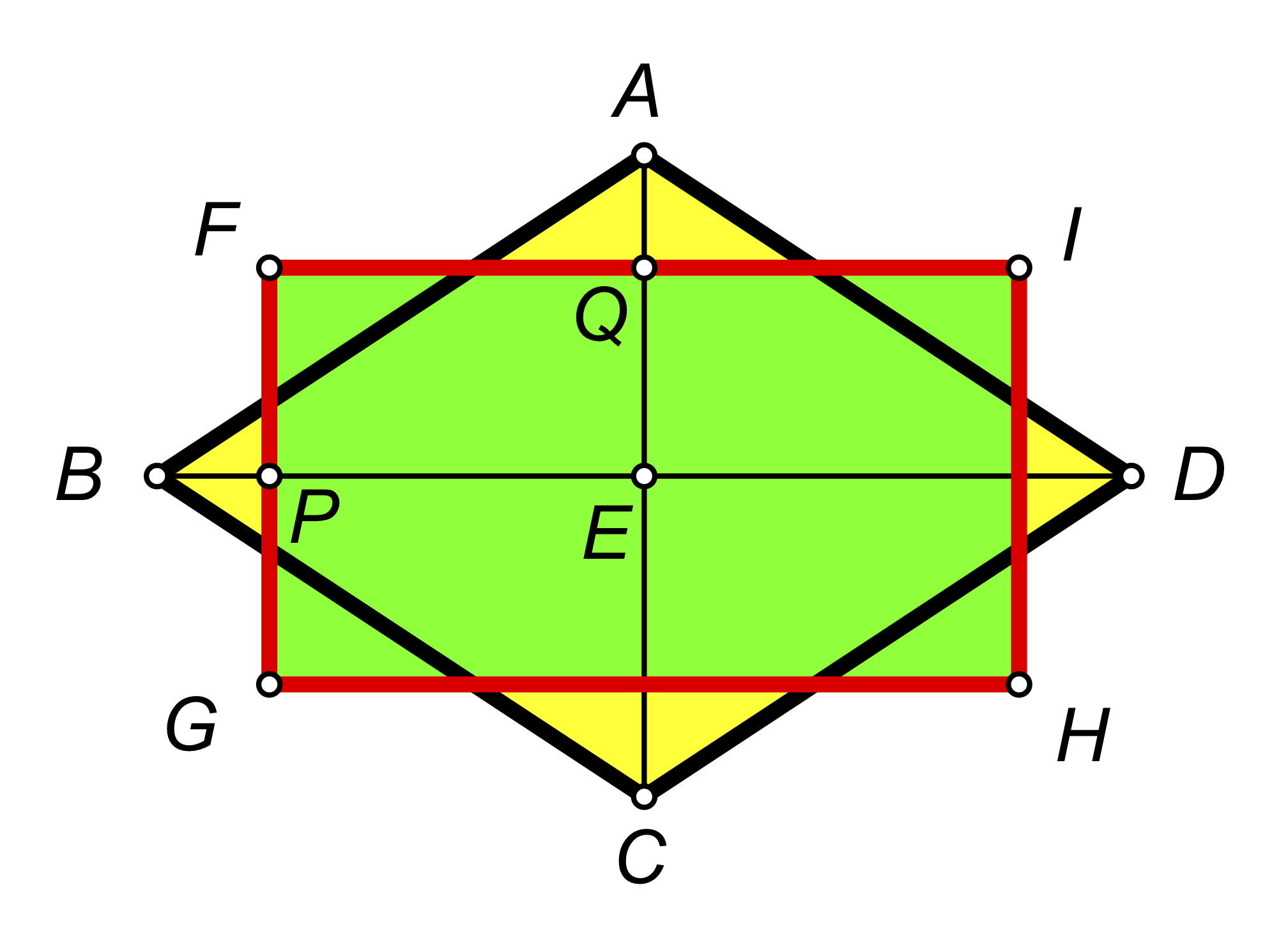}
\caption{}
\label{fig:dpRhombusX40proof}
\end{figure}

By Proposition~\ref{proposition:dpRightTriangleX40}, $FP+FQ=AB$.
Therefore, by symmetry, we have $$FG+GH+HI+IF=AB+BC+CD+DA.$$
Also, $FP\cdot FQ=(BE\cdot AE)/2$. So $[FPEQ]=[ABE]$.
Therefore, by symmetry, we have $[FGHI]=[ABCD]$.
The rectangle and the rhombus therefore have equal areas and equal perimeters.

\newpage
\textbf{Part 2}: $X_{84}$

Let $P$ and $Q$ be the projections of $F'$, the $X_{84}$ point of $\triangle ABE$, on
sides $BE$ and $CE$ of right triangle $ABE$, respectively,
as shown in Figure~\ref{fig:dpRhombusX84proof}.

\begin{figure}[h!t]
\centering
\includegraphics[width=0.4\linewidth]{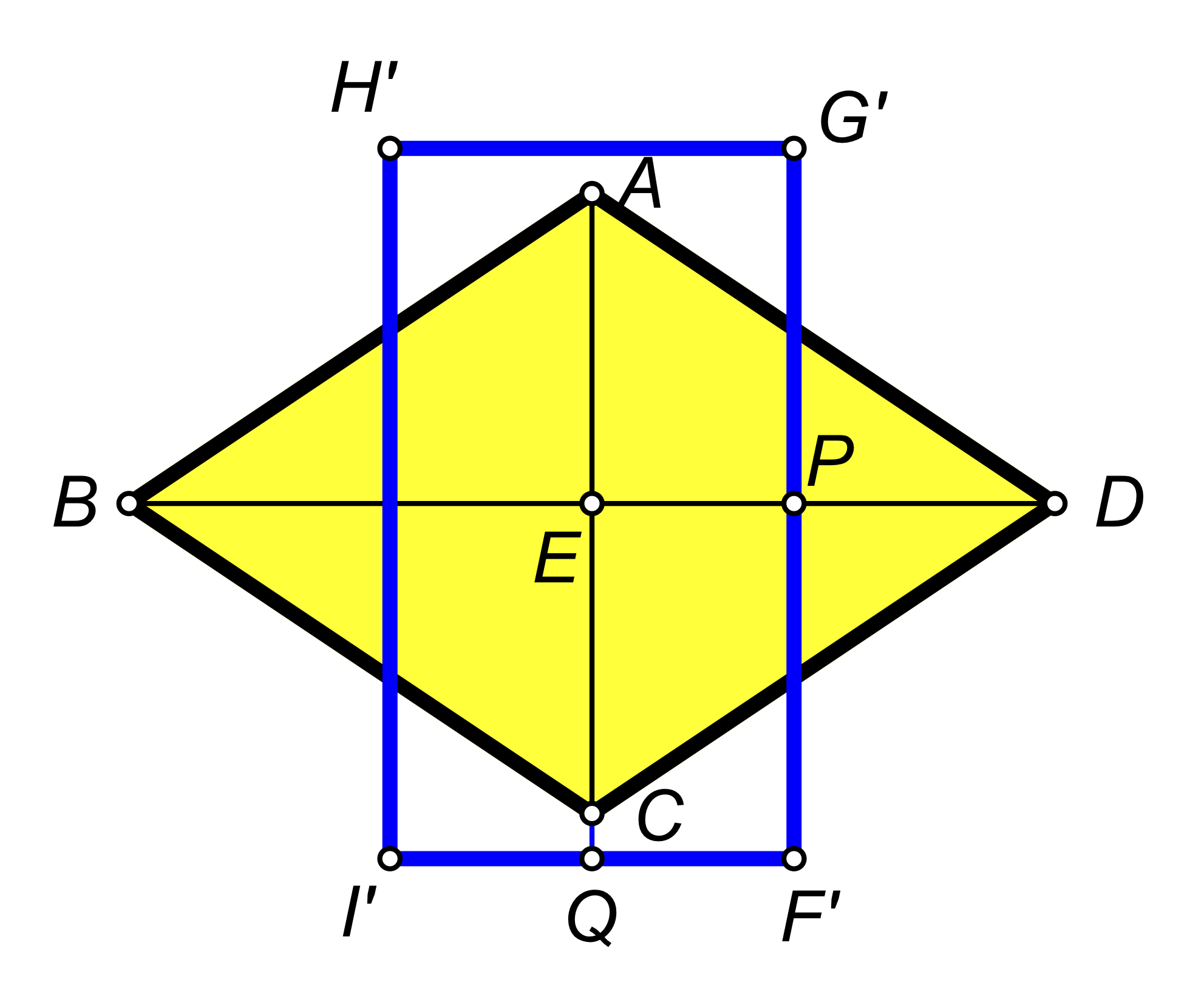}
\caption{}
\label{fig:dpRhombusX84proof}
\end{figure}

By Proposition~\ref{proposition:dpRightTriangleX84}, $F'P+F'Q=AB$.
Therefore, by symmetry, we have $$F'G'+G'H'+H'I'+I'F'=AB+BC+CD+DA.$$
Also, $F'P\cdot F'Q=(BE\cdot AE)/2$. So $[FPEQ]=[ABE]$.
Therefore, by symmetry, we have $[F'G'H'I']=[ABCD]$.
The rectangle and the rhombus therefore have equal areas and equal perimeters.

Finally, note that two rectangles that have equal areas and equal perimeters must
be congruent, so $ABCD\cong FGHI\cong F'G'H'I'$.
\end{proof}

\subsection{Proofs for Rectangles}\ 

We now give proofs for some of the results listed in Table~\ref{table:dp1} for rectangles.

The following result comes from \cite[Theorem~5.52]{shapes}.

\begin{lemma}
\label{thm:arbCenterRectangle}
For any triangle center,
if the reference quadrilateral is a rectangle,
then the central quadrilateral is a rhombus.
The two quadrilaterals have the same diagonal point.
The sides of the rhombus are parallel to diagonals of the rectangle
and are bisected by them
(Figure \ref{fig:dpRectangle}).
\end{lemma}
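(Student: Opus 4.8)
The plan is to derive everything from the symmetry group of the rectangle together with a single structural fact about triangle centers: a triangle center is a point attached to a triangle's congruence class — it does not depend on how the vertices are labelled, and any isometry $\phi$ carries the chosen center of a triangle $T$ to the chosen center of $\phi(T)$. This labelling-independence is exactly what is guaranteed by Kimberling's requirement that the center function $f(a,b,c)$ be symmetric in $b$ and $c$, so I would record it as a preliminary remark; it is the hinge of the whole argument. First I would fix the geometry: since $ABCD$ is a rectangle, its diagonal point $E$ is the common midpoint of the two equal diagonals, so $EA=EB=EC=ED$, and hence each radial triangle $\triangle ABE$, $\triangle BCE$, $\triangle CDE$, $\triangle DAE$ is isosceles with apex $E$, its two legs being half-diagonals. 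Let $\sigma_v$ and $\sigma_h$ be the reflections of the rectangle in its two midlines (the perpendicular bisectors of the two pairs of opposite sides); these are symmetries of $ABCD$.

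The key steps are then as follows. (1) The reflection $\sigma_v$ interchanges $A\leftrightarrow B$ and $C\leftrightarrow D$, so it fixes each of $\triangle ABE$ and $\triangle CDE$ as a set; by the covariance remark it fixes their centers $F$ and $H$, forcing $F$ and $H$ onto the $\sigma_v$-axis (one midline). Symmetrically, $\sigma_h$ fixes $G$ and $I$, placing them on the other midline. (This also recovers Lemma~\ref{lemma:dpIsoscelesTriangle}: the center of an isosceles triangle lies on its axis.) (2) The reflection $\sigma_h$ maps $\triangle ABE$ onto $\triangle DCE=\triangle CDE$, hence $\sigma_h(F)=H$; since $F$ already lies on the $\sigma_v$-axis, its mirror $H$ lies on that axis as well, and $E$ is the midpoint of $FH$. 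Likewise $\sigma_v(G)=I$, so $E$ is the midpoint of $GI$. (3) Therefore the diagonals $FH$ and $GI$ of $FGHI$ lie along the two perpendicular midlines and bisect each other at $E$. A quadrilateral whose diagonals bisect each other is a parallelogram, and a parallelogram with perpendicular diagonals is a rhombus; its diagonal point is $E$, the diagonal point of $ABCD$. Since $FH$ and $GI$ run along the midlines, the diagonals of the rhombus are parallel to the sides of the rectangle.

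I expect the only genuine obstacle to be the covariance/labelling step itself: everything downstream is elementary plane symmetry, but one must be certain that pushing a triangle center through an orientation-reversing symmetry (with its induced vertex swap) really lands on the chosen center of the image triangle, which is precisely where the symmetry of $f$ in $b$ and $c$ is used. I would also be deliberate about the exact parallelism clause. The symmetry argument robustly delivers that the rhombus's \emph{diagonals} are parallel to the rectangle's sides (equivalently, lie along its midlines) and meet at $E$. The stronger reading, that each \emph{side} of the rhombus is parallel to a \emph{diagonal} of the rectangle, is center-dependent: it holds when the center divides the apex-to-base segment of every radial triangle in a shape-independent ratio — as for the centroid, where $EF:EM=2:3$ on each radial triangle — but it can fail otherwise, for instance for the incenter in a non-square rectangle, where the two congruence classes of radial triangles $\{\triangle ABE,\triangle CDE\}$ and $\{\triangle BCE,\triangle DAE\}$ yield different ratios. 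In the final write-up I would therefore either state that clause in the diagonal-versus-side form above or make explicit the extra hypothesis under which the side-versus-diagonal form holds.
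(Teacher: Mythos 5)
The paper never proves this lemma at all --- it is imported verbatim as Theorem~5.52 of \cite{shapes} --- so your symmetry argument is a genuine addition rather than a variant of an in-paper proof, and it is sound where it applies. The covariance of a Kimberling center under isometries (including reflections, which is exactly where the symmetry of $f$ in $b$ and $c$ is needed) forces $F$ and $H$ onto one midline of the rectangle and $G$ and $I$ onto the other, with $E$ the common midpoint of $FH$ and $GI$; diagonals that are perpendicular and bisect each other give a rhombus with diagonal point $E$. This is all the paper ever uses downstream (e.g.\ in Theorem~\ref{theorem:dpRectangleX395} only ``$FGHI$ is a rhombus'' and $[EFG]=\frac14[FGHI]$ are invoked), so your proof fully supports the applications.

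Your reservation about the third clause is not merely a caution; it identifies an actual error in the statement as printed. With $A=(0,0)$, $B=(w,0)$, $C=(w,h)$, $D=(0,h)$, $E=(w/2,h/2)$, write $F=E+\lambda_1(M_{AB}-E)$ and $G=E+\lambda_2(M_{BC}-E)$, where $M_{AB}$, $M_{BC}$ are the midpoints of $AB$, $BC$ and $\lambda_i$ depends only on the shape of the corresponding isosceles radial triangle (Lemma~\ref{lemma:dpIsoscelesTriangle}). Then $G-F\propto(\lambda_2 w,\lambda_1 h)$, which is parallel to the diagonal direction $(w,h)$, and has its midpoint on the other diagonal, precisely when $\lambda_1=\lambda_2$. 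For the centroid $\lambda_1=\lambda_2=\frac23$ always; but for the circumcenter one computes $G-F\propto(h,w)$, and for the incenter $\lambda_i=d/(d+\beta_i)$ with $d$ the diagonal and $\beta_i$ the base of the $i$th radial triangle, so in a non-square rectangle the sides of the rhombus are parallel to neither diagonal of $ABCD$. The clause ``the sides of the rhombus are parallel to diagonals of the rectangle and are bisected by them'' should therefore be dropped, or restricted to centers for which the ratio $EX/EM$ agrees on the two congruence classes of radial triangles, or replaced by what the symmetry argument actually delivers: the diagonals of the rhombus lie along the midlines of the rectangle (hence are parallel to its sides) and meet at $E$.
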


\begin{figure}[h!t]
\centering
\includegraphics[width=0.3\linewidth]{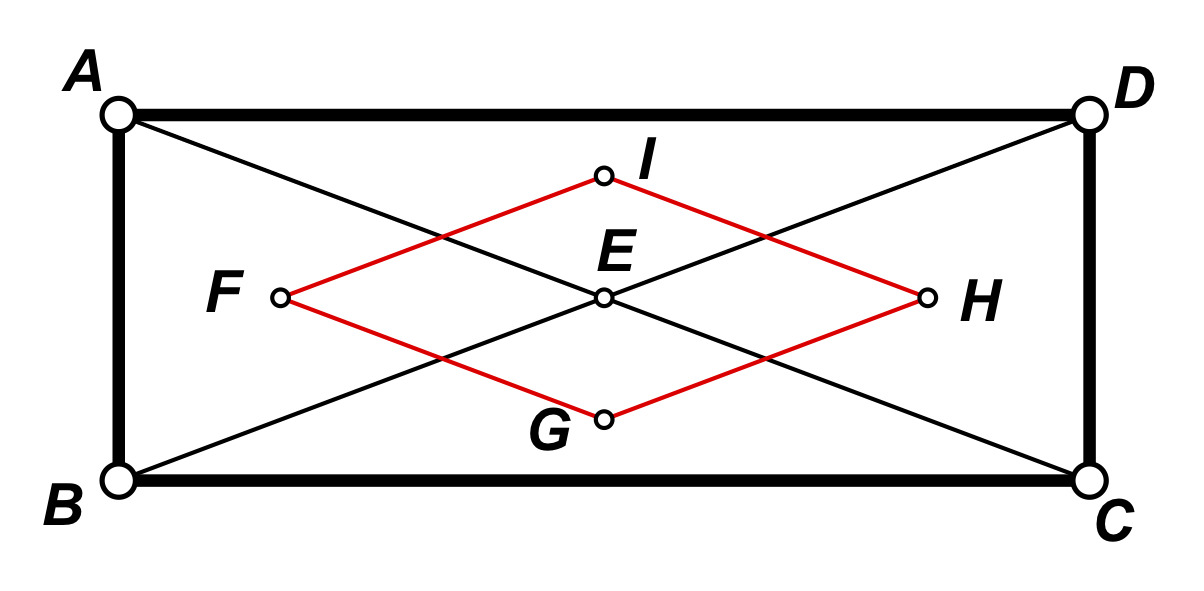}
\caption{rectangle $\implies$ rhombus}
\label{fig:dpRectangle}
\end{figure}

\begin{theorem}
\label{thm:isoscelesRatio}
Let $ABC$ be an isosceles triangle with $AB=AC$. Let $M$ be the midpoint of base $BC$.
Then the center $X_n$ lies on the line $AM$ and the ratio
of $X_nM$ to $AM$ is a constant for the values shown in
Table~\ref{table:diagonalPointIsoscelesRatios}:
\end{theorem}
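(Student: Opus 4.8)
The plan is to reduce the entire statement to a single rational expression in the barycentric coordinates of $X_n$, exactly as was done for the right-triangle case in Theorem~\ref{thm:ratio}, and then to verify center-by-center which of these expressions is constant. The first assertion---that $X_n$ always lies on the line $AM$---requires no computation: by Lemma~\ref{lemma:dpIsoscelesTriangle}, every triangle center of an isosceles triangle with $AB=AC$ lies on the median $AM$ to the base, because the defining symmetry $B\leftrightarrow C$ of the triangle forces the barycentric coordinates $(u:v:w)$ of any center to satisfy $v=w$, and $AM$ is precisely the line $v=w$.

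For the ratio, I would again invoke Lemma~\ref{lemma:dpIsoscelesTriangle}, which gives
$$\frac{X_nM}{AM}=\frac{u}{u+2v}$$
once the coordinates are written with $v=w$. For each $n$ from $1$ to $1000$ I would look up the center function $f$ (equivalently the barycentric coordinates $\bigl(af(a,b,c):bf(b,c,a):cf(c,a,b)\bigr)$) from \cite{ETC}, impose the isosceles condition $b=c$ (that is, $AB=AC$), and read off $u=af(a,b,b)$ and $v=bf(b,b,a)$; the symmetry of $f$ in its last two arguments guarantees that the third coordinate equals the second, confirming $v=w$. Substituting into the displayed ratio yields an explicit rational function of the two remaining free lengths $a$ and $b$.

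The ratio $X_nM:AM$ is then a constant precisely when this rational function is independent of $a$ and $b$; this test is carried out symbolically in Mathematica, the constant values are recorded in Table~\ref{table:diagonalPointIsoscelesRatios}, and all non-constant (and undefined) values are discarded. This is the direct analogue of the argument proving Theorem~\ref{thm:ratio}, with the constraint $a^2=b^2+c^2$ there replaced by the constraint $b=c$ here.

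I expect the only genuine obstacle to be organizational rather than conceptual: there is no uniform closed-form argument covering all centers at once, so the result is established by a finite but large symbolic verification, with extra care needed for those $n$ whose coordinates degenerate, vanish identically, or become points at infinity under the substitution $b=c$. All of the real mathematical content is packaged into Lemma~\ref{lemma:dpIsoscelesTriangle}, which collapses the geometric length ratio to the one-line formula $u/(u+2v)$; once that reduction is in hand, only routine simplification remains.
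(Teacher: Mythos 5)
Your proposal is correct and follows essentially the same route as the paper: the authors likewise reduce the ratio to $u/(u+2v)$ via Lemma~\ref{lemma:dpIsoscelesTriangle}, compute it by computer for each center, simplify under the constraint $b=c$, and discard the non-constant cases. Your additional remark that $v=w$ forces $X_n$ onto the line $AM$ is exactly the content already packaged in that lemma, so nothing is missing.
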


\begin{table}[ht!]
\caption{Values of $n$ for which the ratio $X_nM:AM$ is a constant}
\label{table:diagonalPointIsoscelesRatios}
\begin{center}
\begin{tabular}{|l|c|}
\hline
$n$&\textbf{ratio}\\ \hline
\ru 2&$\frac13$\\ \hline
\ru 148&$-1$\\ \hline
\ru 149&$-1$\\ \hline
\ru 150&$-1$\\ \hline
\ru 290&$-\frac13$\\ \hline
\ru 402&$\frac12$\\ \hline
\ru 620&$\frac12$\\ \hline
\ru 671&$-\frac13$\\ \hline
\ru 903&$-\frac13$\\ \hline
\end{tabular}
\end{center}
\end{table}

\begin{proof}
The ratios $X_nM/AM$ are easily found (by computer) using Lemma~\ref{lemma:dpIsoscelesTriangle}.
The resulting ratio is simplified using the constraint that $b=c$.
Results where this ratio is not a constant are discarded.
\end{proof}

\begin{lemma}
\label{lemma:isoscelesRatioAll}
Let $ABC$ be an isosceles triangle with $AB=AC$. Let $M$ be the midpoint of base $BC$.
Let point $X$ lie on the line $AM$ such that (using signed distances) the ratio
$$\frac{XM}{AM}=k.$$
Then (still using signed distances),
$$\frac{AX}{AM}=1-k.$$
\end{lemma}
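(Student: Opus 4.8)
The plan is to reduce the statement to the additivity of directed segments along the line $AM$ (Chasles' relation), after which the conclusion follows by a single algebraic step. Since $A$, $X$, and $M$ are collinear by hypothesis, I would fix an orientation on this line once and for all and interpret $AX$, $XM$, and $AM$ as signed distances relative to that orientation, using exactly the convention under which the hypothesis $XM/AM = k$ is stated.

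First I would record Chasles' relation for the three collinear points, namely $AX + XM = AM$. This identity holds for signed distances regardless of the relative order of $A$, $X$, and $M$ on the line, which is precisely why passing to signed quantities is the right move. Because $M$ is the midpoint of the base while $A$ is the apex, the points $A$ and $M$ are distinct, so $AM \neq 0$ and every quotient in sight is well defined.

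Dividing the relation $AX + XM = AM$ through by $AM$ gives $\frac{AX}{AM} + \frac{XM}{AM} = 1$. Substituting the hypothesis $\frac{XM}{AM} = k$ then yields $\frac{AX}{AM} = 1 - k$ at once, which is the desired formula. Notably, the isosceles hypothesis plays no real role in the argument; it merely supplies the geometric context (a center $X$ lying on the median $AM$) and guarantees that $A \neq M$ so that the ratios make sense.

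The only place demanding care — and hence the main obstacle, such as it is — is the sign bookkeeping: one must confirm that both quotients $AX/AM$ and $XM/AM$ are formed with a common orientation, so that the signed additivity reads $AX + XM = AM$ rather than, say, $AX - XM = AM$. Once the signed-distance convention is pinned down consistently with the statement of the hypothesis, there is no further difficulty, and the lemma is an immediate consequence of segment additivity.
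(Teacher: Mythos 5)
Your proof is correct. The only point worth comparing is the execution: the paper proves this lemma by splitting into three cases according to the sign and size of $k$ (namely $0<k<1$, $k>1$, and $k<0$, illustrated in Figure~\ref{fig:dpIsoscelesCases}), and in each case rewrites $AX$ in terms of unsigned lengths with the appropriate signs before recovering $1-k$. You instead invoke the signed-distance additivity $AX+XM=AM$ (Chasles' relation) once, divide by $AM\neq 0$, and substitute the hypothesis. The underlying content is the same --- segment additivity along the line $AM$ --- but your version is cleaner and avoids the case analysis entirely, at the small cost of having to be explicit that both ratios are taken with respect to a single fixed orientation of the line, which you correctly flag as the one point requiring care. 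Your observation that the isosceles hypothesis is irrelevant except to guarantee $A\neq M$ and to supply the geometric context is also accurate.
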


\begin{figure}[h!t]
\centering
\includegraphics[width=1\linewidth]{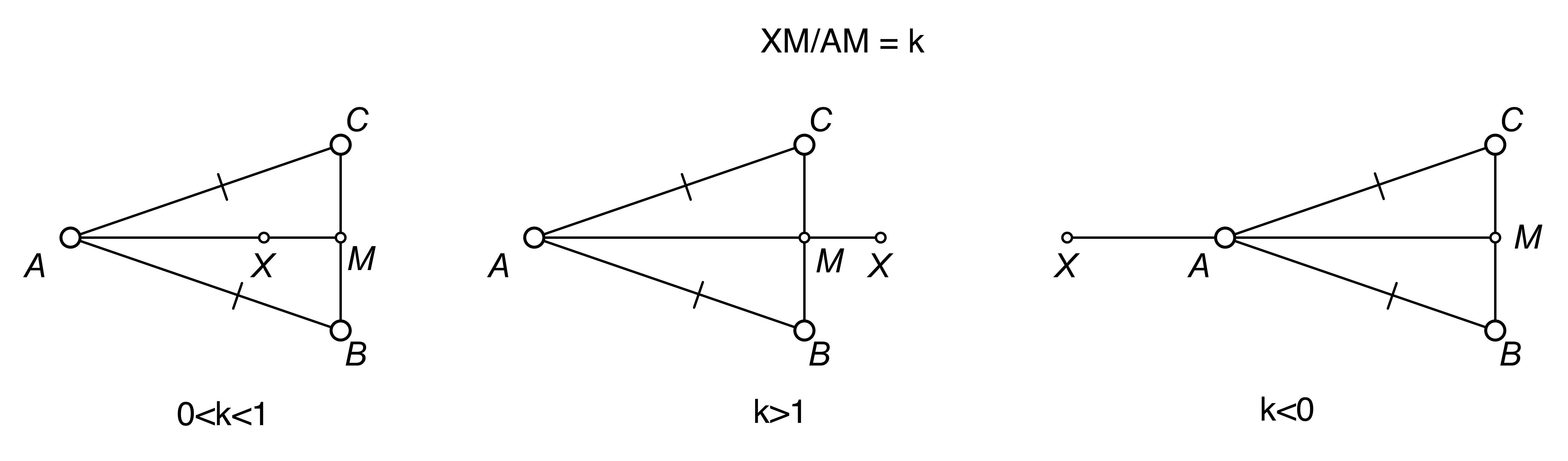}
\caption{Location of $X$ for various $k$}
\label{fig:dpIsoscelesCases}
\end{figure}

\begin{proof}
See Figure~\ref{fig:dpIsoscelesCases} for the three cases.
If $0<k<1$, then
$$\frac{AX}{AM}=\frac{AM-XM}{AM}=1-k.$$
If $k>1$, then
$$\frac{AX}{AM}=\frac{AM+MX}{AM}=\frac{AM-XM}{AM}=1-k.$$
If $k<0$, then
$$\frac{AX}{AM}=\frac{MX-MA}{AM}=-\left(\frac{XM-AM}{AM}\right)=-(k-1)=1-k.$$
\end{proof}

\relbox{Relationship $[ABCD]=8[FGHI]$}

\begin{theorem}
\label{thm:dpRectanglePos}
Let $E$ be the diagonal point of rectangle $ABCD$.
Let $X_n$ be a triangle center with the property that for all isosceles triangles with vertex $V$
and midpoint of base $M$, $X_nM/VM$ is a fixed positive constant $k$.
Let $F$, $G$, $H$, and $I$ be the $X_n$ points of $\triangle EAB$, $\triangle EBC$, 
$\triangle ECD$, and $\triangle EDA$, respectively.
Then
$$[ABCD]=\frac{2}{(1-k)^2}[FGHI].$$
\end{theorem}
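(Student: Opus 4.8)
The plan is to exploit the fact that the diagonals of a rectangle are equal and bisect each other, so the diagonal point $E$ is equidistant from all four vertices. Consequently $EA=EB=EC=ED$, and each radial triangle $\triangle EAB$, $\triangle EBC$, $\triangle ECD$, $\triangle EDA$ is isosceles with apex $E$ and with one side of the rectangle as its base. This is the structural observation that makes every center $X_n$ behave predictably, and it lets us reduce the whole problem to a single homothety.

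First I would let $M_1$, $M_2$, $M_3$, $M_4$ denote the midpoints of sides $AB$, $BC$, $CD$, $DA$, respectively. By Lemma~\ref{lemma:dpIsoscelesTriangle}, the center $X_n$ of each isosceles radial triangle lies on the median from the apex $E$ to the base; that is, $F$ lies on $EM_1$, $G$ on $EM_2$, $H$ on $EM_3$, and $I$ on $EM_4$. The hypothesis that $X_nM/VM=k$ (with $V=E$ the apex) then gives $FM_1/EM_1=k$ in $\triangle EAB$, and similarly for the other three triangles. Applying Lemma~\ref{lemma:isoscelesRatioAll} converts each such ratio into $EF/EM_1=1-k$, and likewise $EG/EM_2=EH/EM_3=EI/EM_4=1-k$.

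The key step is to recognize these four relations as a single homothety. Since $F$, $G$, $H$, $I$ lie on the rays $EM_1$, $EM_2$, $EM_3$, $EM_4$ at the common signed ratio $1-k$ measured from $E$, the quadrilateral $FGHI$ is exactly the image of $M_1M_2M_3M_4$ under the homothety centered at $E$ with ratio $1-k$. A homothety scales all areas by the square of its ratio, so
$$[FGHI]=(1-k)^2\,[M_1M_2M_3M_4].$$
I would then invoke Lemma~\ref{lemma:Varignon}: the midpoints $M_1M_2M_3M_4$ form the Varignon parallelogram of $ABCD$, whose area is $\tfrac12[ABCD]$. Substituting yields $[FGHI]=\tfrac{(1-k)^2}{2}[ABCD]$, which rearranges to the claimed identity $[ABCD]=\dfrac{2}{(1-k)^2}[FGHI]$.

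The main obstacle is purely bookkeeping rather than geometric: one must carry the ratio with signs so that the homothety argument remains valid even when $1-k$ is negative (the case $k>1$), and confirm that the correspondence sending each midpoint $M_i$ to the center of the $i$-th radial triangle respects the cyclic order, so that $FGHI$ really is the homothetic copy of $M_1M_2M_3M_4$ vertex by vertex. Since area is scaled by $(1-k)^2\ge 0$ regardless of the sign of $1-k$, the positivity of $k$ assumed in the statement is not actually needed for the area identity itself; it merely fixes which geometric configuration arises. Once the homothety is identified, the conclusion is immediate from the two cited lemmas.
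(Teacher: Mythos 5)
Your proof is correct and follows essentially the same route as the paper's: each radial triangle is isosceles with apex $E$, the hypothesis plus Lemma~\ref{lemma:isoscelesRatioAll} gives $EF/EM_i = 1-k$ for each side, so $FGHI$ is the homothetic image of the Varignon parallelogram with ratio $1-k$ and center $E$, and Lemma~\ref{lemma:Varignon} finishes the computation. Your added remark that the signed-ratio argument makes the positivity of $k$ unnecessary is accurate; the paper simply splits that case into a separate theorem (Theorem~\ref{thm:dpRectangleNeg}) with the identical argument.
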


\begin{figure}[h!t]
\centering
\includegraphics[width=0.4\linewidth]{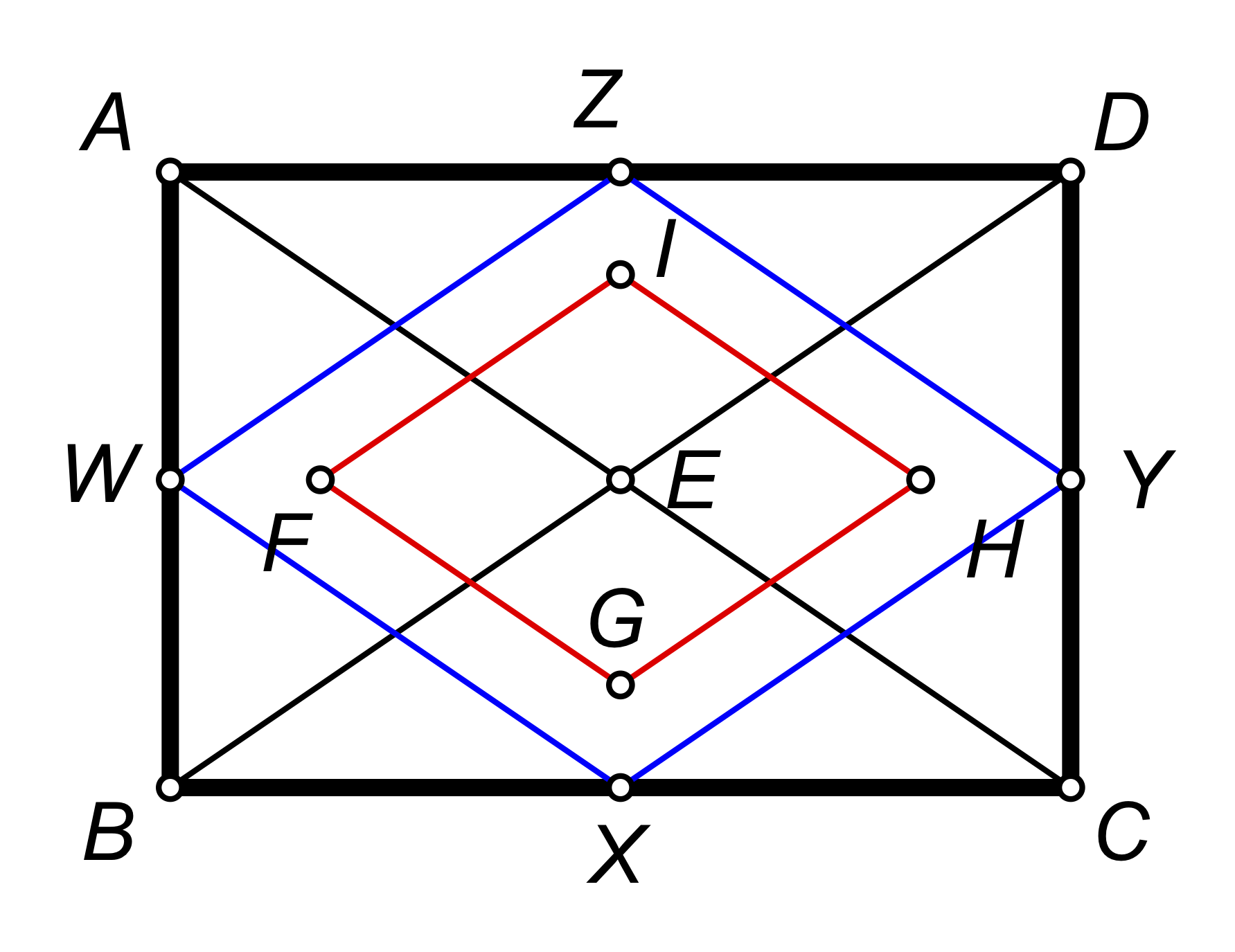}
\caption{}
\label{fig:dpRectanglePos}
\end{figure}

\begin{proof}
Since the diagonals of a rectangle are equal and bisect each other,
each of the radial triangles is isosceles with vertex $E$.
Let the midpoints of the sides of the rectangle be $W$, $X$, $Y$, and $Z$
as shown in Figure~\ref{fig:dpRectanglePos}.
Since $F$ is the $X_n$ point of $\triangle EAB$, by hypothesis,
$$\frac{FW}{EW}=k.$$
Since $k>0$
$$\frac{EF}{EW}=\frac{EW-FW}{EW}=1-\frac{FW}{EW}=1-k.$$
Similarly, $EG/EX=1-k$, $EH/EY=1-k$, and $EI/EZ=1-k$.
So quadrilaterals $FGHI$ and $WXYZ$ are homothetic, with $E$ the center of similitude
and ratio of similarity $1-k$.
Thus
$$[FGHI]=(1-k)^2[WXYZ].$$
But $[WXYZ]=\frac12[ABCD]$, so $[FGHI]=\frac{(1-k)^2}{2}[ABCD]$
or, equivalently, $[ABCD]=\frac{2}{(1-k)^2}[FGHI]$.
\end{proof}

When $n=402$ or $n=620$, $k=1/2$ and $[ABCD]=8[FGHI]$.

\newpage

\relbox{Relationship $[ABCD]=6[FGHI]$}

\begin{theorem}
\label{theorem:dpRectangleX395}
Let $E$ be the diagonal point of rectangle $ABCD$.
Let $F$, $G$, $H$, and $I$ be the $X_{395}$ points of $\triangle EAB$, $\triangle EBC$, 
$\triangle ECD$, and $\triangle EDA$, respectively (Figure~\ref{fig:dpRectangleX395}).
Then
$$[ABCD]=6[FGHI].$$
\end{theorem}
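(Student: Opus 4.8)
The plan is to reduce everything to locating $X_{395}$ inside a single isosceles radial triangle and then computing the area of the resulting rhombus from its two diagonals. By Lemma~\ref{thm:arbCenterRectangle}, since $ABCD$ is a rectangle the central quadrilateral $FGHI$ is a rhombus with the same diagonal point $E$. I would place $E$ at the origin with the sides of the rectangle parallel to the axes, so that $A=(-s,-t)$, $B=(s,-t)$, $C=(s,t)$, $D=(-s,t)$ and $[ABCD]=4st$. Since the diagonals of $ABCD$ bisect each other at $E$, each radial triangle is isosceles with apex $E$: triangles $\triangle EAB$ and $\triangle ECD$ are congruent with base $2s$ and height $t$, while $\triangle EBC$ and $\triangle EDA$ are congruent with base $2t$ and height $s$. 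Because a triangle center of an isosceles triangle lies on its axis of symmetry, $F$ and $H$ lie on the $y$-axis and are reflections of one another through $E$, and $G$ and $I$ lie on the $x$-axis and are reflections through $E$. Hence the diagonals $FH$ and $GI$ are perpendicular, meet at $E$, and satisfy $FH=2\,EF$ and $GI=2\,EG$, so that $[FGHI]=\frac12\,FH\cdot GI=2\,EF\cdot EG$. It therefore suffices to prove that $EF\cdot EG=\frac{st}{3}$.

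Next I would locate $F$ and $G$ using Lemma~\ref{lemma:dpIsoscelesTriangle}. For an isosceles triangle with apex $V$, base midpoint $M$, and a center with barycentric coordinates $(u:v:v)$ (apex weight $u$, equal base weights $v$), that lemma gives $XM/VM=u/(u+2v)$, hence $VX=VM\cdot\frac{2v}{u+2v}$. From \cite{ETC}, the barycentric coordinates of $X_{395}$ are proportional to $\bigl(a^2+\kappa\Delta : b^2+\kappa\Delta : c^2+\kappa\Delta\bigr)$, where $\Delta$ is the triangle's area and $\kappa=4/\sqrt3$ (its companion $X_{396}$ has $\kappa=-4/\sqrt3$, which explains why both appear in Table~\ref{table:dp1}). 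Substituting the side data of $\triangle EAB$ (base $2s$ opposite $E$, legs $\sqrt{s^2+t^2}$, area $st$) gives apex weight $u=4s^2+\kappa st$ and base weight $v=s^2+t^2+\kappa st$, so $EF=t\cdot\frac{2v}{u+2v}$. The analogous substitution in $\triangle EBC$ yields $u'=4t^2+\kappa st$ and the same base weight $v$, so $EG=s\cdot\frac{2v}{u'+2v}$.

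The heart of the proof is then the single identity
$$12\,v^2=(u+2v)(u'+2v),$$
which is exactly the assertion $EF\cdot EG=\frac{st}{3}$. Expanding both sides as polynomials in $s$ and $t$, the $s^4$ and $t^4$ terms and all the irrational $\kappa$-terms (the $s^3t$ and $st^3$ coefficients) match automatically, while the remaining rational $s^2t^2$ coefficients agree precisely because $\kappa^2=16/3$. This is the crux and the main obstacle: unlike the centers treated in Theorem~\ref{thm:dpRectanglePos}, the point $X_{395}$ does \emph{not} have a constant isosceles ratio, so the two pairs of radial triangles contribute genuinely different ratios $EF/t$ and $EG/s$ that each depend on the shape of the rectangle; it is only the Napoleon-type coefficient $\kappa=4/\sqrt3$ carried by the area term of $X_{395}$ that forces their product to collapse to the constant $\frac13$. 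Once $EF\cdot EG=\frac{st}{3}$ is established, we conclude $[FGHI]=2\cdot\frac{st}{3}=\frac{2st}{3}$, and therefore $[ABCD]=4st=6[FGHI]$, as required.
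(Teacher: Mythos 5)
Your proposal is correct, and it reaches the result by a genuinely different route than the paper. The paper works in barycentric coordinates with respect to the right triangle $ABC$ (half the rectangle), substitutes the side lengths of the radial triangles into the $X_{395}$ formula via the Change of Coordinates Formula, and then evaluates the determinant of the Area Formula for $\triangle EFG$ directly, obtaining $[EFG]=K/12$ and finishing with the rhombus lemma. You instead exploit the isosceles structure of the radial triangles: you reduce the whole problem to the two signed semi-diagonals $EF$ and $EG$ of the rhombus via Lemma~\ref{lemma:dpIsoscelesTriangle}, and isolate the single identity $12v^2=(u+2v)(u'+2v)$, equivalent to $\kappa^2=16/3$, as the reason the product $EF\cdot EG$ collapses to the constant $st/3$ even though neither ratio is individually constant. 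This is closer in spirit to the paper's treatment of squares (Theorem~\ref{theorem:dpSquareX356}) than to its own proof of this theorem, and it buys two things the paper's computation does not make visible: a conceptual explanation of why $X_{395}$ escapes the hypothesis of Theorem~\ref{thm:dpRectanglePos} yet still yields a fixed ratio, and a one-line explanation of why $X_{396}$ gives the same constant (the identity depends only on $\kappa^2$). Two small points to tidy: the paper's cited form $\bigl(\sqrt3\,a^2-2S:\cdots\bigr)$ with $S$ twice the area corresponds to $\kappa=-4/\sqrt3$ for $X_{395}$, so your sign assignment for the $395/396$ pair is reversed (harmless, since only $\kappa^2$ enters); and when $s$ and $t$ are comparable the base weight $v$ is negative, so $F$ and $G$ lie on the far side of $E$ from the respective midpoints --- your signed-length formula $[FGHI]=2\,EF\cdot EG$ still gives the correct positive area because both factors change sign together, but a sentence acknowledging this would make the argument airtight.
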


\begin{figure}[h!t]
\centering
\includegraphics[width=0.4\linewidth]{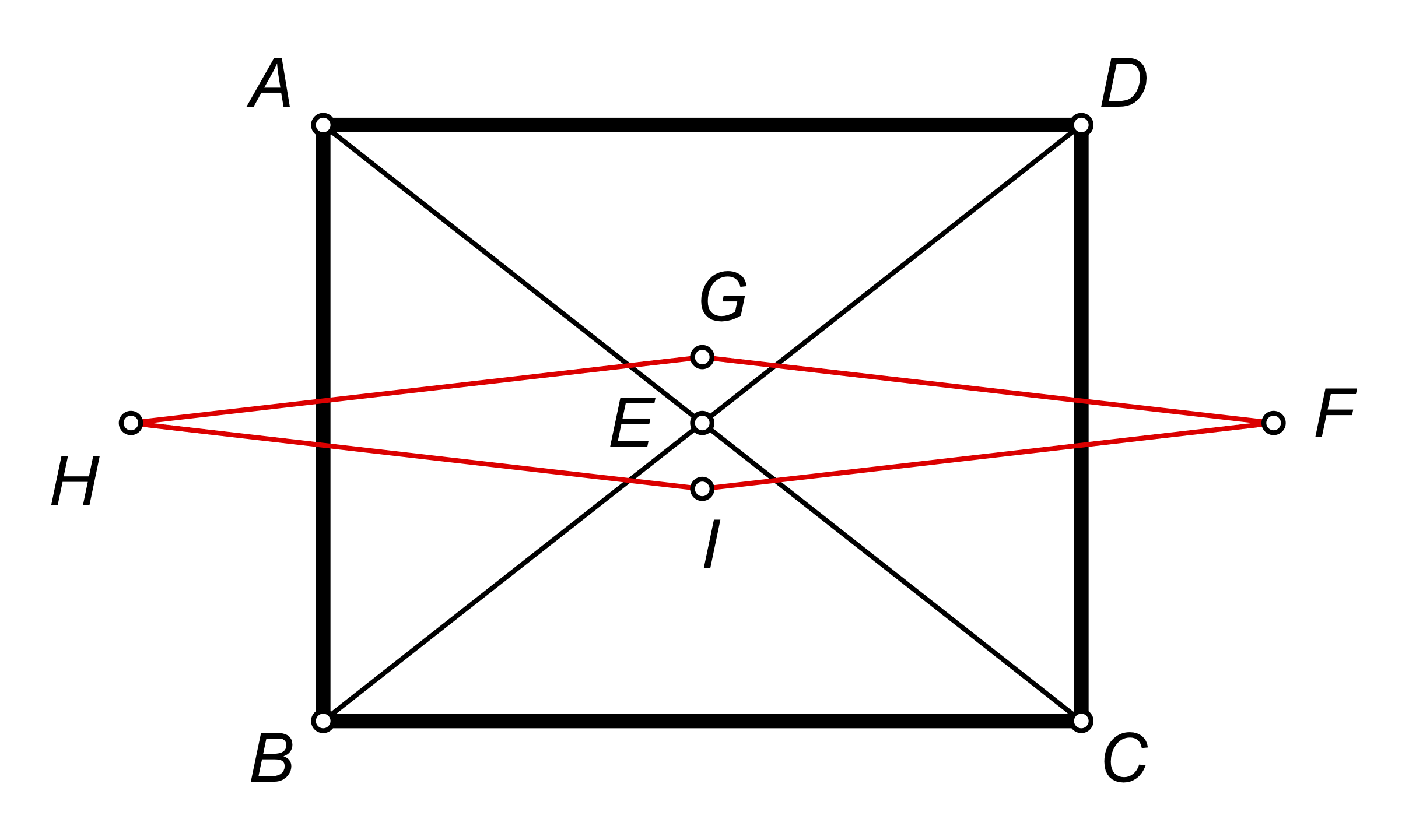}
\caption{rectangle, $X_{395}$ points $\implies$ $[ABCD]=6[FGHI]$}
\label{fig:dpRectangleX395}
\end{figure}

\begin{proof}
From \cite{ETC395}, we find that the barycentric coordinates for the $X_{395}$ point of a triangle are
\begin{equation}
\label{eq:uvw}
{(u:v:w)}=\left\{\sqrt{3} a^2-2 S:\sqrt{3} b^2-2 S:\sqrt{3} c^2-2 S\right\}
\end{equation}
where $a$, $b$, and $c$ are the lengths of the sides of that triangle and $S$ is twice the area of that triangle.

Set up a barycentric coordinate system as shown in Figure~\ref{fig:dpRectangleCoords}.
Let $BC=a$, $AB=c$, and $AC=b=\sqrt{a^2+c^2}$.
Since $ABC$ is a right triangle, $AE=BE=CE=b/2$.
For areas, we have $[ABC]=ac/2$, $[ABE]=ac/4$, and $[BCE]=ac/4$.

\begin{figure}[h!t]
\centering
\includegraphics[width=0.5\linewidth]{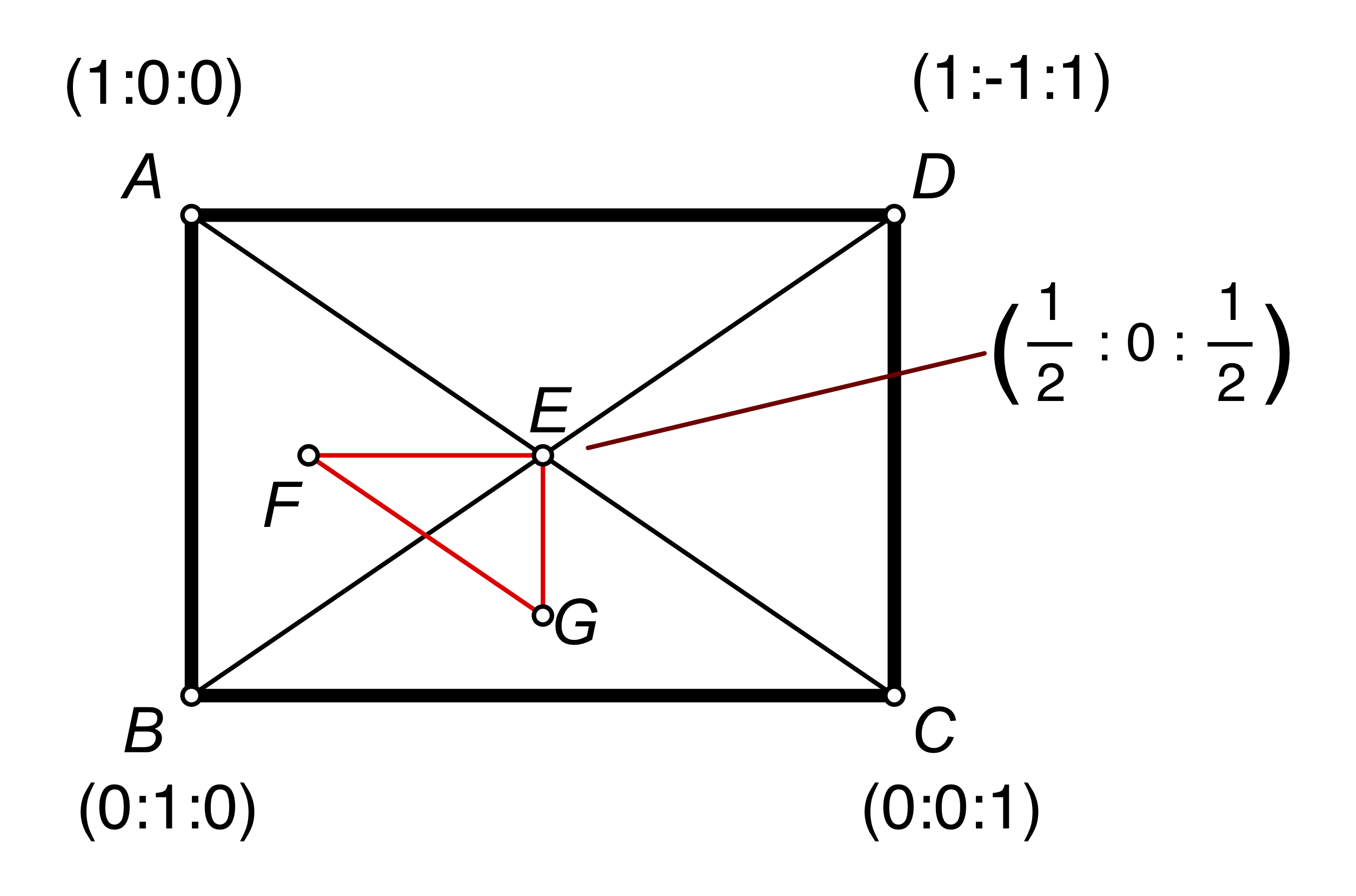}
\caption{barycentric coordinates and rectangle $ABCD$}
\label{fig:dpRectangleCoords}
\end{figure}

Since $F$ is the $X_{395}$ point of $\triangle ABE$,
$$F=uA+vB+wE$$
where $u$, $v$, $w$, and $S$ are given by Equation~(\ref{eq:uvw}),
except that the values of $a$, $b$, $c$, and $S$ are the sides and twice the area of $\triangle ABE$.
In other words, $a\to BE=b/2$, $b\to AE=b/2$, $c\to AB=c$, and $S\to 2[ABE]=ac/2$.
We get
$$F=\left(\frac{1}{4} \left(2 c \left(\sqrt{3} c-3 a\right)+\sqrt{3}
   b^2\right):\frac{\sqrt{3} b^2}{4}-a c:\frac{1}{2} c \left(\sqrt{3} c-a\right)\right).$$
Similarly, we find the coordinates for $G$ by using Equation~(\ref{eq:uvw})
applied to $\triangle BCE$, using the substitutions $a\to CE=b/2$, $b\to BE=b/2$, $c\to BC=a$,
and $S\to 2[BCE]=ac/2$. We get
$$G=\left(\frac{1}{2} a \left(\sqrt{3} a-c\right):\frac{\sqrt{3} b^2}{4}-a c:\frac{1}{4}
   \left(2 \sqrt{3} a^2-6 a c+\sqrt{3} b^2\right)\right).$$
Now we apply the Area Formula (Lemma~\ref{lemma:areaFormula}) to $\triangle EFG$, to get
$$[EFG]=\frac{K}{12}=\frac{[ABC]}{12}$$
after simplifying and using the fact that $b^2=a^2+c^2$ (because $\triangle ABC$ is a right triangle).

By Lemma~\ref{thm:arbCenterRectangle}, $FGHI$ is a rhombus, so $[EFG]=\frac14[FGHI]$.
Since $ABCD$ is a rectangle, $[ABCD]=2[ABC]$. Therefore
$$[ABCD]=2[ABC]=2(12[EFG])=2(12([FGHI]/4))=6[FGHI].$$
\end{proof}

\begin{theorem}
\label{theorem:dpRectangle8}
Let $E$ be the diagonal point of rectangle $ABCD$.
Let $F$, $G$, $H$, and $I$ be the $X_{396}$ points of $\triangle EAB$, $\triangle EBC$, 
$\triangle ECD$, and $\triangle EDA$, respectively.
Then
$$[ABCD]=6[FGHI].$$
\end{theorem}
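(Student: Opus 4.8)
The plan is to mirror the proof of Theorem~\ref{theorem:dpRectangleX395} almost verbatim, since $X_{396}$ is the companion point to $X_{395}$ and the target relationship $[ABCD]=6[FGHI]$ is identical. First I would record the barycentric coordinates of $X_{396}$ from \cite{ETC}, which I expect to take the form
$$(u:v:w)=\bigl\{\sqrt{3}\,a^2+2S:\sqrt{3}\,b^2+2S:\sqrt{3}\,c^2+2S\bigr\},$$
differing from the $X_{395}$ coordinates only in the sign of the area term $S$ (twice the area of the triangle). Everything downstream of this substitution runs in parallel with the $X_{395}$ argument.

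Second, I would reuse the barycentric coordinate system of Figure~\ref{fig:dpRectangleCoords}: take $\triangle ABC$ as reference with the right angle at $B$, so that $AC=b=\sqrt{a^2+c^2}$ is a diagonal of the rectangle and the diagonal point $E$ is the midpoint of this hypotenuse, giving $AE=BE=CE=b/2$ and $[ABE]=[BCE]=ac/4$. Each radial triangle is then isosceles with apex $E$, exactly as before.

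Third, I would compute the coordinates of $F$ (the $X_{396}$ point of $\triangle ABE$) and $G$ (the $X_{396}$ point of $\triangle BCE$) by the same substitutions used for $X_{395}$: for $F$, take $a\to BE=b/2$, $b\to AE=b/2$, $c\to AB=c$, $S\to 2[ABE]=ac/2$, and analogously for $G$. Then I would apply the Area Formula (Lemma~\ref{lemma:areaFormula}) to $\triangle EFG$, simplify under the right-triangle constraint $b^2=a^2+c^2$, and aim to confirm that $[EFG]=K/12=[ABC]/12$. Finally, by Lemma~\ref{thm:arbCenterRectangle} the central quadrilateral $FGHI$ is a rhombus, so $[EFG]=\tfrac14[FGHI]$, and since $[ABCD]=2[ABC]$ the chain $[ABCD]=2(12[EFG])=6[FGHI]$ closes the argument.

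The main obstacle---really the only place the argument could fail to be a pure transcription---is confirming that the signed-area computation of $[EFG]$ collapses to exactly $K/12$ despite the flipped sign on $S$; I would carry out this symbolic simplification in Mathematica, just as was done for $X_{395}$, to be certain no stray cross term survives. If one wished to avoid recomputation entirely, it would be worth checking whether $X_{396}$ and $X_{395}$ are reflections of each other across the axis of symmetry of the isosceles radial triangle, which would make the two rhombi congruent and allow the result to be inherited directly from Theorem~\ref{theorem:dpRectangleX395}; but the direct computation is the safe route.
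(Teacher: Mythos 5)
Your proposal matches the paper exactly: the paper's entire proof of this theorem is the single sentence that it is the same as the proof of Theorem~\ref{theorem:dpRectangleX395} and is omitted, which is precisely the verbatim transcription you describe (substituting the $X_{396}$ coordinates, which flip the sign of the $S$ term, into the same coordinate setup, area computation, and rhombus argument). Your closing observation about the reflection symmetry between $X_{395}$ and $X_{396}$ in an isosceles radial triangle is a nice sanity check but is not needed; the approach is the same as the paper's.
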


The proof is the same as the proof for Theorem~\ref{theorem:dpRectangleX395} and is omitted.

\relbox{Relationship $[ABCD]=2[FGHI]$}

\begin{theorem}
\label{theorem:dpRectangleMidpoints}
Let $E$ be the diagonal point of rectangle $ABCD$.
Let $n$ be in the set
$$\{11, 115, 116, 122, 123, 124, 125, 127, 130, 134, 135, 136 $$
$$137, 139,
244, 245, 246,247, 338, 339, 865, 866, 867, 868\}.$$
Let $F$, $G$, $H$, and $I$ be the $X_n$ points of $\triangle EAB$, $\triangle EBC$, 
$\triangle ECD$,  and $\triangle EDA$, respectively (Figure~\ref{fig:dpRectangleMidpoints}).
Then $F$, $G$, $H$, and $I$ are the midpoints of the sides of the rectangle
and
$$[ABCD]=2[FGHI].$$
\end{theorem}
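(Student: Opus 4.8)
The plan is to reduce the statement to two facts already established in the excerpt: that the radial triangles of a rectangle are isosceles, and that each listed center sits at the midpoint of the base of an isosceles triangle. First I would observe that the diagonals of a rectangle are congruent and bisect each other, so their intersection $E$ satisfies $EA=EB=EC=ED$. Consequently each radial triangle is isosceles with apex at $E$: in $\triangle EAB$ the two equal sides are $EA$ and $EB$, so its base is the rectangle side $AB$, and the same holds for $\triangle EBC$, $\triangle ECD$, and $\triangle EDA$ with bases $BC$, $CD$, and $DA$ respectively.

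Next I would invoke Lemma \ref{lemma:dpIsoscelesTriangleMidpoint}. The index set $\{11,115,\dots,868\}$ appearing in the statement is exactly the list of $n$ for which $X_n$ of an isosceles triangle coincides with the midpoint of its base. Applying this to $\triangle EAB$, with $E$ playing the role of the apex and $AB$ the role of the base, shows that $F$, the $X_n$ point of that triangle, is the midpoint of $AB$. The identical argument on the remaining three triangles gives that $G$, $H$, and $I$ are the midpoints of $BC$, $CD$, and $DA$. Hence $F$, $G$, $H$, $I$ are precisely the midpoints of the four sides of the rectangle, which is the first assertion of the theorem.

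Finally, the area relation is immediate from the Varignon Parallelogram lemma (Lemma \ref{lemma:Varignon}): the quadrilateral whose vertices are the midpoints of the sides of $ABCD$ has half the area of $ABCD$, so $[FGHI]=\frac12[ABCD]$, which rearranges to $[ABCD]=2[FGHI]$. I do not expect any serious obstacle here, since no symbolic computation is required — all the hard arithmetic (tracing which centers fall on the base midpoint) was already absorbed into Lemma \ref{lemma:dpIsoscelesTriangleMidpoint}. The only points requiring care are bookkeeping ones: confirming that the apex of each radial triangle is $E$, so that its base is the side of the rectangle whose midpoint we want rather than a vertex of the rectangle, and verifying that the index set in the statement matches the list in Lemma \ref{lemma:dpIsoscelesTriangleMidpoint} term for term.
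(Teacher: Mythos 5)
Your proposal is correct and is essentially identical to the paper's own proof: both observe that the equal, mutually bisecting diagonals of a rectangle make each radial triangle isosceles with apex $E$, then apply Lemma~\ref{lemma:dpIsoscelesTriangleMidpoint} to place $F$, $G$, $H$, $I$ at the side midpoints, and finish with the Varignon parallelogram lemma. No further changes are needed.
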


\begin{figure}[h!t]
\centering
\includegraphics[width=0.3\linewidth]{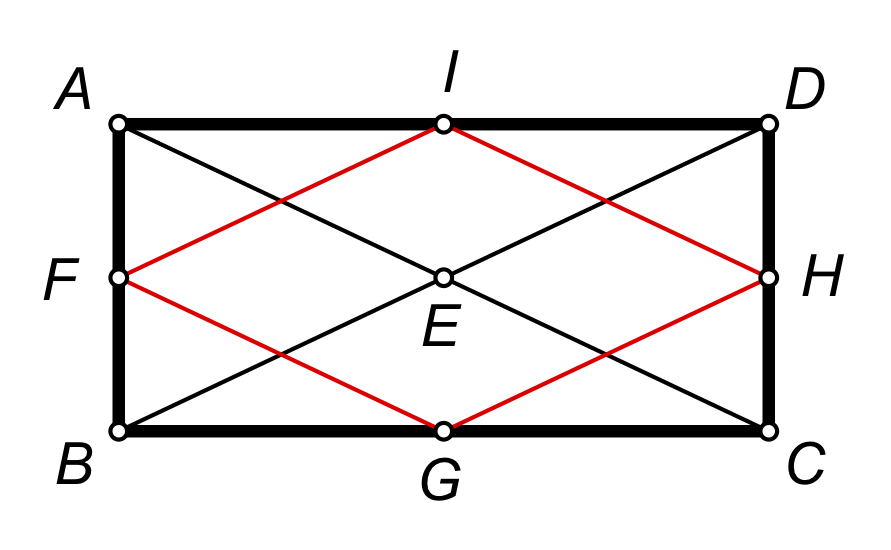}
\caption{rectangle $\implies$ $[ABCD]=2[FGHI]$}
\label{fig:dpRectangleMidpoints}
\end{figure}

\begin{proof}
Since the diagonals of a rectangle are equal and bisect each other,
each of the radial triangles is isosceles with vertex $E$.
Thus, by Lemma \ref{lemma:dpIsoscelesTriangleMidpoint}, 
$F$, $G$, $H$, and $I$ are the midpoints of the sides of the rectangle.
Then, by Lemma \ref{lemma:Varignon}, $[ABCD]=2[FGHI]$.
\end{proof}

This proves all the entries in Table~\ref{table:dp1} for rectangles with the relationship $[ABCD]=2[FGHI]$.

\relbox{Relationship $[ABCD]=\frac32[FGHI]$}

\begin{theorem}
\label{theorem:dpRectangleX616}
Let $E$ be the diagonal point of rectangle $ABCD$.
Let $F$, $G$, $H$, and $I$ be the $X_{616}$ points or the $X_{617}$ points of $\triangle EAB$, $\triangle EBC$, 
$\triangle ECD$, and $\triangle EDA$, respectively (Figure~\ref{fig:dpRectangleX616}).
Then
$$[ABCD]=\frac32[FGHI].$$
\end{theorem}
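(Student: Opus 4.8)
The plan is to imitate the barycentric-coordinate proof of Theorem~\ref{theorem:dpRectangleX395}, because $X_{616}$ and $X_{617}$ are $\sqrt{3}$-type centers of exactly the same flavor as $X_{395}$ and $X_{396}$: their barycentric coordinates, obtained from \cite{ETC}, are built from the side lengths together with the doubled area $S$ and a factor of $\sqrt{3}$, just as in Equation~(\ref{eq:uvw}). First I would reuse the coordinate set-up of Figure~\ref{fig:dpRectangleCoords}, taking $\triangle ABC$ as reference triangle with $BC=a$, $AB=c$, $AC=b=\sqrt{a^2+c^2}$, diagonal point $E$ satisfying $EA=EB=EC=b/2$, and $[ABC]=ac/2$, $[ABE]=[BCE]=ac/4$.

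Next I would compute the coordinates of $F$, the $X_{616}$ point of $\triangle ABE$, and $G$, the $X_{616}$ point of $\triangle BCE$, by substituting the radial-triangle data into the $X_{616}$ center function exactly as was done for $X_{395}$: for $F$ one uses $a\to BE=b/2$, $b\to AE=b/2$, $c\to AB=c$, and $S\to 2[ABE]=ac/2$, and analogously for $G$. Then I would apply the Area Formula (Lemma~\ref{lemma:areaFormula}) to $\triangle EFG$ and simplify the resulting determinant under the right-triangle constraint $b^2=a^2+c^2$. The target value is $[EFG]=\tfrac13 K$ with $K=[ABC]$: by Lemma~\ref{thm:arbCenterRectangle} the central quadrilateral $FGHI$ is a rhombus with diagonal point $E$, so $[FGHI]=4[EFG]=\tfrac43 K$, and since $[ABCD]=2[ABC]=2K$ we obtain $[ABCD]=\tfrac32[FGHI]$, as required. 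The same computation with the $X_{617}$ center function handles the second case.

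I expect the main obstacle to be the symbolic simplification in step three. Because $\sqrt{3}$ enters both a rotational term and the area term of these centers, the determinant for $[EFG]$ will be considerably bushier than in the $X_{395}$ case, and coaxing it down to the clean value $\tfrac13 K$ will require careful elimination of $b$ via $b^2=a^2+c^2$ and of $S$ via the area substitution. The one genuinely new point to verify is that $X_{616}$ and $X_{617}$ produce the \emph{same} ratio; this is plausible on structural grounds, since the two centers correspond to the two choices of sign for $\sqrt{3}$ and the area ratio should depend on them only through a squared length.

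A cleaner alternative, should the determinant prove unwieldy, is to run the argument of Theorem~\ref{thm:dpRectanglePos} with an irrational ratio. Using Lemma~\ref{lemma:dpIsoscelesTriangle} one would show that for every isosceles radial triangle the signed ratio $X_{616}M/EM$ equals a constant $k$ with $(1-k)^2=\tfrac43$ (this irrationality is precisely why $X_{616}$ and $X_{617}$ are absent from Table~\ref{table:diagonalPointIsoscelesRatios}, which tabulates only rational ratios). Lemma~\ref{lemma:isoscelesRatioAll} then gives $EF/EW=1-k$ for the midpoint $W$ of the corresponding side of the rectangle, so $FGHI$ and the Varignon quadrilateral $WXYZ$ are homothetic from $E$ with ratio $1-k$; combining $[FGHI]=(1-k)^2[WXYZ]$ with $[WXYZ]=\tfrac12[ABCD]$ (Lemma~\ref{lemma:Varignon}) again yields $[ABCD]=\tfrac{2}{(1-k)^2}[FGHI]=\tfrac32[FGHI]$.
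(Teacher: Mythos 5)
Your primary route is exactly what the paper does: its proof of this theorem is literally ``the same as the proof for Theorem~\ref{theorem:dpRectangleX395}, details omitted,'' i.e.\ the barycentric computation of $F$ and $G$ followed by the Area Formula, the identity $[EFG]=\tfrac14[FGHI]$ from Lemma~\ref{thm:arbCenterRectangle}, and $[ABCD]=2K$. Your target value $[EFG]=\tfrac13K$ is correct, so this part of the proposal is sound.

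Your ``cleaner alternative,'' however, rests on a false premise. There is no constant $k$ with $X_{616}M/VM=k$ for all isosceles triangles: by Proposition~\ref{proposition:isoscelesX616} the distance from the apex $E$ to the $X_{616}$ point is $\mathrm{base}/\sqrt3$, which is proportional to the \emph{base} of the isosceles triangle, not to its height $EM$. So $EF/EM$ varies with the shape of the radial triangle, and for a non-square rectangle the four ratios $EF/EW,\dots,EI/EZ$ are pairwise different (indeed ``swapped'': the diagonal of the rhombus $FGHI$ perpendicular to $AB$ has length proportional to $AB$, while the corresponding diagonal of the Varignon rhombus is proportional to $BC$). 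Hence $FGHI$ is \emph{not} homothetic to $WXYZ$, and the Theorem~\ref{thm:dpRectanglePos}-style argument collapses; it happens to give the right number only because the product of the two reciprocal distortions cancels in the area. Relatedly, $616$ and $617$ are absent from Table~\ref{table:diagonalPointIsoscelesRatios} because the ratio is not constant, not because it is irrational. The correct ``clean'' argument is the one the paper gives later in Theorem~\ref{thm:opCyclicX616} for general cyclic quadrilaterals: use $EF=AB/\sqrt3$, $EG=BC/\sqrt3$ and $[EFG]=\tfrac12\,EF\cdot EG\sin(\angle FEG)=\tfrac13[ABC]$, summing over the four radial triangles.
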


\begin{figure}[h!t]
\centering
\includegraphics[width=0.3\linewidth]{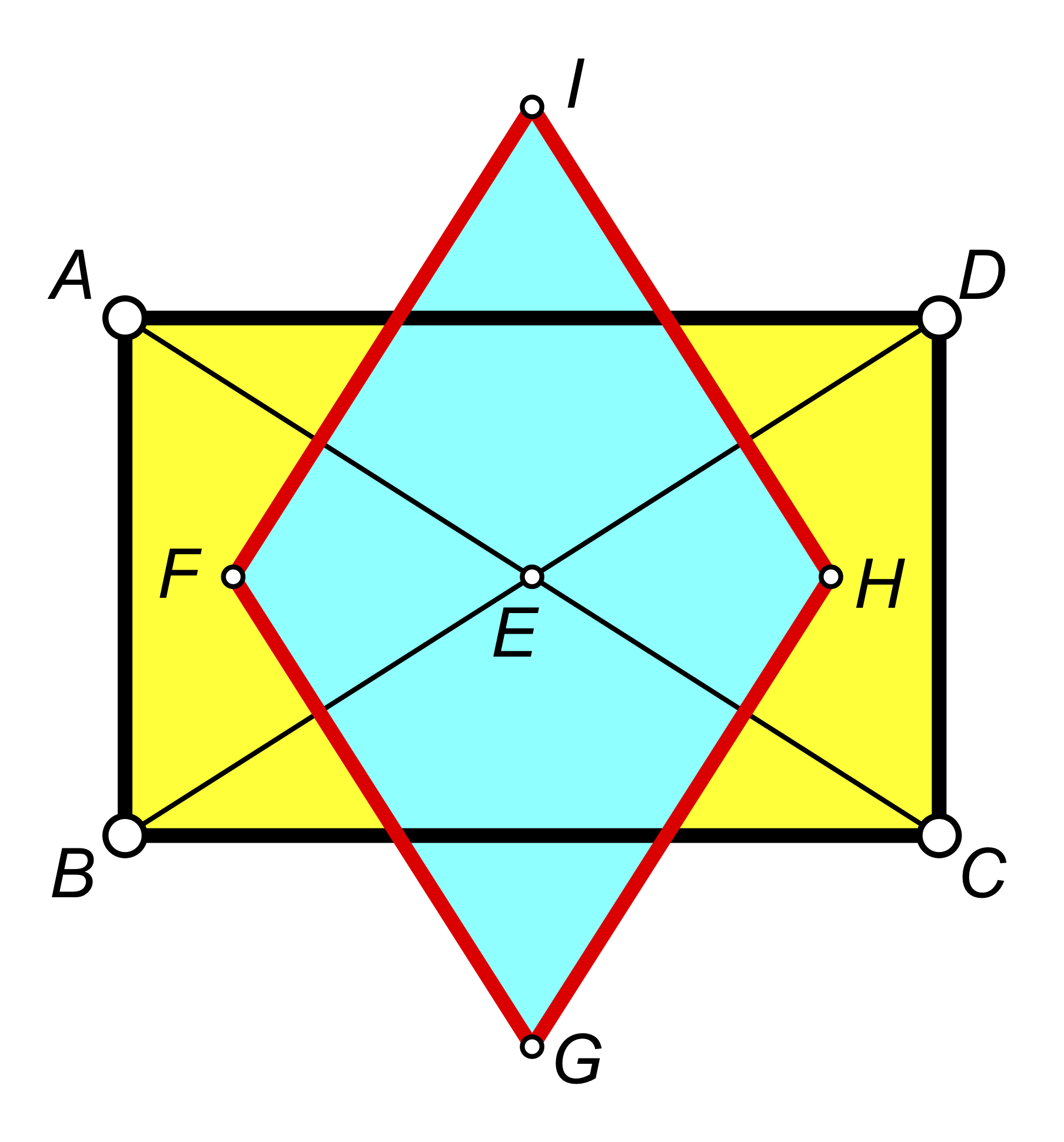}
\caption{rectangle, $X_{616}$ points $\implies$ $[ABCD]=\frac32[FGHI]$}
\label{fig:dpRectangleX616}
\end{figure}

\begin{proof}
The proof is the same as the proof for Theorem~\ref{theorem:dpRectangleX395} and the details are omitted.
\end{proof}


\relbox{Relationship $[ABCD]=\frac98[FGHI]$}

\begin{theorem}
\label{thm:dpRectangleNeg}
Let $E$ be the diagonal point of rectangle $ABCD$.
Let $X_n$ be a triangle center with the property that for all isosceles triangles with vertex $A$
and midpoint of base $M$, $X_nM/AM$ is a fixed negative constant $k$.
Let $F$, $G$, $H$, and $I$ be the $X_n$ points of $\triangle EAB$, $\triangle EBC$, 
$\triangle ECD$, and $\triangle EDA$, respectively.
Then
$$[ABCD]=\frac{2}{(1-k)^2}[FGHI].$$
\end{theorem}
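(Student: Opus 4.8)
The plan is to run the same homothety argument used for Theorem~\ref{thm:dpRectanglePos}, replacing the unsigned length computation by the signed-distance Lemma~\ref{lemma:isoscelesRatioAll}, which is exactly the tool needed to accommodate a negative ratio $k$.

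First I would observe that since $E$ is the diagonal point of the rectangle $ABCD$, the two diagonals are equal and bisect each other, so $EA=EB=EC=ED$. Consequently each radial triangle $\triangle EAB$, $\triangle EBC$, $\triangle ECD$, $\triangle EDA$ is isosceles with apex $E$. Let $W$, $X$, $Y$, and $Z$ be the midpoints of sides $AB$, $BC$, $CD$, and $DA$, respectively; these are precisely the midpoints of the bases of the four isosceles radial triangles.

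Next I would apply the hypothesis to the radial triangle $\triangle EAB$: its apex is $E$ and the midpoint of its base is $W$, so $F=X_n$ lies on line $EW$ with signed ratio $FW/EW=k$. By Lemma~\ref{lemma:isoscelesRatioAll} (with apex $E$, base midpoint $W$, and center $F$), this gives the signed ratio $EF/EW=1-k$. The identical computation applied to $G$, $H$, and $I$ yields $EG/EX=EH/EY=EI/EZ=1-k$. Because $k<0$ we have $1-k>1$, so each center lies on the ray from $E$ through the corresponding midpoint, beyond that midpoint; the single orientation-preserving homothety centered at $E$ with positive ratio $1-k$ therefore sends $W\mapsto F$, $X\mapsto G$, $Y\mapsto H$, and $Z\mapsto I$, so that $FGHI$ is the homothetic image of $WXYZ$.

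Finally, a homothety of ratio $1-k$ scales areas by $(1-k)^2$, so $[FGHI]=(1-k)^2[WXYZ]$, and $WXYZ$ is the Varignon parallelogram of $ABCD$, whence $[WXYZ]=\tfrac12[ABCD]$ by Lemma~\ref{lemma:Varignon}. Combining these gives $[FGHI]=\tfrac{(1-k)^2}{2}[ABCD]$, i.e.\ $[ABCD]=\tfrac{2}{(1-k)^2}[FGHI]$, as claimed. The only genuine obstacle is the sign bookkeeping that motivates stating this as a separate theorem: in the positive case one wrote $EF/EW=(EW-FW)/EW=1-k$ under the tacit assumption that $F$ lies between $E$ and $W$, but for $k<0$ the center $F$ falls on the far side of $W$, so that step is no longer valid as written. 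Lemma~\ref{lemma:isoscelesRatioAll} is precisely what re-establishes $EF/EW=1-k$ for every $k$ using signed distances, after which the homothety and Varignon steps go through verbatim.
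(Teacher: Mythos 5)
Your proposal is correct and follows essentially the same route as the paper: isosceles radial triangles, the signed ratio $EF/EW=1-k$, the homothety of ratio $1-k$ onto the Varignon parallelogram, and Lemma~\ref{lemma:Varignon}. The only cosmetic difference is that you invoke Lemma~\ref{lemma:isoscelesRatioAll} for the sign bookkeeping, whereas the paper carries out the same signed-distance computation inline.
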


\begin{figure}[h!t]
\centering
\includegraphics[width=0.5\linewidth]{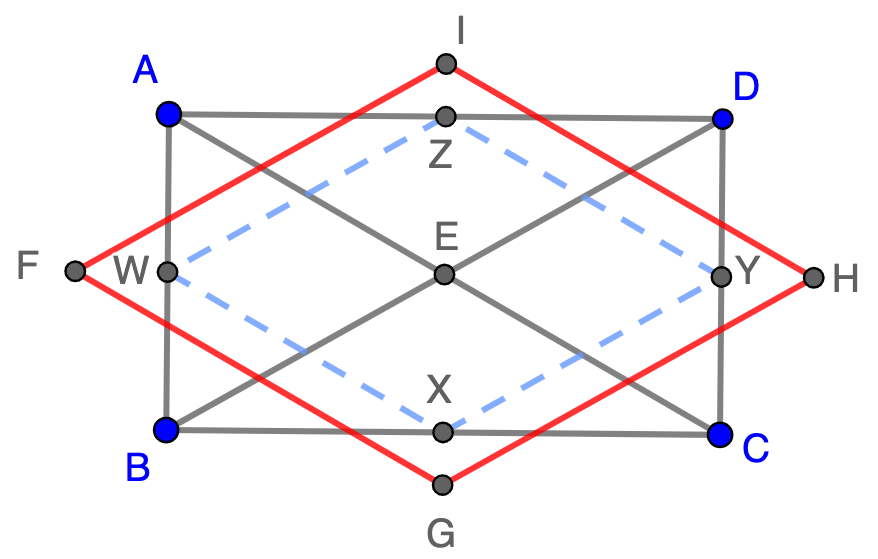}
\caption{}
\label{fig:dpRectangleNeg}
\end{figure}

\begin{proof}
Since the diagonals of a rectangle are equal and bisect each other,
each of the radial triangles is isosceles with vertex $E$.
Let the midpoints of the sides of the rectangle be $W$, $X$, $Y$, and $Z$
as shown in Figure~\ref{fig:dpRectangleNeg}.
Since $F$ is the $X_n$ point of $\triangle EAB$, by hypothesis,
$$\frac{FW}{EW}=k.$$
Since $k<0$
$$\frac{EF}{EW}=\frac{EW+WF}{EW}=\frac{EW-FW}{EW}=1-\frac{FW}{EW}=1-k.$$
Similarly, $EG/EX=1-k$, $EH/EY=1-k$, and $EI/EZ=1-k$.
So quadrilaterals $FGHI$ and $WXYZ$ are homothetic, with $E$ the center of similitude
and ratio of similarity $1-k$.
Thus
$$[FGHI]=(1-k)^2[WXYZ].$$
But $[WXYZ]=\frac12[ABCD]$, so $[FGHI]=\frac{(1-k)^2}{2}[ABCD]$
or, equivalently, $[ABCD]=\frac{2}{(1-k)^2}[FGHI]$.
\end{proof}

When $n=290$, $n=671$, or $n=903$, $k=-1/3$ and $[ABCD]=\frac98[FGHI]$.
These values do not appear in Table~\ref{table:dp1} because the ratios in
Table~\ref{table:dp1} are limited to those with denominators less than 6.


\relbox{Relationship $[ABCD]=\frac12[FGHI]$}

\begin{theorem}
\label{theorem:dpRectangleX148}
Let $E$ be the diagonal point of rectangle $ABCD$.
Let $n$ be 148, 149, or 150.
Let $F$, $G$, $H$, and $I$ be the $X_n$ points of $\triangle EAB$, $\triangle EBC$, 
$\triangle ECD$, and $\triangle EDA$, respectively.
Then
$$[ABCD]=\frac12[FGHI].$$
\end{theorem}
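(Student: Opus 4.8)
The plan is to recognize that the three values $n = 148, 149, 150$ are precisely the entries of Table~\ref{table:diagonalPointIsoscelesRatios} whose constant ratio is $k = -1$, and then to feed this single value into the negative-ratio engine already established in Theorem~\ref{thm:dpRectangleNeg}. In other words, this statement is not a fresh geometric fact but a specialization: all the work has been done in the two prior results, and what remains is to match hypotheses and perform one arithmetic substitution.

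First I would invoke Theorem~\ref{thm:isoscelesRatio}, which guarantees that for each of $n = 148, 149, 150$, in every isosceles triangle with apex $A$ and base midpoint $M$, the center $X_n$ lies on the median line $AM$ and the signed ratio $X_nM/AM$ equals the tabulated constant $-1$. Next, since the diagonals of a rectangle are equal and bisect one another, the diagonal point $E$ satisfies $EA = EB = EC = ED$, so each radial triangle $\triangle EAB$, $\triangle EBC$, $\triangle ECD$, $\triangle EDA$ is isosceles with apex $E$. This is exactly the configuration required by Theorem~\ref{thm:dpRectangleNeg}, whose fixed negative constant I take to be $k = -1$.

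Applying Theorem~\ref{thm:dpRectangleNeg} with $k = -1$ then yields directly
$$[ABCD] = \frac{2}{(1-k)^2}[FGHI] = \frac{2}{(1-(-1))^2}[FGHI] = \frac{2}{4}[FGHI] = \frac12[FGHI],$$
which is the claimed relationship. The homothety underlying Theorem~\ref{thm:dpRectangleNeg} (with $E$ as center of similitude and ratio $1-k = 2$ carrying the Varignon midpoint quadrilateral $WXYZ$ onto $FGHI$) does all the geometric lifting automatically.

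There is essentially no analytic obstacle here, since the geometric content is entirely absorbed into Theorems~\ref{thm:isoscelesRatio} and~\ref{thm:dpRectangleNeg}. The only point demanding care is the bookkeeping of the sign convention: the tabulated ratio $-1$ is negative, so it is the hypotheses of Theorem~\ref{thm:dpRectangleNeg} (rather than those of its positive-constant counterpart Theorem~\ref{thm:dpRectanglePos}) that are literally satisfied, the center $X_n$ falling on the far side of $M$ from $E$. In practice both engine theorems produce the identical factor $\tfrac{2}{(1-k)^2}$, so the substitution would give the same numerical answer either way; the cleanest logically valid route, however, is to cite Theorem~\ref{thm:dpRectangleNeg} and confirm that the three listed centers are exactly the $k=-1$ rows of Table~\ref{table:diagonalPointIsoscelesRatios}.
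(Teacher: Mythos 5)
Your proposal is correct and follows exactly the paper's own argument: the paper's proof likewise reads off $k=-1$ for $n=148,149,150$ from Table~\ref{table:diagonalPointIsoscelesRatios} and substitutes into Theorem~\ref{thm:dpRectangleNeg} to get $\frac{2}{(1-k)^2}=\frac12$. Your additional remarks on the sign convention and the verification that the radial triangles are isosceles are sound but simply make explicit what the paper leaves implicit.
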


\begin{proof}
From Table~\ref{table:diagonalPointIsoscelesRatios} and Theorem~\ref{thm:dpRectangleNeg}
with $k=-1$, we have $\frac{2}{(1-k)^2}=\frac12$.
\end{proof}

\begin{theorem}
\label{theorem:dpRectangleX146}
Let $E$ be the diagonal point of rectangle $ABCD$.
Let $n$ be 146, 147, 150, 151, 152, or 153.
Let $F$, $G$, $H$, and $I$ be the $X_n$ points of $\triangle EAB$, $\triangle EBC$, 
$\triangle ECD$, and $\triangle EDA$, respectively.
Then
$$[ABCD]=\frac12[FGHI].$$
\end{theorem}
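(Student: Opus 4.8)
The plan is to reduce everything to the rhombus structure that a rectangle forces on its central quadrilateral, and then to exploit a reciprocity between the two shapes of radial triangle. First I would invoke Lemma~\ref{thm:arbCenterRectangle}: since $ABCD$ is a rectangle, $FGHI$ is a rhombus whose diagonal point is $E$ and whose sides are parallel to and bisected by the diagonals of $ABCD$. Concretely, if $W,X,Y,Z$ denote the midpoints of $AB,BC,CD,DA$, then the diagonals $FH$ and $GI$ of the rhombus lie along the two perpendicular bisectors of the sides of the rectangle, namely line $WY$ and line $XZ$, both of which pass through $E$ with $E$ as their common midpoint. Thus $F,H$ lie on line $WY$ and $G,I$ lie on line $XZ$, and $FGHI$ is obtained from the Varignon rhombus $WXYZ$ by scaling along these two perpendicular axes.

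Next I would locate $F$ and $G$ on their axes. Because the diagonals of a rectangle are equal and bisect each other, every radial triangle is isosceles with apex $E$, and up to congruence there are only two shapes: $\triangle EAB\cong\triangle ECD$ (base $AB$ of length $2a$) and $\triangle EBC\cong\triangle EDA$ (base $BC$ of length $2b$), all four sharing the leg length $L=\sqrt{a^2+b^2}$. By Lemma~\ref{lemma:dpIsoscelesTriangle} the center $X_n$ of each isosceles radial triangle lies on its axis of symmetry, and if $(u:v:v)$ are its barycentric coordinates with apex coordinate $u$, then $k:=u/(u+2v)$ is the ratio of the base-midpoint distance to the apex distance; by Lemma~\ref{lemma:isoscelesRatioAll} this gives $EF/EW=EG/EX=1-k$. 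Writing $k_1,k_2$ for the two shapes, the anisotropic scaling of $WXYZ$ yields
\[
[FGHI]=(1-k_1)(1-k_2)\,[WXYZ]=\tfrac12(1-k_1)(1-k_2)\,[ABCD],
\]
using Lemma~\ref{lemma:Varignon}. Hence the assertion $[ABCD]=\tfrac12[FGHI]$ is equivalent to the single numerical identity $(1-k_1)(1-k_2)=4$.

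To finish I would compute $k_1$ and $k_2$ explicitly, substituting the isosceles side data into the ETC barycentric coordinates of $X_n$ under the constraint $b=c$, reading off $u:v:v$, and forming $1-k=2v/(u+2v)$ for each shape. The crucial structural fact is that the two shapes are \emph{reciprocal}: as a function of the aspect ratio $s=a/b$, shape~1 has ``half-base over height'' equal to $s$ while shape~2 has $1/s$, and for each of the six centers I expect $1-k$ to come out as a monomial $2\,(w/h)^{2j}$ in this ratio. For $n=150$ one gets $j=0$, i.e. the constant value $1-k=2$ already exploited in Theorem~\ref{theorem:dpRectangleX148}; for $n=146,147,151,152,153$ one gets the next case, and in all cases $(1-k_1)(1-k_2)=2s^{2j}\cdot 2s^{-2j}=4$, exactly as required. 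The main obstacle is precisely this last step: unlike the centers handled in Theorem~\ref{theorem:dpRectangleX148}, the ratio $k$ for $X_{146},X_{147},X_{151},X_{152},X_{153}$ is \emph{not} constant over all isosceles triangles — which is why these indices are absent from Table~\ref{table:diagonalPointIsoscelesRatios} and why Theorem~\ref{thm:dpRectangleNeg} cannot be quoted directly. The result survives only because the two reciprocal shapes cancel in the product, and verifying the monomial form for each of the six (partly irrational) ETC center functions after imposing $b=c$ is where the real labor lies.
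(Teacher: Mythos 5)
Your route is genuinely different from the paper's. The paper dispatches this theorem in one line --- ``the proofs are the same as the proof for Theorem~\ref{theorem:dpRectangleX395}'' --- meaning a direct symbolic computation: substitute the side lengths of $\triangle EAB$ and $\triangle EBC$ into the ETC barycentrics for $X_n$, change coordinates to $\triangle ABC$, evaluate $[EFG]$ by the determinant formula under $b^2=a^2+c^2$, and multiply by $4$ using the rhombus structure of Lemma~\ref{thm:arbCenterRectangle}. You instead collapse the problem to one dimension on each bimedian: $FH=(1-k_1)\,WY$ and $GI=(1-k_2)\,XZ$ with $FH\perp GI$, so Lemmas~\ref{lemma:dpIsoscelesTriangle}, \ref{lemma:isoscelesRatioAll}, \ref{lemma:orthodiag} and \ref{lemma:Varignon} reduce the claim exactly to the identity $(1-k_1)(1-k_2)=4$ for the two reciprocal isosceles shapes. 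That reduction is valid, and it explains something the paper's computation conceals: why the theorem survives for centers whose position on the axis of an isosceles triangle is \emph{not} scale-invariant, which is precisely why Theorem~\ref{thm:dpRectangleNeg} cannot be cited here. The cost is that the product identity still has to be checked center by center.

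That check is the one point where your write-up is a conjecture (``I expect $1-k$ to come out as a monomial'') rather than a proof, and it is the crux. It does go through, and for a uniform reason: each of $X_{146},X_{147},X_{151},X_{152},X_{153}$ is (per ETC) the reflection of $X_{20}$ in a circumcircle point, equivalently the anticomplement of a circumcircle point. In an isosceles triangle every circumcircle center collapses to the apex $A$ (where $k=1$) or to its antipode $A'$ (where $k=-(w/h)^2$), and the anticomplement map $k\mapsto 1-2k$ then gives either $1-k=2$ --- the constant case already recorded for $148,149,150$ in Table~\ref{table:diagonalPointIsoscelesRatios} and used in Theorem~\ref{theorem:dpRectangleX148} --- or $1-k=-2(w/h)^{2}$. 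Note the sign: the monomial is $-2(w/h)^2$, not $+2(w/h)^2$; this reflects $FGHI$ through $E$ but leaves $(1-k_1)(1-k_2)=4$ and hence the area ratio intact. With that verification written out (a short computation, and the barycentrics involved are polynomial, not irrational), your proof is complete.
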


The proofs are the same as the proof for Theorem~\ref{theorem:dpRectangleX395} and are omitted.

\newpage

\subsection{Proofs for Squares}\ 

\void{

We now give proofs for some of the results listed in Table~\ref{table:dp1} for squares.

\begin{table}[ht!]
\caption{}
\label{table:dp2}
\begin{center}
\begin{tabular}{|l|l|p{2.2in}|}
\hline
\multicolumn{3}{|c|}{\textbf{\large \strut Central Quadrilaterals formed by the Diagonal Point}}\\ \hline
Quadrilateral Type&Relationship&centers\\ \hline
\ru square&$[ABCD]=32[FGHI]$&143, 263, 576\\
\cline{2-3}
\ru &$[ABCD]=18[FGHI]$&32, 51, 195, 218, 568, 800, 864, 973\\
\cline{2-3}
\ru &$[ABCD]=\frac{25}{2}[FGHI]$&194, 251, 262\\
\cline{2-3}
\ru &$[ABCD]=9[FGHI]$&983\\
\cline{2-3}
\ru &$[ABCD]=8[FGHI]$&6, 169, 226, 344, 389, 482, 485, 578, 942\\
\cline{2-3}
\ru &$[ABCD]=\frac92[FGHI]$&2, 54, 284, 311, 349, 569, 570, 581, 637, 639, 943\\
\cline{2-3}
\ru &$[ABCD]=4[FGHI]$&55, 241, 405, 500, 582, 950\\
\cline{2-3}
\ru &$[ABCD]=\frac94[FGHI]$&341\\
\cline{2-3}
\ru &$[ABCD]=2[FGHI]$&48, 49, 63, 69, 71--73, 77, 78, 130, 184, 185, 187, 201, 212, 217, 219, 222, 228, 237, 248, 255, 265, 271, 283, 287, 293, 295, 296, 304--307, 326, 328, 332, 336, 337, 343, 345, 348, 394, 399, 488, 499, 591, 603, 606, 615, 640, 682, 748, 820, 836, 895, 902, 974\\
\cline{2-3}
\ru &$[ABCD]=\frac12[FGHI]$&70, 74, 94, 98, 103-106, 111, 160, 323, 325, 385, 477, 586, 638, 675, 697, 699, 701, 703, 705, 707, 713, 715, 717, 
719, 721, 723, 725, 727, 729, 731, 733, 735, 737, 739, 741, 743, 745, 747, 753, 755, 759, 761, 767, 840--843, 953, 972\\
\cline{2-3}
\ru &$[ABCD]=\frac29[FGHI]$&446\\
\cline{2-3}
\ru &$[ABCD]=\frac18[FGHI]$&316, 352\\
\cline{2-3}
\ru &$ABCD\cong FGHI$&36, 40, 80, 84, 238, 291, 859\\
\cline{2-3}
\ru &$ABCD\sim FGHI$&all\\
\hline
\end{tabular}
\end{center}
\end{table}

}

\relbox{Relationship $[ABCD]=6[FGHI]$}

\newcommand{\rv}{\rule[-10pt]{0pt}{28pt}}
\newcommand{\rw}{\rule[-14pt]{0pt}{34pt}}

By symmetry (or by Theorem~6.33 of \cite{shapes}), the central quadrilateral is a square when the reference quadrilateral is a square. It is straightforward to calculate $[ABCD]/[FGHI]$ for various centers.
The barycentric coordinates $(u:v:w)$ for the center can be found in \cite{ETC}.
Then the ratio $EF/EM$ can be found by Lemma~\ref{lemma:isoscelesRatioAll}
in terms of $u$, $v$, and $w$ (Figure~\ref{fig:dpSquare}). Since each radial triangle is an isosceles right
triangle, we can replace $a$, $b$, and $c$ by $\sqrt2$, 1, and 1, respectively, in this ratio.
Finally, the ratio of the areas of the squares is the square of the ratio of similitude, so
$$\frac{[ABCD]}{[FGHI]}=\left(\frac{EA}{EF}\right)^2=\left(\frac{EM\sqrt2}{EF}\right)^2=2\left(\frac{EM}{EF}\right)^2
=2\left(\frac{1}{\frac{EF}{EM}}\right)^2.$$
The results are tabulated in the tables on the following pages. We omit the cases where $EF/EM=0$, but otherwise
list all ratios, even when the result is true for quadrilaterals that are ancestors of a square.

\begin{figure}[h!t]
\centering
\includegraphics[width=0.3\linewidth]{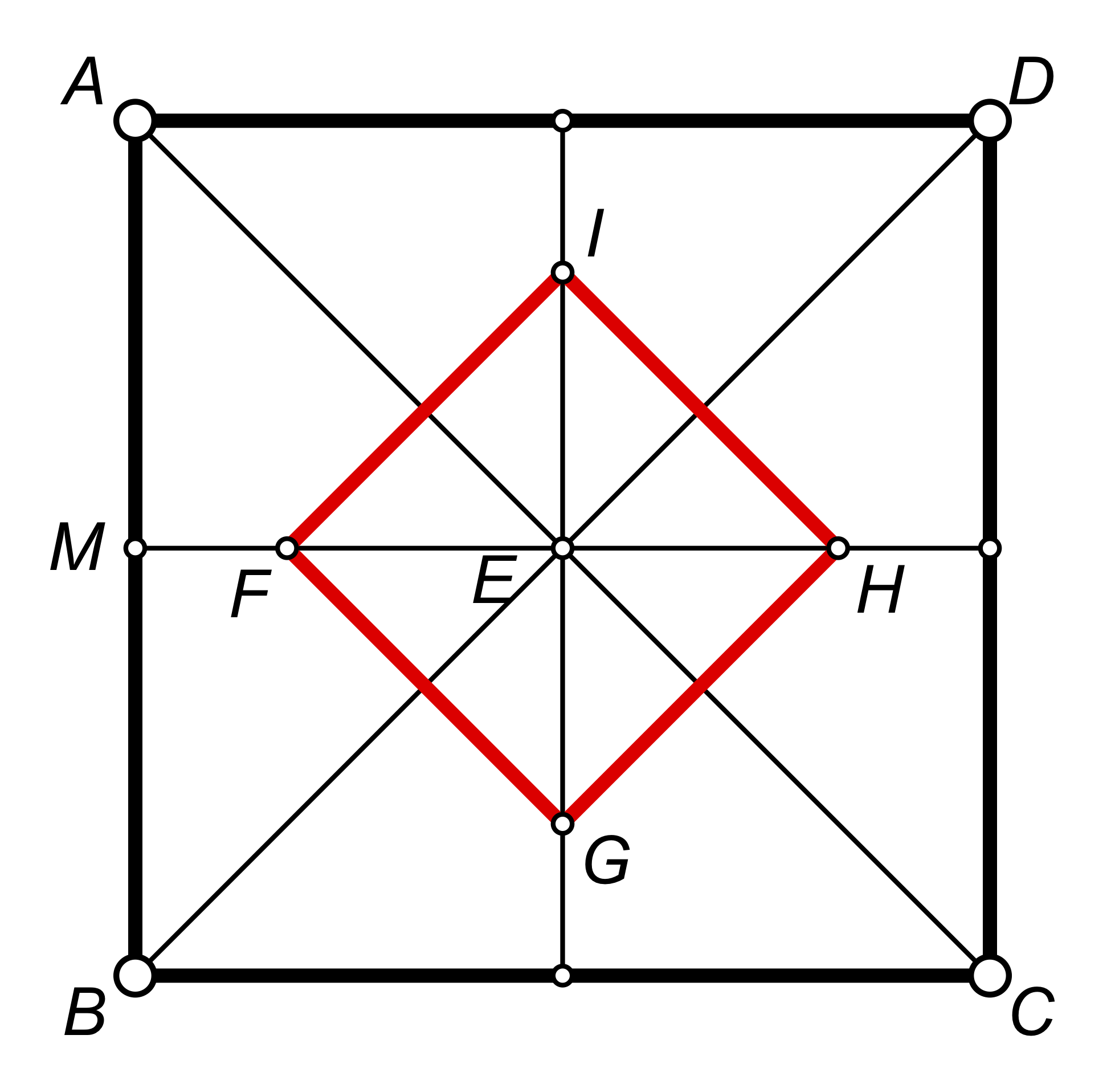}
\caption{A square and its central quadrilateral}
\label{fig:dpSquare}
\end{figure}

We can calculate $[ABCD]/[FGHI]$ even for centers that are not simple algebraic expressions in terms of $a$, $b$, and $c$.
For example, the Morley center of a triangle ($X_{356}$) has barycentric coordinates
$$\left(a\cos\frac{A}{3}+2a\cos\frac{B}{3}\cos\frac{C}{3}:b\cos\frac{B}{3}+2b\cos\frac{C}{3}\cos\frac{A}{3}:c\cos\frac{C}{3}+2c\cos\frac{A}{3}\cos\frac{B}{3}\right).$$
The quantities $A$, $B$, and $C$ normally are inverse trigonometric functions of $a$, $b$, and $c$.
However, when the triangle is an isosceles right triangle, with right angle at $A$, we have
$A=\pi/2$, $B=\pi/4$, and $C=\pi/4$.
Thus, the barycentric coordinates for the Morley center of an isosceles right triangle are
$$\left(\sqrt2\cos\frac{\pi}{6}+2\sqrt2\cos\frac{\pi}{12}\cos\frac{\pi}{12}:\cos\frac{\pi}{12}+2\cos\frac{\pi}{12}\cos\frac{\pi}{6}:\cos\frac{\pi}{12}+2\cos\frac{\pi}{6}\cos\frac{\pi}{12}\right)$$
$$=\left(\sqrt2+\sqrt6:\frac{\sqrt6+2\sqrt2}{2}:\frac{\sqrt6+2\sqrt2}{2}\right).$$
We therefore have the following theorem.

\begin{theorem}
\label{theorem:dpSquareX356}
Let $E$ be the diagonal point of square $ABCD$.
Let $F$, $G$, $H$, and $I$ be the $X_{356}$ points of $\triangle EAB$, $\triangle EBC$, 
$\triangle ECD$, and $\triangle EDA$, respectively.
Then
$$[ABCD]=6[FGHI].$$
\end{theorem}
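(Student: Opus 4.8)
The plan is to lean entirely on the machinery assembled immediately before the statement, so that the proof collapses to a single surd simplification. Two facts are already in hand: that $FGHI$ is a square (by the symmetry argument, or Theorem~6.33 of \cite{shapes}), and that for a square reference quadrilateral
$$\frac{[ABCD]}{[FGHI]}=2\left(\frac{EM}{EF}\right)^2,$$
where $M$ is the midpoint of a side of the square and $F$ is the chosen center in the radial triangle on that side. Since the diagonals of a square are equal and bisect each other, every radial triangle—say $\triangle EAB$—is an isosceles right triangle with its right angle at the apex $E$ and its hypotenuse a side of the square. It therefore suffices to compute the single ratio $EF/EM$ in $\triangle EAB$.

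First I would take the barycentric coordinates of the Morley center $X_{356}$ of an isosceles right triangle, already worked out above as
$$\left(\sqrt2+\sqrt6:\frac{\sqrt6+2\sqrt2}{2}:\frac{\sqrt6+2\sqrt2}{2}\right),$$
so that in the notation of Lemma~\ref{lemma:dpIsoscelesTriangle} we have $u=\sqrt2+\sqrt6$ and $v=w=\frac{\sqrt6+2\sqrt2}{2}$ (the apex coordinate being the first one, since the right angle sits at $E$). Applying that lemma gives
$$\frac{FM}{EM}=\frac{u}{u+2v}=\frac{\sqrt2+\sqrt6}{3\sqrt2+2\sqrt6}.$$
Because this ratio lies strictly between $0$ and $1$, Lemma~\ref{lemma:isoscelesRatioAll} then converts it into
$$\frac{EF}{EM}=1-\frac{FM}{EM}=\frac{2\sqrt2+\sqrt6}{3\sqrt2+2\sqrt6},\qquad\text{so}\qquad\frac{EM}{EF}=\frac{3\sqrt2+2\sqrt6}{2\sqrt2+\sqrt6}.$$

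The crux is the observation that this surd ratio has a clean value: since $\sqrt3\,(2\sqrt2+\sqrt6)=2\sqrt6+\sqrt{18}=3\sqrt2+2\sqrt6$, we get $EM/EF=\sqrt3$ exactly, hence $(EM/EF)^2=3$ and $[ABCD]=2\cdot 3\,[FGHI]=6[FGHI]$. I expect the only real obstacle to be algebraic rather than geometric—namely recognizing that $2\sqrt2+\sqrt6$ divides $3\sqrt2+2\sqrt6$ with quotient $\sqrt3$; everything else is direct substitution into the two isosceles-triangle lemmas. The one sign subtlety, whether Lemma~\ref{lemma:isoscelesRatioAll} contributes $1-k$ or $1+k$, is resolved automatically by $FM/EM\in(0,1)$, which places $F$ strictly between $E$ and $M$.
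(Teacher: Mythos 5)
Your proposal is correct and follows essentially the same route as the paper's own proof: both apply Lemma~\ref{lemma:dpIsoscelesTriangle} with the Morley-center coordinates $u=\sqrt2+\sqrt6$, $v=\frac{\sqrt6+2\sqrt2}{2}$ to get $FM/EM=\frac{\sqrt2+\sqrt6}{3\sqrt2+2\sqrt6}$, then use Lemma~\ref{lemma:isoscelesRatioAll} to obtain $EF/EM=1/\sqrt3$ and conclude via the ratio formula $[ABCD]/[FGHI]=2(EM/EF)^2=6$. The only cosmetic difference is that the paper simplifies $FM/EM$ to $1-\tfrac{1}{\sqrt3}$ before subtracting, whereas you subtract first and then rationalize the surd quotient; the arithmetic is identical.
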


\begin{proof}
By Lemma~\ref{lemma:dpIsoscelesTriangle},
$$\frac{FM}{EM}=\frac{u}{u+2v}$$
where $u=\sqrt2+\sqrt6$ and $v=\sqrt2+\sqrt6/2$.
Thus, $\frac{FM}{EM}=\frac{\sqrt2+\sqrt6}{3\sqrt2+2\sqrt6}=1-\frac{1}{\sqrt3}$.
From Lemma~\ref{lemma:isoscelesRatioAll}, we have
$$\frac{EF}{EM}=1-\left(1-\frac{1}{\sqrt3}\right)=\frac{1}{\sqrt3}.$$
But $EA=EM\sqrt2$, so
$$\frac{EA}{EF}=\frac{EM\sqrt2}{EF}=\frac{\sqrt2}{EF/EM}=\frac{\sqrt2}{1/\sqrt3}=\sqrt6.$$
Since $\frac{[ABCD]}{[FGHI]}=\left(\frac{EA}{EF}\right)^2$, we therefore have
$$\frac{[ABCD]}{[FGHI]}=\left(\sqrt6\right)^2=6.$$
\end{proof}

\relbox{Relationship $[ABCD]=k[FGHI]$ when $X_n$ is not algebraic}

By the same procedure, we get the following result for other values of $n$
for which the barycentric coordinates of $X_n$ are not algebraic.

\begin{theorem}
\label{theorem:dpSquareX357}
Let $E$ be the diagonal point of square $ABCD$.
Let $F$, $G$, $H$, and $I$ be the $X_{n}$ points of $\triangle EAB$, $\triangle EBC$, 
$\triangle ECD$, and $\triangle EDA$, respectively.\\[1pt]
If $n=357$, then $\displaystyle [ABCD]=\frac13\left(14+3\sqrt3\right)[FGHI]$.\\[3pt]
If $n=358$, then $\displaystyle [ABCD]=\left(14-5\sqrt3\right)[FGHI]$.\\[2pt]
If $n=359$, then $\displaystyle [ABCD]=\frac92[FGHI]$.\\[3pt]
If $n=360$, then $\displaystyle [ABCD]=8[FGHI]$.\\[2pt]
If $n=369$, then $\displaystyle [ABCD]=\frac14\left(27-10\sqrt2\right)[FGHI]$.
\end{theorem}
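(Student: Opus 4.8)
The plan is to carry out, for each of the five indices $n\in\{357,358,359,360,369\}$, exactly the computation performed for the Morley center in the proof of Theorem~\ref{theorem:dpSquareX356}. Since $ABCD$ is a square, its diagonals are congruent and bisect each other at $E$, so every radial triangle (for instance $\triangle EAB$) is an isosceles right triangle with its apex and right angle at $E$. By the symmetry argument cited before Theorem~\ref{theorem:dpSquareX356} (equivalently Theorem~6.33 of \cite{shapes}), the central quadrilateral $FGHI$ is again a square, homothetic to $ABCD$ about $E$, and $F$ lies on the median from $E$ to the midpoint $M$ of $AB$. Thus the entire problem collapses to computing the single ratio $EF/EM$ and substituting into
$$\frac{[ABCD]}{[FGHI]}=\left(\frac{EA}{EF}\right)^2=2\left(\frac{EM}{EF}\right)^2,$$
the last equality using $EA=EM\sqrt2$.

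First I would look up in \cite{ETC} the barycentric coordinates of each $X_n$; these are transcendental, given as trigonometric expressions in the angles $A$, $B$, $C$ (for $X_{357}$ through $X_{360}$ in the third-angles $A/3$, $B/3$, $C/3$, exactly as for $X_{356}$). Because the radial triangle is an isosceles right triangle with right angle at the apex, I substitute $A=\pi/2$ and $B=C=\pi/4$ into those expressions, just as in the $X_{356}$ calculation. This produces closed-form values $u$ for the apex coordinate and $v=w$ for the two equal base coordinates, each written in terms of surds such as $\cos(\pi/6)$ and $\cos(\pi/12)$. Then Lemma~\ref{lemma:dpIsoscelesTriangle} gives $FM/EM=u/(u+2v)$, and Lemma~\ref{lemma:isoscelesRatioAll}, applied with $k=FM/EM$, gives $EF/EM=1-u/(u+2v)=2v/(u+2v)$. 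Feeding this into the displayed formula yields
$$\frac{[ABCD]}{[FGHI]}=2\left(\frac{u+2v}{2v}\right)^2=\frac{(u+2v)^2}{2v^2},$$
from which the five stated constants should emerge. As a consistency check, $X_{359}$ must give $u=v$, so that $FGHI$ matches the centroid case with ratio $\tfrac92$, and $X_{360}$ must give $u=2v$, reproducing the hypotenuse-midpoint ratio $8$ of Theorem~\ref{thm:dpX5}.

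The main obstacle is entirely computational rather than conceptual: it lies in simplifying the nested radicals coming from the third-angle cosines (e.g.\ $\cos(\pi/12)=\tfrac{\sqrt6+\sqrt2}{4}$) and in reducing $(u+2v)^2/(2v^2)$ to the clean closed forms $\tfrac13(14+3\sqrt3)$, $14-5\sqrt3$, $\tfrac92$, $8$, and $\tfrac14(27-10\sqrt2)$ for the respective values of $n$. No new geometric idea is needed beyond the one already used for $X_{356}$; the only real work is confirming that each center's trigonometric coordinates evaluate correctly at the isosceles-right-triangle angles and that the ensuing surd arithmetic is performed without error, which is precisely why the paper states the results and omits the repetitive details.
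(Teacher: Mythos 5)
Your proposal is correct and follows essentially the same route the paper intends: it explicitly declares that these cases are handled ``by the same procedure'' as Theorem~\ref{theorem:dpSquareX356}, namely evaluating the (non-rational) barycentric coordinates at the isosceles right triangle $(a,b,c)=(\sqrt2,1,1)$ (or $A=\pi/2$, $B=C=\pi/4$), applying Lemmas~\ref{lemma:dpIsoscelesTriangle} and \ref{lemma:isoscelesRatioAll}, and squaring the similarity ratio, exactly as you describe, with the remaining work being surd arithmetic. One trivial quibble: in your consistency check the ratio $8$ for $u=2v$ corresponds to the midpoint of the \emph{median} to the hypotenuse (the $X_5$ point of Theorem~\ref{thm:dpX5}), not the midpoint of the hypotenuse itself, which gives ratio $2$.
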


\newpage

\scalebox{0.45}
{
\begin{tabular}{|l|p{2.5in}|}
\hline
\multicolumn{2}{|c|}{\textbf{\color{blue}\large \strut Central Quadrilaterals formed by the Diagonal Point}}\\
\multicolumn{2}{|c|}{\textbf{\color{blue}\large \strut of a Square (part 1)}}\\ \hline
$[ABCD]/[FGHI]$&$n$\\ \hline
\ru $\frac{1}{2} \left(7-4 \sqrt{3}\right)$&622\\ \hline
\ru $\frac{1}{4} \left(3-2 \sqrt{2}\right)$&285,  309,  320, 350, 448\\ \hline
\ru $-2 \left(12 \sqrt{2}-17\right)$&205, 449, 853\\ \hline
\ru $\frac{1}{2} \left(3-2 \sqrt{2}\right)$&189, 239\\ \hline
\ru $-\frac{3}{2} \left(4 \sqrt{3}-7\right)$&383\\ \hline
\ru $\frac{1}{8}$&316, 352\\ \hline
\ru $3-2 \sqrt{2}$&223,  282,  484,  610,  857,  908,  909,  911,  923\\ \hline
\ru $-\frac{3}{4} \left(\sqrt{3}-2\right)$&299\\ \hline
\ru $\frac{2}{9}$&446\\ \hline
\ru $2-\sqrt{3}$&16\\ \hline
\ru $-2 \left(2 \sqrt{2}-3\right)$&44, 197, 227, 478, 672, 851, 861, 896, 899, 910\\ \hline
\ru $-\frac{4}{3} \left(\sqrt{3}-2\right)$&624\\ \hline
\ru $-\frac{8}{49} \left(6 \sqrt{2}-11\right)$&970\\ \hline
\ru $\frac{1}{2}$ &
   20, 22, 23, 70, 74, 94, 98, 102, 103, 104, 105, 106, 111, 146, 147, 148, 149, 150, 151, 152, 153, 160, 17
   5, 253, 280, 323, 325, 347, 385, 401, 477, 586, 638, 675, 697, 699, 701, 703, 705, 707, 709, 711, 713, 715
   , 717, 719, 721, 723, 725, 727, 729, 731, 733, 735, 737, 739, 741, 743, 745, 747, 753, 755, 759, 761, 767, 
   840, 841, 842, 843, 858, 953, 972\\ \hline
\ru $-4 \left(2 \sqrt{2}-3\right)$&198, 292\\ \hline
\rv $23-16 \sqrt{2}+2 \sqrt{2 \left(58-41 \sqrt{2}\right)}$&845\\ \hline
\ru $-3 \left(\sqrt{3}-2\right)$&14\\ \hline
\ru $179-126 \sqrt{2}$&170\\ \hline
\ru $\frac{1}{4} \left(9-4 \sqrt{2}\right)$&88, 411, 416, 673, 897\\ \hline
\rv $931-658 \sqrt{2}+4 \sqrt{2 \left(54146-38287 \sqrt{2}\right)}$&167\\ \hline
\ru $\frac{8}{9}$&67, 550, 625, 694\\ \hline
\ru $-\frac{18}{49} \left(6 \sqrt{2}-11\right)$&992\\ \hline
\ru $1$&36, 40, 80, 84, 238, 291, 859\\ \hline
\ru $\frac{1}{6} \left(13-4 \sqrt{3}\right)$&634\\ \hline
\rv $43-30 \sqrt{2}+4 \sqrt{58-41 \sqrt{2}}$&844\\ \hline
\ru $\frac{9}{8}$&290, 315, 376, 490, 671, 903\\ \hline
\rv $295-208 \sqrt{2}+2 \sqrt{2 \left(20714-14647 \sqrt{2}\right)}$&168\\ \hline
\ru $\frac{4}{49} \left(9+4 \sqrt{2}\right)$&963\\ \hline
\ru $\frac{1}{2} \left(11-6 \sqrt{2}\right)$&294, 329, 335, 573\\ \hline
\ru $\frac{32}{25}$&486, 548\\ \hline
\ru $\frac{1}{4} \left(14-5 \sqrt{3}\right)$&301\\ \hline
\ru $8 \left(3-2 \sqrt{2}\right)$&199\\ \hline
\rv $5-3 \sqrt{2}+2 \sqrt{10-7 \sqrt{2}}$&504\\ \hline
\ru $\frac{1}{4} \left(3+2 \sqrt{2}\right)$&322, 334, 944\\ \hline
\ru $\frac{3}{2}$&616, 617\\ \hline
\ru $9 \left(3-2 \sqrt{2}\right)$&165\\ \hline
\ru $\frac{1}{2} \left(9-4 \sqrt{2}\right)$&144\\ \hline
\ru $2$ &
   3, 11, 48, 49, 63, 69, 71, 72, 73, 77, 78, 97, 115, 116, 122, 123, 124, 125, 127, 130, 137, 184, 185, 187, 
   201, 212, 216, 217, 219, 222, 228, 237, 244, 245, 246, 248, 255, 265, 268, 271, 283, 287, 293, 295, 296, 3
   04, 305, 306, 307, 326, 328, 332, 336, 337, 338, 339, 343, 345, 348, 382, 394, 399, 408, 417, 418, 426, 44
   0, 441, 464, 465, 466, 488, 499, 577, 591, 603, 606, 615, 640, 682, 748, 820, 828, 836, 852, 856, 865, 866
   , 867, 868, 895, 902, 974\\ \hline
\rw $\frac{1}{2} \left(995+702 \sqrt{2}-4 \sqrt{2 \left(61466+43463 \sqrt{2}\right)}\right)$&400\\ \hline
\ru $121-84 \sqrt{2}$&728\\ \hline
\rv $7-4 \sqrt{2}+2 \sqrt{2 \left(10-7 \sqrt{2}\right)}$&164\\ \hline
\ru $\frac{9}{4}$&341\\ \hline
\ru $4 \left(43-30 \sqrt{2}\right)$&480\\ \hline
\ru $\frac{1}{2} \left(33-20 \sqrt{2}\right)$&346\\ \hline
\ru $\frac{1}{36} \left(73+12 \sqrt{2}\right)$&413\\ \hline
\ru $11-6 \sqrt{2}$&191, 200, 979\\ \hline
\rv $3-\sqrt{2}+2 \sqrt{3 \sqrt{2}-4}$&510\\ \hline
\ru $\frac{1}{2} \left(19-8 \sqrt{3}\right)$&627\\ \hline
\ru $\frac{4}{9} \left(3+2 \sqrt{2}\right)$&956\\ \hline
\ru $\frac{1}{4} \left(19-6 \sqrt{2}\right)$&312, 319, 572\\ \hline
\ru $\frac{49}{18}$&183, 252\\ \hline
\end{tabular}
\quad
\begin{tabular}{|l|p{2.5in}|}
\hline
\multicolumn{2}{|c|}{\textbf{\color{blue}\large \strut Central Quadrilaterals formed by the Diagonal Point}}\\
\multicolumn{2}{|c|}{\textbf{\color{blue}\large \strut of a Square (part 2)}}\\ \hline
$[ABCD]/[FGHI]$&$n$\\ \hline
\ru $16 \left(3-2 \sqrt{2}\right)$&220\\ \hline
\ru $\frac{4}{169} \left(146-17 \sqrt{3}\right)$&636\\ \hline
\ru $\frac{1}{16} \left(33+8 \sqrt{2}\right)$&314, 561, 679\\ \hline
\ru $\frac{3}{4} \left(2+\sqrt{3}\right)$&298\\ \hline
\ru $\frac{72}{25}$&549, 599, 626\\ \hline
\ru $\frac{1}{2} \left(3+2 \sqrt{2}\right)$&8, 313, 501, 947, 962\\ \hline
\ru $\frac{1}{3} \left(14-3 \sqrt{3}\right)$&18\\ \hline
\rv $19-10 \sqrt{2}-4 \sqrt{2 \left(10-7 \sqrt{2}\right)}$&166\\ \hline
\ru $\frac{1}{98} \left(561-184 \sqrt{2}\right)$&391\\ \hline
\ru $18 \left(3-2 \sqrt{2}\right)$&210\\ \hline
\ru $\frac{25}{8}$&76, 95, 277, 333, 353, 492, 631, 801\\ \hline
\ru $\frac{1}{144} \left(347+78 \sqrt{2}\right)$&871\\ \hline
\ru $\frac{8}{49} \left(11+6 \sqrt{2}\right)$&596, 960, 993\\ \hline
\ru $\frac{1}{4}\left(27-10\sqrt{2}\right)$&369\\ \hline
\ru $12 \left(2-\sqrt{3}\right)$&618\\ \hline
\rv $\frac{1}{4} \left(10-\sqrt{2}+4 \sqrt{2 \left(2-\sqrt{2}\right)}\right)$&556\\ \hline
\ru $\frac{1}{98} \left(233+60 \sqrt{2}\right)$&330\\ \hline
\ru $\frac{1}{6} \left(13+4 \sqrt{3}\right)$&633\\ \hline
\ru $9-4 \sqrt{2}$&9, 35, 321\\ \hline
\ru $\frac{1}{32} \left(83+18 \sqrt{2}\right)$&261, 310\\ \hline
\ru $\frac{1}{4} \left(26-7 \sqrt{3}\right)$&302\\ \hline
\ru $\frac{2}{49} \left(43+30 \sqrt{2}\right)$&958\\ \hline
\ru $\frac{1}{2} \left(41-24 \sqrt{2}\right)$&452\\ \hline
\ru $\frac{32}{9}$&140, 141, 182, 566, 641\\ \hline
\ru $2 \left(81-56 \sqrt{2}\right)$&594\\ \hline
\ru $\frac{1}{4} \left(9+4 \sqrt{2}\right)$&21, 75, 371, 497, 775, 991, 997\\ \hline
\ru $2+\sqrt{3}$&15\\ \hline
\ru $\frac{2}{49} \left(123-22 \sqrt{2}\right)$&949, 965\\ \hline
\ru $\frac{121}{32}$&308\\ \hline
\rv $11-6 \sqrt{2}+4 \sqrt{10-7 \sqrt{2}}$&503\\ \hline
\ru $\frac{18}{289} \left(33+20 \sqrt{2}\right)$&392\\ \hline
\rv $\frac{1}{2} \left(6-\sqrt{2}+4 \sqrt{2-\sqrt{2}}\right)$&188\\ \hline
\ru $\frac{1}{49} \left(163+18 \sqrt{2}\right)$&87, 936\\ \hline
\ru $\frac{4}{121} \left(146-17 \sqrt{3}\right)$&629\\ \hline
\ru $\frac{1}{98} \left(193+132 \sqrt{2}\right)$&390\\ \hline
\ru $\frac{1}{8} \left(51-14 \sqrt{2}\right)$&966\\ \hline
\ru $\frac{98}{25}$&574\\ \hline
\ru $2 \left(3249-2296 \sqrt{2}\right)$&762\\ \hline
\ru $\frac{1}{18} \left(51+14 \sqrt{2}\right)$&257\\ \hline
\ru $4$&10, 55, 241, 405, 500, 582, 950\\ \hline
\ru $\frac{2}{3} \left(13-4 \sqrt{3}\right)$&398\\ \hline
\ru $\frac{1}{16} \left(51+10 \sqrt{2}\right)$&274\\ \hline
\ru $\frac{200}{49}$&632\\ \hline
\rv $31-20 \sqrt{2}+2 \sqrt{2 \left(194-137 \sqrt{2}\right)}$&258\\ \hline
\ru $\frac{4}{169} \left(146+17 \sqrt{3}\right)$&635\\ \hline
\ru $\frac{1}{18} \left(41+24 \sqrt{2}\right)$&585\\ \hline
\rv $\frac{1}{2} \left(154+105 \sqrt{2}-8 \sqrt{2 \left(338+239 \sqrt{2}\right)}\right)$&557\\ \hline
\ru $\frac{1}{49} \left(137+48 \sqrt{2}\right)$&982\\ \hline
\ru $\frac{1}{8} \left(17+12 \sqrt{2}\right)$&1000\\ \hline
\ru $25 \left(3-2 \sqrt{2}\right)$&380, 846\\ \hline
\ru $\frac{2}{9} \left(11+6 \sqrt{2}\right)$&38\\ \hline
\ru $\frac{64}{49} \left(9-4 \sqrt{2}\right)$&45\\ \hline
\rv $\frac{1}{4} \left(8+3 \sqrt{2}+2 \sqrt{2 \left(2+\sqrt{2}\right)}\right)$&260\\ \hline
\ru $\frac{1}{49} \left(113+72 \sqrt{2}\right)$&984\\ \hline
\rv $3+2 \sqrt{2 \left(3 \sqrt{2}-4\right)}$&364\\ \hline
\ru $2 \left(121-84 \sqrt{2}\right)$&756\\ \hline
\ru $\frac{9}{2}$&2, 54, 284, 311, 349, 359, 569, 570, 581, 637, 639, 943\\ \hline
\ru $\frac{1}{64} \left(209+60 \sqrt{2}\right)$&873\\ \hline
\ru $\frac{1}{36} \left(107+42 \sqrt{2}\right)$&870\\ \hline
\ru $\frac{2}{49} \left(57+40 \sqrt{2}\right)$&996\\ \hline
\rv $23-8 \sqrt{2}-2 \sqrt{2 \left(50-31 \sqrt{2}\right)}$&363\\ \hline
\ru $33-20 \sqrt{2}$&964, 986\\ \hline
\rv $4-\sqrt{2}+2 \sqrt{2 \left(2-\sqrt{2}\right)}$&259\\ \hline
\ru $\frac{16}{49} \left(9+4 \sqrt{2}\right)$&496, 613, 988\\ \hline
\ru $\frac{1}{4} \left(11+6 \sqrt{2}\right)$&968\\ \hline
\end{tabular}
}

\newpage

\scalebox{0.45}
{
\begin{tabular}{|l|p{2.5in}|}
\hline
\multicolumn{2}{|c|}{\textbf{\color{blue}\large \strut Central Quadrilaterals formed by the Diagonal Point}}\\
\multicolumn{2}{|c|}{\textbf{\color{blue}\large \strut of a Square (part 3)}}\\ \hline
$[ABCD]/[FGHI]$&$n$\\ \hline
\rv $2 \left(8+5 \sqrt{2}-2 \sqrt{2 \left(10+7 \sqrt{2}\right)}\right)$&178\\ \hline
\ru $\frac{1}{9} \left(33+8 \sqrt{2}\right)$&256\\ \hline
\ru $\frac{4}{3} \left(2+\sqrt{3}\right)$&623\\ \hline
\ru $2 \left(11-6 \sqrt{2}\right)$&37, 41, 442, 498, 584, 601, 976\\ \hline
\ru $\frac{1}{36} \left(97+60 \sqrt{2}\right)$&409\\ \hline
\ru $\frac{1}{2} \left(4+4 \sqrt[4]{2}+\sqrt{2}\right)$&366\\ \hline
\ru $\frac{36}{529} \left(41+24 \sqrt{2}\right)$&551\\ \hline
\rv $5-\sqrt{2}+2 \sqrt{2-\sqrt{2}}$&362\\ \hline
\ru $\frac{128}{25}$&575, 695\\ \hline
\ru $\frac{1}{8} \left(27+10 \sqrt{2}\right)$&60, 86, 443\\ \hline
\ru $\frac{2}{961} \left(2217+188 \sqrt{2}\right)$&980\\ \hline
\ru $\frac{841}{162}$&592\\ \hline
\rv $\frac{1}{2} \left(18-\sqrt{2}-8 \sqrt{2-\sqrt{2}}\right)$&236\\ \hline
\ru $\frac{1}{2} \left(19-6 \sqrt{2}\right)$&612, 894\\ \hline
\ru $\frac{4}{49} \left(51+10 \sqrt{2}\right)$&142, 474, 939, 954, 975\\ \hline
\ru $14-5 \sqrt{3}$&61, 358\\ \hline
\rv $2 \left(11+8 \sqrt{2}-2 \sqrt{2 \left(24+17 \sqrt{2}\right)}\right)$&367\\ \hline
\ru $\frac{1}{4} \left(33-8 \sqrt{2}\right)$&941\\ \hline
\ru $\frac{50}{9}$&39, 233, 493, 567, 590\\ \hline
\ru $\frac{1}{16} \left(73+12 \sqrt{2}\right)$&751\\ \hline
\ru $\frac{1}{4} \left(14+5 \sqrt{3}\right)$&300\\ \hline
\ru $\frac{18}{289} \left(123-22 \sqrt{2}\right)$&374\\ \hline
\ru $\frac{4}{121} \left(146+17 \sqrt{3}\right)$&630\\ \hline
\ru $3+2 \sqrt{2}$&1, 377\\ \hline
\ru $\frac{288}{49}$&547, 597\\ \hline
\ru $6$&395, 396\\ \hline
\ru $\frac{2}{49} \left(233-60 \sqrt{2}\right)$&750\\ \hline
\ru $\frac{49}{8}$&83, 288, 327, 588\\ \hline
\ru $36 \left(3-2 \sqrt{2}\right)$&375\\ \hline
\ru $\frac{2}{49} \left(123+22 \sqrt{2}\right)$&940\\ \hline
\rv $\frac{1}{2} \left(5+\sqrt{2}+4 \sqrt{1+\sqrt{2}}\right)$&508\\ \hline
\ru $\frac{16}{49} \left(11+6 \sqrt{2}\right)$&495, 611\\ \hline
\ru $\frac{1}{3} \left(14+3 \sqrt{3}\right)$&17, 357\\ \hline
\ru $\frac{1}{2} \left(27-10 \sqrt{2}\right)$&938, 955\\ \hline
\rv $\frac{1}{2} \left(10-\sqrt{2}+4 \sqrt{2 \left(2-\sqrt{2}\right)}\right)$&483\\ \hline
\ru $\frac{162}{25}$&373\\ \hline
\ru $\frac{4}{9} \left(9+4 \sqrt{2}\right)$&893\\ \hline
\rv $\frac{\left(-2+2 \sqrt{2}+\left(2-\sqrt{2}\right)^{3/4}\right)^2}{2\left(\sqrt{2}-1\right)^2}$&507\\ \hline
\rv $32-19 \sqrt{2}+2 \sqrt{2 \left(194-137 \sqrt{2}\right)}$&289\\ \hline
\ru $\frac{1}{16} \left(129-16 \sqrt{2}\right)$&981\\ \hline
\ru $2 \left(9-4 \sqrt{2}\right)$&12, 42, 863, 922\\ \hline
\ru $\frac{121}{18}$&384\\ \hline
\ru $2+\sqrt{2}+2\ 2^{3/4}$&365\\ \hline
\rv $\frac{\left(1+2^{2/3} \sqrt[3]{\sqrt{2}-1}\right)^2}{\sqrt[3]{2}\left(\sqrt{2}-1\right)^{2/3}}$&506\\ \hline
\ru $\frac{1}{4} \left(19+6 \sqrt{2}\right)$&81, 85, 386, 404, 977\\ \hline
\ru $\frac{1}{2} \left(7+4 \sqrt{3}\right)$&621\\ \hline
\ru $41-24 \sqrt{2}$&171\\ \hline
\ru $\frac{18}{49} \left(11+6 \sqrt{2}\right)$&354, 969\\ \hline
\rv $\frac{1}{2} \left(227-154 \sqrt{2}+28 \sqrt{2 \left(58-41 \sqrt{2}\right)}\right)$&179\\ \hline
\rv $\frac{1}{2} \left(21+9 \sqrt{3}-2 \sqrt{62+35 \sqrt{3}}\right)$&559\\ \hline
\ru $\frac{1}{2} \left(9+4 \sqrt{2}\right)$&176, 192, 387, 989\\ \hline
\rv $\frac{1}{2} \left(6+\sqrt{2}+4 \sqrt{2+\sqrt{2}}\right)$&174\\ \hline
\ru $\frac{25}{49} \left(9+4 \sqrt{2}\right)$&987\\ \hline
\rv $\frac{1}{49} \left(1169-672 \sqrt{2}+8 \sqrt{33178-23201 \sqrt{2}}\right)$&180\\ \hline
\rv $\frac{3}{2} \left(5+3 \sqrt{2}-\sqrt{3 \left(3+2 \sqrt{2}\right)}\right)$&554\\ \hline
\ru $\frac{1}{8} \left(33+20 \sqrt{2}\right)$&995\\ \hline
\end{tabular}
\quad
\begin{tabular}{|l|p{2.5in}|}
\hline
\multicolumn{2}{|c|}{\textbf{\color{blue}\large \strut Central Quadrilaterals formed by the Diagonal Point}}\\
\multicolumn{2}{|c|}{\textbf{\color{blue}\large \strut of a Square (part 4)}}\\ \hline
$[ABCD]/[FGHI]$&$n$\\ \hline
\ru $\frac{2}{49} \left(163+18 \sqrt{2}\right)$&869\\ \hline
\ru $18 \left(57-40 \sqrt{2}\right)$&872\\ \hline
\ru $8$&5, 6, 169, 226, 344, 360, 389, 402, 482, 485, 494, 578, 620, 642, 942\\ \hline
\ru $\frac{9}{16} \left(9+4 \sqrt{2}\right)$&757\\ \hline
\ru $\frac{1}{9} \left(41+24 \sqrt{2}\right)$&985\\ \hline
\rv $2 \left(13+9 \sqrt{2}-2 \sqrt{58+41 \sqrt{2}}\right)$&177\\ \hline
\ru $\frac{1}{98} \left(561+184 \sqrt{2}\right)$&89\\ \hline
\rv $10+6 \sqrt{2}-\sqrt{3 \left(17+12 \sqrt{2}\right)}$&202\\ \hline
\ru $\frac{1}{4} \left(17+12 \sqrt{2}\right)$&82, 388\\ \hline
\ru $\frac{2}{49} \left(113+72 \sqrt{2}\right)$&999\\ \hline
\rv $3+\sqrt{2}+2 \sqrt{2 \left(1+\sqrt{2}\right)}$&509\\ \hline
\ru $9$&983\\ \hline
\ru $\frac{2}{9} \left(27+10 \sqrt{2}\right)$&904\\ \hline
\rv $7+2 \sqrt{2 \left(2-\sqrt{2}\right)}$&173\\ \hline
\ru $2 \left(33-20 \sqrt{2}\right)$&213, 605\\ \hline
\ru $\frac{1}{4} \left(26+7 \sqrt{3}\right)$&303\\ \hline
\ru $\frac{1}{2} \left(11+6 \sqrt{2}\right)$&7, 58, 614, 951\\ \hline
\ru $4 \left(11-6 \sqrt{2}\right)$&172\\ \hline
\ru $\frac{1}{64} \left(387+182 \sqrt{2}\right)$&763\\ \hline
\ru $\frac{81}{8}$&598\\ \hline
\ru $\frac{2}{49} \left(177+52 \sqrt{2}\right)$&967\\ \hline
\ru $19-6 \sqrt{2}$&43, 937\\ \hline
\rv $4+\sqrt{2}+2 \sqrt{2 \left(2+\sqrt{2}\right)}$&266\\ \hline
\ru $\frac{36}{49} \left(9+4 \sqrt{2}\right)$&553\\ \hline
\ru $\frac{1}{4} \left(33+8 \sqrt{2}\right)$&948\\ \hline
\ru $3 \left(2+\sqrt{3}\right)$&13\\ \hline
\rv $-2 \left(-34-23 \sqrt{2}+4 \sqrt{2 \left(58+41 \sqrt{2}\right)}\right)$&234\\ \hline
\ru $2 \left(3+2 \sqrt{2}\right)$&31, 65, 355, 602, 774, 855, 945, 998\\ \hline
\ru $\frac{1}{8} \left(57+28 \sqrt{2}\right)$&593\\ \hline
\ru $97-60 \sqrt{2}$&609\\ \hline
\ru $\frac{25}{2}$&194, 251, 262\\ \hline
\ru $\frac{9}{4} \left(3+2 \sqrt{2}\right)$&229, 959\\ \hline
\rv $\frac{1}{4} \left(18+7 \sqrt{2}+4 \sqrt{2 \left(10+7 \sqrt{2}\right)}\right)$&555\\ \hline
\ru $\frac{2}{3} \left(13+4 \sqrt{3}\right)$&397\\ \hline
\ru $2 \left(41-24 \sqrt{2}\right)$&181\\ \hline
\ru $9+4 \sqrt{2}$&57, 79, 379\\ \hline
\ru $\frac{25}{32} \left(11+6 \sqrt{2}\right)$&552\\ \hline
\ru $\frac{1}{2} \left(19+8 \sqrt{3}\right)$&628\\ \hline
\rv $11+2 \sqrt{2}+4 \sqrt{2-\sqrt{2}}$&361\\ \hline
\ru $\frac{1}{2} \left(17+12 \sqrt{2}\right)$&145, 595, 961, 994\\ \hline
\rv $\frac{1}{2} \left(18+\sqrt{2}+8 \sqrt{2+\sqrt{2}}\right)$&558\\ \hline
\ru $18$&32, 51, 195, 218, 381, 568, 800, 864, 973\\ \hline
\ru $\frac{1}{4} \left(41+24 \sqrt{2}\right)$&849\\ \hline
\ru $14+5 \sqrt{3}$&62\\ \hline
\ru $4 \left(3+2 \sqrt{2}\right)$&56, 214, 215, 481, 502, 946, 990\\ \hline
\rv $10+6 \sqrt{2}+\sqrt{3 \left(17+12 \sqrt{2}\right)}$&203\\ \hline
\ru $2 \left(9+4 \sqrt{2}\right)$&560, 678, 854\\ \hline
\ru $\frac{1}{2} \left(33+20 \sqrt{2}\right)$&279\\ \hline
\ru $32$&143, 263, 546, 576\\ \hline
\ru $2 \left(11+6 \sqrt{2}\right)$&583, 604\\ \hline
\ru $\frac{1}{2} \left(43+30 \sqrt{2}\right)$&957\\ \hline
\ru $12 \left(2+\sqrt{3}\right)$&619\\ \hline
\ru $9 \left(3+2 \sqrt{2}\right)$&267, 269, 978\\ \hline
\ru $2 \left(139-66 \sqrt{2}\right)$&600\\ \hline
\ru $\frac{1}{2} \left(131+90 \sqrt{2}\right)$&479\\ \hline
\rv $47+32 \sqrt{2}+6 \sqrt{2 \left(58+41 \sqrt{2}\right)}$&505\\ \hline
\ru $121+84 \sqrt{2}$&738\\ \hline
\end{tabular}
}

\newpage


\section{Poncelet point}

In this section, we examine central quadrilaterals formed from the Poncelet
point of the reference quadrilateral.

The \emph{Poncelet point} (sometimes called the Euler-Poncelet point) of a quadrilateral
is the common point of the nine-point circles of the component triangles (half-triangles)
of the quadrilateral. A triangle formed from three vertices of a quadrilateral is called
a \emph{component triangle} of that quadrilateral.
The \emph{nine-point circle} of a triangle is the circle through the midpoints
of the sides of that triangle.

Figure \ref{fig:ppPonceletPoint} shows the Poncelet point of quadrilateral $ABCD$.
The yellow points represent the midpoints of the sides and diagonals of the quadrilateral.
The component triangles are $BCD$, $ACD$, $ABD$, and $ABC$.
The blue circles are the nine-point circles of these triangles.
The common point of the four circles is the Poncelet point (shown in green).

\begin{figure}[h!t]
\centering
\includegraphics[width=0.4\linewidth]{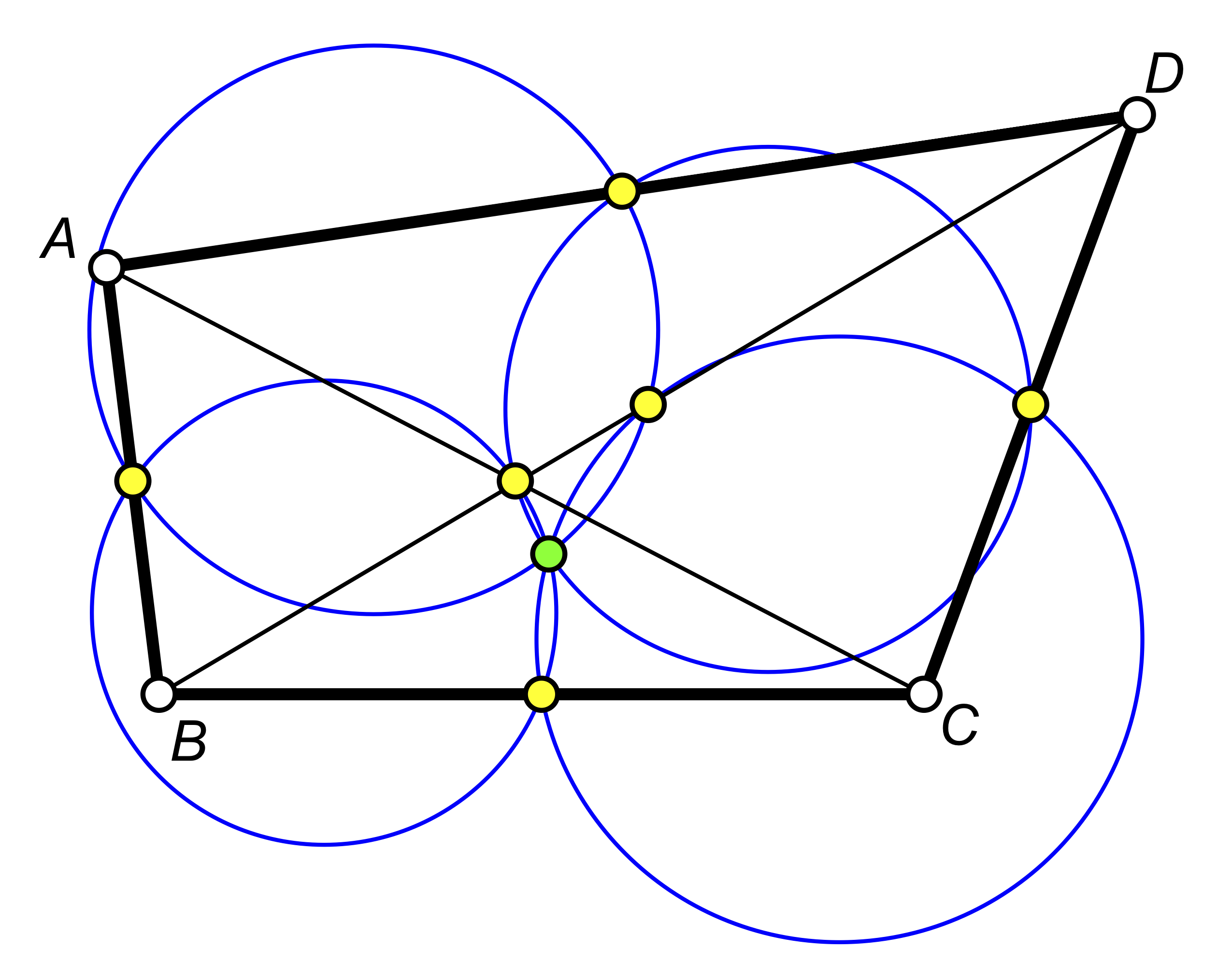}
\caption{The Poncelet point of quadrilateral $ABCD$}
\label{fig:ppPonceletPoint}
\end{figure}

\begin{proposition}
\label{prop:ppParallelogram}
The Poncelet point of a parallelogram coincides with the diagonal point.
\end{proposition}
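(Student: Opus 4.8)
The plan is to show directly that the diagonal point lies on each of the four nine-point circles whose common point defines the Poncelet point; once this is established, the result is immediate. The crucial observation is that every component triangle of $ABCD$ carries one of the two diagonals of the quadrilateral as a side. Indeed, running through the list $\triangle BCD$, $\triangle ACD$, $\triangle ABD$, $\triangle ABC$, one sees that $\triangle ABC$ and $\triangle ACD$ each contain the diagonal $AC$, while $\triangle BCD$ and $\triangle ABD$ each contain the diagonal $BD$.

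Next I would let $O$ denote the diagonal point, the intersection of $AC$ and $BD$. Since $ABCD$ is a parallelogram, its diagonals bisect each other, so $O$ is simultaneously the midpoint of $AC$ and the midpoint of $BD$. Now I invoke the defining property recalled in the introduction to this section: the nine-point circle of a triangle passes through the midpoints of its three sides. In $\triangle ABC$ the midpoint of side $AC$ is $O$, so $O$ lies on the nine-point circle of $\triangle ABC$; the same holds for $\triangle ACD$, whose side $AC$ also has midpoint $O$. Likewise, in $\triangle ABD$ and $\triangle BCD$ the side $BD$ has midpoint $O$, so $O$ lies on those two nine-point circles as well. Hence $O$ lies on all four nine-point circles at once, and since the Poncelet point is by definition their common point, it must coincide with $O$.

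There is essentially no obstacle in this argument: the parallelogram's bisecting property forces a single shared midpoint, and the elementary property of the nine-point circle then places $O$ on every relevant circle. The only point requiring a moment's care is the bookkeeping that pairs each component triangle with the correct diagonal, but this is transparent from the side-list above, so I would expect the entire proof to be short and purely synthetic, with no recourse to coordinates or the barycentric machinery of the earlier lemmas.
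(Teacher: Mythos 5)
Your proof is correct and follows essentially the same route as the paper's: the diagonals of a parallelogram bisect each other, so the diagonal point is the midpoint of a side of each component triangle and hence lies on all four nine-point circles. Your version merely spells out the bookkeeping of which diagonal belongs to which component triangle, which the paper leaves to "similarly."
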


\begin{figure}[h!t]
\centering
\includegraphics[width=0.5\linewidth]{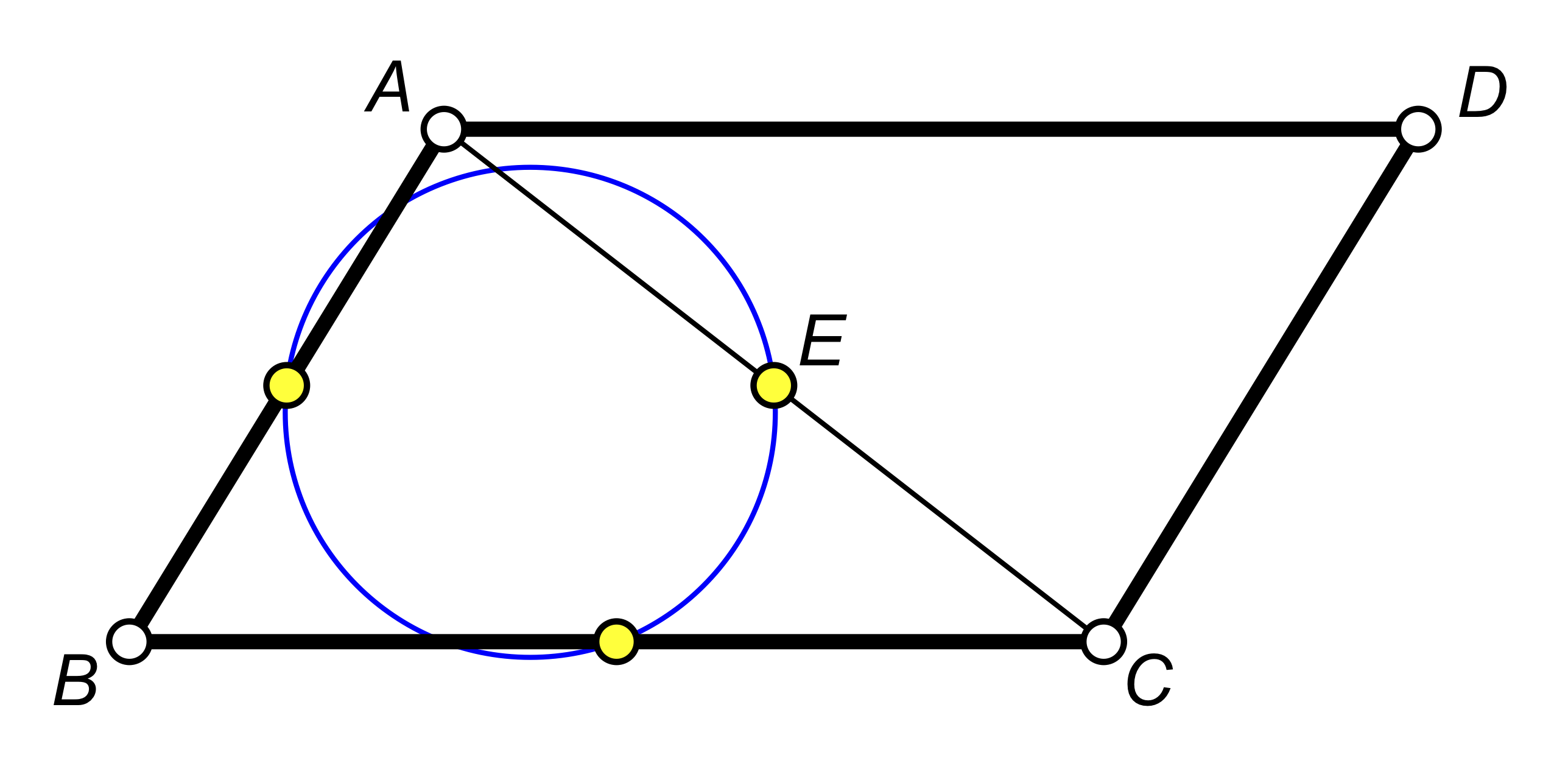}
\caption{Nine-point circle of component triangle $ABC$}
\label{fig:ppParallelogram}
\end{figure}

\begin{proof}
Since the diagonals of a parallelogram bisect each other, the diagonal point $E$, is the
midpoint of side $AC$ of component triangle $ABC$.
Thus, the nine-point circle of $\triangle ABC$ passes through $E$ (Figure~\ref{fig:ppParallelogram}).
Similarly, all the nine-point circles of the other component triangles pass through $E$. Hence the diagonal point is common to all four of these circles and
is therefore the Poncelet point of the quadrilateral.
\end{proof}

\newpage
The following result is well known.

\begin{lemma}
\label{lemma:ninePointCircle}
The nine-point circle of a triangle passes through the feet of the altitudes.
\end{lemma}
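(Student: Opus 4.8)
The plan is to use the paper's own definition of the nine-point circle as the circumcircle of the medial triangle, and then show that each foot of an altitude lies on that circle. Write $M_A$, $M_B$, $M_C$ for the midpoints of $BC$, $CA$, $AB$, and let $H_A$, $H_B$, $H_C$ be the feet of the altitudes from $A$, $B$, $C$. Since the three cases are identical up to relabeling, it suffices to prove that $H_A$ lies on the circle through $M_A$, $M_B$, $M_C$.

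First I would record two length facts. Because $M_AM_B$ joins the midpoints of $BC$ and $CA$, it is a midline and $M_AM_B=\tfrac12 AB$. Because $AH_A\perp BC$, the triangle $ABH_A$ has its right angle at $H_A$, and $M_C$ is the midpoint of its hypotenuse $AB$; the midpoint of the hypotenuse of a right triangle is equidistant from all three vertices, so $M_CH_A=\tfrac12 AB$. Hence $M_AM_B=M_CH_A$.

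Next I would exploit the parallelism $M_BM_C\parallel BC$ (another midline). Since $M_A$ and $H_A$ both lie on line $BC$, the four points $M_B,M_C,M_A,H_A$ form a trapezoid with $M_BM_C\parallel M_AH_A$ whose two legs $M_AM_B$ and $M_CH_A$ are equal by the previous step; a trapezoid with equal legs is isosceles and therefore cyclic, which puts $H_A$ on the nine-point circle. Equivalently, and more robustly, one can argue by inscribed angles: the isosceles triangles $M_CBH_A$ and $M_BCH_A$ (with $M_CH_A=M_CB$ and $M_BH_A=M_BC$) give $\angle M_CH_AB=\angle B$ and $\angle M_BH_AC=\angle C$, so $\angle M_BH_AM_C=\angle A=\angle M_BM_AM_C$; since $H_A$ and $M_A$ lie on the same side of the midline $M_BM_C$ and subtend equal angles on $M_BM_C$, they are concyclic with $M_B$ and $M_C$.

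The main obstacle is purely a matter of configuration rather than of ideas: when the triangle is obtuse the foot $H_A$ falls outside segment $BC$ and the quadrilateral $M_BM_CM_AH_A$ becomes non-convex, while in the isosceles case $AB=AC$ the foot $H_A$ coincides with $M_A$ and the trapezoid degenerates. To cover every case at once I would phrase the final step with directed angles modulo $\pi$, proving $\angle(H_AM_B,H_AM_C)=\angle(M_AM_B,M_AM_C)$ directly from the equal-length relations; this concyclicity criterion is insensitive to the position of $H_A$ and to the shape of the triangle, so no case analysis is needed.
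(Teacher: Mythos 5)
The paper does not prove this lemma at all; it is stated as ``well known'' with no argument supplied, so there is no in-paper proof to compare against. Your proposal is a correct, self-contained synthetic proof and is consistent with the paper's definition of the nine-point circle as the circle through the three side midpoints: the two length facts ($M_AM_B=\tfrac12 AB$ by the midline and $M_CH_A=\tfrac12 AB$ by the hypotenuse-midpoint property of the right triangle $ABH_A$) are exactly what is needed, and the inscribed-angle computation $\angle M_BH_AM_C=\angle A=\angle M_BM_AM_C$ closes the argument. One small caution: ``a trapezoid with equal legs is isosceles and therefore cyclic'' is not literally true (a parallelogram has equal legs), but you already flag this and fall back on the directed-angle formulation, which handles that degeneracy together with the obtuse case and the case $H_A=M_A$; with that final formulation the proof is complete and configuration-independent.
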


\begin{proposition}
\label{prop:ppOrtho}
The Poncelet point of an orthodiagonal quadrilateral coincides with the diagonal point.
\end{proposition}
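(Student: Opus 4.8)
The plan is to mirror the proof of Proposition~\ref{prop:ppParallelogram}, but to use the altitude-foot characterization of the nine-point circle (Lemma~\ref{lemma:ninePointCircle}) in place of the midpoint characterization. The goal is to show that the diagonal point $E$ lies on the nine-point circle of each of the four component triangles $ABC$, $ACD$, $ABD$, and $BCD$; since those circles can meet in at most one common point, that point must be $E$, and $E$ is by definition the Poncelet point.

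First I would observe that for the component triangle $ABC$, the side $AC$ is one diagonal of the quadrilateral, and $E$ lies on this diagonal. Because the quadrilateral is orthodiagonal, the other diagonal $BD$ is perpendicular to $AC$; since $B$ and $E$ both lie on line $BD$, the segment $BE$ is perpendicular to $AC$ with $E$ on $AC$. Hence $E$ is precisely the foot of the altitude from vertex $B$ to side $AC$ in $\triangle ABC$. By Lemma~\ref{lemma:ninePointCircle}, the nine-point circle of $\triangle ABC$ passes through the feet of its altitudes, so it passes through $E$.

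Next I would apply the identical reasoning to the remaining three component triangles. In $\triangle ACD$, the point $E$ lies on side $AC$ and $DE\perp AC$, so $E$ is the foot of the altitude from $D$; in $\triangle ABD$, the point $E$ lies on side $BD$ and $AE\perp BD$, so $E$ is the foot of the altitude from $A$; and in $\triangle BCD$, the point $E$ lies on side $BD$ and $CE\perp BD$, so $E$ is the foot of the altitude from $C$. In each case Lemma~\ref{lemma:ninePointCircle} places $E$ on the corresponding nine-point circle.

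Having shown that all four nine-point circles pass through $E$, I would conclude that $E$ is their common point and therefore coincides with the Poncelet point of $ABCD$. I do not anticipate a genuine obstacle here; the only point requiring care is the bookkeeping that identifies $E$ as an altitude foot in each triangle (checking which diagonal serves as the relevant side and which vertex supplies the perpendicular), and verifying that the orthodiagonality hypothesis is what supplies the right angle in every case.
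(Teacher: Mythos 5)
Your proposal is correct and follows essentially the same route as the paper: both identify the diagonal point $E$ as the foot of an altitude in each component triangle (the paper notes $BE$ is an altitude of $\triangle ABC$ and argues the rest by symmetry) and then invoke Lemma~\ref{lemma:ninePointCircle} to place $E$ on all four nine-point circles. Your version simply writes out the altitude-foot bookkeeping for all four triangles explicitly, which the paper leaves to "similarly."
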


\begin{figure}[h!t]
\centering
\includegraphics[width=0.4\linewidth]{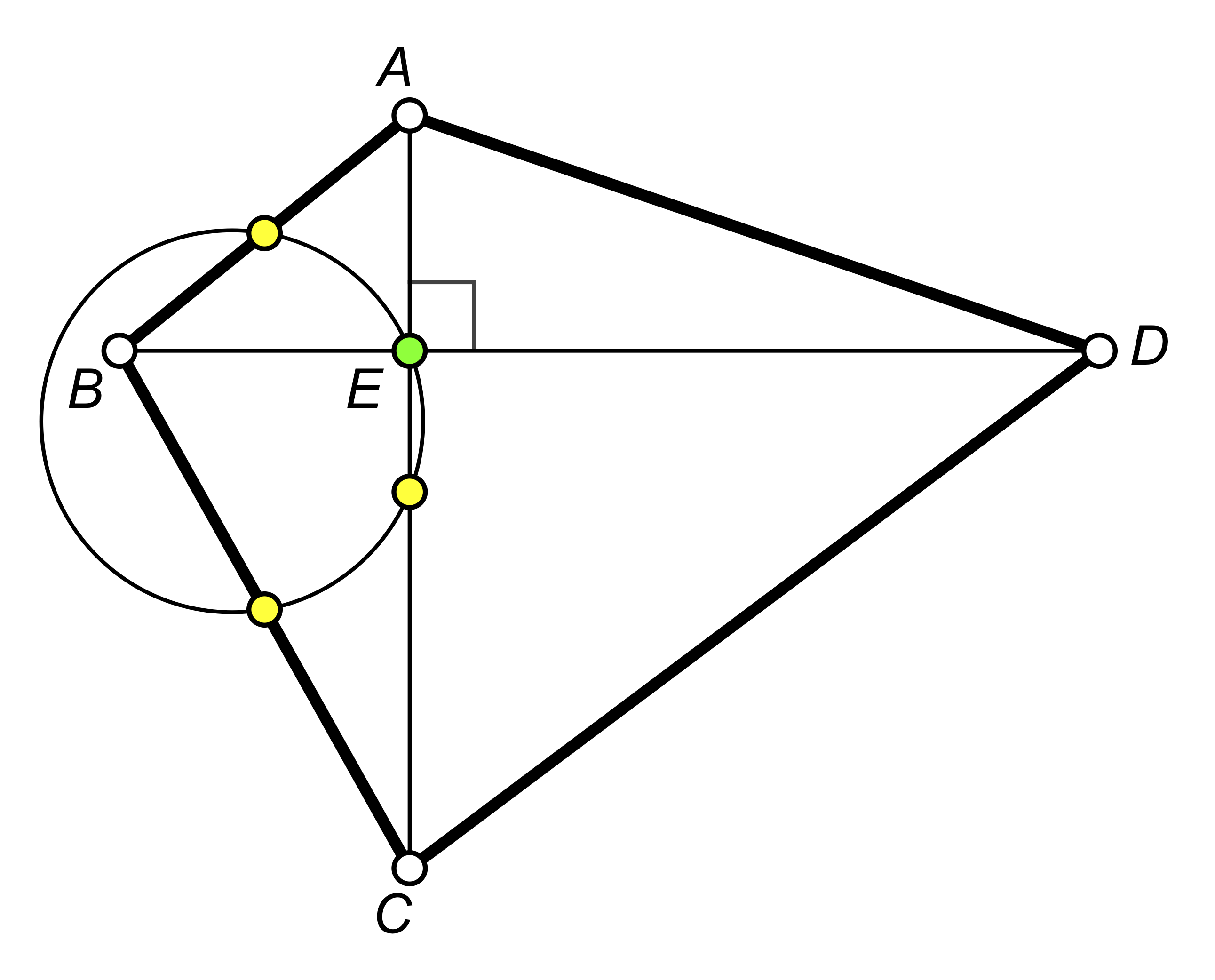}
\caption{Nine-point circle of component triangle $ABC$}
\label{fig:ppOrtho}
\end{figure}

\begin{proof}
Let $ABCD$ be an orthodiagonal quadrilateral with diagonal point $E$.
Then $BE$ is an altitude of component triangle $ABC$.
By Lemma~\ref{lemma:ninePointCircle}, the nine point circle of $\triangle ABC$
passes through $E$ (Figure~\ref{fig:ppOrtho}).
Similarly, the nine-point circles of the other component triangles also pass through $E$.
Thus, $E$ is the Poncelet point of quadrilateral $ABCD$.
\end{proof}

Our computer study examined the central quadrilaterals formed by the Poncelet point.
Since the Poncelet point coincides with the diagonal point of an orthodiagonal quadrilateral,
we omit results for orthodiagonal quadrilaterals.
Since the Poncelet point coincides with the diagonal point of a parallelogram,
we omit results for parallelograms.
We checked the central quadrilateral for all the first 1000 triangle centers (omitting points
at infinity) and all reference quadrilateral shapes listed in Table~\ref{table:quadrilaterals}.

The results found are listed in Table~\ref{table:Ponce}.
\medskip

\begin{table}[ht!]
\caption{}
\label{table:Ponce}
\begin{center}
\begin{tabular}{|l|l|p{2.2in}|}
\hline
\multicolumn{3}{|c|}{\color{blue}\textbf{\large \strut Central Quadrilaterals formed by the Poncelet Point}}\\ \hline
\textbf{Quadrilateral Type}&\textbf{Relationship}&\textbf{centers}\\ \hline
\ru Hjelmslev&$[ABCD]=\frac12[FGHI]$&20\\
\hline
\end{tabular}
\end{center}
\end{table}

The following result is well known, \cite{MathWorld-ninePointCircle}.

\begin{lemma}
\label{lemma:9pt}
The nine-point circle of a triangle bisects any line from the orthocenter to a point on the circumcircle.
\end{lemma}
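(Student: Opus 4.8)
The plan is to realize the nine-point circle as the image of the circumcircle under a single homothety, which makes the bisection claim immediate. Write $H$ for the orthocenter of the triangle and let $h$ be the homothety centered at $H$ with ratio $\tfrac12$. The crucial reformulation is this: $h$ sends every point $P$ of the plane to the midpoint of the segment $HP$, so the assertion "the nine-point circle bisects every segment from $H$ to a point on the circumcircle" is \emph{equivalent} to the assertion that $h$ maps the circumcircle onto the nine-point circle. The whole proof therefore reduces to identifying the image of the circumcircle under $h$.

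To carry this out, I would first recall the standard fact that the nine-point circle passes through the three \emph{Euler points}, namely the midpoints of the segments $HA$, $HB$, and $HC$ from the orthocenter to the vertices; these are three of the classical nine points. Since a homothety carries circles to circles, $h$ maps the circumcircle to some circle $\Gamma$. The vertices $A$, $B$, $C$ lie on the circumcircle, so their images under $h$ — which are exactly the three Euler points — lie on $\Gamma$. But those same three (non-collinear) points lie on the nine-point circle, and three non-collinear points determine a unique circle, so $\Gamma$ coincides with the nine-point circle. Finally, for any point $P$ on the circumcircle, its image $h(P)$ is the midpoint of $HP$ and lies on $\Gamma$; hence the midpoint of $HP$ lies on the nine-point circle, which is precisely the claim.

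There is essentially no genuine obstacle here: the only ingredient that must be granted is that the three Euler points lie on the nine-point circle, which is part of the standard characterization of that circle and may be cited as well known alongside Lemma~\ref{lemma:ninePointCircle}. If one preferred not to invoke the Euler points, the alternative is to verify directly that $h$ sends the circumcenter $O$ to the midpoint of $OH$ and scales the circumradius $R$ to $R/2$, and then to appeal to the well-known facts that the nine-point center is the midpoint of $OH$ and the nine-point radius is $R/2$. Either route keeps the argument purely synthetic, but the Euler-point version is the most economical and avoids having to quote the location of the nine-point center.
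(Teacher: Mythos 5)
Your argument is correct. Note first that the paper itself offers no proof of this lemma at all: it is simply cited as well known (from MathWorld), so there is no ``paper's approach'' to match against. Your homothety proof is the standard one and is sound: the homothety $h$ centered at the orthocenter $H$ with ratio $\tfrac12$ sends each point $P$ to the midpoint of $HP$, sends $A$, $B$, $C$ to the three Euler points, and hence sends the circumcircle to the unique circle through those three non-collinear points, which is the nine-point circle; the bisection claim follows immediately. The one thing worth flagging is that the paper's working definition of the nine-point circle (given in the Poncelet-point section) is only ``the circle through the midpoints of the sides,'' so the fact that the Euler points also lie on it is an extra standard fact you must import --- you acknowledge this explicitly, and your fallback route via the nine-point center being the midpoint of $OH$ with radius $R/2$ would serve equally well. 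Either way the lemma is established, and your write-up supplies a proof the paper leaves to a citation.
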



\begin{lemma}
\label{lemma-ppHjelmslevMidpoint}
Let $E$ be the Poncelet point of quadrilateral $ABCD$ that has right angles at $B$ and $D$.
Then $E$ is the midpoint of $BD$.
\end{lemma}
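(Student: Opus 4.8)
The plan is to show directly that the midpoint $M$ of $BD$ lies on all four nine-point circles whose common point defines the Poncelet point; since those circles meet only at that point, this forces $E=M$. The four component triangles of $ABCD$ are $\triangle ABD$, $\triangle BCD$, $\triangle ABC$, and $\triangle ACD$, and I would treat them in two groups.

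First I would dispose of the two triangles that have $BD$ as a side. In both $\triangle ABD$ and $\triangle BCD$, the segment $BD$ is a side, so its midpoint $M$ is one of the three side-midpoints through which the nine-point circle passes by definition. Hence $M$ lies on the nine-point circles of $\triangle ABD$ and $\triangle BCD$ with no further work.

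The crux is the remaining two triangles. Here I would first record the key structural observation: since $\angle ABC=90^\circ$ and $\angle ADC=90^\circ$, both $B$ and $D$ lie on the circle with diameter $AC$ (Thales). Consequently this one circle is simultaneously the circumcircle of the right triangle $\triangle ABC$ (hypotenuse $AC$) and of the right triangle $\triangle ACD$ (hypotenuse $AC$), and in each case the ``opposite'' vertex lies on it: $D$ lies on the circumcircle of $\triangle ABC$, and $B$ lies on the circumcircle of $\triangle ACD$. Moreover, the orthocenter of a right triangle is the vertex of the right angle, so the orthocenter of $\triangle ABC$ is $B$ and the orthocenter of $\triangle ACD$ is $D$. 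Now I would apply Lemma~\ref{lemma:9pt}: for $\triangle ABC$, the nine-point circle bisects the segment from the orthocenter $B$ to the point $D$ on the circumcircle, and that midpoint is $M$; symmetrically, for $\triangle ACD$ the nine-point circle bisects the segment from the orthocenter $D$ to the point $B$ on the circumcircle, again yielding $M$. Thus $M$ lies on all four nine-point circles, so $M$ is their common point, which is by definition the Poncelet point $E$; therefore $E$ is the midpoint of $BD$.

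I do not anticipate a serious obstacle: the only non-routine step is recognizing that the shared hypotenuse $AC$ serves as a common diameter, so that the fourth vertex lands on the relevant circumcircle and Lemma~\ref{lemma:9pt} becomes applicable. The one point meriting care is that two circles meet in two points, so establishing $M$ on only a pair of the nine-point circles would not identify it with $E$; committing to show $M$ lies on \emph{all four} circles sidesteps this ambiguity, and it would be prudent to note that the argument is orientation-independent so it covers the configurations where $ABCD$ is traversed in either sense.
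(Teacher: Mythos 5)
Your proposal is correct and follows essentially the same route as the paper's proof: the midpoint of $BD$ lies on the nine-point circles of $\triangle ABD$ and $\triangle CBD$ because $BD$ is a side of each, and on those of $\triangle ABC$ and $\triangle ACD$ by applying Lemma~\ref{lemma:9pt} with the right-angle vertex as orthocenter and the opposite vertex on the common circumcircle with diameter $AC$. The only difference is cosmetic ordering, plus your (reasonable) extra remark about why membership in all four circles is needed.
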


\begin{figure}[h!t]
\centering
\includegraphics[width=0.4\linewidth]{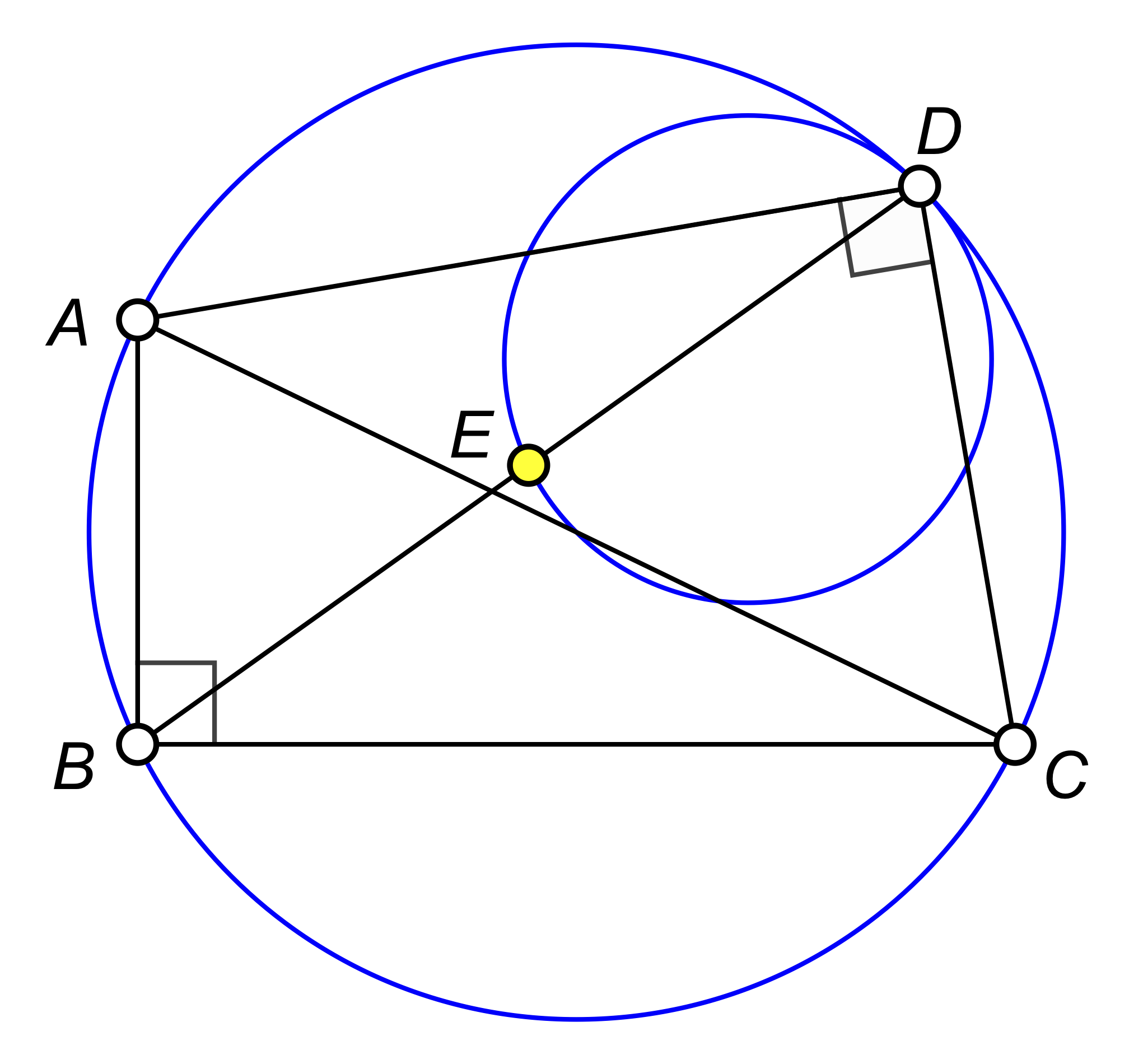}
\caption{Poncelet point of a Hjelmslev quadrilateral}
\label{fig:ppHjelmslevMidpoint}
\end{figure}

\begin{proof}
Since $\angle ABC$ is a right angle, $AC$ is a diameter of the circumcircle of $\triangle ABC$.
Since $\angle ADC$ is a right angle, $D$ lies on the circumcircle of $\triangle ABC$.
Since $\triangle ADC$ is a right triangle, its orthocenter is point $D$.
By Lemma~\ref{lemma:9pt}, the nine-point circle of $\triangle ADC$ bisects $BD$
(Figure~\ref{fig:ppHjelmslevMidpoint}). Let $E$ be the midpoint of $BD$.
Similarly, the nine-point circle of $\triangle ABC$ bisects $BD$.
So both nine-point circles pass through $E$.
By definition, the nine-point circles of triangles $ABD$ and $CBD$ pass through $E$.
Thus, $E$ is a common point of the nine-point circles of all four component triangles of
quadrilateral $ABCD$. Therefore, $E$ is the Poncelet point of $ABCD$.
\end{proof}


\relbox{Relationship $[ABCD]=\frac12[FGHI]$}

\begin{theorem}
\label{thm-ppHjelmslevX20}
Let $E$ be the Poncelet point of a Hjelmslev quadrilateral $ABCD$.
Let $F$, $G$, $H$, and $I$ be the de Longchamps points ($X_{20}$ points) of $\triangle EAB$, $\triangle EBC$, 
$\triangle ECD$,  and $\triangle EDA$, respectively (Figure~\ref{fig:ppHjelmslevX20}).
Then
$$[ABCD]=\frac12[FGHI].$$
\end{theorem}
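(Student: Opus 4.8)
The plan is to prove this by a direct coordinate computation in a frame adapted to the symmetry of the configuration. First I would record two structural facts. By Lemma~\ref{lemma-ppHjelmslevMidpoint}, the Poncelet point $E$ is the midpoint of $BD$. Moreover, the proof of that lemma shows that $B$ and $D$ both lie on the circle with diameter $AC$; hence all four vertices are concyclic and $ABCD$ is a cyclic quadrilateral whose circumcenter $O$ is the midpoint of $AC$. Since $E$ is the midpoint of the chord $BD$, the segment $OE$ is perpendicular to $BD$.

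These two facts dictate a convenient coordinate system: I would place $E$ at the origin with $BD$ along the $x$-axis, so that $B=(-m,0)$, $D=(m,0)$, and $O=(0,h)$ lies on the $y$-axis. Writing $A=(a_1,a_2)$ then forces $C=2O-A=(-a_1,\,2h-a_2)$, and the single relation expressing that $A$ lies on the circumcircle is $a_1^2+a_2^2-2a_2h=m^2$, equivalently $a_1^2-m^2=a_2(2h-a_2)$. This last identity will be the engine of the whole proof.

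Next I would compute the four de Longchamps points using the vector characterisation $X_{20}=4O'-(P+Q+R)$, where $O'$ is the circumcenter of the radial triangle $PQR$ (the de Longchamps point is the reflection of the orthocenter in the circumcenter, and relative to an origin the orthocenter of $PQR$ equals $P+Q+R-2O'$). For each radial triangle one vertex is the origin $E$, so two of the three perpendicular-bisector equations are immediate and the circumcenter is found with almost no work. After translating everything by $-2h$ in the $y$-direction, and writing $s=a_1+m$, $t=a_1-m$, $\alpha=a_2$, $\gamma=2h-a_2$, the four centers will take the clean forms
$$F=\left(-s,\tfrac{s^2}{\alpha}\right),\quad G=\left(t,\tfrac{t^2}{\gamma}\right),\quad H=\left(s,\tfrac{s^2}{\gamma}\right),\quad I=\left(-t,\tfrac{t^2}{\alpha}\right),$$
the simplification from the raw expressions being exactly the place where the circle relation, in the form $2a_1^2+a_2^2=a_1^2+m^2+2a_2h$, is applied.

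Finally I would finish with the shoelace formula. A short computation gives $[FGHI]=\dfrac{|st\,(s-t)(\alpha-\gamma)|}{|\alpha\gamma|}$ and, directly, $[ABCD]=|2m(a_2-h)|$. Substituting $s-t=2m$ and $\alpha-\gamma=2(a_2-h)$, and then invoking the constraint in the decisive form $st=a_1^2-m^2=\alpha\gamma$, the factor $|\alpha\gamma|$ cancels and $[FGHI]=|4m(a_2-h)|=2[ABCD]$, which is the claim. I expect the main obstacle to be bookkeeping rather than conceptual: coaxing the four circumcenters and de Longchamps points into the clean form above, and keeping orientation and signs consistent across the shoelace sums. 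It is worth noting that, unlike the diagonal-point version in Theorem~\ref{theorem:dpX20}, here $FGHI$ is generally \emph{not} a parallelogram (the midpoints of $FH$ and $GI$ coincide only when $s^2=t^2$), so the factor $\tfrac12$ cannot be extracted from a parallelogram shortcut and must emerge from the full area computation.
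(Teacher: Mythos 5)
Your proposal is correct and takes a genuinely different route from the paper's. I checked the key steps: in your frame $B=(-m,0)$, $D=(m,0)$, $O=(0,h)$, $A=(a_1,a_2)$, $C=2O-A$, the vector identity $X_{20}=4O'-(P+Q+R)$ does yield, after the harmless translation by $(0,-2h)$, exactly $F=(-s,s^2/\alpha)$, $G=(t,t^2/\gamma)$, $H=(s,s^2/\gamma)$, $I=(-t,t^2/\alpha)$, and the shoelace sum is $\tfrac{1}{2}\cdot\tfrac{2st(s-t)(\alpha-\gamma)}{\alpha\gamma}$, so the identity $st=a_1^2-m^2=\alpha\gamma$ (the circumcircle constraint, i.e.\ the power of $E$) collapses everything to $[FGHI]=|4m(a_2-h)|=2[ABCD]$. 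The paper instead works in barycentric coordinates relative to $\triangle ABC$: it computes the side lengths of each radial triangle with the Distance Formula, substitutes them into the published quartic barycentric formula for $X_{20}$, converts via the Change of Coordinates Formula, simplifies under the two right-angle constraints, and evaluates $[FGH]+[HIF]$ with the Area Formula --- a machine-assisted symbolic computation whose intermediate expressions are opaque. Your approach buys conceptual clarity at each stage: it isolates the two geometric facts doing the work (a Hjelmslev quadrilateral is cyclic with circumcenter the midpoint of $AC$, and $E$ is the midpoint of the chord $BD$), it avoids the explicit $X_{20}$ coordinates entirely, and it pinpoints the source of the factor $2$ in the single relation $st=\alpha\gamma$. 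Your closing remark is also correct and worth retaining: the midpoints of $FH$ and $GI$ are $\bigl(0,\tfrac{s^2(\alpha+\gamma)}{2\alpha\gamma}\bigr)$ and $\bigl(0,\tfrac{t^2(\alpha+\gamma)}{2\alpha\gamma}\bigr)$, which differ unless $a_1m=0$ or $h=0$, so $FGHI$ is generally not a parallelogram and no Varignon-type shortcut of the kind used in Theorem~\ref{theorem:dpX20} is available; the full area computation is genuinely needed. The only polishing required is to note that convexity rules out the degenerate cases $\alpha=0$ and $\gamma=0$ appearing in denominators.
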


\begin{figure}[h!t]
\centering
\includegraphics[width=0.4\linewidth]{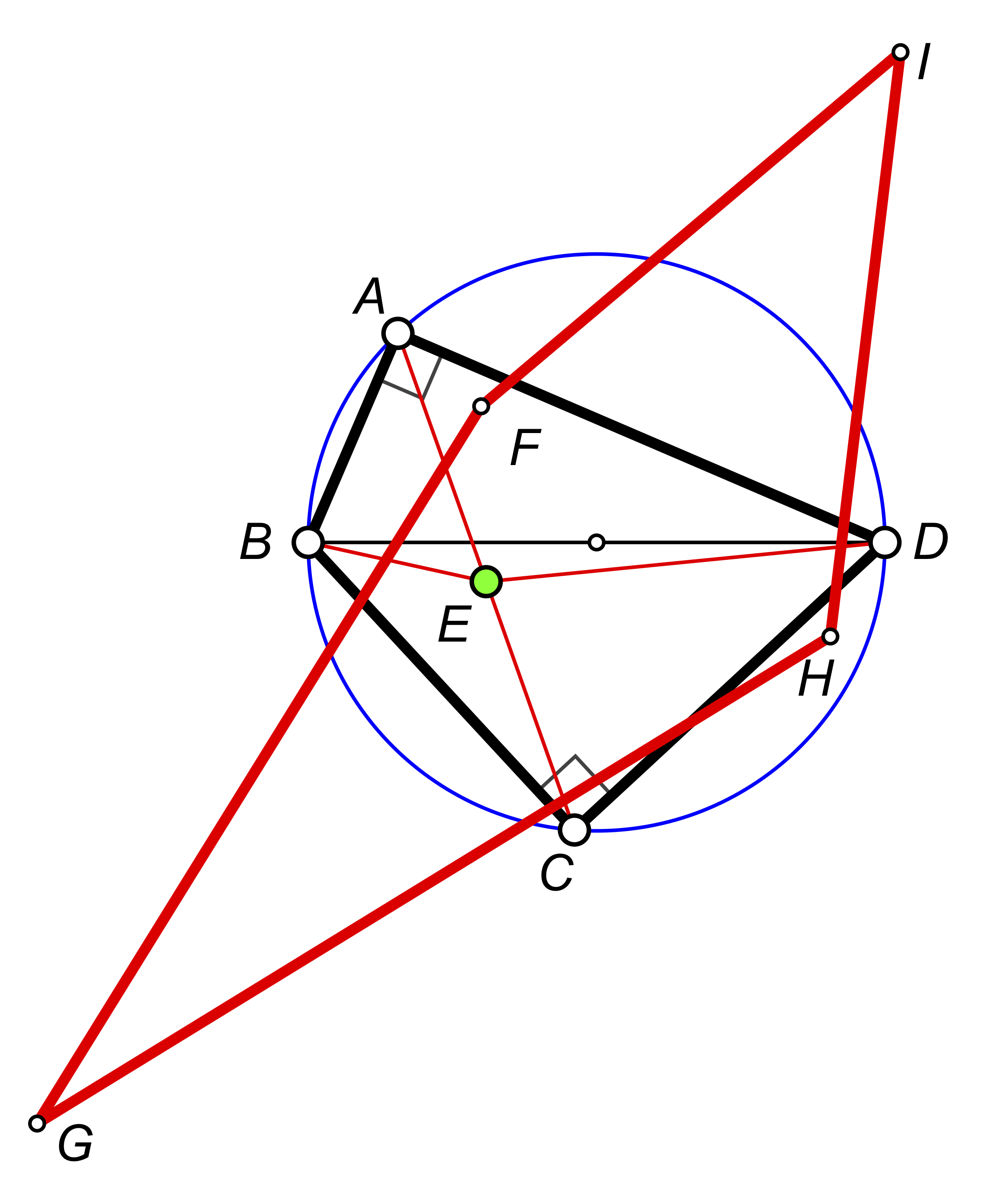}
\caption{Hjelmslev, $X_{20}$ points $\implies [ABCD]=\frac12[FGHI]$}
\label{fig:ppHjelmslevX20}
\end{figure}

\begin{proof}
Recall that a Hjelmslev quadrilateral is a quadrilateral with right angles at two opposite vertices.
Call the quadrilateral $ABCD$ with right angles at $B$ and $D$ (Figure~\ref{fig:ppCoordinatesMidpoint}).
We set up a barycentric coordinate system using $\triangle ABC$ as the reference triangle,
so that
$$
\begin{aligned}
A&=(1:0:0)\\
B&=(0:1:0)\\
C&=(0:0:1).
\end{aligned}
$$
We let the barycentric coordinates of $D$ be $(p:q:r)$ with $p+q+r=1$
and without loss of generality, assume $p>0$, $q<0$, and $r>0$.
Let $E$ be the Poncelet point of quadrilateral $ABCD$.
By Lemma~\ref{lemma-ppHjelmslevMidpoint}, $E$ is the midpoint of $BD$,
so has normalized barycentric coordinates $E=(\frac{p}{2}:\frac{q+1}{2}:\frac{r}{2})$ as shown in Figure~\ref{fig:ppCoordinatesMidpoint}.

\begin{figure}[h!t]
\centering
\includegraphics[width=0.45\linewidth]{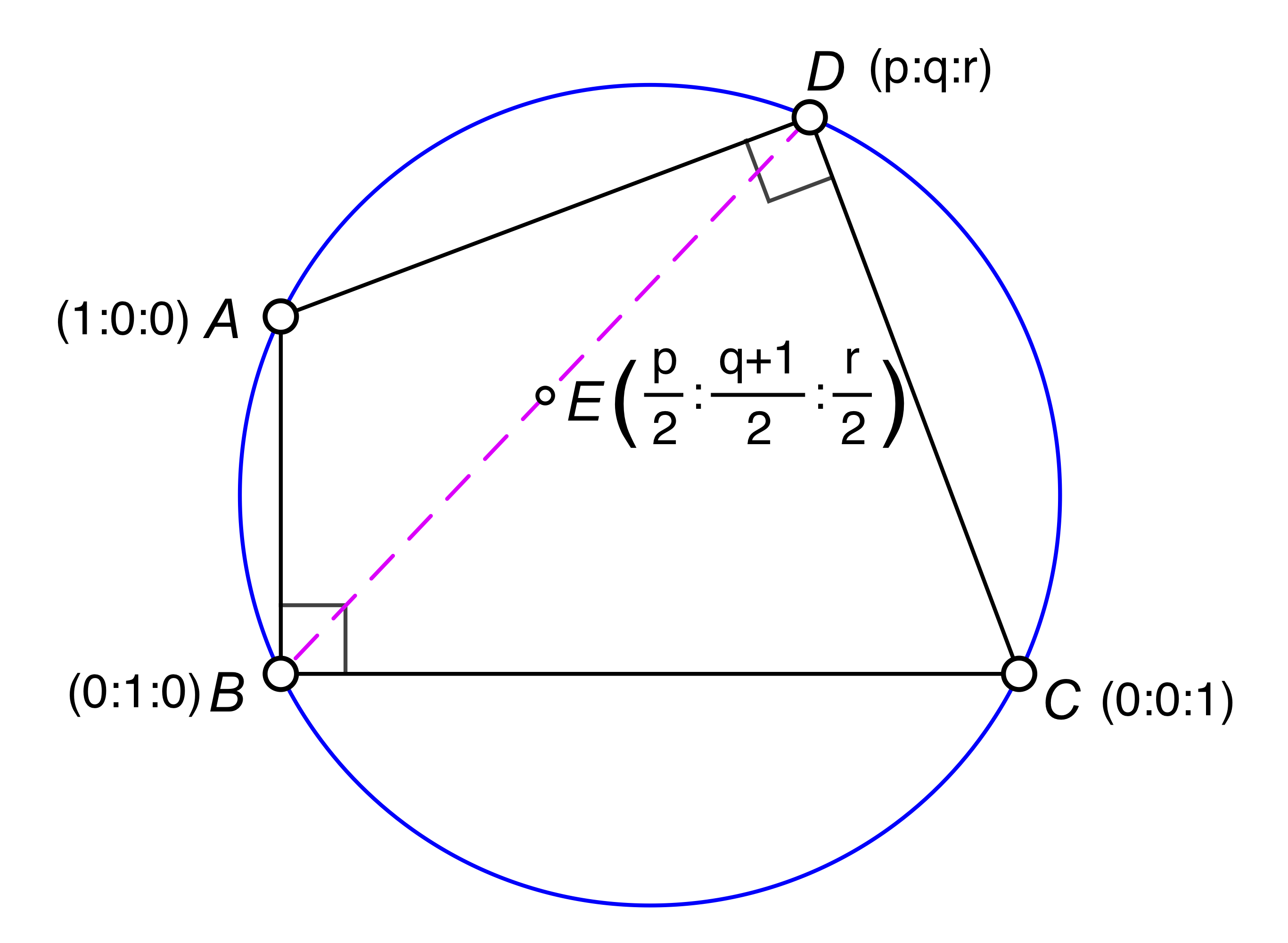}
\caption{Coordinate system for a Hjelmslev quadrilateral}
\label{fig:ppCoordinatesMidpoint}
\end{figure}

Using the Distance Formula (Lemma~\ref{lemma:distanceFormula}), we can compute the
distances between the various points. We get
$$
\begin{aligned}
AB&=c\\
BC&=a\\
AC&=b\\
AD&=\sqrt{-a^2 q r+b^2 r (q+r)+c^2 q (q+r)}\\
BD&=\sqrt{a^2 r (p+r)-b^2 p r+c^2 p (p+r)}\\
CD&=\sqrt{a^2 q (p+q)+b^2 p (p+q)-c^2 p q}\\
AE&=\frac{1}{2} \sqrt{-a^2 (q+1) r+b^2 r (q+r+1)+c^2 (q+1) (q+r+1)}\\
BE&=\frac{1}{2} \sqrt{a^2 r (p+r)-b^2 p r+c^2 p (p+r)}\\
CE&=\frac{1}{2} \sqrt{a^2 (q+1) (p+q+1)+b^2 p (p+q+1)-c^2 p (q+1)}\\
DE&=\frac{1}{2} \sqrt{a^2 r (p+r)-b^2 p r+c^2 p (p+r)}.
\end{aligned}
$$

Using the Area Formula (Lemma~\ref{lemma:areaFormula}), we can compute the areas of triangles $ABC$ and $ACD$.
We find
$$[ABC]=K\qquad\hbox{and}\qquad [ACD]=-qK$$
so that
$$[ABCD]=K(1-q).$$
Recall that $q$ is negative, so these areas are positive.

From \cite{ETC20}, we find that the barycentric coordinates for the $X_{20}$ point of a triangle with
sides of lengths $a$, $b$, and $c$ are $(x:y:z)$ where
$$
\begin{aligned}
x&=3 a^4-2 a^2 b^2-2 a^2 c^2-b^4+2 b^2 c^2-c^4\\
y&=-a^4-2 a^2 b^2+2 a^2 c^2+3 b^4-2 b^2 c^2-c^4\\
z&=-a^4+2 a^2 b^2-2 a^2c^2-b^4-2 b^2 c^2+3 c^4.
\end{aligned}
$$

We can use the Change of Coordinates
Formula (Lemma~\ref{lemma:changeOfCoordinates}) to find the coordinates for point $F$, the $X_{20}$ point of $\triangle ABE$, by substituting the lengths of $BE$, $AE$, and $AB$ for $a$, $b$, and $c$ in the expression for the
normalized barycentric coordinates for $X_{20}$.
In the same manner, we can find the barycentric coordinates for $G$, $H$, and $I$.

These barycentric coordinates are very complicated, but can be simplified using the fact that
quadrilateral $ABCD$ is a Hjelmslev quadrilateral.

Since $\triangle ABC$ is a right triangle, we have the relationship
\begin{equation}
\label{eq:a} a^2+c^2=b^2.
\end{equation}
Since $\triangle ADC$ is a right triangle, we have the relationship
$$AD^2+CD^2=AC^2.$$

In terms of $a$, $c$, $p$, and $q$, this is equivalent to
\begin{equation}
\label{eq:b}c^2=\frac{a^2 \left(p^2+2 p q-p+q^2-q\right)}{(1-p) p}
\end{equation}
where we have eliminated $b$ and $r$ since $b=\sqrt{a^2+c^2}$ and $r=1-p-q$.

Simplifying the formulas for the barycentric coordinates for $F$, $G$, $H$, and $I$
taking relationships (\ref{eq:a}) and (\ref{eq:b}) into account, we find that
$$
\begin{aligned}
F&=\left(1-\frac{p}{2}:\frac{p (-q)+p+3 q+1}{2 (p-1)}:\frac{p^2+p (q-2)-3 q-1}{2(p-1)}\right)\\
G&=\left(-\frac{p^2+p q-2 (q+1)}{2 (p+q)}:-\frac{p (q-1)+q^2+q+2}{2 (p+q)}:\frac{1}{2}(p+q+1)\right)\\
H&=\left(\frac{p}{2}+q+1,\frac{p (q-1)+2 q^2+q+1}{2 (p-1)},\frac{p^2+3 p q-2 p+2q^2-q+1}{2-2 p}\right)\\
I&=\left(\frac{p^2-p q+2 q}{2 (p+q)},\frac{p (q-1)-q (q+3)}{2 (p+q)},\frac{1}{2}(-p+q+3)\right).
\end{aligned}
$$

Using the Area Formula, we can compute the areas of triangles $FGH$ and $HIF$. We get
$$
\begin{aligned}\ 
[FGH]&=\frac{K \left(p (q+3)+q^2+q-2\right)}{p-1}\\
[HIF]&=-\frac{K (3 p q+p+(q-1) q)}{p-1}\\
\end{aligned}
$$
so that
$$[FGHI]=[FGH]+[HIF]=2K(1-q).$$

Thus,
$[FGHI]=2[ABCD]$.
\end{proof}

\newpage


\section{Circumcenter}

In this section, we examine central quadrilaterals formed from the circumcenter
of the reference quadrilateral. Note that only cyclic quadrilaterals have circumcenters.
The \emph{circumcenter} of a cyclic quadrilateral is the center of the circle
through the vertices of the quadrilateral.

Our computer study examined the central quadrilaterals formed by the circumcenter.
Since the circumcenter of a rectangle coincides with the diagonal point of the rectangle,
we omit results for rectangles.
We checked the central quadrilateral for all the first 1000 triangle centers (omitting points
at infinity) and all reference quadrilateral shapes listed in Table~\ref{table:quadrilaterals}
that are cyclic.

The results found are listed in Table~\ref{table:circum}.

\begin{table}[ht!]
\caption{}
\label{table:circum}
\begin{center}
\begin{tabular}{|l|l|p{2.2in}|}
\hline
\multicolumn{3}{|c|}{\textbf{\color{blue}\large \strut Central Quadrilaterals formed by the Circumcenter}}\\ \hline
\textbf{Quadrilateral Type}&\textbf{Relationship}&\textbf{centers}\\ \hline
\ru cyclic&$[ABCD]=8[FGHI]$&402, 620\\
\cline{2-3}
\ru &$[ABCD]=2[FGHI]$&11, 115, 116, 122--125, 127, 130, 134-137, 139, 244--247, 338, 339, 865--868\\
\cline{2-3}
\ru &$[ABCD]=\frac32[FGHI]$&616, 617\\
\cline{2-3}
\ru &$[ABCD]=\frac98[FGHI]$&290, 671, 903\\
\cline{2-3}
\ru &$[ABCD]=\frac12[FGHI]$&148--150\\
\hline
\end{tabular}
\end{center}
\end{table}


\begin{theorem}
\label{thm:dpCyclic}
Let $E$ be the circumcenter of cyclic quadrilateral $ABCD$.
Let $X_n$ be a triangle center with the property that for all isosceles triangles with vertex $V$
and midpoint of base $M$, $X_nM/VM$ is a fixed positive constant $k$.
Let $F$, $G$, $H$, and $I$ be the $X_n$ points of $\triangle EAB$, $\triangle EBC$, 
$\triangle ECD$, and $\triangle EDA$, respectively.
Then
$$[ABCD]=\frac{2}{(1-k)^2}[FGHI].$$
\end{theorem}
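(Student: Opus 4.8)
The plan is to recognize this as the circumcenter analogue of Theorem~\ref{thm:dpRectanglePos}, driven by the single geometric fact that the circumcenter is equidistant from the four vertices. Since $E$ is the circumcenter, $EA=EB=EC=ED=R$, where $R$ is the circumradius. Consequently each radial triangle has its two sides issuing from $E$ equal, so $\triangle EAB$, $\triangle EBC$, $\triangle ECD$, and $\triangle EDA$ are all isosceles with apex $E$. This is exactly the structural feature exploited in the rectangle proof, where the equal half-diagonals played the role now played by the equal radii, so the same homothety argument should carry over essentially verbatim.

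First I would let $W$, $X$, $Y$, $Z$ be the midpoints of sides $AB$, $BC$, $CD$, $DA$, respectively. Because $\triangle EAB$ is isosceles with apex $E$ and base $AB$, Lemma~\ref{lemma:dpIsoscelesTriangle} guarantees that its center $F=X_n$ lies on the median $EW$; likewise $G$, $H$, $I$ lie on $EX$, $EY$, $EZ$. By hypothesis the ratio $FW/EW$ equals the constant $k$, so Lemma~\ref{lemma:isoscelesRatioAll} (applied with vertex $E$ and base-midpoint $W$) yields $EF/EW=1-k$, and similarly $EG/EX=EH/EY=EI/EZ=1-k$. Since all four midpoints are sent to the corresponding centers by one and the same map, $FGHI$ is the image of the Varignon parallelogram $WXYZ$ under the homothety centered at $E$ with ratio $1-k$.

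The area bookkeeping then finishes the proof. A homothety of ratio $1-k$ scales areas by $(1-k)^2$, so $[FGHI]=(1-k)^2[WXYZ]$; and by Lemma~\ref{lemma:Varignon} the Varignon parallelogram satisfies $[WXYZ]=\tfrac12[ABCD]$. Combining these gives $[FGHI]=\tfrac{(1-k)^2}{2}[ABCD]$, which rearranges to the claimed $[ABCD]=\frac{2}{(1-k)^2}[FGHI]$.

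The only point requiring care — and the closest thing to an obstacle — is that, unlike the diagonal point of a rectangle, the circumcenter of a general cyclic quadrilateral need not lie inside $ABCD$ (for instance when one of the arcs is large). I would address this by working with signed areas throughout, as the paper does, and by invoking Lemma~\ref{lemma:isoscelesRatioAll} in its signed form, which already covers every location of the center along the median $EW$ regardless of the sign of $1-k$. Because both the homothety relation $EF/EW=1-k$ and the area-scaling factor $(1-k)^2$ are insensitive to the position of $E$ relative to the quadrilateral, the identity holds in all cases and no further casework is needed.
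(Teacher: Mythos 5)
Your proof is correct and is essentially the paper's own argument: the paper proves this theorem by stating that the proof is identical to that of Theorem~\ref{thm:dpRectanglePos}, which is exactly the homothety-onto-the-Varignon-parallelogram computation you carry out, with $EA=EB=EC=ED$ supplying the isosceles radial triangles in place of the rectangle's bisecting equal diagonals. Your added remark about the circumcenter possibly lying outside $ABCD$ and the use of signed ratios is a reasonable extra precaution but does not change the substance.
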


\begin{proof}
The proof is the same as the proof of Theorem~\ref{thm:dpRectanglePos}.
\end{proof}


\relbox{Relationship $[ABCD]=8[FGHI]$}

\begin{theorem}
\label{thm:opCyclicX402}
Let $E$ be the circumcenter of cyclic quadrilateral $ABCD$.
Let $F$, $G$, $H$, and $I$ be the $X_{402}$ points or the $X_{620}$ points of $\triangle EAB$, $\triangle EBC$, 
$\triangle ECD$, and $\triangle EDA$, respectively (Figure~\ref{fig:opCyclicX402}).
Then
$$[ABCD]=8[FGHI].$$
\end{theorem}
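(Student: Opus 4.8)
The plan is to reduce this theorem to the two general results already established for the circumcenter configuration, namely Theorem~\ref{thm:dpCyclic} and Theorem~\ref{thm:isoscelesRatio}. The essential geometric observation is that $E$ is the circumcenter, so $EA=EB=EC=ED=R$, the circumradius. Consequently each radial triangle $\triangle EAB$, $\triangle EBC$, $\triangle ECD$, $\triangle EDA$ is isosceles with apex at $E$: its two equal sides are the radii joining $E$ to a pair of adjacent vertices. This is exactly the hypothesis under which Theorem~\ref{thm:dpCyclic} applies, so the work reduces to identifying the value of the constant $k$ for the centers $X_{402}$ and $X_{620}$.

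First I would verify that both $X_{402}$ and $X_{620}$ satisfy the ratio condition demanded by Theorem~\ref{thm:dpCyclic}. By Theorem~\ref{thm:isoscelesRatio} (reading off the relevant rows of Table~\ref{table:diagonalPointIsoscelesRatios}), for an isosceles triangle with apex $A$ and base-midpoint $M$ both of these centers lie on the median $AM$ with $X_nM/AM=\tfrac12$. Identifying the apex $A$ with the vertex $V$ of Theorem~\ref{thm:dpCyclic}, this says the fixed positive constant is $k=\tfrac12$ for both $n=402$ and $n=620$.

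Then I would simply substitute $k=\tfrac12$ into the area formula of Theorem~\ref{thm:dpCyclic}, obtaining
$$[ABCD]=\frac{2}{(1-k)^2}[FGHI]=\frac{2}{(1/2)^2}[FGHI]=8[FGHI],$$
which is the desired relationship.

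There is no serious obstacle here; the theorem is essentially a corollary of the machinery already in place, and its proof parallels the rectangle case treated in Theorem~\ref{thm:dpRectanglePos}, where the same value $k=\tfrac12$ likewise produces the factor $8$. The only points requiring care are (i) confirming that the ratio tabulated in Theorem~\ref{thm:isoscelesRatio} is measured from the base-midpoint $M$ in the same sense demanded by Theorem~\ref{thm:dpCyclic}, and (ii) noting that $k=\tfrac12$ is positive, so that Theorem~\ref{thm:dpCyclic} rather than its negative-$k$ analogue (Theorem~\ref{thm:dpRectangleNeg}) is the applicable result. Both are immediate. As an independent check one could instead confirm $k=\tfrac12$ directly, by taking the barycentric coordinates of $X_{402}$ and $X_{620}$ for a generic isosceles triangle and applying Lemma~\ref{lemma:dpIsoscelesTriangle} together with Lemma~\ref{lemma:isoscelesRatioAll}; but this merely reproduces the entries already recorded in Table~\ref{table:diagonalPointIsoscelesRatios}.
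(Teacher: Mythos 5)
Your proposal is correct and is essentially identical to the paper's own proof: both reduce the statement to Theorem~\ref{thm:dpCyclic} after noting that the radial triangles are isosceles with apex $E$, and both read off $k=\tfrac12$ for $n=402,620$ to get the factor $\tfrac{2}{(1-k)^2}=8$. If anything, your citation of Theorem~\ref{thm:isoscelesRatio} and Table~\ref{table:diagonalPointIsoscelesRatios} is the more accurate source for the value $k=\tfrac12$ (the paper points to the right-triangle table, which does not even list $620$), so no changes are needed.
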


\begin{figure}[h!t]
\centering
\includegraphics[width=0.35\linewidth]{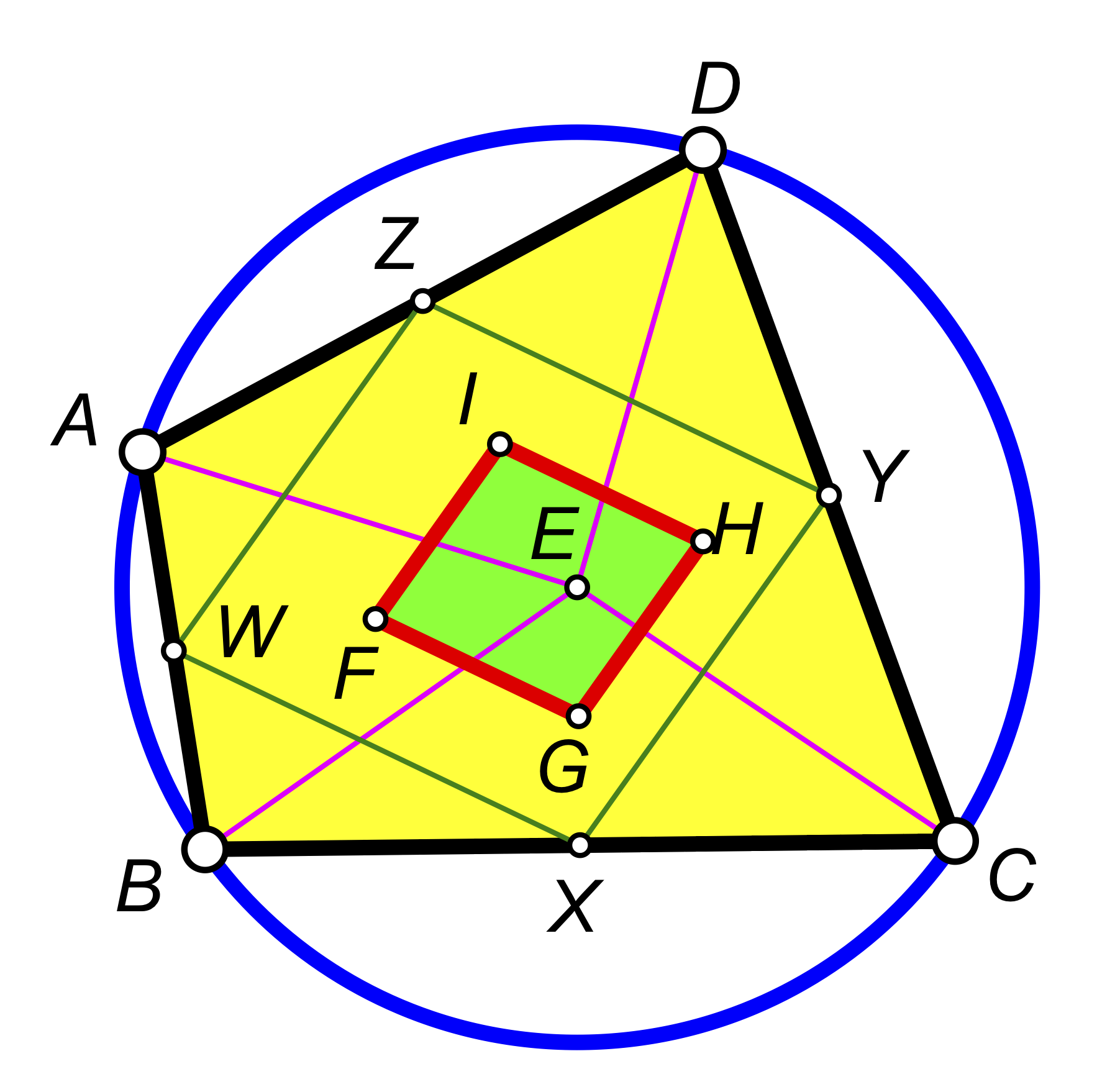}
\caption{$X_{402}$ points $\implies [ABCD]=8[FGHI]$}
\label{fig:opCyclicX402}
\end{figure}

\begin{proof}
Since $E$ is the center of the circle through points $A$, $B$, $C$, and $D$,
each of the radial triangles is isosceles with vertex $E$.
Let the midpoints of the sides of the quadrilateral be $W$, $X$, $Y$, and $Z$
as shown in Figure~\ref{fig:opCyclicX402}.
From Theorem~\ref{thm:ratio} and Table~\ref{table:diagonalPointRatios}, we find that
for $n=402$ and $n=620$, the ratio $X_nM/AM$ is a constant, $\frac12$, for all isosceles triangles
with vertex $A$ and midpoint of opposite side $M$.
Therefore, by Theorem~\ref{thm:dpCyclic}, with $k=\frac12$, we must have
$$[ABCD]=\frac{2}{(1-k)^2}[FGHI]=8[FGHI].$$
\end{proof}

\void{
\begin{proof}
Since $E$ is the center of the circle through points $A$, $B$, $C$, and $D$,
each of the radial triangles is isosceles with vertex $E$.
Let the midpoints of the sides of the quadrilateral be $W$, $X$, $Y$, and $Z$
as shown in Figure~\ref{fig:opCyclicX402}.

***NEED TO FIX ***

Since $F$ is the $X_n$ point of $\triangle EAB$, by hypothesis,
$$\frac{FW}{EW}=k.$$
Since $k>0$
$$\frac{EF}{EW}=\frac{EW-FW}{EW}=1-\frac{FW}{EW}=1-k.$$
Similarly, $EG/EX=1-k$, $EH/EY=1-k$, and $EI/EZ=1-k$.
So quadrilaterals $FGHI$ and $WXYZ$ are homothetic, with $E$ the center of similitude
and ratio of similarity $\frac12$.
Thus
$$[FGHI]=\frac14[WXYZ].$$
But $[WXYZ]=\frac12[ABCD]$, so $[FGHI]=\frac18[ABCD]$
\end{proof}

When $n=402$ or $n=620$, $k=1/2$ and $[ABCD]=8[FGHI]$.
}

\relbox{Relationship $[ABCD]=2[FGHI]$}

\begin{theorem}
\label{theorem:opCyclicMidpoints}
Let $E$ be the circumcenter of cyclic quadrilateral $ABCD$.
Let $n$ be in the set
$$\{11, 115, 116, 122, 123, 124, 125, 127, 130, 134, 135, 136 $$
$$137, 139,
244, 245, 246,247, 338, 339, 865, 866, 867, 868\}.$$
Let $F$, $G$, $H$, and $I$ be the $X_n$ points of $\triangle EAB$, $\triangle EBC$, 
$\triangle ECD$,  and $\triangle EDA$, respectively (Figure~\ref{fig:opCyclicMidpoints}).
Then $F$, $G$, $H$, and $I$ are the midpoints of the sides of the quadrilateral
and
$$[ABCD]=2[FGHI].$$
\end{theorem}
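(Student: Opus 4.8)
The plan is to mirror the proof of Theorem~\ref{theorem:dpRectangleMidpoints}, replacing the rectangle's ``diagonals are equal and bisect each other'' with the defining property of the circumcenter. First I would observe that because $E$ is the circumcenter of $ABCD$, all four vertices lie at distance $R$ (the circumradius) from $E$, so $EA=EB=EC=ED$. Consequently each radial triangle is isosceles with apex $E$: in $\triangle EAB$ the two equal legs are the radii $EA$ and $EB$, so the base is the side $AB$ of the quadrilateral, and the same holds for the other three triangles.

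Next I would invoke Lemma~\ref{lemma:dpIsoscelesTriangleMidpoint}. The set of indices $n$ appearing in the hypothesis is exactly the list of centers that, in any isosceles triangle, coincide with the midpoint of the base. Applying this to $\triangle EAB$ identifies $F$ with the midpoint of $AB$; the identical reasoning gives $G$, $H$, and $I$ as the midpoints of $BC$, $CD$, and $DA$, respectively. Thus $F$, $G$, $H$, $I$ are precisely the midpoints of the four sides of $ABCD$.

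Finally, since $FGHI$ is the quadrilateral formed by the midpoints of the sides of the convex quadrilateral $ABCD$, Lemma~\ref{lemma:Varignon} tells us that $FGHI$ is the Varignon parallelogram and that $[FGHI]=\frac12[ABCD]$, which is the desired relation $[ABCD]=2[FGHI]$.

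There is no serious obstacle here; once the setup is in place the argument is essentially bookkeeping, and it runs in parallel to the rectangle case already treated. The only point demanding care is the identification of the base of each isosceles radial triangle: one must note that the two equal sides are the radii meeting at $E$, so that the base (and hence the location of the center $X_n$) is the quadrilateral's side rather than a radius. With that observation fixed, the two cited lemmas close the proof at once.
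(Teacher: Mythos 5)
Your proposal is correct and follows exactly the same route as the paper's own proof: the equal circumradii make each radial triangle isosceles with apex $E$, Lemma~\ref{lemma:dpIsoscelesTriangleMidpoint} places the listed centers at the midpoints of the bases (the sides of $ABCD$), and Lemma~\ref{lemma:Varignon} gives $[ABCD]=2[FGHI]$. No gaps.
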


\begin{figure}[h!t]
\centering
\includegraphics[width=0.3\linewidth]{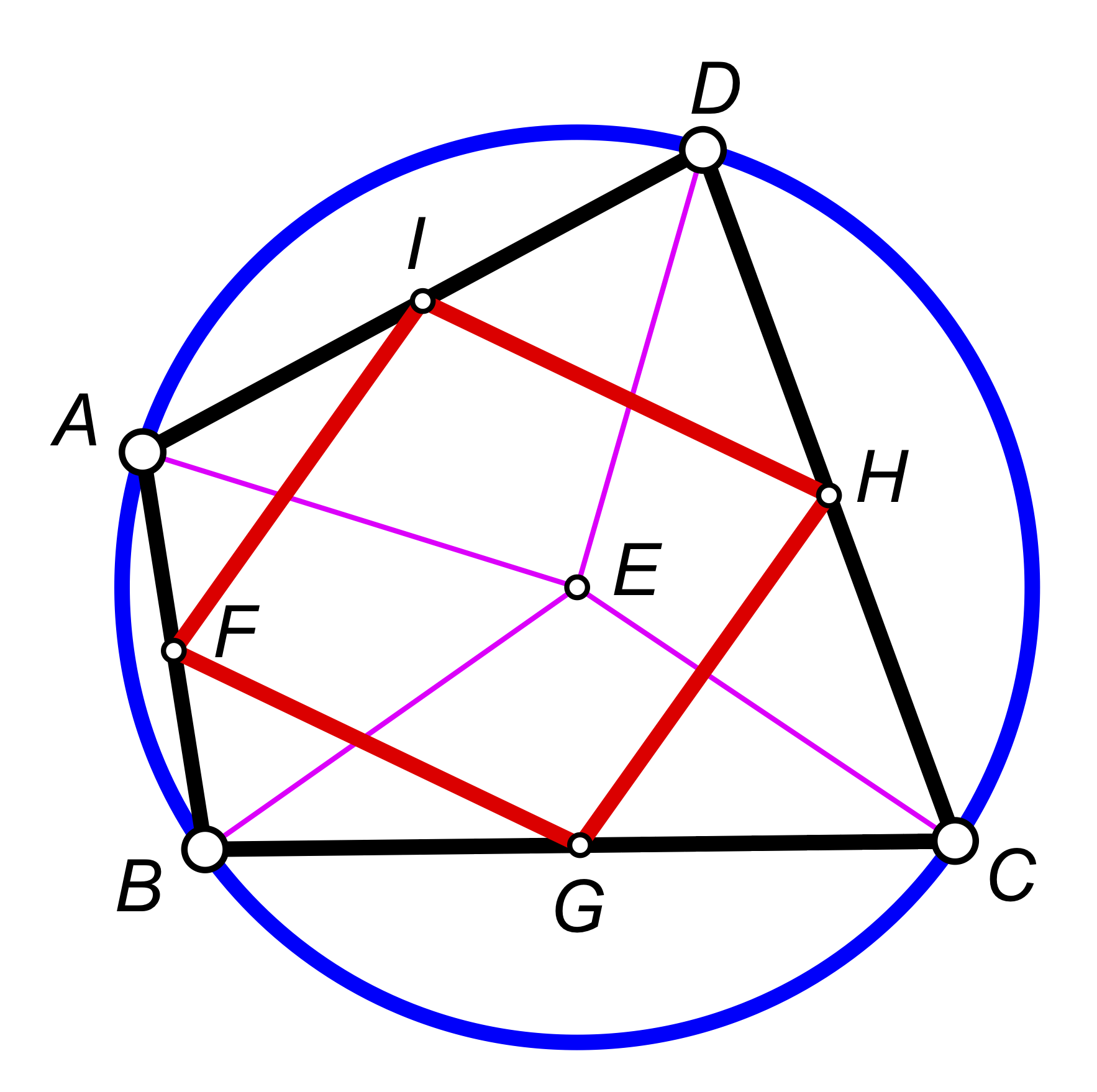}
\caption{$[ABCD]=2[FGHI]$}
\label{fig:opCyclicMidpoints}
\end{figure}

\begin{proof}
Since $E$ is the center of the circle through points $A$, $B$, $C$, and $D$,
each of the radial triangles is isosceles with vertex $E$.
Thus, by Lemma \ref{lemma:dpIsoscelesTriangleMidpoint}, 
$F$, $G$, $H$, and $I$ are the midpoints of the sides of the quadrilateral.
Then, by Lemma \ref{lemma:Varignon}, $[ABCD]=2[FGHI]$.
\end{proof}

\relbox{Relationship $[ABCD]=\frac32[FGHI]$}

\begin{proposition}[$X_{616}$ Property of an Isosceles Triangle]
\label{proposition:isoscelesX616}
Let $\triangle ABC$ be an isosceles triangle with $AB=AC$. If $F$ is the $X_{616}$ point of $\triangle ABC$, then $AF\perp BC$ and $AF=BC/\sqrt{3}$ (Figure~\ref{fig:opIsoscelesX616}).
\end{proposition}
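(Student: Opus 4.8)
The plan is to treat the two assertions separately: the perpendicularity $AF\perp BC$ follows from symmetry, while the length $AF=BC/\sqrt3$ reduces to a single algebraic identity after specializing barycentric coordinates to the isosceles case. First I would let $M$ be the midpoint of $BC$; since $AB=AC$, the median $AM$ is the perpendicular bisector of $BC$, so $AM\perp BC$. Because $X_{616}$ is a triangle center, Lemma~\ref{lemma:dpIsoscelesTriangle} applies: writing its barycentric coordinates as $(u:v:w)$, the isosceles hypothesis forces $v=w$ and places the point on $AM$. Hence $F\in AM$, so $AF$ lies along a line perpendicular to $BC$. (Here $F\neq A$, since $616$ does not appear in the list of Lemma~\ref{lemma:dpIsoscelesTriangleA}.)

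For the length I would first record a fact valid for any center $X=(u:v:v)$ of an isosceles triangle with $AB=AC=b$ and $BC=a$. Normalizing the coordinates and applying the Distance Formula (Lemma~\ref{lemma:distanceFormula}) to $A=(1:0:0)$ and $X$, the cross terms collapse to give
$$AX=\frac{v}{u+2v}\sqrt{4b^2-a^2},$$
which equals $\tfrac{2v}{u+2v}\,AM$ since $AM=\tfrac12\sqrt{4b^2-a^2}$; this is consistent with Lemma~\ref{lemma:dpIsoscelesTriangle} (and with Lemma~\ref{lemma:isoscelesRatioAll} for the signed version, when $F$ lies outside the triangle). Consequently the whole problem collapses to verifying, for the coordinates of $X_{616}$ evaluated at $c=b$, the single identity
$$\left(\frac{u+2v}{v}\right)^2=\frac{3\left(4b^2-a^2\right)}{a^2}.$$

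To finish I would fetch the barycentric coordinates of $X_{616}$ from \cite{ETC}, set $c=b$, and write the doubled area as $S=\tfrac{a}{2}\sqrt{4b^2-a^2}$. Because the coordinates of $X_{616}$ involve $\sqrt3$ together with the area (just as those of its companion $X_{395}$ do), the ratio $(u+2v)/v$ should come out proportional to $\sqrt{4b^2-a^2}$, and squaring should reduce it to $3(4b^2-a^2)/a^2$. The main obstacle is precisely this simplification: one must check that the dependence on the leg length $b$ cancels so that only the base $a$ survives. This shape-independence is the entire content of the proposition and is the reason $AF=BC/\sqrt3$ holds for every isosceles triangle rather than only special ones. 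A secondary subtlety is the branch of the square root, equivalently that $F$ falls on the far side of $BC$ from $A$ once the apex angle is large enough that $AF/AM>1$; the equilateral case $a=b$, in which $X_{616}$ is the centroid and $AF=\tfrac23 AM=a/\sqrt3$, fixes the correct sign and serves as a sanity check.
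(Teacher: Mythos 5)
Your proposal is correct and follows essentially the same route as the paper: both establish $AF\perp BC$ by showing the second and third barycentric coordinates of $X_{616}$ coincide when $c=b$ (you package this as Lemma~\ref{lemma:dpIsoscelesTriangle}, the paper checks it directly), and both reduce the length claim to an algebraic simplification of those coordinates with $S$ expressed in terms of $a$ and $b$ --- your ratio identity $\bigl((u+2v)/v\bigr)^2=3(4b^2-a^2)/a^2$ is just the paper's distance computation $AF^2=a^2/3$ rewritten via $AF=\frac{2v}{u+2v}AM$. The deferred simplification does go through (e.g.\ for $a=\sqrt2$, $b=1$ one gets $(u+2v)/v=\sqrt3$), so the plan is sound as stated.
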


\begin{figure}[h!t]
\centering
\includegraphics[width=0.3\linewidth]{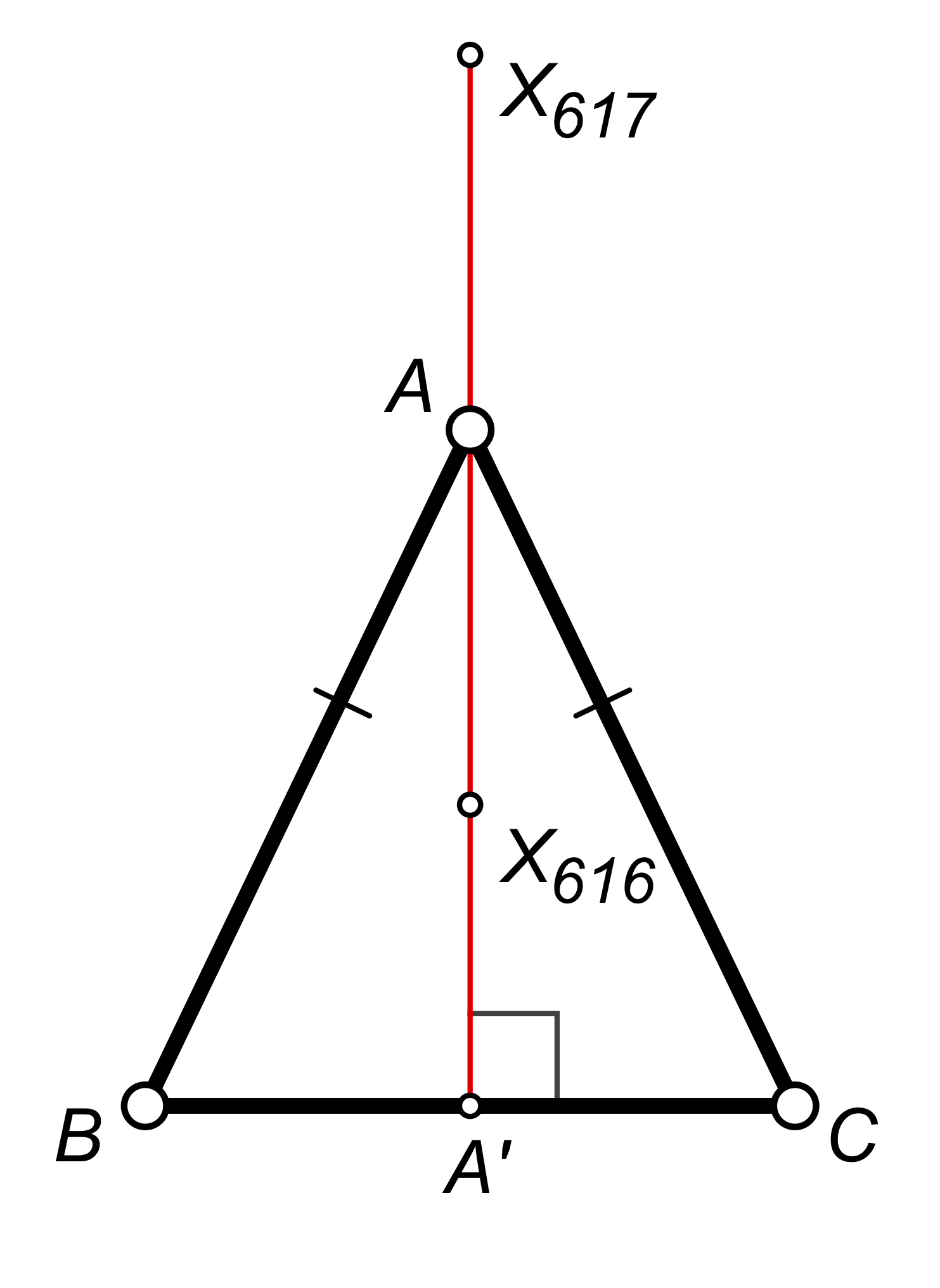}
\caption{$X_{616}$ and $X_{617}$ points of an isosceles triangle}
\label{fig:opIsoscelesX616}
\end{figure}

\begin{proof}
We use barycentric coordinates with respect to $\triangle ABC$. Let $A'$ be orthogonal projection of $A$ on the side $BC$.
Since $\triangle ABC$ is isosceles, $A'$ is the midpoint of $BC$, so $A'=(0:1:1)$. The barycentric coordinates of $F$ are
\begin{equation*}
   F=\left(5 a^4-a^2 \left(4 b^2+4 c^2-2 \sqrt{3} S\right)-b^4+2 b^2 \left(c^2-\sqrt{3} S\right)-c^4-2 \sqrt{3} c^2 S:\,:\right)
\end{equation*}
where $S$ denotes twice the area of $\triangle ABC$. Using the fact that $b=c$ we get
\begin{equation*}
\begin{split}
F= &\Biggl(5 a^4-a^2 \left(8 b^2-2 \sqrt{3} S\right)-2 b^4-2 \sqrt{3} b^2 S+2 b^2 \left(b^2-\sqrt{3} S\right): \\
   &-a^4-2 a^2 \left(b^2+\sqrt{3} S\right):-a^4-2 a^2 \left(b^2+\sqrt{3} S\right)\Biggr)
\end{split}
\end{equation*}

A simple calculation shows that the point $F$ lies on the line $AA'$ (which has equation $y=z$), so $AF\perp BC$.\\

Using the distance formula to get the length of $AF$ and substituting
$c=b$ and $S=\frac{1}{4} a \sqrt{4 b^2-a^2}$, we get

\void{
Using the distance formula we get
\begin{equation*}
   AF^2=\frac{a^6-a^4 b^2-a^4 c^2+2 a^2 b^4-3 a^2 b^2 c^2+2 a^2 c^4+b^6-b^4 c^2-2 \sqrt{3} b^4 S-b^2 c^4+4 \sqrt{3} b^2 c^2 S+c^6-2 \sqrt{3} c^4 S}{3 \left(a^4-a^2 b^2-a^2 c^2+b^4-b^2 c^2+c^4\right)}
\end{equation*} 
Putting $c=b$ and $S=\frac{1}{4} a \sqrt{4 b^2-a^2}$ in the above formula we obtain
}
$$AF^2=\frac{a^2}{3}=\frac{BC^2}{3}.$$
Thus, $AF=BC/\sqrt{3}$.
\end{proof}

\begin{proposition}[$X_{617}$ Property of an Isosceles Triangle]
\label{proposition:isoscelesX617}
Let $\triangle ABC$ be an isosceles triangle with $AB=AC$. If $F$ is the $X_{617}$ point of $\triangle ABC$, then $AF\perp BC$ and $AF=BC/\sqrt{3}$.
\end{proposition}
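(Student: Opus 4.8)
The plan is to imitate the proof of Proposition~\ref{proposition:isoscelesX616} essentially line for line, exploiting the fact that the center functions of $X_{616}$ and $X_{617}$ differ only by the sign of the area term. Concretely, I would take from \cite{ETC} the barycentric coordinates of $X_{617}$; they are identical to those displayed for $X_{616}$ in the previous proof except that $S$ (twice the area of $\triangle ABC$) is everywhere replaced by $-S$, equivalently $+\sqrt3\,S$ becomes $-\sqrt3\,S$. I would set up barycentrics with respect to $\triangle ABC$, let $A'=(0:1:1)$ be the midpoint of $BC$, and first substitute $b=c$ into these coordinates.

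With $b=c$, the second and third barycentric coordinates of $F$ become equal, just as in the $X_{616}$ case; the sign change $S\to -S$ does not disturb this symmetry, since those two coordinates carry identical $S$-terms. Hence $F$ satisfies $y=z$, i.e.\ it lies on the line $AA'$, whose barycentric equation is $y=z$. Because $A'$ is the midpoint of $BC$ and $\triangle ABC$ is isosceles, $AA'$ is the altitude from $A$, so this already yields $AF\perp BC$.

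For the length I would apply the Distance Formula (Lemma~\ref{lemma:distanceFormula}) to $A$ and $F$, then specialize by putting $c=b$ and $S=\tfrac14 a\sqrt{4b^2-a^2}$ exactly as in the $X_{616}$ computation. The expectation is that the expression collapses to $AF^2=a^2/3=BC^2/3$, giving $AF=BC/\sqrt3$. The only genuine labor — and the main obstacle — is the symbolic simplification of the distance expression, which is most safely carried out in Mathematica; the delicate point to check is that reversing the sign of $S$ relative to the $X_{616}$ case leaves the final squared length unchanged, so that $X_{617}$ reproduces precisely the same value $BC/\sqrt3$ even though it is in general a distinct point from $X_{616}$.
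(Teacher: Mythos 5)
Your proposal is correct and is exactly what the paper intends: its proof of this proposition simply states that the argument is similar to that of Proposition~\ref{proposition:isoscelesX616} and omits the details, which you have filled in by the same route (substitute $b=c$ into the barycentrics, observe the last two coordinates coincide so $F$ lies on the altitude $y=z$, then compute $AF^2=a^2/3$ via the Distance Formula with $S=\tfrac14 a\sqrt{4b^2-a^2}$). Your observation that the sign change $S\to -S$ preserves both the $y=z$ symmetry and the final squared length is the right point to verify.
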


\begin{proof}[Proof] 
The proof is similar to the proof of Proposition~\ref{proposition:isoscelesX616}, so the details are omitted.
\end{proof}

\begin{theorem}
\label{thm:opCyclicX616}
Let $E$ be the circumcenter of cyclic quadrilateral $ABCD$.
Let $F$, $G$, $H$, and $I$ be the $X_{616}$ points of $\triangle EAB$, $\triangle EBC$, 
$\triangle ECD$, and $\triangle EDA$, respectively (Figure~\ref{fig:opCyclicX616}).
Then
$$[ABCD]=\frac32[FGHI].$$
\end{theorem}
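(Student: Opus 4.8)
The plan is to exploit the fact that, since $E$ is the circumcenter, all four radial triangles are isosceles with apex $E$, because $EA=EB=EC=ED=R$, the circumradius. Thus Proposition~\ref{proposition:isoscelesX616} applies to each of them: in $\triangle EAB$ the $X_{616}$ point $F$ satisfies $EF\perp AB$ and $EF=AB/\sqrt3$, and analogously for $G$, $H$, $I$ on the sides $BC$, $CD$, $DA$. The only extra geometric input needed is that $F$ lies on the altitude from $E$ directed toward the base $AB$ (i.e.\ toward the midpoint of $AB$), which for the circumcenter is the \emph{outward} radial direction of that chord.

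The key observation is that this data can be written uniformly as a rotation of the side vectors. I would place $E$ at the origin and let $J$ denote the $90^\circ$ rotation taken in the sense consistent with the orientation of $ABCD$, so that $J$ carries each directed side to the outward radial direction of that side. Then $EF\perp AB$ together with $EF=AB/\sqrt3$ and the outward direction give, as position vectors,
$$F=\tfrac{1}{\sqrt3}\,J(B-A),\quad G=\tfrac{1}{\sqrt3}\,J(C-B),\quad H=\tfrac{1}{\sqrt3}\,J(D-C),\quad I=\tfrac{1}{\sqrt3}\,J(A-D).$$
I would verify the single nontrivial point here — that the \emph{same} rotation $J$ works at all four vertices — by checking that for each side the perpendicular from the center points toward the chord's midpoint, which is exactly $J$ applied to the directed side for a consistently oriented quadrilateral.

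With this in hand the area computation is short and avoids trigonometry entirely. Using the shoelace formula for $[FGHI]$ as one half the cyclic sum of the cross products $F\times G$ (the $z$-component), rotation invariance of the cross product gives $(Ju)\times(Jv)=u\times v$, so each term contributes $\tfrac13$ of an edge-vector cross product such as $(B-A)\times(C-B)$. Since $(B-A)\times(C-B)=A\times B+B\times C+C\times A=2[ABC]$, and likewise for the other three, the four edge cross products are $2[ABC]$, $2[BCD]$, $2[CDA]$, $2[DAB]$. Because $[ABC]+[CDA]=[ABCD]$ and $[BCD]+[DAB]=[ABCD]$ (both by the signed-area definition), their sum is $4[ABCD]$, whence $[FGHI]=\tfrac13\cdot\tfrac12\cdot 4[ABCD]=\tfrac23[ABCD]$, that is, $[ABCD]=\tfrac32[FGHI]$.

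The main obstacle is not the algebra but the orientation bookkeeping of the second paragraph: confirming that one common rotation $J$ sends every directed side to the correct outward radial direction, so that $F,G,H,I$ are genuinely the images of the side vectors under a single similarity $\tfrac1{\sqrt3}J$. Once that is secured, the cross-product identities finish the proof. Should the orientation argument prove delicate, an alternative is to parametrize the vertices by their angles on the circumcircle, write $F,G,H,I$ in polar form (each at the corresponding arc-midpoint angle, with radius $\text{side}/\sqrt3$), and reduce the claim to the identity $\sum_i \sin a_i\cos a_i=\sum_i \sin a_i\sin a_{i+1}\sin(a_i+a_{i+1})$ subject to $a_1+a_2+a_3+a_4=\pi$; this collapses at once after applying $\sin^2 x+\sin^2 y=1-\cos(x+y)\cos(x-y)$ together with $\cos(a_2+a_4)=-\cos(a_1+a_3)$.
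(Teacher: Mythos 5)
Your proposal is correct and follows essentially the same route as the paper: both rest on Proposition~\ref{proposition:isoscelesX616} applied to the four isosceles radial triangles at the circumcenter, both decompose $[FGHI]$ into the four triangles at $E$, and both arrive at $[EFG]=\tfrac13[ABC]$ (your cross-product identity $(Ju)\times(Jv)=u\times v$ applied to $F=\tfrac1{\sqrt3}J(B-A)$, $G=\tfrac1{\sqrt3}J(C-B)$ is exactly the paper's computation $\tfrac12\cdot\tfrac{a}{\sqrt3}\cdot\tfrac{b}{\sqrt3}\sin(180\degrees-B)=\tfrac13[ABC]$ in vector rather than trigonometric dress). The orientation point you flag — that each $X_{616}$ point lies on the radius toward the chord's midpoint — is assumed at the same level of explicitness in the paper's proof (via $\angle FEG=180\degrees-B$), so you are not missing anything the paper supplies.
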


\begin{figure}[h!t]
\centering
\includegraphics[width=0.4\linewidth]{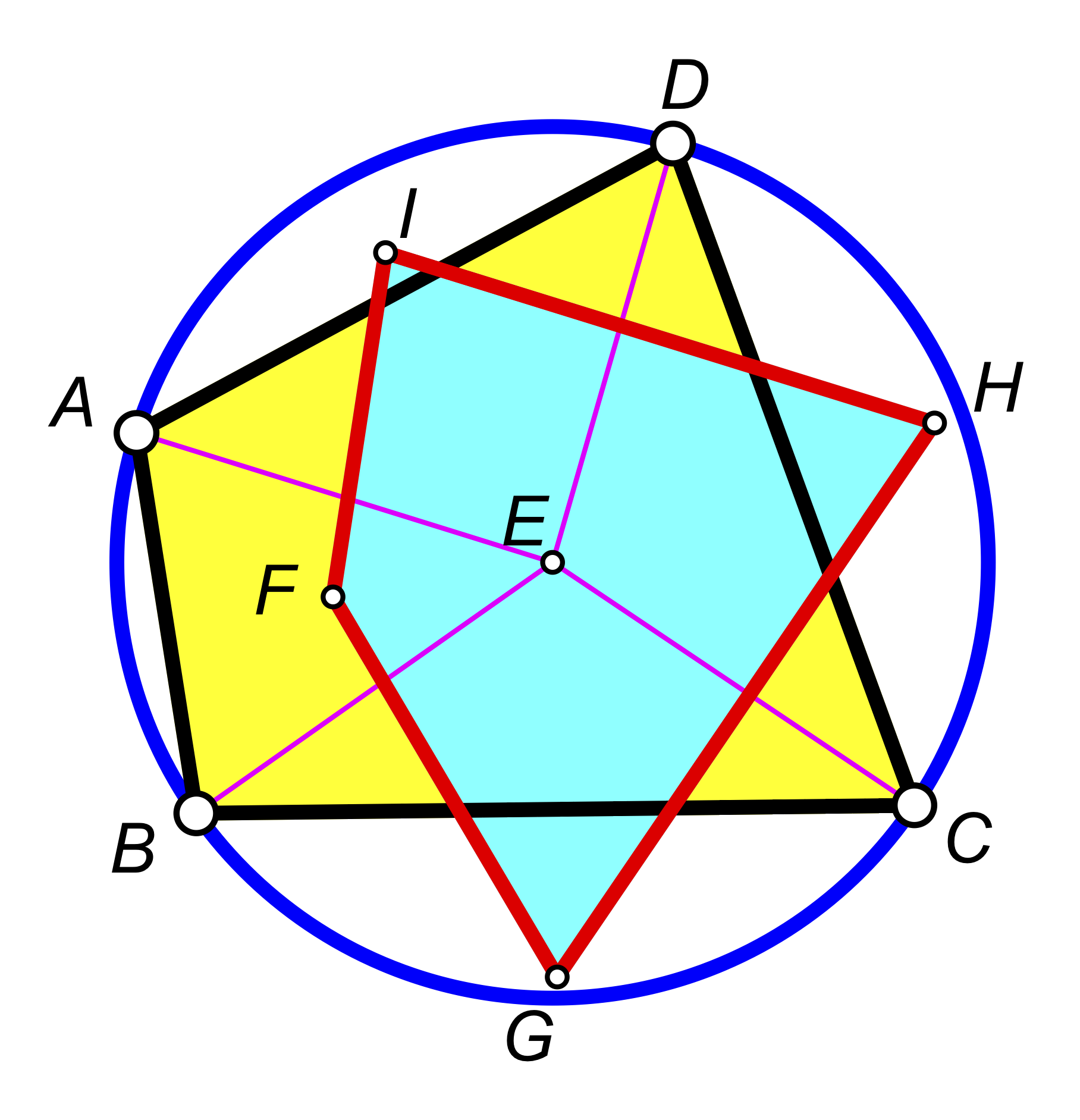}
\caption{$X_{616}$ points $\implies [ABCD]=\frac32[FGHI]$}
\label{fig:opCyclicX616}
\end{figure}

\begin{proof}
Let $AB=a$, $BC=b$, $CD=c$, and $DA=d$.
From Lemma~\ref{proposition:isoscelesX616}, we have $EF=a/\sqrt{3}$ and $EG=b/\sqrt{3}$
(Figure~\ref{fig:opCyclicX616proof}). 

\begin{figure}[h!t]
\centering
\includegraphics[scale=0.4]{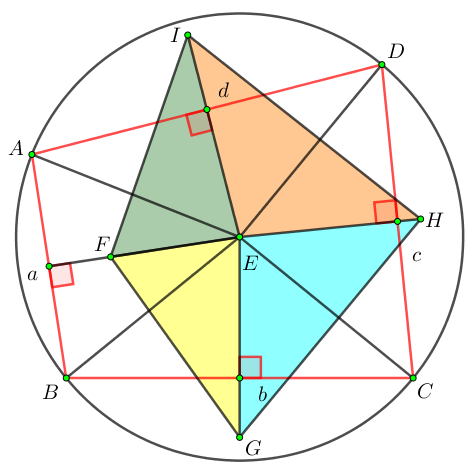}
\caption{}
\label{fig:opCyclicX616proof}
\end{figure}

Therefore,
$$
\begin{aligned}
{[EFG]}&=\frac{1}{2}\cdot EF\cdot EG\cdot \sin \left(\angle FEG\right)\\
&=\frac{1}{2}\cdot \frac{a}{\sqrt{3}}\cdot \frac{b}{\sqrt{3}}\cdot \sin \left(180^\circ-B\right)\\
&=\frac{1}{6}ab\sin B\\
&=\frac{1}{3}[ABC].
\end{aligned}
$$ 
Similarly, $[EGH]=\frac{1}{3}[BCD]$, $[EHI]=\frac{1}{3}[CDA]$, $[EIF]=\frac{1}{3}[DAB]$.
Therefore,
$$
\begin{aligned}
{[EFGH]}&=[EFG]+[EGH]+[EHI]+[EIF]\\
&=\frac{1}{3}\left([ABC]+[BCD]+[CDA]+[DAB]\right)
\end{aligned}
$$
from which, using the relations $[ABCD]=[ABC]+[CDA]=[BCD]+[DAB]$, we get the desired result.
\end{proof}

\textbf{Note.} Theorem~\ref{theorem:dpRectangleX616} is a special case of Theorem~\ref{thm:opCyclicX616}
since all rectangles are cyclic.

\begin{theorem}
\label{thm:opCyclicX617}
Let $E$ be the circumcenter of cyclic quadrilateral $ABCD$. Let $F$, $G$, $H$, and $I$ be the $X_{617}$ points of $\triangle EAB$, $\triangle EBC$, $\triangle ECD$,
and $\triangle EDA$, respectively. Then
\begin{equation*}
   [ABCD]=\frac{3}{2}[FGHI]
\end{equation*}
\end{theorem}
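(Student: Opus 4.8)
The plan is to follow the proof of Theorem~\ref{thm:opCyclicX616} essentially verbatim, replacing the $X_{616}$ point by the $X_{617}$ point throughout and invoking Proposition~\ref{proposition:isoscelesX617} in place of Proposition~\ref{proposition:isoscelesX616}. Since those two propositions record exactly the same metric data for an isosceles triangle---namely that the center lies on the axis of symmetry at distance equal to the base divided by $\sqrt3$ from the apex---every step of the area computation transfers without modification.

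First I would note that because $E$ is the circumcenter, $EA=EB=EC=ED$, so each radial triangle is isosceles with apex $E$. Writing $AB=a$, $BC=b$, $CD=c$, and $DA=d$, Proposition~\ref{proposition:isoscelesX617} applied to $\triangle EAB$ gives that the $X_{617}$ point $F$ lies on the perpendicular bisector of $AB$ with $EF=a/\sqrt3$; symmetrically $EG=b/\sqrt3$, $EH=c/\sqrt3$, and $EI=d/\sqrt3$.

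Next I would decompose $FGHI$ into the four triangles sharing vertex $E$. Because $EF\perp AB$ and $EG\perp BC$, the angle $\angle FEG$ equals $180^\circ-B$, whence
$$[EFG]=\frac{1}{2}\cdot\frac{a}{\sqrt3}\cdot\frac{b}{\sqrt3}\cdot\sin(180^\circ-B)=\frac{1}{6}ab\sin B=\frac{1}{3}[ABC],$$
and likewise $[EGH]=\frac{1}{3}[BCD]$, $[EHI]=\frac{1}{3}[CDA]$, and $[EIF]=\frac{1}{3}[DAB]$. Adding these and using $[ABC]+[CDA]=[BCD]+[DAB]=[ABCD]$ gives $[FGHI]=\frac{2}{3}[ABCD]$, equivalent to the asserted relation $[ABCD]=\frac{3}{2}[FGHI]$.

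Because the paper already furnishes (with details omitted) the proof of Proposition~\ref{proposition:isoscelesX617}, there is no real obstacle here. The only point deserving a moment's care is the orientation/incidence question---that $E$ lies interior to $FGHI$ so that the four sub-triangles genuinely tile it---exactly the tacit assumption already present in Theorem~\ref{thm:opCyclicX616}; the signed-area convention of Lemma~\ref{lemma:areaFormula} makes the sign bookkeeping robust in any case.
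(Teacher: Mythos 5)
Your proposal is correct and matches the paper's intent exactly: the paper proves Theorem~\ref{thm:opCyclicX617} by stating that the argument is the same as for Theorem~\ref{thm:opCyclicX616} with Proposition~\ref{proposition:isoscelesX617} in place of Proposition~\ref{proposition:isoscelesX616}, and omits the details. Your write-up simply supplies those details, including the correct conclusion $[FGHI]=\frac{2}{3}[ABCD]$.
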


\begin{proof}
The proof is similar to the proof of Theorem~\ref{thm:opCyclicX616}, so the details are omitted.
\end{proof}

\relbox{Relationship $[ABCD]=\frac98[FGHI]$}

\begin{theorem}
\label{thm:opCyclicX290}
Let $E$ be the circumcenter of cyclic quadrilateral $ABCD$.
Let $n$ be 290, 671, or 903.
Let $F$, $G$, $H$, and $I$ be the $X_n$ points of $\triangle EAB$, $\triangle EBC$, 
$\triangle ECD$, and $\triangle EDA$, respectively (Figure~\ref{fig:opCyclicX290}).
Then
$$[ABCD]=\frac98[FGHI].$$
\end{theorem}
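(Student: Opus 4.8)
The plan is to mirror the proof of Theorem~\ref{thm:dpCyclic}, replacing the positive constant $k$ with the negative constant $k=-\frac13$ supplied by Table~\ref{table:diagonalPointIsoscelesRatios} for each of $n=290$, $671$, and $903$, and to import the sign-handling device used in Theorem~\ref{thm:dpRectangleNeg}. First I would observe that since $E$ is the circumcenter of $ABCD$ we have $EA=EB=EC=ED$, so each radial triangle $\triangle EAB$, $\triangle EBC$, $\triangle ECD$, $\triangle EDA$ is isosceles with apex $E$. Let $W$, $X$, $Y$, $Z$ be the midpoints of sides $AB$, $BC$, $CD$, $DA$; these are precisely the midpoints of the bases of the four isosceles radial triangles.

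Next, for each listed center, Table~\ref{table:diagonalPointIsoscelesRatios} records the signed ratio $X_nM/VM=-\frac13$ in an isosceles triangle with apex $V$ and base midpoint $M$. Applying this to $\triangle EAB$ with apex $E$ and base midpoint $W$ gives $FW/EW=-\frac13$. By Lemma~\ref{lemma:isoscelesRatioAll}, used with $E$, $W$, and $F$ in the roles of apex, base midpoint, and center, this yields $EF/EW=1-k=\frac43$. The identical computation on the other three radial triangles gives $EG/EX=EH/EY=EI/EZ=\frac43$.

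I would then conclude that $FGHI$ is the image of the Varignon quadrilateral $WXYZ$ under the homothety centered at $E$ with ratio $\frac43$, so that $[FGHI]=\left(\frac43\right)^2[WXYZ]=\frac{16}{9}[WXYZ]$. By Lemma~\ref{lemma:Varignon}, $[WXYZ]=\frac12[ABCD]$, hence $[FGHI]=\frac{16}{9}\cdot\frac12[ABCD]=\frac89[ABCD]$, which rearranges to the desired $[ABCD]=\frac98[FGHI]$.

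The one delicate point is the sign bookkeeping forced by $k<0$: unlike in Theorem~\ref{thm:dpCyclic}, the center $F$ now lies beyond $W$ rather than between $E$ and $W$, so one must confirm that $EF/EW$ is nonetheless a positive number before declaring $E$ the center of a genuine (orientation-preserving) homothety carrying $WXYZ$ onto $FGHI$. Lemma~\ref{lemma:isoscelesRatioAll} disposes of this automatically, since it produces $EF/EW=1-k>0$ for every $k<1$; with that lemma in hand no separate case analysis is required, and the remainder of the argument is formally identical to the positive-$k$ circumcenter case.
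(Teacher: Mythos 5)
Your proposal is correct and follows essentially the same route as the paper, which simply cites the proof of Theorem~\ref{thm:opCyclicX402} (itself resting on Theorem~\ref{thm:dpCyclic} and the homothety of $FGHI$ onto the Varignon parallelogram) with $k=-\frac13$ substituted so that $\frac{2}{(1-k)^2}=\frac98$. Your explicit attention to the sign bookkeeping via Lemma~\ref{lemma:isoscelesRatioAll} matches the device the paper already introduced in Theorem~\ref{thm:dpRectangleNeg}, so nothing is missing.
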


\begin{figure}[h!t]
\centering
\includegraphics[width=0.4\linewidth]{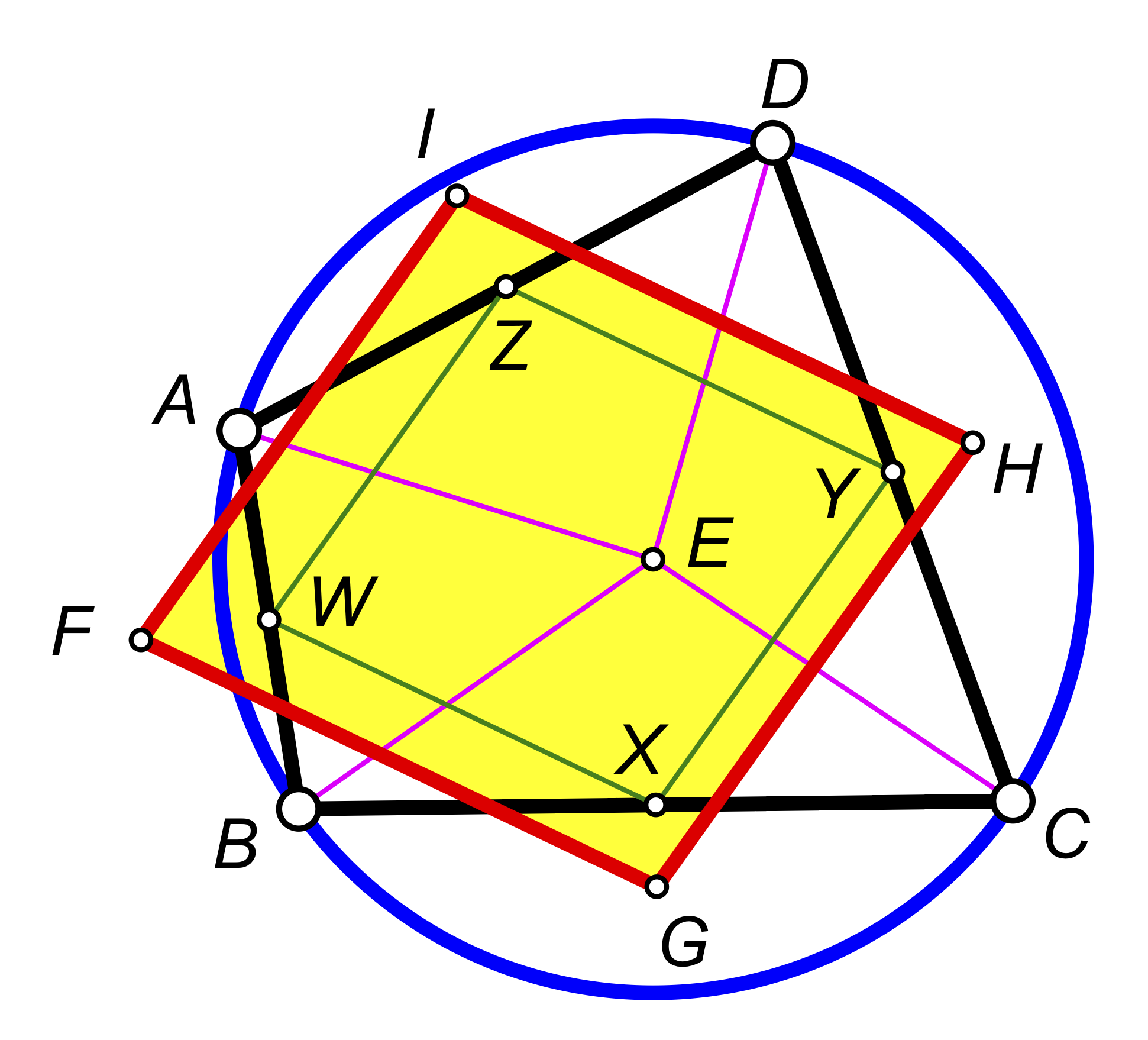}
\caption{$X_{290}$ points $\implies [ABCD]=\frac98[FGHI]$}
\label{fig:opCyclicX290}
\end{figure}

\begin{proof}
The proof is the same as the proof of Theorem~\ref{thm:opCyclicX402}, except $k=-\frac13$
and $\frac{2}{(1-k)^2}=\frac98$.
\end{proof}

\newpage

\relbox{Relationship $[ABCD]=\frac12[FGHI]$}

\begin{theorem}
\label{thm:opCyclicX149}
Let $E$ be the circumcenter of cyclic quadrilateral $ABCD$.
Let $n$ be 148, 149, or 150.
Let $F$, $G$, $H$, and $I$ be the $X_n$ points of $\triangle EAB$, $\triangle EBC$, 
$\triangle ECD$, and $\triangle EDA$, respectively (Figure~\ref{fig:opCyclicX149}).
Then
$$[ABCD]=\frac12[FGHI].$$
\end{theorem}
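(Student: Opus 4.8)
The plan is to reduce this to exactly the homothety argument already used for the cyclic circumcenter in Theorem~\ref{thm:opCyclicX402} and Theorem~\ref{thm:opCyclicX290}, now with the ratio constant $k=-1$. Since $E$ is the circumcenter, $EA=EB=EC=ED$, so each radial triangle $\triangle EAB$, $\triangle EBC$, $\triangle ECD$, $\triangle EDA$ is isosceles with apex $E$. This is precisely the hypothesis under which the earlier cyclic theorems operate, so the entire framework transfers with no new ideas; the proof is essentially that of Theorem~\ref{thm:opCyclicX290} with a different value of $k$.

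First I would invoke Theorem~\ref{thm:isoscelesRatio} together with Table~\ref{table:diagonalPointIsoscelesRatios}, which records that for $n=148$, $149$, and $150$ the signed ratio of $X_nM$ to the distance from the apex to $M$ equals $-1$ for every isosceles triangle (here $M$ is the base midpoint). Writing $W$, $X$, $Y$, $Z$ for the midpoints of the sides $AB$, $BC$, $CD$, $DA$, and noting that in each radial triangle the apex is $E$ and the base midpoint is the corresponding side midpoint, this reads $FW/EW=-1$, and likewise for $G$, $H$, $I$. Applying Lemma~\ref{lemma:isoscelesRatioAll} with $k=-1$ converts this into $EF/EW=1-k=2$, and similarly $EG/EX=EH/EY=EI/EZ=2$.

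Next I would observe that $F$, $G$, $H$, $I$ are therefore the images of $W$, $X$, $Y$, $Z$ under the homothety centered at $E$ with ratio $2$, so $FGHI$ is similar to the midpoint quadrilateral $WXYZ$ with $[FGHI]=4[WXYZ]$. Since $WXYZ$ is the Varignon parallelogram of $ABCD$, Lemma~\ref{lemma:Varignon} gives $[WXYZ]=\frac12[ABCD]$, hence $[FGHI]=2[ABCD]$, which is the claimed relation $[ABCD]=\frac12[FGHI]$. This is just the master formula $[ABCD]=\frac{2}{(1-k)^2}[FGHI]$ evaluated at $k=-1$.

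The only point needing care, and thus the main (very mild) obstacle, is that Theorem~\ref{thm:dpCyclic} is phrased for positive $k$, whereas here $k=-1<0$, so the center $X_n$ lies beyond the base midpoint $W$ rather than between $W$ and $E$. This is exactly the sign subtlety already resolved in the rectangle setting by Theorem~\ref{thm:dpRectangleNeg} (and used in Theorem~\ref{theorem:dpRectangleX148}); Lemma~\ref{lemma:isoscelesRatioAll} was stated with signed distances precisely to cover it, so the homothety ratio $1-k=2$ remains positive and the argument closes without complication.
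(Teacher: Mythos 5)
Your argument is correct and is essentially the paper's own proof, which simply cites the proof of Theorem~\ref{thm:opCyclicX402} with $k=-1$ so that $\frac{2}{(1-k)^2}=\frac12$; you have merely written out the homothety details (via Theorem~\ref{thm:isoscelesRatio}, Lemma~\ref{lemma:isoscelesRatioAll}, and Lemma~\ref{lemma:Varignon}) that the paper leaves implicit. The remark about the sign of $k$ being handled as in Theorem~\ref{thm:dpRectangleNeg} is exactly the right observation and matches the paper's treatment.
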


\begin{figure}[h!t]
\centering
\includegraphics[width=0.4\linewidth]{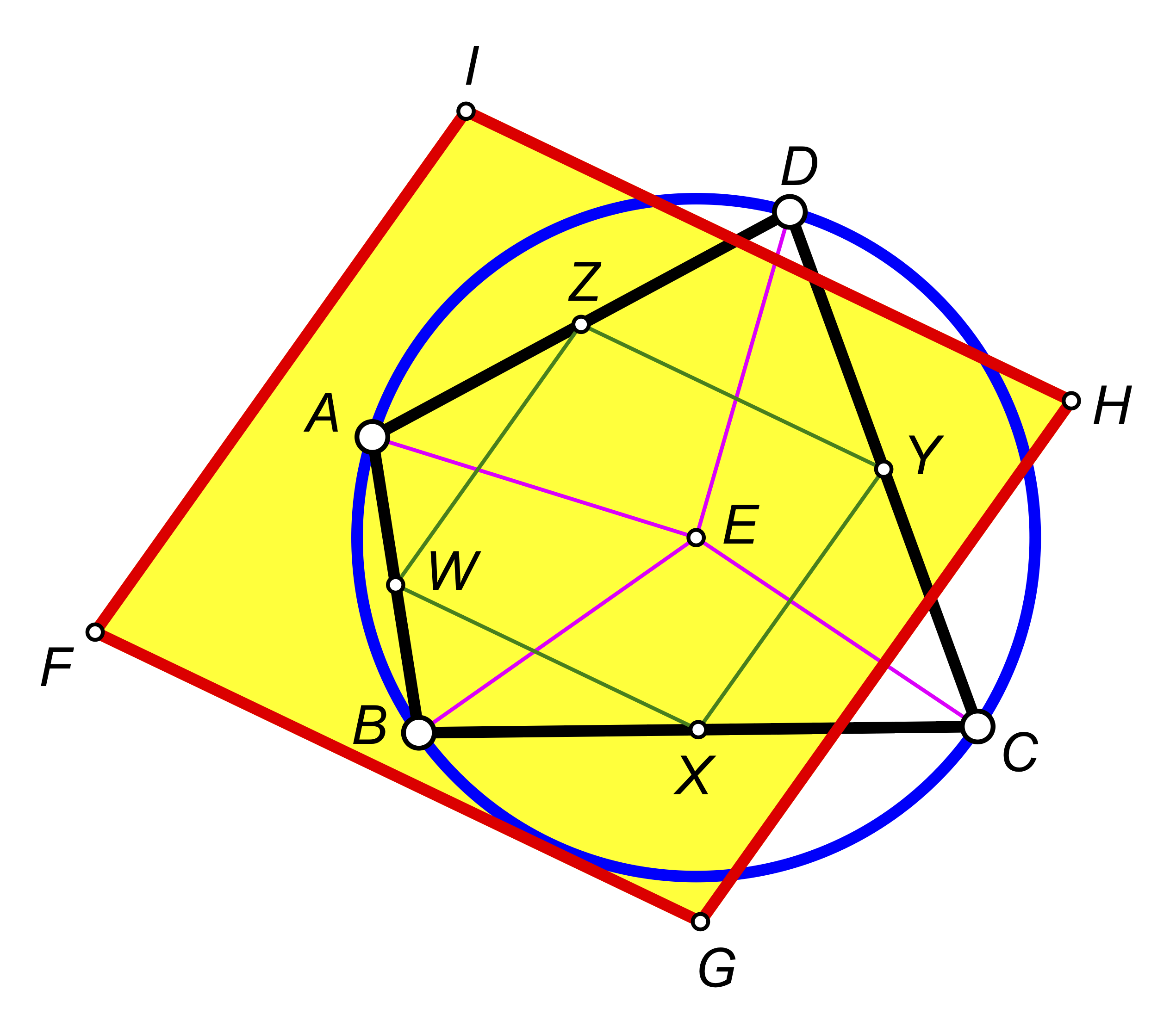}
\caption{$X_{149}$ points $\implies [ABCD]=\frac12[FGHI]$}
\label{fig:opCyclicX149}
\end{figure}

\begin{proof}
The proof is the same as the proof of Theorem~\ref{thm:opCyclicX402}, except $k=-1$
and $\frac{2}{(1-k)^2}=\frac12$.
\end{proof}

\newpage


\section{Steiner point}

In this section, we examine central quadrilaterals formed from the Steiner
point of the reference quadrilateral.

A \emph{midray circle} of a quadrilateral
is the circle through the midpoints of the line segments joining one vertex
of the quadrilateral to the other vertices (Figure~\ref{fig:spMidrayCircle}).

\begin{figure}[h!t]
\centering
\includegraphics[width=0.35\linewidth]{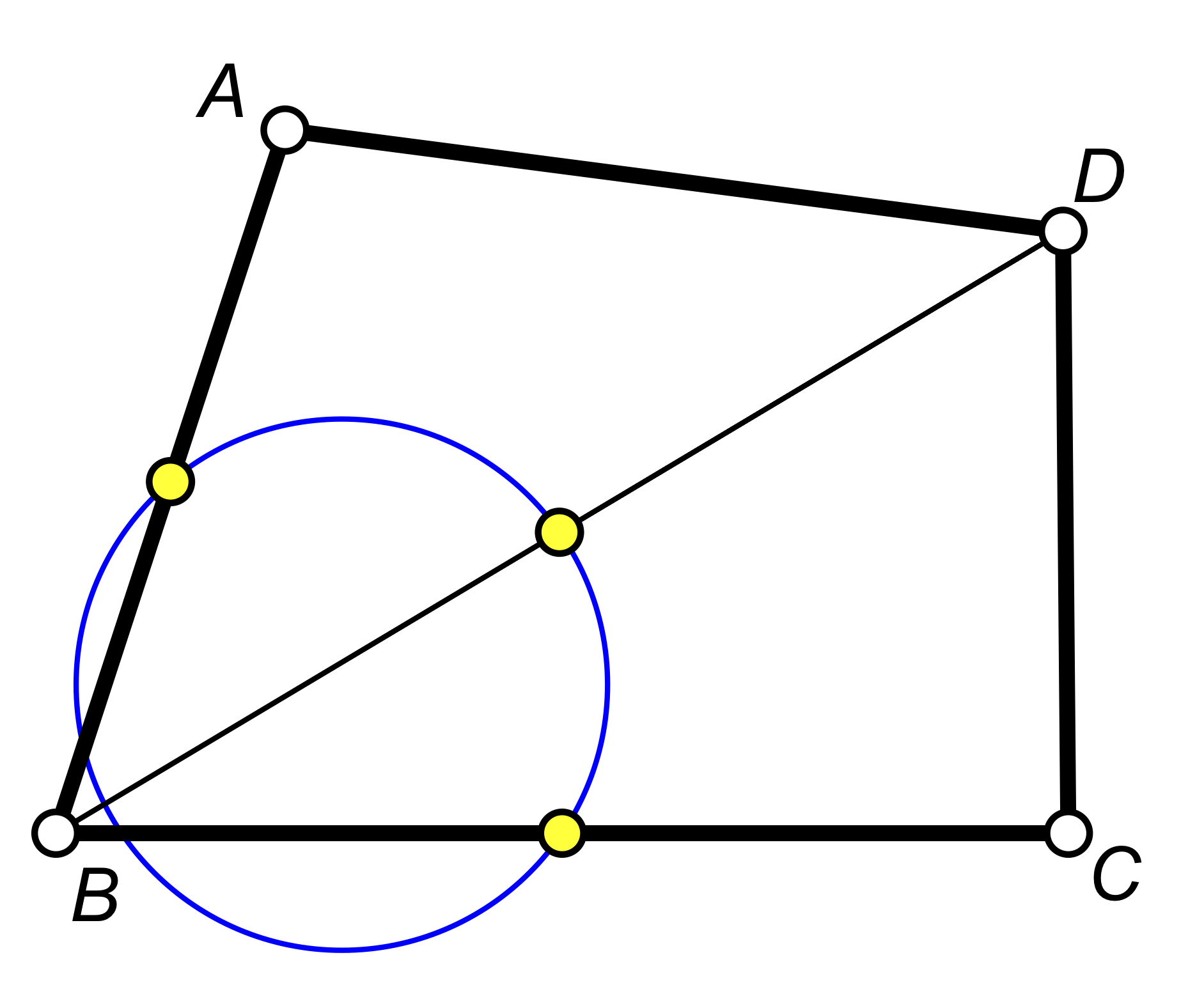}
\caption{Midray circle of quadrilateral $ABCD$ relative to vertex $B$}
\label{fig:spMidrayCircle}
\end{figure}

The \emph{Steiner point} (sometimes called the Gergonne-Steiner point) of a quadrilateral
is the common point of the midray circles of the quadrilateral.

\begin{figure}[h!t]
\centering
\includegraphics[width=0.4\linewidth]{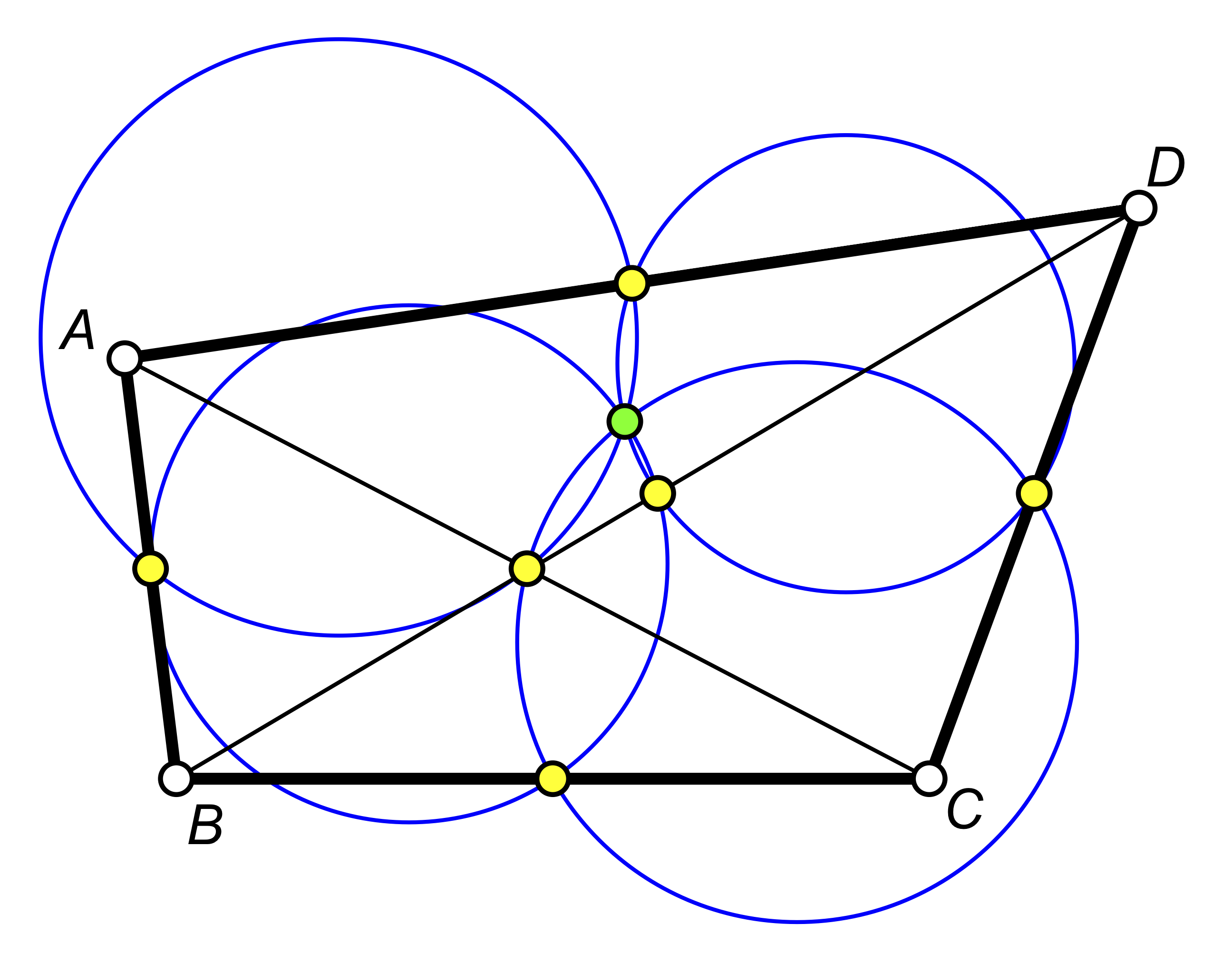}
\caption{Steiner point of quadrilateral $ABCD$}
\label{fig:spSteinerPoint}
\end{figure}

Figure \ref{fig:spSteinerPoint} shows the Steiner point of quadrilateral $ABCD$.
The yellow points represent the midpoints of the sides and diagonals of the quadrilateral.
The blue circles are the midray circles.
The common point of the four circles is the Steiner point (shown in green).

\void{
\begin{lemma}
\label{lemma:rightTriangleNinePoint}
The nine-point circle of a right triangle passes through the vertex of the right angle.
\end{lemma}

\begin{proof}
by Lemma~\ref{lemma:ninePointCircle}, the nine-point circle passes through
the feet of the altitudes. The result then follows since the vertex of a right triangle
is the foot of two altitudes.
\end{proof}
}

\begin{proposition}
\label{prop:spParallelogram}
The Steiner point of a parallelogram coincides with the diagonal point.
\end{proposition}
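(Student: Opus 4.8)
The plan is to follow exactly the same strategy used in the proof of Proposition~\ref{prop:ppParallelogram} for the Poncelet point, exploiting the fact that each midray circle is, by its very definition, forced to pass through the midpoint of one of the diagonals. The whole engine of the argument is a single elementary property of parallelograms.

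First I would recall that in a parallelogram the diagonals bisect each other, so the diagonal point $E$ is simultaneously the midpoint of diagonal $AC$ and the midpoint of diagonal $BD$. This is the one fact that makes everything work, and it is the analogue of the observation in Proposition~\ref{prop:ppParallelogram} that $E$ is the midpoint of a side of each component triangle.

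Next I would examine the four midray circles in turn. By definition, the midray circle relative to vertex $B$ passes through the midpoints of $BA$, $BC$, and $BD$; since $E$ is the midpoint of the diagonal $BD$, it lies on this circle. Applying the same reasoning to the remaining vertices, the midray circle relative to $D$ contains the midpoint of $DB = E$, the midray circle relative to $A$ contains the midpoint of $AC = E$, and the midray circle relative to $C$ contains the midpoint of $CA = E$. Hence $E$ lies on all four midray circles. (One may note in passing that each triple of midpoints is the image of three non-collinear vertices under a homothety of ratio $\tfrac12$, so each midray circle is genuinely well-defined.)

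Finally, since $E$ is common to all four midray circles, it is by definition the Steiner point of the parallelogram, which is what we wanted to show. I do not expect any real obstacle here; the only point requiring care is to pair each vertex correctly with the diagonal emanating from it, so that the relevant defining midpoint is indeed the common diagonal midpoint $E$ rather than the midpoint of a side.
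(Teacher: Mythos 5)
Your proposal is correct and follows essentially the same argument as the paper: each midray circle passes through the midpoint of the diagonal issuing from its defining vertex, and since the diagonals of a parallelogram bisect each other, that midpoint is the diagonal point in every case. The paper states this in three sentences; you merely spell out the vertex-by-vertex bookkeeping.
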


\begin{proof}
The diagonals of a parallelogram bisect each other.
Every midray circle passes through the midpoint of a diagonal.
Therefore all midray circles pass through the diagonal point of the quadrilateral.
\end{proof}


\begin{proposition}
\label{prop:spCyclic}
The Steiner point of a cyclic quadrilateral coincides with the circumcenter of that quadrilateral.
\end{proposition}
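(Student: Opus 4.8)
The plan is to show that the circumcenter $O$ of the cyclic quadrilateral lies on every one of the four midray circles; since the Steiner point is, by definition, the common point of these circles, this will identify it with $O$. Fix a vertex, say $B$. The midray circle relative to $B$ is the circle through the midpoints $P$, $Q$, and $S$ of the segments $BA$, $BC$, and $BD$. First I would note that these three midpoints are never collinear: they are the images of the three distinct vertices $A$, $C$, $D$ under the homothety centered at $B$ with ratio $\tfrac12$, and $A$, $C$, $D$ are non-collinear in a convex quadrilateral, so the midray circle is the unique circle they determine.

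The main geometric step is to recognize this circle as the circle having $OB$ as a diameter. Since $O$ is the circumcenter, it is equidistant from $A$ and $B$, so $O$ lies on the perpendicular bisector of the chord $AB$; that bisector meets $AB$ at its midpoint $P$, whence $OP\perp PB$ and $\angle OPB=90\degrees$. The same reasoning applied to the chords $BC$ and $BD$ gives $\angle OQB=90\degrees$ and $\angle OSB=90\degrees$. By the converse of Thales' theorem, $P$, $Q$, and $S$ therefore all lie on the circle with diameter $OB$. Since this circle passes through three points of the midray circle, the two circles coincide; in particular the midray circle relative to $B$ passes through the diameter endpoint $O$.

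Finally, the construction is completely symmetric in the four vertices: replacing $B$ by $A$, $C$, or $D$ shows in exactly the same way that $O$ lies on each of the remaining three midray circles. Hence $O$ is a point common to all four midray circles, and therefore $O$ is the Steiner point of $ABCD$. The only point requiring care is the identification step—that a point lying on all four midray circles must be the Steiner point—which rests solely on the definition of the Steiner point as their common intersection; the genuine content is entirely captured by the Thales observation, and no computation is needed.
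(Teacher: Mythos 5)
Your proof is correct, but it takes a genuinely different route from the paper's. The paper applies the homothety centered at the chosen vertex $A$ with ratio $\tfrac12$: this sends $B$, $C$, $D$ to the three midpoints and the antipode $A'$ of $A$ to the circumcenter $O$, so the circumcircle maps onto the midray circle and $O$ lands on it. You instead identify the midray circle relative to $B$ outright as the circle on diameter $OB$, using the fact that the perpendicular bisector of each chord through $B$ passes through its midpoint and through $O$, so that $\angle OPB=\angle OQB=\angle OSB=90\degrees$ and Thales' converse applies. Your version is slightly more informative in this special case, since it pins down the center (the midpoint of $OB$) and radius ($\tfrac12 OB$) of each midray circle, whereas the paper's homothety argument is the one that generalizes naturally (the midray circle at a vertex is always the half-scale image of the circle through the other three vertices, cyclic or not). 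One cosmetic remark: when a side or diagonal through $B$ happens to be a diameter of the circumcircle, the corresponding midpoint coincides with $O$ and your right-angle statement degenerates; the conclusion still holds trivially there, since $O$ is then itself one of the three defining points of the midray circle, but it is worth a word. Otherwise the argument is complete.
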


\begin{figure}[h!t]
\centering
\includegraphics[width=0.4\linewidth]{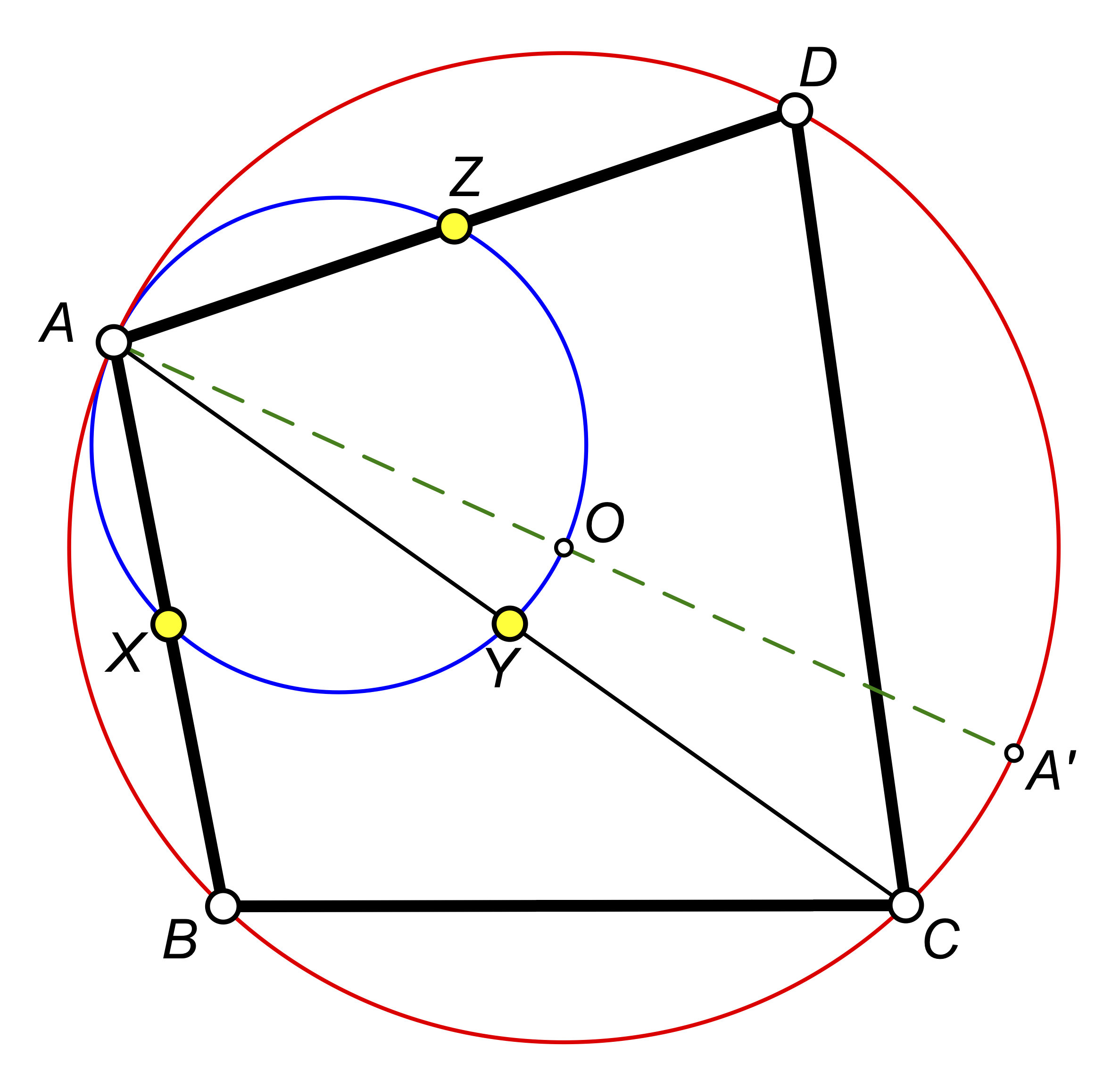}
\caption{Relationship between midray circle and circumcircle}
\label{fig:spCyclic}
\end{figure}

\begin{proof}
Let $ABCD$ be a cyclic quadrilateral with circumcenter $O$.
Let $X$, $Y$, and $Z$ be the midpoints of $AB$, $AC$, and $AD$, respectively.
Then the circle through $X$, $Y$, and $Z$ is a midray circle of quadrilateral $ABCD$.
Let $A'$ be the point on the circumcircle diametrically opposite $A$ (Figure~\ref{fig:spCyclic}).
The homothety with center $A$ and ratio of similarity $\frac12$ maps $B$ into $X$, $C$ into $Y$,
$D$ into $Z$, and $A'$ into $O$.
This homothety therefore maps the circumcircle into the midray circle.
Thus, the circumcenter of the quadrilateral, $O$, lies on the midray circle.
Since this is true for all the midray circles, the point of intersection of the midray circles
(the Steiner point) must be $O$.
\end{proof}


The following result comes from \cite{Suppa9953}.

\begin{lemma}
\label{lemma:spOrtho}
Let $P$ be any point on altitude $AH$ of $\triangle ABC$.
Let $X$ and $X'$ be the midpoints of $AB$ and $AC$, respectively.
Let $\omega$ be the circumcircle of $\triangle XPX'$.
Let $PP'$ be the chord of $\omega$ through $P$ that is parallel to $BC$.
Let $H'$ be the orthogonal projection of $P'$ on $BC$.
Then $H'$ is the midpoint of $BC$.
\end{lemma}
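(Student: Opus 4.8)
The plan is to project the entire configuration orthogonally onto the line $BC$ and to exploit the fact that $XX'$ and $PP'$ are parallel chords of $\omega$. First I would record the two structural facts that drive everything. Since $X$ and $X'$ are the midpoints of $AB$ and $AC$, the segment $XX'$ is the midline of $\triangle ABC$, so $XX'\parallel BC$; and $PP'\parallel BC$ by construction. Hence $XX'$ and $PP'$ are two parallel chords of the circle $\omega$, and a standard fact (the perpendicular from the center bisects a chord) shows that the midpoints of two parallel chords both lie on the single diameter of $\omega$ perpendicular to them. Consequently the midpoints of $XX'$ and of $PP'$ have the \emph{same} orthogonal projection $K$ onto the line $BC$ (the foot of that diameter).

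Next I would compute $K$ in two ways, using that orthogonal projection onto $BC$ preserves midpoints. Let $\pi$ denote this projection and let $M$ be the midpoint of $BC$. Because $P$ lies on the altitude $AH$, we have $\pi(P)=H$, while $\pi(P')=H'$ by the definition of $H'$; therefore $\pi$ of the midpoint of $PP'$ is the midpoint of $HH'$, so $K$ is the midpoint of $HH'$. On the other hand $\pi(A)=H$ as well (since $AH\perp BC$), so $\pi(X)$ is the midpoint of $HB$ and $\pi(X')$ is the midpoint of $HC$; their midpoint is $\tfrac14(2H+B+C)$, i.e.\ the midpoint of $HM$, so $K$ is the midpoint of $HM$. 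Equating the two descriptions of $K$ forces the midpoint of $HH'$ to equal the midpoint of $HM$, whence $H'=M$, which is exactly the claim.

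I expect the only real obstacle to be bookkeeping with signs and special positions rather than any genuine difficulty: the foot $H$ need not lie between $B$ and $C$ when $\triangle ABC$ is obtuse, $P$ may sit on either side of $BC$, and the configuration degenerates harmlessly when $AB=AC$ (then $XX'$ and $PP'$ share the perpendicular bisector through $A$). Carrying out every midpoint identity above with \emph{directed} lengths along $BC$ makes each step configuration-independent, so the argument stands verbatim. As a robust fallback that avoids discussing the perpendicular-diameter fact, the same conclusion follows by coordinates: place $H$ at the origin with $BC$ on the $x$-axis and $AH$ on the $y$-axis, so $P=(0,t)$ and $M$ has abscissa $x_M=\tfrac12(x_B+x_C)$; writing $\omega$ as $x^2+y^2+Dx+Ey+F=0$ and subtracting the equations satisfied by $X$ and $X'$ pins down $D=-x_M$, and then the two intersections of the horizontal line through $P$ with $\omega$ have abscissas summing to $-D=x_M$. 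Since $P$ has abscissa $0$, the point $P'$ has abscissa $x_M$, giving $H'=M$.
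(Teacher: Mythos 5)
Your proof is correct, and it rests on the same underlying symmetry as the paper's proof but executes it quite differently. Both arguments hinge on the fact that $XX'$ (the midline, hence parallel to $BC$) and $PP'$ are parallel chords of $\omega$, so the diameter perpendicular to them serves as a common perpendicular bisector meeting $BC$ at a single foot. The paper exploits this by reflecting $A$ and $B$ across the perpendicular bisector $L$ of $XX'$ and then chasing congruent triangles and an unsigned segment addition $BH'=BH+HH'$, which is tied to a particular configuration (acute triangle, $H$ between $B$ and $C$). You instead push the whole picture onto $BC$ by orthogonal projection and use that projection preserves midpoints: computing the common foot $K$ once from the chord $PP'$ (midpoint of $HH'$) and once from the chord $XX'$ (midpoint of $HM$) gives $H'=M$ directly, and with directed lengths this is configuration-independent, which you rightly flag as the main care point. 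Your coordinate fallback (pinning down $D=-x_M$ from the two known points $X,X'$ on $\omega$ and using the sum of the roots of the horizontal section) is a genuinely independent verification that sidesteps the chord-bisection lemma entirely. Either of your two routes would serve as a complete proof; the projection argument is arguably cleaner and more robust than the paper's reflection-and-congruence chase.
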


\begin{figure}[h!t]
\centering
\includegraphics[width=0.45\linewidth]{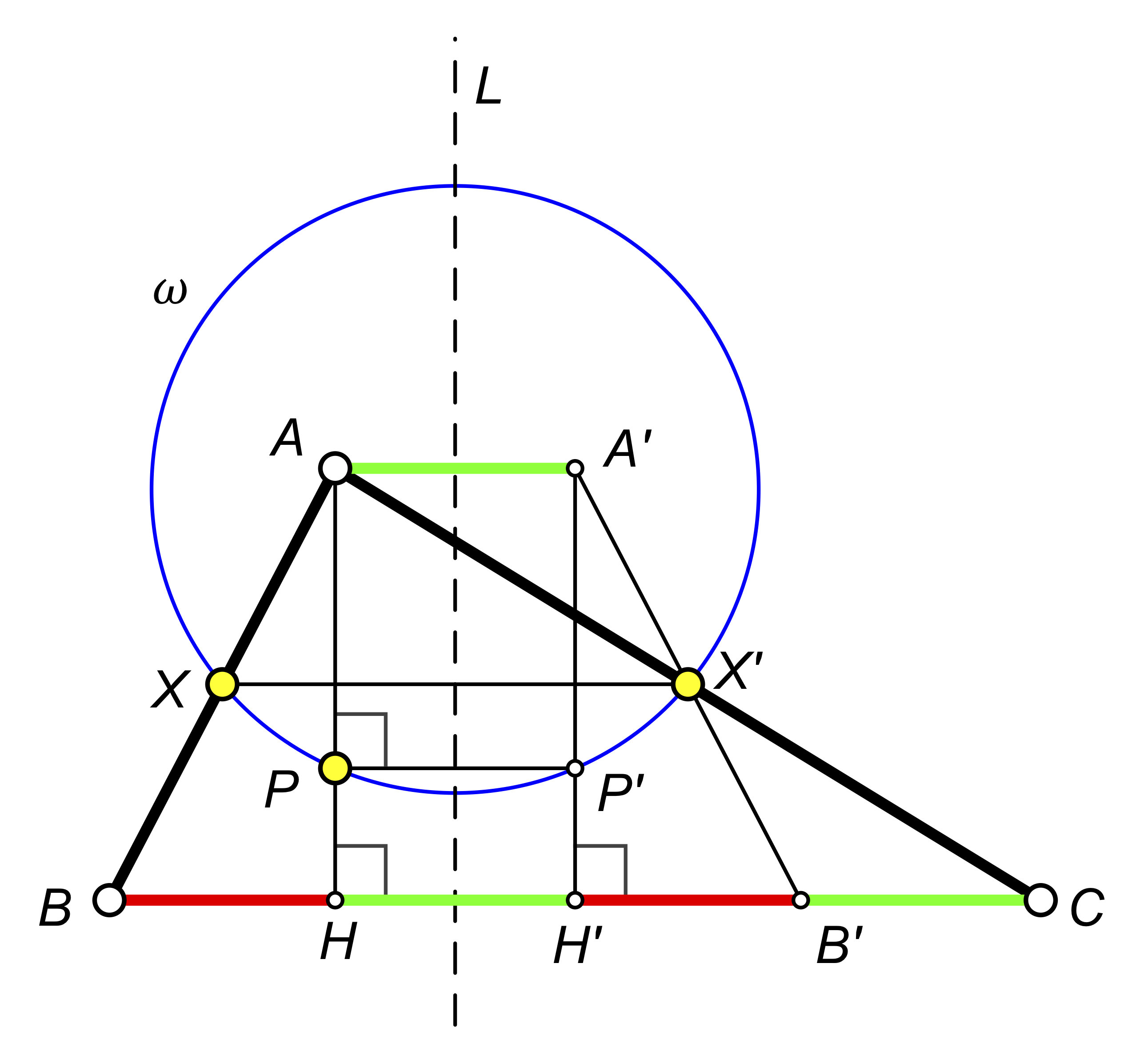}
\caption{}
\label{fig:spOrthoProof}
\end{figure}

\begin{proof}
Let $L$ be the perpendicular bisector of $XX'$.
Let $A'$ be the reflection of $A$ about $L$.
Let $B'$ be the reflection of $B$ about $L$.
Since $X'$ is the reflection of $X$ about $L$, $A'B'$ passes through $X'$ (Figure~\ref{fig:spOrthoProof}).
Then $\triangle AX'A'\cong\triangle CX'B'$. Thus, $AA'=B'C$.
Also, $HH'=AA'$, so $HH'=B'C'$.
By symmetry, $BH=H'B'$.
Hence, $BH'=BH+HH'=HB'+B'C=H'C$ and $H'$ is the midpoint of $BC$.
\end{proof}

\begin{proposition}
\label{prop:spOrtho}
The Steiner point of an orthodiagonal quadrilateral coincides with the point
of intersection of the perpendicular bisectors of the diagonals.
\end{proposition}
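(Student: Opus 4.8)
The plan is to show that the point $P$ where the perpendicular bisectors of the two diagonals meet lies on all four midray circles; since the Steiner point is by definition the common point of these circles, this identifies $P$ as the Steiner point. Because $ABCD$ is orthodiagonal, the diagonals cross at right angles at the diagonal point $E$, and I would exploit this perpendicularity to match each midray circle to the configuration of Lemma~\ref{lemma:spOrtho}.

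I would treat the midray circle relative to vertex $B$ first. Consider $\triangle BAC$ with base $AC$. Since $BD\perp AC$, the line $BD$ is precisely the altitude from $B$ in this triangle, and its foot is $E$. Let $X$ and $X'$ be the midpoints of $BA$ and $BC$; the midray circle relative to $B$ is the circle through $X$, $X'$, and the midpoint $M$ of $BD$. As $M$ lies on $BD$, it is a point on the altitude, so this midray circle is exactly the circle $\omega$ of Lemma~\ref{lemma:spOrtho} applied to $\triangle BAC$ with the chosen altitude point equal to $M$.

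Now I would apply the lemma. It produces the second endpoint $P'$ of the chord of $\omega$ through $M$ parallel to $AC$, and asserts that the foot of the perpendicular from $P'$ to $AC$ is the midpoint of $AC$. I claim $P'=P$. On one hand, that foot being the midpoint of $AC$ says exactly that $P'$ lies on the perpendicular bisector of $AC$. On the other hand, the chord through $M$ passes through the midpoint $M$ of $BD$ and is parallel to $AC$, hence perpendicular to $BD$; a line through the midpoint of $BD$ perpendicular to $BD$ is the perpendicular bisector of $BD$, so $P'$ also lies on the perpendicular bisector of $BD$. Thus $P'$ is the intersection of the two perpendicular bisectors, i.e.\ $P'=P$, and since $P'\in\omega$ we conclude that $P$ lies on the midray circle relative to $B$. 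The same argument with $B$ replaced by $D$ (again using $BD$ as altitude and $AC$ as base), and with the roles of the diagonals interchanged for vertices $A$ and $C$ (using $AC$ as altitude, $BD$ as base, and the midpoint of $AC$ as the point on the altitude), shows that $P$ lies on each of the remaining three midray circles. Hence $P$ is common to all four, and is the Steiner point.

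The step I expect to be the main obstacle is setting up the correspondence with Lemma~\ref{lemma:spOrtho} cleanly: checking that the midpoint of a diagonal really is a legitimate point on the altitude (which needs $BD\perp AC$ together with the fact that the foot $E$ lies on line $AC$, guaranteed by convexity since $E$ is interior), and confirming that the line the lemma constructs is an honest chord of the midray circle rather than a degenerate or tangent line. Once the dictionary between the quadrilateral's diagonals and the lemma's altitude/base is fixed, the identification $P'=P$ is immediate. As a fallback, the whole proposition can be verified analytically: placing $E$ at the origin with $AC$ and $BD$ along the axes, so that $A=(-a,0)$, $B=(0,b)$, $C=(c,0)$, $D=(0,-d)$, the midray circle through the midpoints of $BA$, $BC$, $BD$ is a routine computation, and one checks directly that the point $\left(\frac{c-a}{2},\frac{b-d}{2}\right)$—the intersection of the perpendicular bisectors—satisfies its equation.
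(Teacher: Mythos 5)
Your proposal is correct and follows essentially the same route as the paper: both identify each midray circle with the circle $\omega$ of Lemma~\ref{lemma:spOrtho} (using the perpendicularity of the diagonals to view one diagonal as an altitude and its midpoint as the point $P$ on that altitude), and then read off that the second endpoint of the chord parallel to the base is the intersection of the two perpendicular bisectors. The paper states this more tersely, leaving the identification of $P'$ implicit, whereas you spell out the dictionary; the content is the same.
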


\begin{figure}[h!t]
\centering
\includegraphics[width=0.45\linewidth]{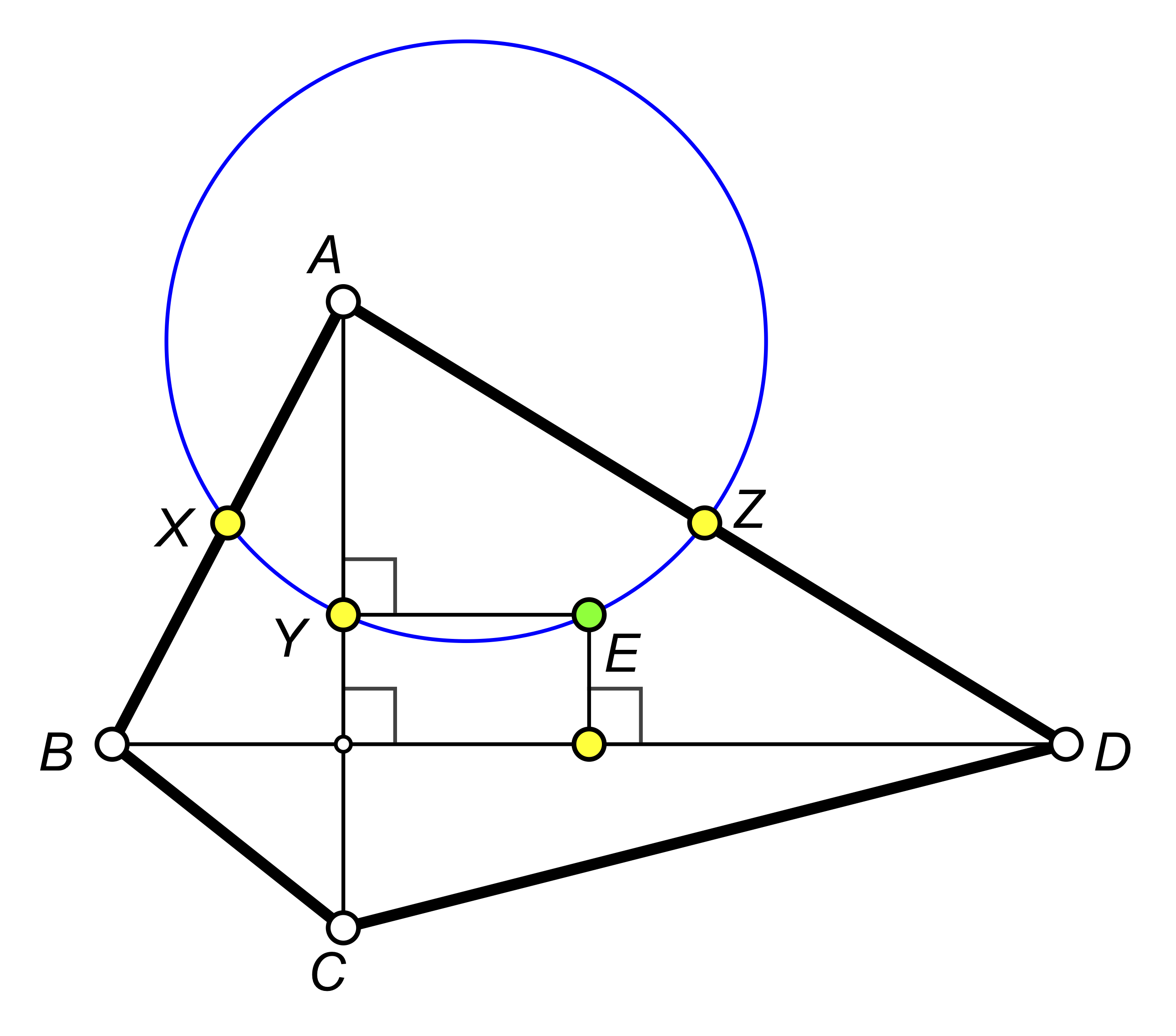}
\caption{Steiner point of an orthodiagonal quadrilateral}
\label{fig:spOrtho}
\end{figure}

\begin{proof}
Let the orthodiagonal quadrilateral be $ABCD$ and let $X$, $Y$, and $Z$
be the midpoints of $AB$, $AC$, and $AD$, respectively.
Let $E$ be the intersection point of the perpendicular bisectors of diagonals $AC$ and $BD$
(Figure~\ref{fig:spOrtho}).
From Lemma~\ref{lemma:spOrtho}, we  can conclude that the midray circle through $X$, $Y$, and $Z$
passes through $E$. Similarly, the other midray circles pass through $E$.
Thus, $E$ is the Steiner point of quadrilateral $ABCD$.
\end{proof}

\textit{Note}. The point
of intersection of the perpendicular bisectors of the diagonals of a quadrilateral
is known as the \textit{quasi circumcenter} (QG-P5 in \cite{EQF}) of the quadrilateral.

When the orthodiagonal quadrilateral is a kite, we get the following result.

\begin{corollary}
\label{prop:spKite}
Let $ABCD$ be a kite in which $BD$ bisects $AC$.
Then the Steiner point of $ABCD$ is the midpoint of $BD$ (Figure~\ref{fig:spKite}).
\end{corollary}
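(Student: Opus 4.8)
The plan is to reduce this statement to the characterization of the Steiner point that was already established for orthodiagonal quadrilaterals. First I would observe that every kite is orthodiagonal, so its diagonals are perpendicular. By Proposition~\ref{prop:spOrtho}, the Steiner point of $ABCD$ is then the point $E$ where the perpendicular bisectors of the two diagonals $AC$ and $BD$ meet. This turns a claim about four midray circles into a short synthetic argument about two perpendicular bisectors.

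Next I would extract the geometric content of the hypothesis. In a kite, one diagonal is the perpendicular bisector of the other; the assumption that $BD$ bisects $AC$ singles out $BD$ as the axis of symmetry, so that the midpoint of $AC$ lies on $BD$. Because the kite is orthodiagonal, $BD$ also meets $AC$ at a right angle. Consequently the line $BD$ passes through the midpoint of $AC$ and is perpendicular to $AC$, which means the perpendicular bisector of $AC$ is precisely the line $BD$ itself. This identification is the key step.

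Finally I would intersect the two bisectors. The Steiner point $E$ lies on the perpendicular bisector of $AC$, which we have just shown to be the line $BD$, and it also lies on the perpendicular bisector of the segment $BD$. A point that lies simultaneously on the line through $B$ and $D$ and on the perpendicular bisector of segment $BD$ can only be the midpoint of $BD$. Hence $E$ is the midpoint of $BD$, as claimed.

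I do not anticipate a serious obstacle in this argument; the only point requiring care is the bookkeeping of the vertex labels, namely confirming (from the kite convention $a=b$, $c=d$, i.e.\ $AB=BC$ and $CD=DA$, or directly from the hypothesis that $BD$ bisects $AC$) that it is $BD$ and not $AC$ that is the axis of symmetry, so that the perpendicular bisector of $AC$ coincides with the line $BD$ rather than the other way around.
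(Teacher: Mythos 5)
Your argument is correct and is precisely the reasoning the paper intends: the corollary is stated immediately after Proposition~\ref{prop:spOrtho} with no separate proof, being the direct specialization in which the perpendicular bisector of $AC$ coincides with the line $BD$, forcing the intersection with the perpendicular bisector of $BD$ to be the midpoint of $BD$.
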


\begin{figure}[h!t]
\centering
\includegraphics[width=0.4\linewidth]{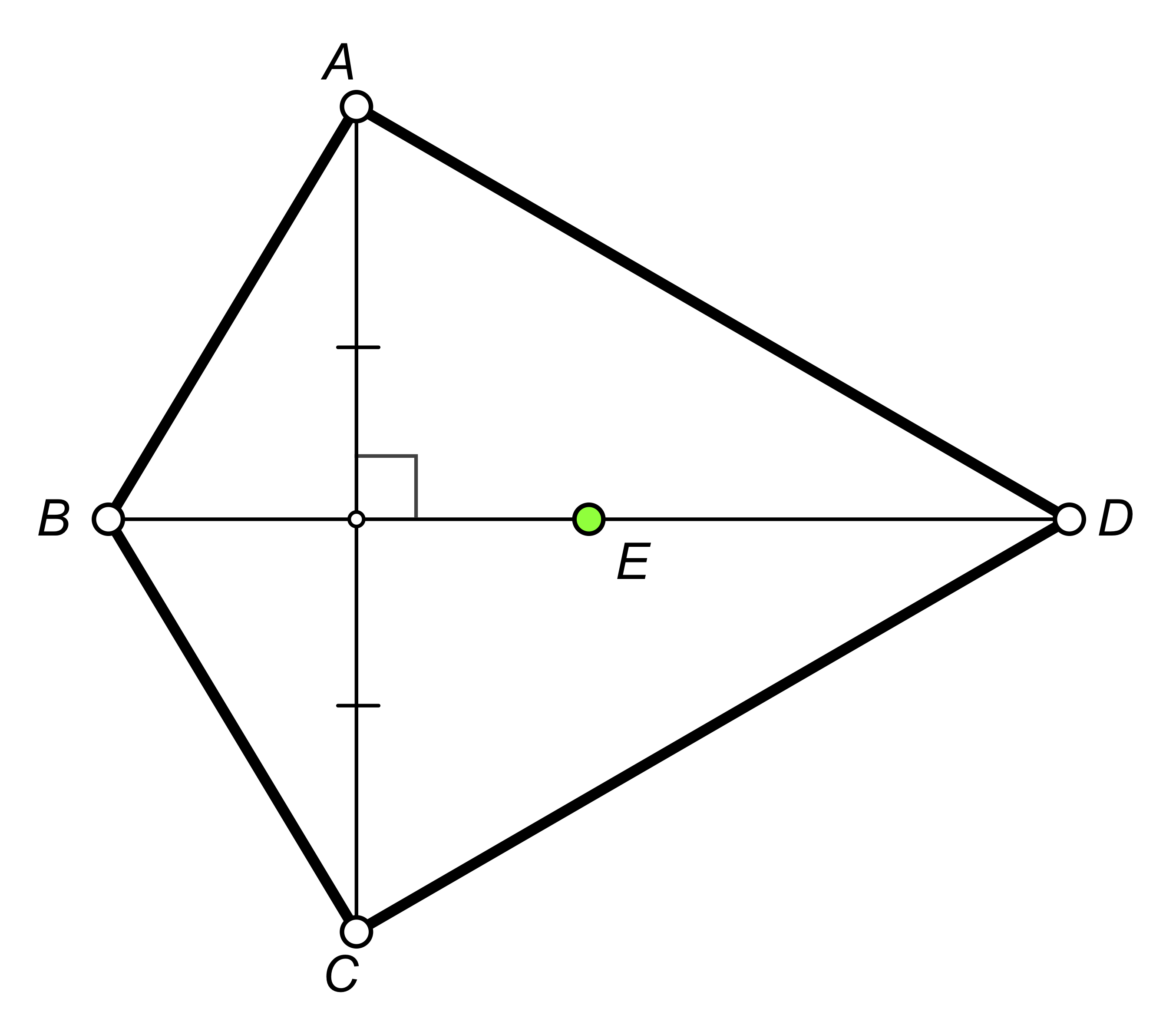}
\caption{Steiner point $E$ of kite is midpoint of $BD$}
\label{fig:spKite}
\end{figure}

Our computer study examined the central quadrilaterals formed by the Steiner point.
Since the Steiner point coincides with the diagonal point of a parallelogram,
we omit results for parallelograms.
Since the Steiner point of a cyclic quadrilateral coincides with the circumcenter of
that quadrilateral, we omit results for cyclic quadrilaterals.
We checked the central quadrilateral for all the first 1000 triangle centers (omitting points
at infinity) and all reference quadrilateral shapes listed in Table~\ref{table:quadrilaterals}.

The results found are listed in Table~\ref{table:Steiner}.

\begin{table}[ht!]
\caption{}
\label{table:Steiner}
\begin{center}
\begin{tabular}{|l|l|p{2.2in}|}
\hline
\multicolumn{3}{|c|}{\textbf{\color{blue}\large \strut Central Quadrilaterals formed by the Steiner Point}}\\ \hline
\textbf{Quadrilateral Type}&\textbf{Relationship}&\textbf{centers}\\ \hline
\void{
\ru cyclic&$[ABCD]=8[FGHI]$&402, 620\\
\cline{2-3}
\ru &$[ABCD]=2[FGHI]$&11, 115, 116, 122--125, 127, 136, 137, 244--247, 338, 339, 865--868\\
\cline{2-3}
\ru &$[ABCD]=\frac32[FGHI]$&616, 617\\
\cline{2-3}
\ru &$[ABCD]=\frac12[FGHI]$&148--150\\
\hline
}
\ru equidiagonal kite&$[ABCD]=8[FGHI]$&642\\
\cline{2-3}
\ru &$[ABCD]=2[FGHI]$&486\\
\hline
\end{tabular}
\end{center}
\end{table}


\relbox{Relationship $[ABCD]=8[FGHI]$}

The following result by Peter Moses comes from \cite{ETC642}.

\begin{lemma}
\label{lemma:x642}
Erect squares inwards on the sides of triangle $\triangle ABC$. The circumcenter of the centers of the squares is the center $X_{642}$ of $\triangle ABC$. 
\end{lemma}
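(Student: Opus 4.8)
The plan is to prove the lemma analytically with barycentric coordinates, in the style used elsewhere in this paper, since no purely synthetic route is apparent. The first task is to locate the three square centers. Write $S_A=\tfrac12(b^2+c^2-a^2)$, $S_B=\tfrac12(c^2+a^2-b^2)$, $S_C=\tfrac12(a^2+b^2-c^2)$ for the Conway symbols and let $S$ denote twice the area of $\triangle ABC$. The center $O_a$ of the square on $BC$ is the apex of the isosceles right triangle erected inward on $BC$ (so $\triangle O_aBC$ has its right angle at $O_a$); geometrically it is characterized by two conditions: it lies on the perpendicular bisector of $BC$, being equidistant from the adjacent square vertices $B$ and $C$, and its perpendicular distance to $BC$ is $a/2$ on the interior side.

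The second condition fixes the normalized first coordinate: since the distance from a normalized point $(x:y:z)$ to line $BC$ equals $x\cdot (S/a)$, requiring it to equal $a/2$ gives $x=a^2/(2S)$ (positive, because the displacement is toward the interior). The first condition is the line $a^2(y-z)=(c^2-b^2)x$, obtained by equating $PB^2$ and $PC^2$ through the Distance Formula (Lemma~\ref{lemma:distanceFormula}). Solving these together with $x+y+z=1$ and homogenizing yields
\begin{equation*}
O_a=\bigl(a^2:\,S-S_C:\,S-S_B\bigr),
\end{equation*}
and, by the cyclic symmetry of the construction,
\begin{equation*}
O_b=\bigl(S-S_C:\,b^2:\,S-S_A\bigr),\qquad
O_c=\bigl(S-S_B:\,S-S_A:\,c^2\bigr).
\end{equation*}
As a sanity check one verifies that for an equilateral triangle these reduce to the symmetric points on the three medians, at perpendicular distance $a/2$ from the respective sides.

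With the three centers in hand, the circumcenter of $\triangle O_aO_bO_c$ is the point $P$ equidistant from all three. I would compute it as the intersection of the perpendicular bisectors of $O_aO_b$ and $O_bO_c$: expanding $PO_a^2-PO_b^2$ and $PO_b^2-PO_c^2$ with the Distance Formula gives two linear equations in $(x:y:z)$, whose common solution is the desired circumcenter. Carrying out this elimination symbolically in Mathematica and homogenizing produces barycentric coordinates that I would then compare, after simplifying with the identities $S_B+S_C=a^2$, $S_C+S_A=b^2$, $S_A+S_B=c^2$ and $S_AS_B+S_BS_C+S_CS_A=S^2$, against the coordinates recorded for $X_{642}$ in \cite{ETC642}.

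The main obstacle is this last comparison. The two perpendicular-bisector equations have coefficients quadratic in $S_A,S_B,S_C$ and in $S$, so their solution is a ratio of high-degree expressions; collapsing it to the exact homogeneous form of $X_{642}$ relies on repeated use of the Conway relations, and in particular on the area identity $S_AS_B+S_BS_C+S_CS_A=S^2$, which is where essentially all the algebra lives. A secondary point to confirm is orientation: the sign $x=+a^2/(2S)$ (rather than $-a^2/(2S)$) is precisely what encodes the \emph{inward} squares of the hypothesis, so I would check that reversing this sign yields the outer analogue instead of $X_{642}$, thereby pinning down that the inward construction is the one giving the stated center.
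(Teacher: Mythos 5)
The paper gives no proof of this lemma at all: it is quoted from \cite{ETC642}, where the statement is credited to Peter Moses, so your proposal is not being measured against an argument in the text but is supplying the missing verification. Your setup is correct. The center of the inward square on $BC$ is indeed characterized by lying on the perpendicular bisector $a^2(y-z)=(c^2-b^2)x$ at signed distance $a/2$ from $BC$ on the side of $A$, and solving these with $x+y+z=1$ gives exactly $O_a=\bigl(a^2:S-S_C:S-S_B\bigr)$ with coordinate sum $2S$; a reassuring cross-check of the inward orientation is that the perspector of $\triangle O_aO_bO_c$ with $\triangle ABC$ computed from your coordinates is $\bigl(\tfrac{1}{S-S_A}:\tfrac{1}{S-S_B}:\tfrac{1}{S-S_C}\bigr)$, which is $X_{486}$, the inner Vecten point, exactly as it should be, while the sign flip you mention produces $\tfrac{1}{S+S_A}$, i.e.\ the outer configuration. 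The Conway identity $S_AS_B+S_BS_C+S_CS_A=S^2$ that you lean on is also correct. The one thing separating this from a finished proof is that the comparison target is never written down: you do not state the barycentric coordinates that \cite{ETC642} records for $X_{642}$, and the circumcenter elimination and the final identification are only described as steps you \emph{would} carry out. Since the entire content of the lemma is that identification, that computation (or at least its output) must actually appear; once it does, the argument is complete and is of the same computational character as the Mathematica-assisted proofs used elsewhere in this paper.
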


\begin{theorem}
\label{thm-equiKiteX642}
Let $E$ be the Steiner point of equidiagonal kite $ABCD$ with $AB=BC$ and $AD=CD$.
Let $F$, $G$, $H$, and $I$ be the $X_{642}$ points of $\triangle EAB$, $\triangle EBC$, 
$\triangle ECD$, and $\triangle EDA$, respectively (Figure~\ref{fig:spKiteX642}).
Then $FGHI$ is a square homothetic to the Varignon parallelogram of $ABCD$ and
$$[ABCD]=8[FGHI].$$
The two squares have the same diagonal point.
\end{theorem}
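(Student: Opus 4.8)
The plan is to set up a coordinate system that makes the kite's symmetry manifest and then locate the four centers explicitly, using the geometric description of $X_{642}$ furnished by Lemma~\ref{lemma:x642}. First I would put the axis of symmetry $BD$ along the $y$-axis and the perpendicularly bisected diagonal $AC$ along the $x$-axis, writing $A=(-a,0)$, $C=(a,0)$, $B=(0,a+e)$, $D=(0,e-a)$. Here the equidiagonal condition $AC=BD$ has already been used to force $BD=2a$, so that a single parameter $e$ (with $|e|<a$) measures how far the kite is from being a rhombus. By Corollary~\ref{prop:spKite} the Steiner point is the midpoint of $BD$, namely $E=(0,e)$. With these coordinates the four side-midpoints are $(\pm a/2,\,(e\pm a)/2)$, so the Varignon parallelogram is an axis-aligned square of side $a$ centred at $(0,e/2)$; this already proves that the Varignon parallelogram is a square, and by Lemma~\ref{lemma:Varignon} it has area $\tfrac12[ABCD]=a^2$.

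Next I would compute the four centers, using symmetry to halve the work. The reflection $\sigma$ across $BD$ fixes $B$, $D$, $E$ and swaps $A\leftrightarrow C$, hence carries $\triangle EAB$ to $\triangle EBC$ and $\triangle ECD$ to $\triangle EDA$; since a Kimberling center commutes with isometries, $\sigma(F)=G$ and $\sigma(H)=I$, so it suffices to find $F$ and $H$. For each of these I would apply Lemma~\ref{lemma:x642}: erect squares inward on the three sides, where the centre of the square on a side is that side's midpoint displaced by half the side length along the inward unit normal (chosen so as to point toward the opposite vertex), and then take the circumcentre of the three resulting square-centres. Solving the two perpendicular-bisector equations in each case gives $F=(-a/4,\,e/2+a/4)$ and $H=(a/4,\,e/2-a/4)$, whence $G=(a/4,\,e/2+a/4)$ and $I=(-a/4,\,e/2-a/4)$ by reflection.

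These four points are precisely the corners of an axis-aligned square of side $a/2$ centred at $(0,e/2)$, i.e. the image of the Varignon square under the homothety of ratio $\tfrac12$ about the common centre $(0,e/2)$. This single observation settles three of the claims at once: $FGHI$ is a square, it is homothetic to the Varignon parallelogram, and the two squares share the diagonal point $(0,e/2)$. For the area relation, $[FGHI]=(a/2)^2=a^2/4$, while $[ABCD]=\tfrac12\,AC\cdot BD=2a^2$ by Lemma~\ref{lemma:orthodiag} (or directly by Lemma~\ref{lemma:equiorthodiag}), so that $[ABCD]=8[FGHI]$, as required.

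The main obstacle is the middle step: orienting the inward normals correctly for each radial triangle and then carrying out the two circumcentre computations without sign errors. The symmetry reduction to only $F$ and $H$ is what keeps this manageable. One should also note that the denominators $a+e$, $a-e$, and $a$ that appear when solving the perpendicular-bisector systems are nonzero exactly because $0<b,d<2a$ keeps the kite non-degenerate; the borderline value $e=0$ corresponds to the kite degenerating to a square (which is handled separately as an ancestor case), so no generality is lost.
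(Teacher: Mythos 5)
Your proposal is correct and follows essentially the same route as the paper's proof: Cartesian coordinates adapted to the kite's symmetry, Corollary~\ref{prop:spKite} to place $E$ at the midpoint of $BD$, Lemma~\ref{lemma:x642} to realize each center as the circumcenter of three square-centers, and explicit computation of $F$ and $H$ (with $G$ and $I$ obtained by reflection) followed by comparison with the Varignon square; your coordinates for $F,G,H,I$ agree with the paper's under the obvious change of frame. The only quibble is the closing remark: the value $e=0$ (the square case) is not where your denominators degenerate --- the genuine degeneracies are $e=\pm a$, where $B$ or $D$ collides with the intersection of the diagonals --- but this does not affect the argument.
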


\begin{figure}[h!t]
\centering
\includegraphics[width=0.5\linewidth]{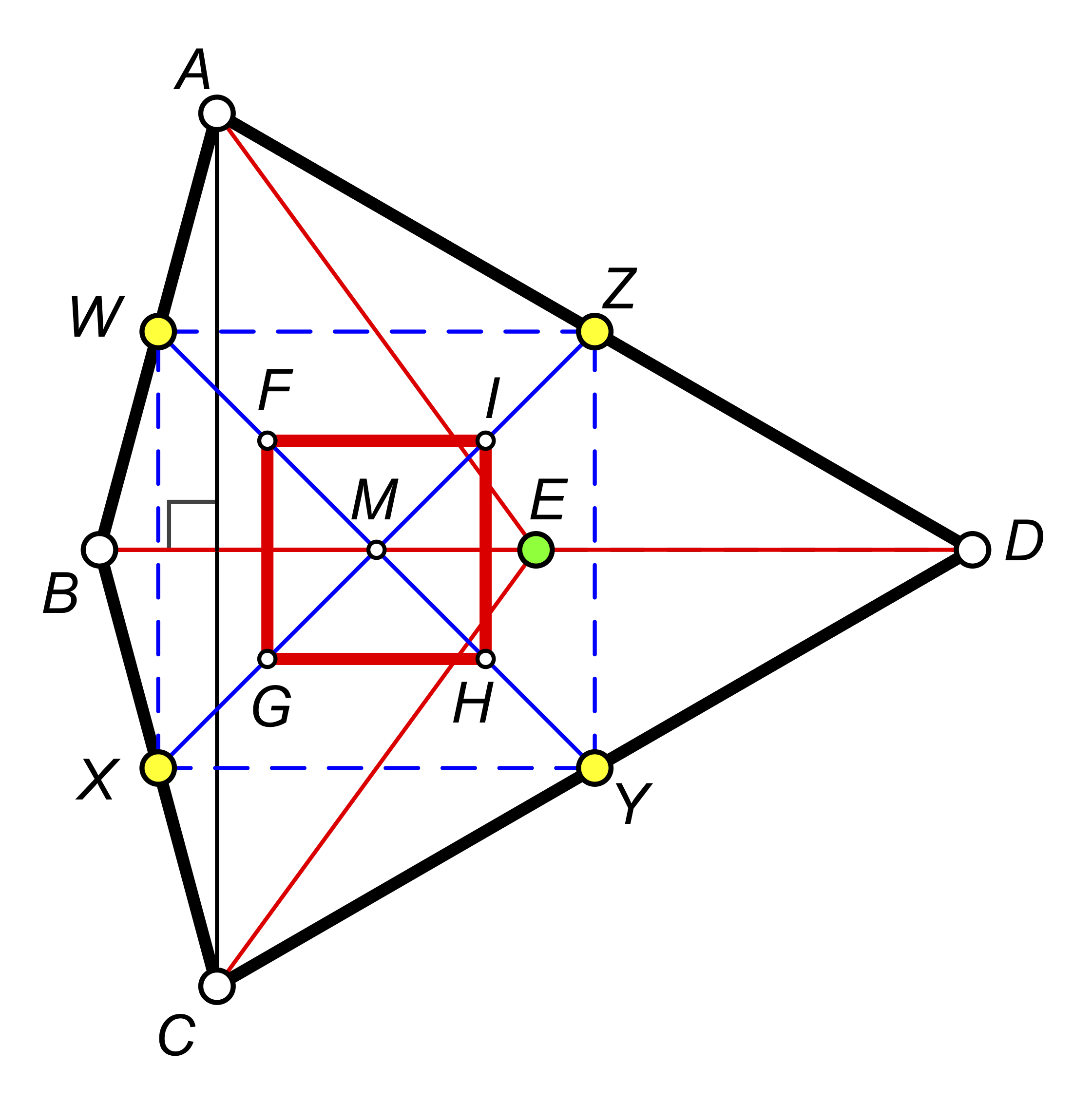}
\caption{Equidiagonal kite, $X_{642}$ points $\implies [ABCD]=8[FGHI]$}
\label{fig:spKiteX642}
\end{figure}


\begin{proof}
We use Cartesian coordinates with the origin at point $E$, with $x$-axis the line $BD$.
Without loss of generality, assume that $AC=BD=2$ and $AB<AD$.
Since $ABCD$ is a kite, $BD\perp AC$ and $BD$ bisects $AC$.
By Corollary~\ref{prop:spKite}, we have $B=(-1,0)$ and $D=(1,0)$.
Let $K$ be the point of intersection of $AC$ and $BD$.
Since $ABCD$ is equidiagonal, $AK=CK=1$ and we can let
$K=(-u,0)$, $A=(-u,1)$, and $C=(-u,-1)$, with $u>0$
as shown in Figure~{\ref{fig:spKiteCoords}}.

\begin{figure}[h!t]
\centering
\includegraphics[width=0.65\linewidth]{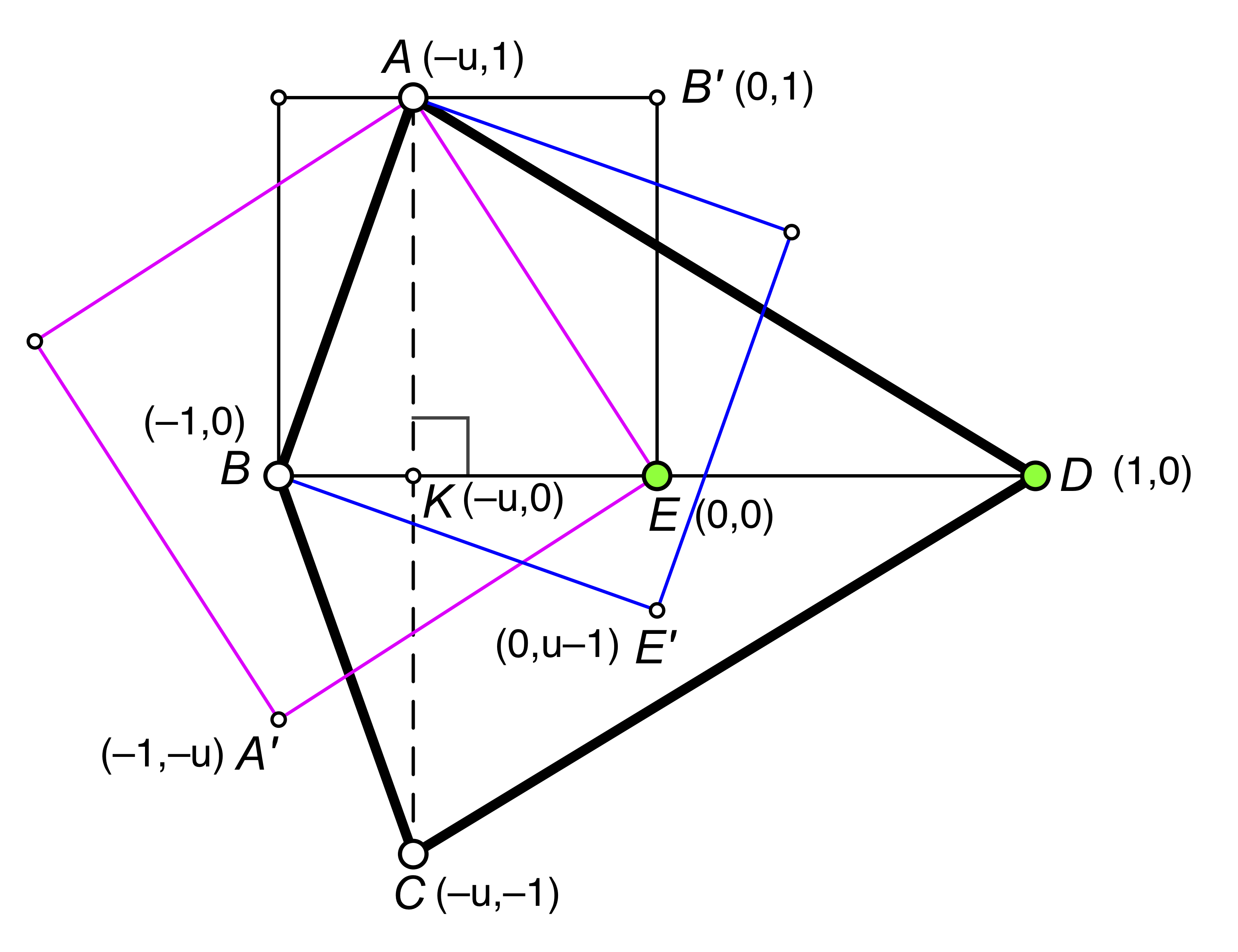}
\caption{Coordinate set-up for an equidiagonal kite}
\label{fig:spKiteCoords}
\end{figure}


In order to get the coordinates of point $F$, we use Lemma~\ref{lemma:x642}.
See Figure~\ref{fig:spKiteCenters}.

\begin{figure}[h!t]
\centering
\includegraphics[width=0.5\linewidth]{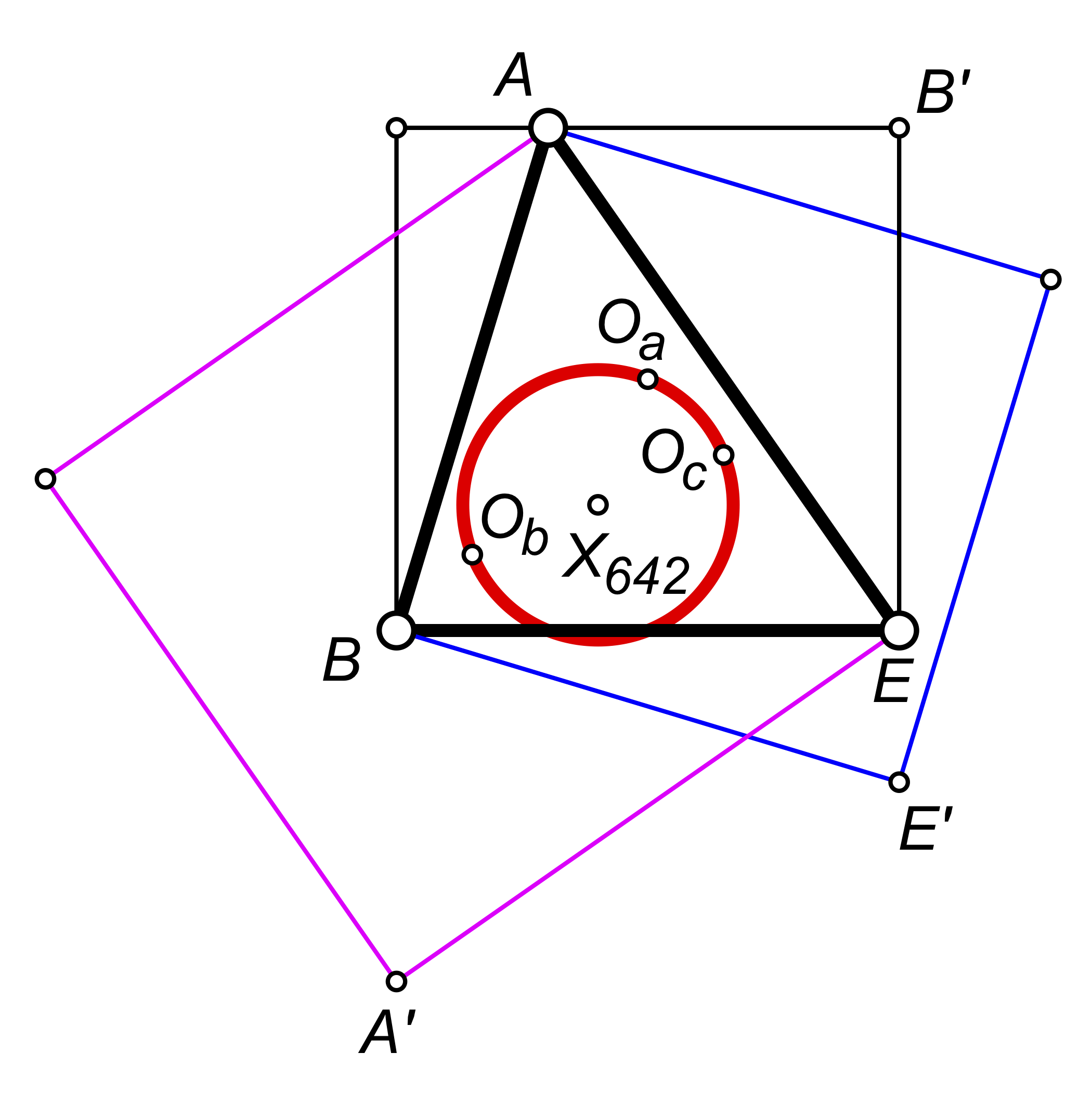}
\caption{}
\label{fig:spKiteCenters}
\end{figure}

The center of the square constructed inwards on the side $BE$ is the midpoint of the segment $BB'$ where $B'=(0,1)$.
Therefore, $O_a=\left(-\frac{1}{2},\frac{1}{2}\right)$.

The center of the square constructed inwards on the side $AE$ is the midpoint of the segment $AA'$ where $A'=(-1,-u)$ since $\triangle EBA'\cong\triangle EB'A$.
Therefore, $O_b=\left(\frac{-u-1}{2},\frac{-u+1}{2}\right)$.

The center of the square constructed inwards on the side $AB$ is the midpoint of the segment $AE'$ where $E'=(0,u-1)$ since $\triangle BEE'\cong\triangle ABK$, so $EE'=BK$.
Therefore, $O_c=\left(-\frac{u}{2},\frac{u}{2}\right)$.

The point $F$ is the circumcenter of $\triangle O_aO_bO_e$.
It coincides with the point of intersection of the perpendicular bisectors of $O_aO_b$ and $O_aO_c$.
The midpoint of $O_aO_b$ is easily found, as well as the slope of $O_aO_b$.
The slope of the perpendicular bisector of $O_aO_b$ is the negative reciprocal of the slope of $O_aO_b$.
Using the Point Slope Formula, we find that the equation of the perpendicular
bisector of $O_aO_b$ is $2x+2y+u=0$. Similarly, the equation of the perpendicular
bisector of $O_aO_c$ is $2x-2y+u=0$.

Solving these two equations gives us the coordinates for point $F$.
The point $G$ is the reflection of $F$ with respect to $BD$.
The coordinates of $F$ and $G$ are therefore
\begin{equation*}
  F=\left(\frac{-2u-1}{4},\frac{1}{4}\right),\qquad  G=\left(\frac{-2u-1}{4},-\frac{1}{4}\right).
\end{equation*}

In the same manner, we find that the coordinates of $H$ and $I$ are
\begin{equation*}
  H=\left(\frac{-2u+1}{4},-\frac{1}{4}\right),\qquad  I=\left(\frac{-2u+1}{4},\frac{1}{4}\right).
\end{equation*}

Therefore, $FG=FI=\frac{1}{2}$, so $FGHI$ is a square (notice that $FGHI$ is a parallelogram by construction).
The center of $FGHI$ is the midpoint $M=\left(-\frac{u}{2},0\right)$ of $FH$.
The coordinates for the midpoints of the sides $AB$, $BC$, $CD$, and $DA$ are
$$W=\left(\frac{-u-1}{2},\frac{1}{2}\right),\qquad  X=\left(\frac{-u-1}{2},-\frac{1}{2}\right),$$
$$Y=\left(\frac{-u+1}{2},-\frac{1}{2}\right),\qquad  Z=\left(\frac{-u+1}{2},\frac{1}{2}\right).$$
The center of $WXYZ$ is $\left(-\frac{u}{2},0\right)$.
The square $FGHI$ and the Varignon parallelogram $WXYZ$ of $ABCD$ have the same center and parallel sides, so they are homothetic. The ratio of similarity is $k=\frac{1}{2}$.

Finally, since $[ABCD]=2$ and $[FGHI]=\frac{1}{4}$, we have $[ABCD]=8[FGHI]$.
\end{proof}

\relbox{Relationship $[ABCD]=2[FGHI]$}

\begin{lemma}
\label{lemma:spSquareVecten}
Let $P$ be any point on side $AD$ of square $ABCD$.
Then the inner Vecten point ($X_{486}$ point) of $\triangle PBC$
coincides with the diagonal point of the square (Figure~\ref{fig:spSquareVecten}).
\end{lemma}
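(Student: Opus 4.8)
The plan is to work in Cartesian coordinates, normalizing the square as $A=(0,0)$, $B=(1,0)$, $C=(1,1)$, $D=(0,1)$, so that the diagonal point is $O=(\tfrac12,\tfrac12)$ and a general point on side $AD$ is $P=(0,t)$ with $0\le t\le 1$. Recall that the inner Vecten point $X_{486}$ of a triangle is the perspector of the triangle with the triangle of centers of the squares erected \emph{inward} (toward the interior) on its sides; equivalently, it is the common point of the three cevians joining each vertex to the center of the inward square on the opposite side. The strategy is therefore to exhibit these three cevians for $\triangle PBC$ and to show that all three pass through $O$.

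The key observation is that the inward square on side $BC$ of $\triangle PBC$ is nothing but the original square $ABCD$: side $BC$ is a side of $ABCD$, and since $P$ lies on the opposite side $AD$, the interior of $\triangle PBC$ lies on the same side of line $BC$ as the square, so the inward square built on $BC$ coincides with $ABCD$ and has center $O$. Consequently the cevian from $P$ to this center is just the line $PO$ and passes through $O$ trivially. It then remains only to handle the cevians from $B$ and $C$. First I would compute the center of the inward square on $PB$ and the center of the inward square on $PC$ by translating the midpoint of each side by half its length along the correctly oriented inward unit normal. I expect to find $O_{PB}=\bigl(\tfrac{1+t}{2},\tfrac{1+t}{2}\bigr)$ and $O_{PC}=\bigl(\tfrac{2-t}{2},\tfrac{t}{2}\bigr)$; the first lies on the diagonal line $AC$ (the line $y=x$) and the second lies on the diagonal line $BD$ (the line $x+y=1$). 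Hence the cevian from $C$ to $O_{PB}$ is exactly the diagonal $CA$, and the cevian from $B$ to $O_{PC}$ is exactly the diagonal $BD$. Since the diagonals of the square meet at $O$, all three cevians concur at $O$, so the inner Vecten point of $\triangle PBC$ is the diagonal point of $ABCD$, as claimed.

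The only delicate point is fixing the \emph{inward} orientation consistently for each of the three sides, i.e.\ choosing, for each side, the normal direction pointing toward the interior of $\triangle PBC$ (equivalently toward its centroid). I would pin this down once and for all by taking the dot product of each candidate normal with the vector from the side's midpoint to the centroid and keeping the positive one; after that, the location of each square center is a routine computation. No genuinely hard step arises: the whole argument reduces to the fortunate fact that the inward square on $BC$ is the square itself and that the other two defining cevians turn out to be the diagonals of $ABCD$.
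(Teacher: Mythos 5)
Your proposal is correct, and it shares the key structural idea of the paper's proof: both use the characterization of $X_{486}$ as the common point of the cevians joining each vertex to the center of the inward square on the opposite side, and both exploit the fact that the inward square on $BC$ of $\triangle PBC$ is the original square $ABCD$ itself, so the cevian from $P$ passes through the diagonal point for free. Where you diverge is in how the remaining cevians are handled. The paper argues synthetically: it erects the square $PBRS$ inwardly on $BP$, drops a perpendicular from $S$ to $AD$, uses the congruence $\triangle BAP\cong\triangle PTS$ to show $DS\parallel AC$, and then a midline argument in $\triangle BDS$ to conclude that the cevian $CO_c$ lies along the diagonal $AC$; it needs only two cevians. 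You instead compute the inward-square centers in Cartesian coordinates and observe that they land on the diagonals; your values $O_{PB}=\bigl(\tfrac{1+t}{2},\tfrac{1+t}{2}\bigr)$ on $y=x$ and $O_{PC}=\bigl(\tfrac{2-t}{2},\tfrac{t}{2}\bigr)$ on $x+y=1$ check out, so the cevians from $C$ and $B$ are the diagonals $CA$ and $BD$ and all three cevians concur at $O$. The coordinate route is shorter and mechanical and identifies all three cevians; the synthetic route avoids coordinates, in keeping with the paper's stated preference for geometric proofs when possible. One cosmetic remark: when $t=0$ or $t=1$ one of the centers $O_{PB}$, $O_{PC}$ coincides with $C$ or $B$ and that cevian degenerates, but two of the three cevians always survive and already pin the Vecten point at $O$, so nothing is lost.
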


\begin{figure}[h!t]
\centering
\includegraphics[width=0.3\linewidth]{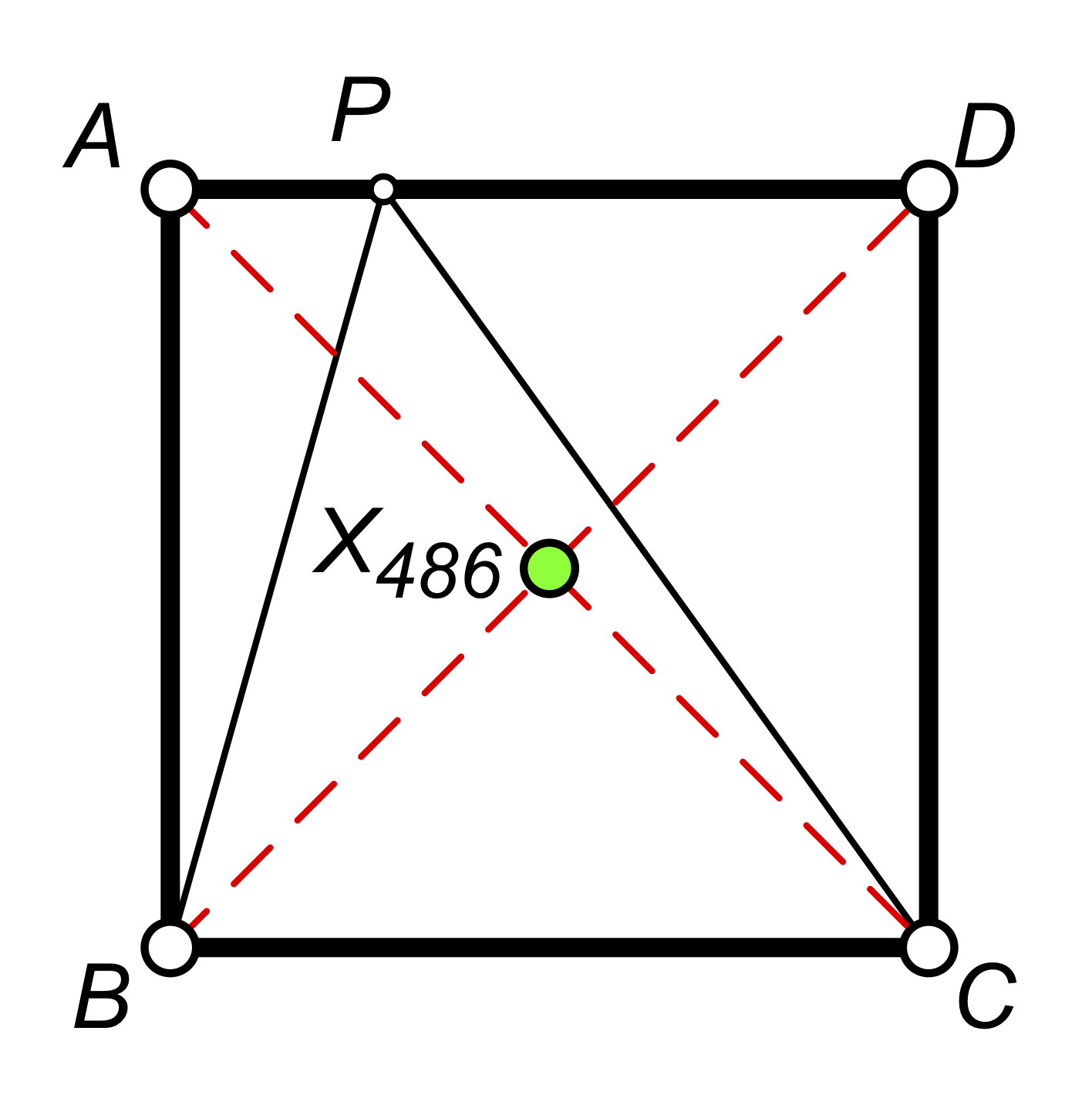}
\caption{Vecten point of $\triangle PBC$ in square $ABCD$}
\label{fig:spSquareVecten}
\end{figure}

\begin{proof}
From \cite{ETC486} we know that the inner Vecten point is the intersection of $PO_a$ and $CO_c$
where $O_a$ is the center of the square erected internally on side $BC$ of $\triangle PBC$ and
$O_c$ is the center of the square erected internally on side $BP$ of $\triangle PBC$
(Figure~\ref{fig:spSquareVectenProof}).

\begin{figure}[h!t]
\centering
\includegraphics[width=0.3\linewidth]{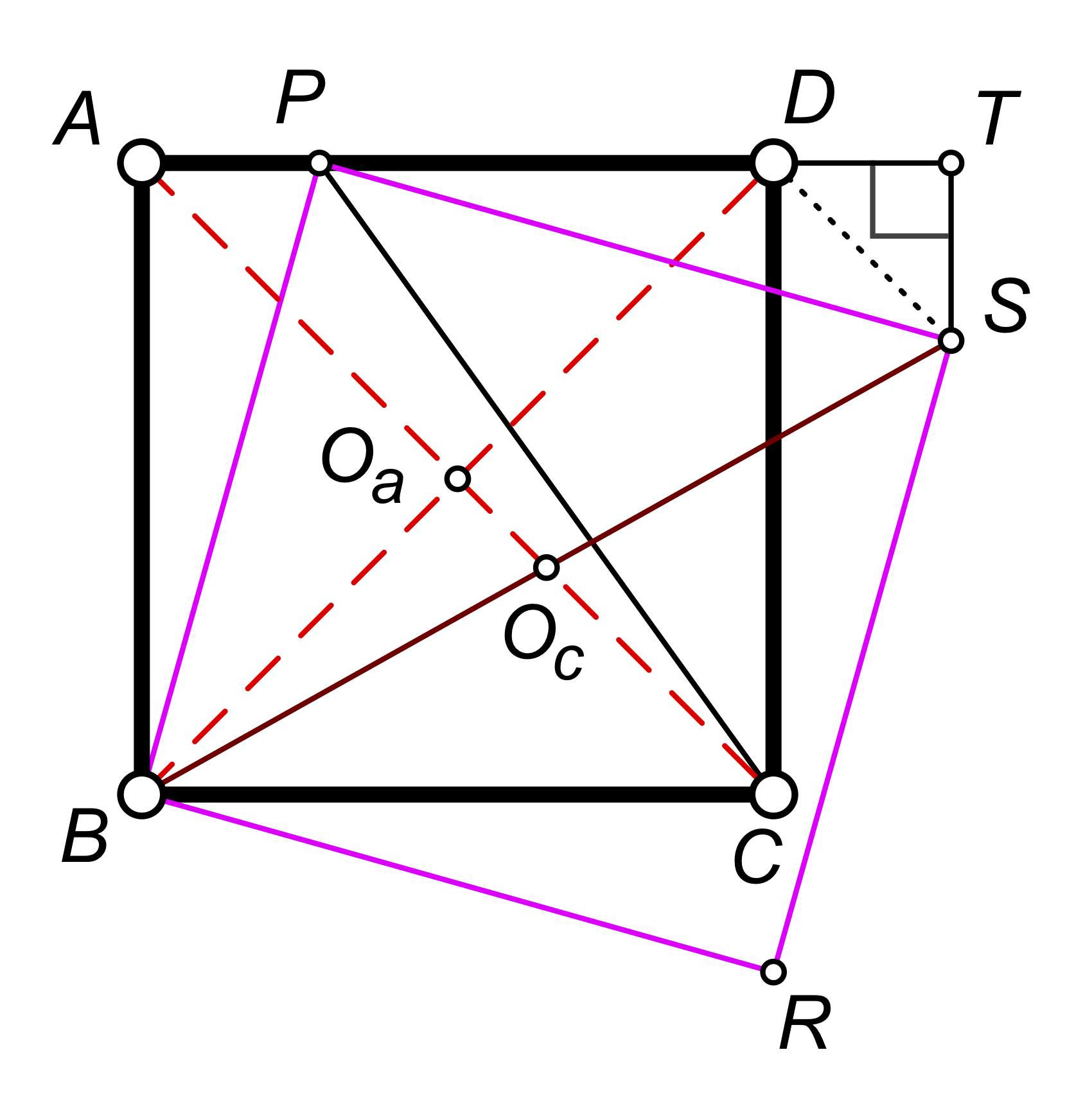}
\caption{Squares erected on sides $PB$ and $BC$}
\label{fig:spSquareVectenProof}
\end{figure}

The line $PO_a$ clearly passes through $O_a$, so we need only show that $CO_c$ also passes through $O_a$.
Let $PBRS$ be the square erected internally on side $BP$ of $\triangle PBC$.
Drop a perpendicular from $S$ to $AD$ meeting it at $T$.
Right triangles $BAP$ and $PTS$ have equal hypotenuses.
Angles $\angle PBA$ and $\angle SPT$ are equal because they are both complementary to $\angle APB$.
Thus, $\triangle BAP\cong\triangle PTS$. Hence $PT=AB$ and $TS=AP$.
Since $AB=AD$, this implies that $DT=PT-PD=AB-PD=AD-PD=AP$.
Because $DT=TS$, $\angle SDT=45\degrees$.
But $\angle CAT=45\degrees$, so $DS\parallel AC$.
In $\triangle BDS$, $O_a$ is the midpoint of $BD$ and $O_c$ is the midpoint of $BS$,
so $O_aO_c\parallel DS$.
Thus, $O_aO_c$ coincides with $AC$ and hence $CO_c$ passes through $O_a$.
\end{proof}


\begin{theorem}
\label{thm-equiKiteX486}
Let $E$ be the Steiner point of equidiagonal kite $ABCD$ with $AB=BC$ and $AD=CD$.
Let $F$, $G$, $H$, and $I$ be the inner Vecten points ($X_{486}$ points) of $\triangle EAB$, $\triangle EBC$, 
$\triangle ECD$,  and $\triangle EDA$, respectively (Figure~\ref{fig:spKiteX486}).
Then $FGHI$ is a square congruent to the Varignon parallelogram of $ABCD$ and
$$[ABCD]=2[FGHI].$$
\end{theorem}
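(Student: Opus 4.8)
The plan is to reuse the coordinate system established in the proof of Theorem~\ref{thm-equiKiteX642}: place the Steiner point at the origin $E=(0,0)$ with the axis of symmetry $BD$ along the $x$-axis, so that $B=(-1,0)$, $D=(1,0)$, $A=(-u,1)$, and $C=(-u,-1)$ for some $u>0$ (by Corollary~\ref{prop:spKite}, $E$ really is the midpoint of $BD$). Convexity of the kite forces the diagonal point $K=(-u,0)$ to lie strictly between $B$ and $D$, so in fact $0<u<1$; I will use this below. From that same proof we already know that the Varignon parallelogram $WXYZ$ of $ABCD$ is the unit square centered at $(-u/2,0)$ with sides parallel to the axes, and that $[ABCD]=2$ (alternatively by Lemma~\ref{lemma:equiorthodiag}, since both diagonals have length $2$). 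Hence it suffices to prove that $FGHI$ is a unit square, for then it is automatically congruent to $WXYZ$ and $[FGHI]=1=\tfrac12[ABCD]$.

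The key observation is that each radial triangle has a side of length $1$ issuing from $E$ — namely $EB$ for $\triangle EAB$ and $\triangle EBC$, and $ED$ for $\triangle ECD$ and $\triangle EDA$ — while the vertex opposite that side lies at distance exactly $1$ from its supporting line (since $A$ and $C$ lie on $y=\pm1$). This is precisely the configuration of Lemma~\ref{lemma:spSquareVecten}: the inward unit square erected on that base has the opposite vertex on its far side, so the inner Vecten point of the triangle coincides with the center of that square. For $\triangle EAB$ the base is $EB$, and because $0<u<1$ the apex $A=(-u,1)$ actually lies on the segment forming the far side of the inward unit square on $EB$; Lemma~\ref{lemma:spSquareVecten} then gives $F=(-\tfrac12,\tfrac12)$. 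Reflecting across the axis $BD$ (an isometry fixing $E$, $B$, $D$ and interchanging $A\leftrightarrow C$, hence sending $\triangle EAB$ to $\triangle EBC$ and preserving the inner-Vecten construction) yields $G=(-\tfrac12,-\tfrac12)$.

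The one obstacle is that for $\triangle ECD$ and $\triangle EDA$ the opposite vertex lies on the far line $y=\mp1$ but \emph{outside} the relevant segment (its $x$-coordinate is $-u<0$), so Lemma~\ref{lemma:spSquareVecten} does not apply verbatim. I will handle these two by the underlying perspector computation directly: the inner Vecten point is the common point of the three cevians joining each vertex to the center of the inward square on the opposite side. For $\triangle EDA$ one computes the three inward square centers and checks that all three of these cevians pass through $(\tfrac12,\tfrac12)$, so $I=(\tfrac12,\tfrac12)$; reflecting across $BD$ gives $H=(\tfrac12,-\tfrac12)$. (Equivalently, one observes that the conclusion of Lemma~\ref{lemma:spSquareVecten} is an algebraic identity in the position of $P$ along the far line and therefore persists when $P$ leaves the segment.) Assembling the four points, $FGHI$ has vertices $(-\tfrac12,\tfrac12)$, $(-\tfrac12,-\tfrac12)$, $(\tfrac12,-\tfrac12)$, $(\tfrac12,\tfrac12)$: a unit square centered at $E$. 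Since $WXYZ$ is also a unit square with sides parallel to the axes, the two are congruent, and $[ABCD]=2=2[FGHI]$, as claimed. The crux to nail down is this second case, where the apex falls outside the auxiliary square's far edge; the perspector verification there is the heart of the argument.
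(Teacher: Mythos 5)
Your proposal is correct and rests on the same two pillars as the paper's proof: Corollary~\ref{prop:spKite} to place $E$ at the midpoint of $BD$, and Lemma~\ref{lemma:spSquareVecten} to identify the inner Vecten points of the radial triangles with the centers of the four unit squares erected on the half-diagonals $BE$ and $ED$. The paper does this synthetically, erecting perpendiculars to $BD$ at $B$, $E$, $D$ and to $AC$ at $A$, $C$ to form the squares $PBET$, $TEDS$, $BQUE$, $EURD$, and then invokes Lemma~\ref{lemma:spSquareVecten} for all four triangles at once; you do it in the Cartesian frame of Theorem~\ref{thm-equiKiteX642}. The substantive difference is that you spotted a point the paper passes over in silence: for $\triangle ECD$ and $\triangle EDA$ the apex ($C$, resp.\ $A$) lies on the \emph{line} carrying the far side of the corresponding square but outside the segment (its $x$-coordinate is $-u<0$ while the segment runs from $0$ to $1$), so Lemma~\ref{lemma:spSquareVecten} as stated does not apply verbatim to those two triangles. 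Your direct perspector check (the three cevians to the inward-square centers do concur at $(\tfrac12,\tfrac12)$ for $\triangle EDA$, hence at $(\tfrac12,-\tfrac12)$ for $\triangle ECD$ by reflection), or equivalently the remark that the concurrence is an algebraic identity in the apex's position along that line, closes this gap; the paper's proof would benefit from the same remark. Your computed vertices $(\pm\tfrac12,\pm\tfrac12)$, the congruence with the Varignon unit square, and the area count $[ABCD]=2=2[FGHI]$ all agree with the paper's conclusion.
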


\begin{figure}[h!t]
\centering
\includegraphics[width=0.3\linewidth]{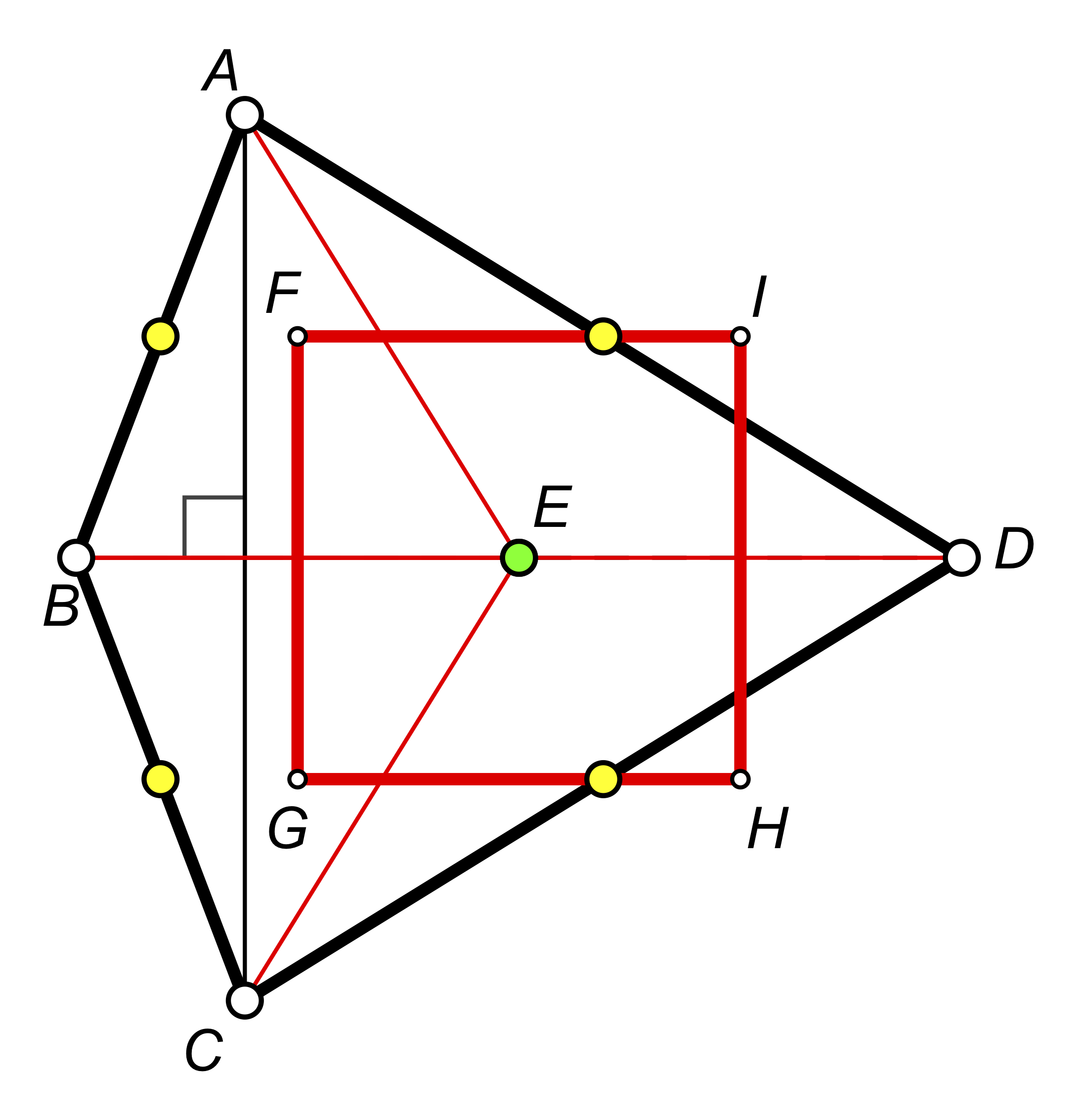}
\caption{Equidiagonal kite, X486 points $\implies [ABCD]=2[FGHI]$}
\label{fig:spKiteX486}
\end{figure}

\begin{proof}
Since $ABCD$ is a kite, $BD\perp AC$ and $BD$ and $BD$ bisects $AD$.
By Corollary~\ref{prop:spKite}, $E$ is the midpoint of $BD$.
Let $K$ be the point of intersection of $AC$ and $BD$.
Erect perpendiculars to $BD$ at $B$, $E$ and $D$.
Erect perpendiculars to $AC$ at $A$, and $C$.
The points of intersection of these perpendiculars are $P$, $Q$, $R$, $S$, $T$, and $U$
as shown in Figure~\ref{fig:spKiteX486Proof}.

\begin{figure}[h!t]
\centering
\includegraphics[width=0.3\linewidth]{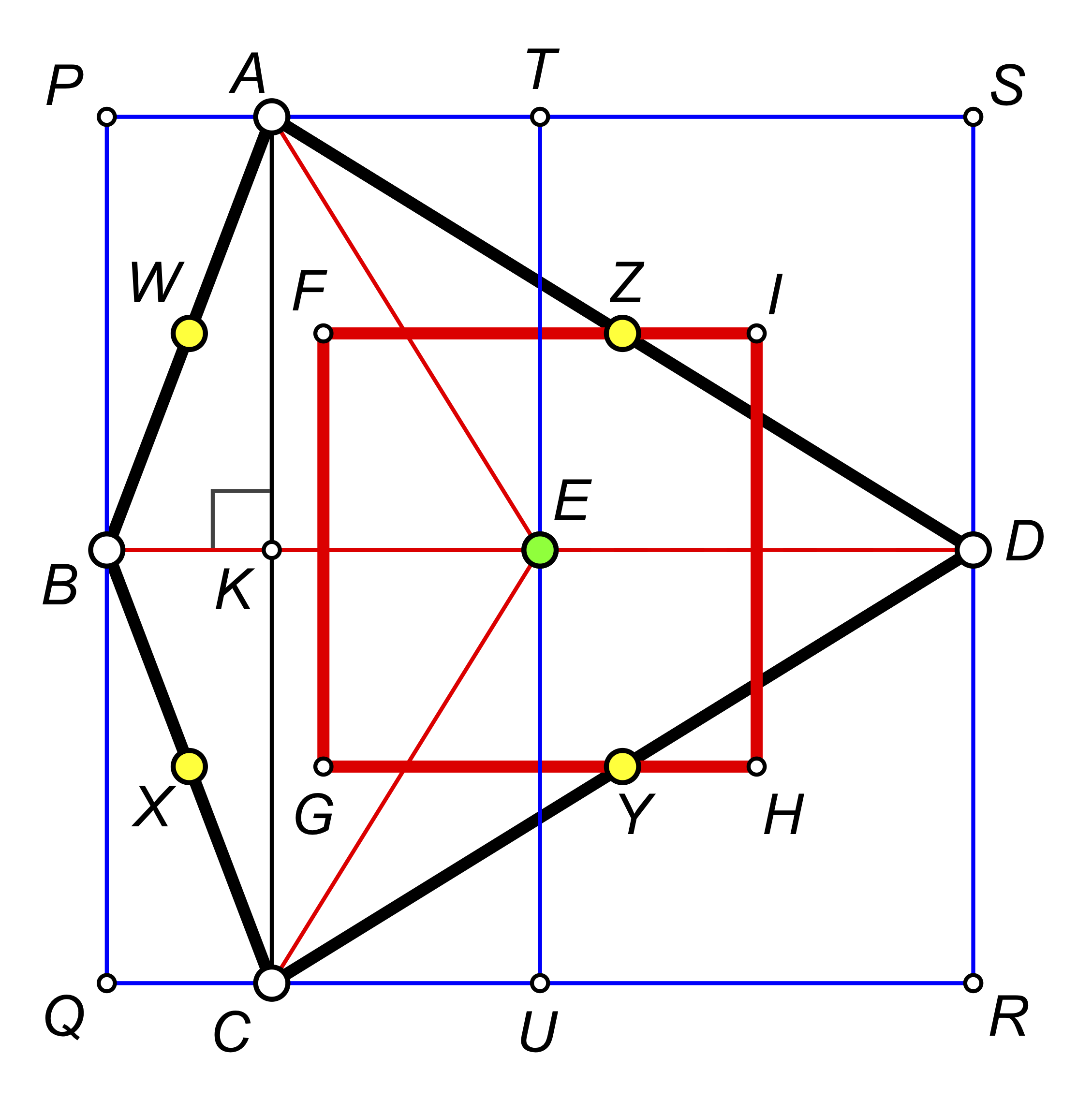}
\caption{}
\label{fig:spKiteX486Proof}
\end{figure}

Since $ABCD$ is equidiagonal, $AC=BD$ or $AK=KC$.
This implies that quadrilaterals $PBET$, $TEDS$, $BQUE$, and $EURD$ are all squares.

By Lemma~\ref{lemma:spSquareVecten}, $F$, $G$, $H$, and $I$ are the centers of these squares,
from which it follows that $FGHI$ is a square congruent to each of these squares.
Square $FGHI$ has center at $E$ and side of length equal to $BE$.
If $W$, $X$, $Y$, and $Z$ are the midpoints of the sides of $ABCD$, then $WXYZ$ is the Varignon parallelogram
of $ABCD$. Since $WZ=\frac12BD=BE$, square $FGHI\cong WXYZ$.

Finally, since $[ABCD]=2[WXYZ]$ and $[FGHI]=[WXYZ]$, we have $[ABCD]=2[FGHI]$.
\end{proof}

\void{
\textbf{Note.} The lengths of the segments in the equidiagonal kite $ABCD$ are related as follows.
If we let $BC=a$, $AC=b$, and $AB=c$, then $c=a$. If we let $BP=x$, $AE=y$, and $AD=z$, then
$$
\begin{aligned}
x&=\sqrt{a^2-b^2/4}\\
AP=CP=BE=DE&=b/2\\
y&=\sqrt{b^2/4+(b/2-x)^2}\\
z&=\sqrt{b^2/4+(b-x)^2}\\
PD&=b-x\\
[AEB]=[BEC]=[CED]=[DEA]&=\frac12(AP\times BE)=\frac{b^2}{8}\\
[ABCD]&=b^2/2
\end{aligned}
$$
as shown in Figure~\ref{fig:spKiteLengths}.

\begin{figure}[h!t]
\centering
\includegraphics[width=0.4\linewidth]{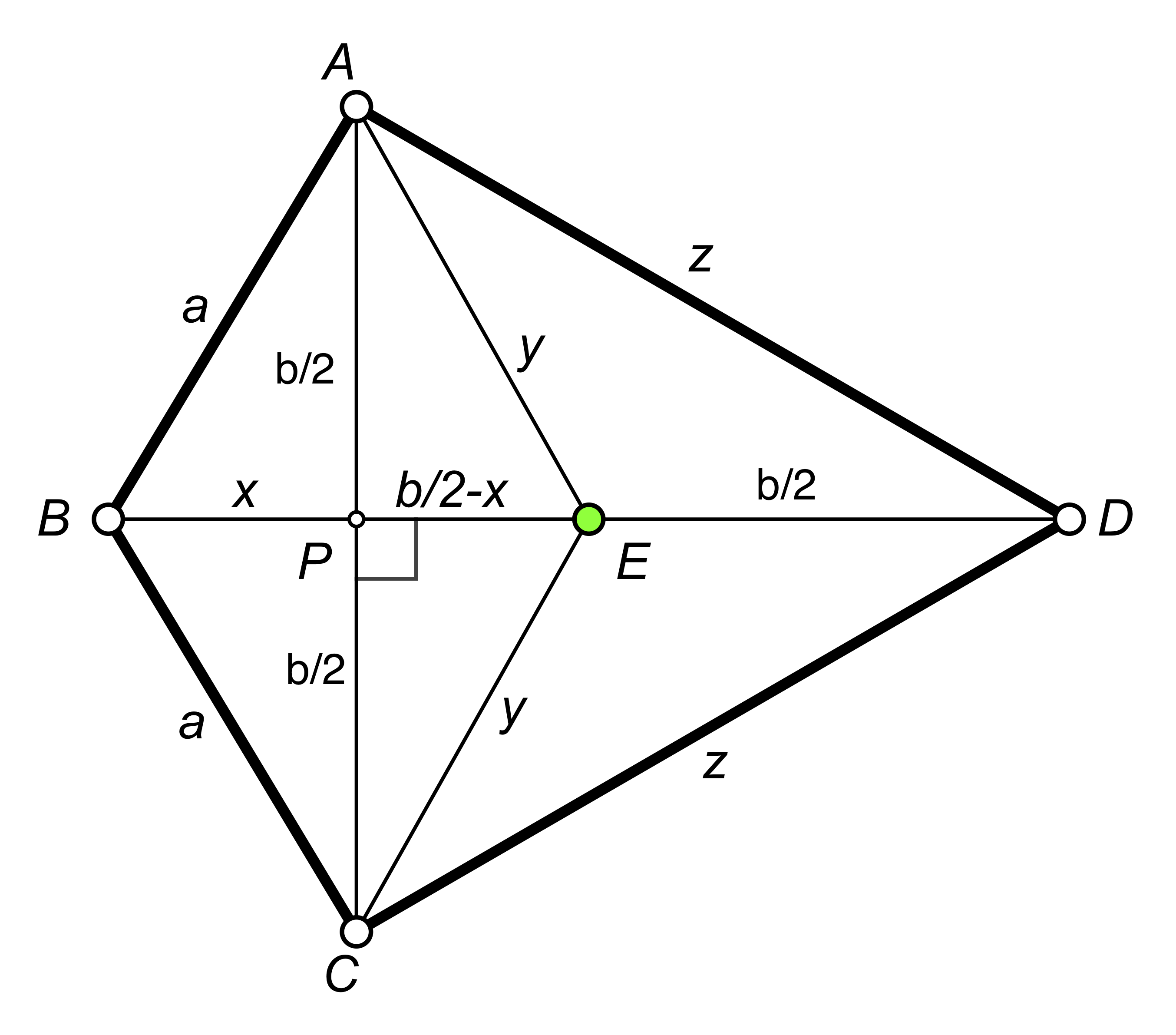}
\caption{Lengths in Equidiagonal kite with Steiner point $E$}
\label{fig:spKiteLengths}
\end{figure}
}

\newpage


\section{Centroid}

In this section, we examine central quadrilaterals formed from the centroid
of the reference quadrilateral.

A \textit{bimedian} of a quadrilateral is the line segment
joining the midpoints of two opposite sides (Figure~\ref{fig:gpBimedian}).

\begin{figure}[h!t]
\centering
\includegraphics[width=0.4\linewidth]{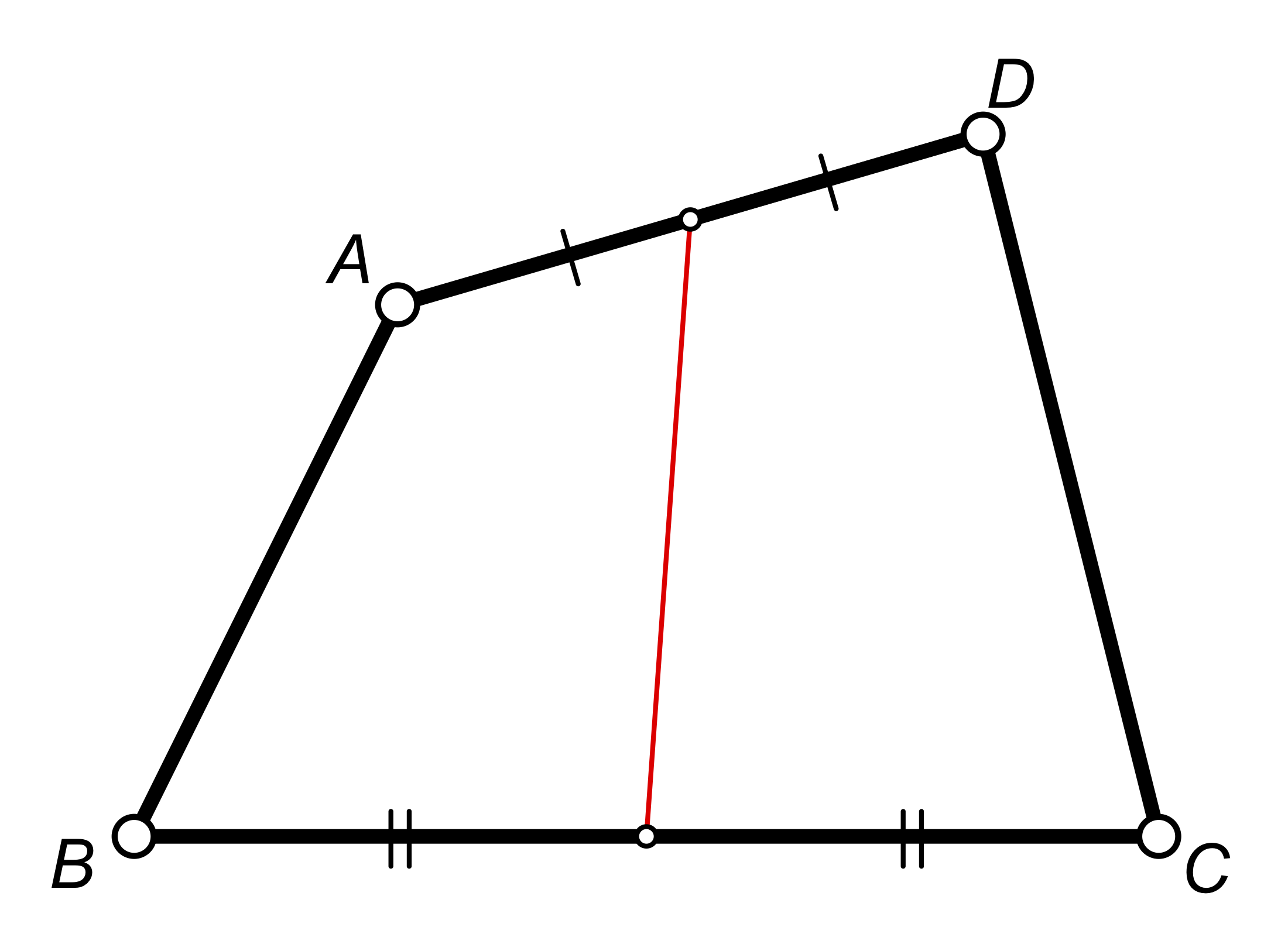}
\caption{Bimedian of a quadrilateral}
\label{fig:gpBimedian}
\end{figure}

The \textit{centroid} (or vertex centroid) of a quadrilateral is the point of intersection
of the bimedians (Figure~\ref{fig:gpCentroid}). The centroid bisects each bimedian.

\begin{figure}[h!t]
\centering
\includegraphics[width=0.4\linewidth]{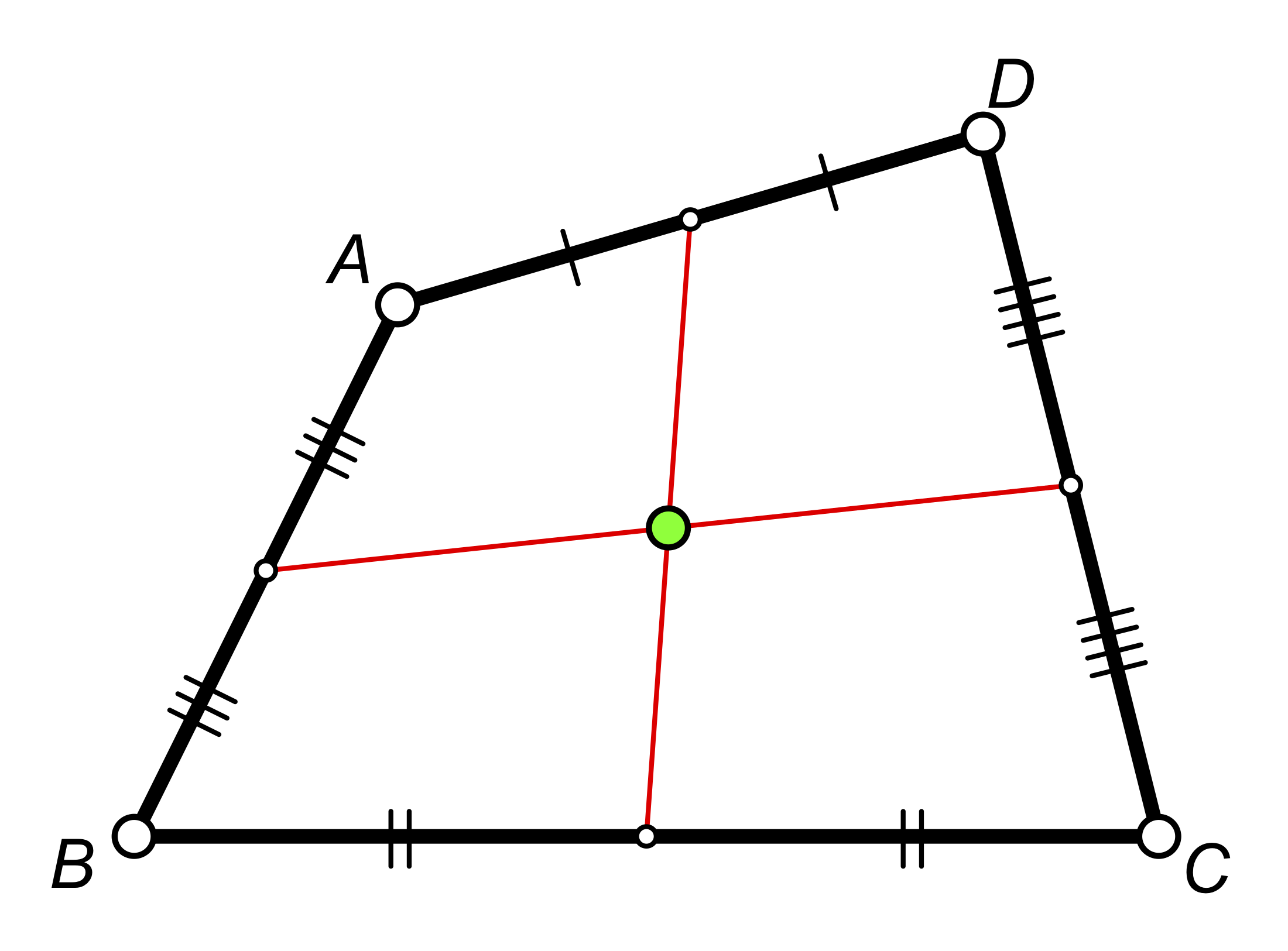}
\caption{Centroid of a quadrilateral}
\label{fig:gpCentroid}
\end{figure}
 
Our computer study examined the central quadrilaterals formed by the centroid.
Since it is easy to prove that the centroid coincides with the diagonal point of a parallelogram,
we omit results for parallelograms.
We checked the central quadrilateral for all the first 1000 triangle centers (omitting points
at infinity) and all reference quadrilateral shapes listed in Table~\ref{table:quadrilaterals}.

The results found are listed in the following table.
\medskip

\begin{table}[ht!]
\caption{}
\begin{center}
\begin{tabular}{|l|l|p{2.2in}|}
\hline
\multicolumn{3}{|c|}{\textbf{\color{blue}\large \strut Central Quadrilaterals formed by the Centroid}}\\ \hline
\textbf{Quadrilateral Type}&\textbf{Relationship}&\textbf{centers}\\ \hline
\ru bicentric trapezoid&$[ABCD]=8[FGHI]$&402\\
\cline{2-3}
\ru &$[ABCD]=2[FGHI]$&122, 123, 127, 339\\
\cline{2-3}
\ru &$[ABCD]=\frac12[FGHI]$&74, 477\\
\hline
\end{tabular}
\end{center}
\end{table}

\begin{lemma}
\label{lemma:cyclicTrapIsosceles}
A cyclic trapezoid is isosceles.
\end{lemma}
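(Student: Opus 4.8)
The plan is to work entirely with classical circle geometry, using the angle characterizations of a trapezoid and of a cyclic quadrilateral rather than barycentric machinery. I would label the cyclic trapezoid $ABCD$ with its vertices in order around the circumcircle and, without loss of generality, take $AB \parallel CD$. The goal is then to show that the two legs satisfy $BC = DA$, which is exactly the condition that the trapezoid be isosceles.

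First I would record the two angle relations supplied by the hypotheses. Since $ABCD$ is cyclic, opposite angles are supplementary: $\angle A + \angle C = 180^\circ$ and $\angle B + \angle D = 180^\circ$. Since $AB \parallel CD$, the angles $\angle A$ and $\angle D$ are co-interior along the transversal $AD$, so $\angle A + \angle D = 180^\circ$; likewise $\angle B + \angle C = 180^\circ$ along $BC$. Comparing $\angle A + \angle C = 180^\circ$ with $\angle A + \angle D = 180^\circ$ gives $\angle C = \angle D$, and feeding this back into the two transversal relations yields $\angle A = \angle B$. Thus the base angles of the trapezoid are equal.

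The cleanest way to finish is an arc argument: because $AB$ and $CD$ are parallel chords of the circumcircle, the arcs they cut off between them are congruent, i.e. arc $BC$ equals arc $DA$. Equal arcs subtend equal chords, so $BC = DA$ and the trapezoid is isosceles. Alternatively, starting from the equal base angles just derived, one can drop perpendiculars from $C$ and $D$ onto line $AB$ and note that the resulting right triangles are congruent (equal heights and equal acute angles), again forcing $BC = DA$.

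I expect no genuine obstacle here; the only points that deserve care are (i) fixing the convex cyclic ordering so that $BC$ and $DA$ really are the legs and the phrase ``arcs between the parallel chords'' is unambiguous, and (ii) the degenerate case in which both pairs of opposite sides happen to be parallel, where $ABCD$ is a rectangle and is isosceles trivially. If an algebraic check is preferred to match the table, one can instead combine the conditions $A + C = B + D$ (cyclic) and $A + B = C + D$ (trapezoid) with $A + B + C + D = 360^\circ$ to conclude directly that the relevant opposite angles are equal, and then invoke the chord--arc correspondence as above.
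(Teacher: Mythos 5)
Your proposal is correct and finishes the same way the paper does: both arguments reduce the claim to the fact that the parallel sides cut off congruent arcs of the circumcircle, and equal arcs subtend equal chords (the paper just makes the congruent-arcs step explicit by taking a diagonal as transversal and invoking the inscribed angle theorem, which is the standard proof of the parallel-chords fact you cite). The preliminary angle chase and the perpendicular-dropping alternative are fine but not needed for the arc argument.
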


\begin{figure}[h!t]
\centering
\includegraphics[width=0.4\linewidth]{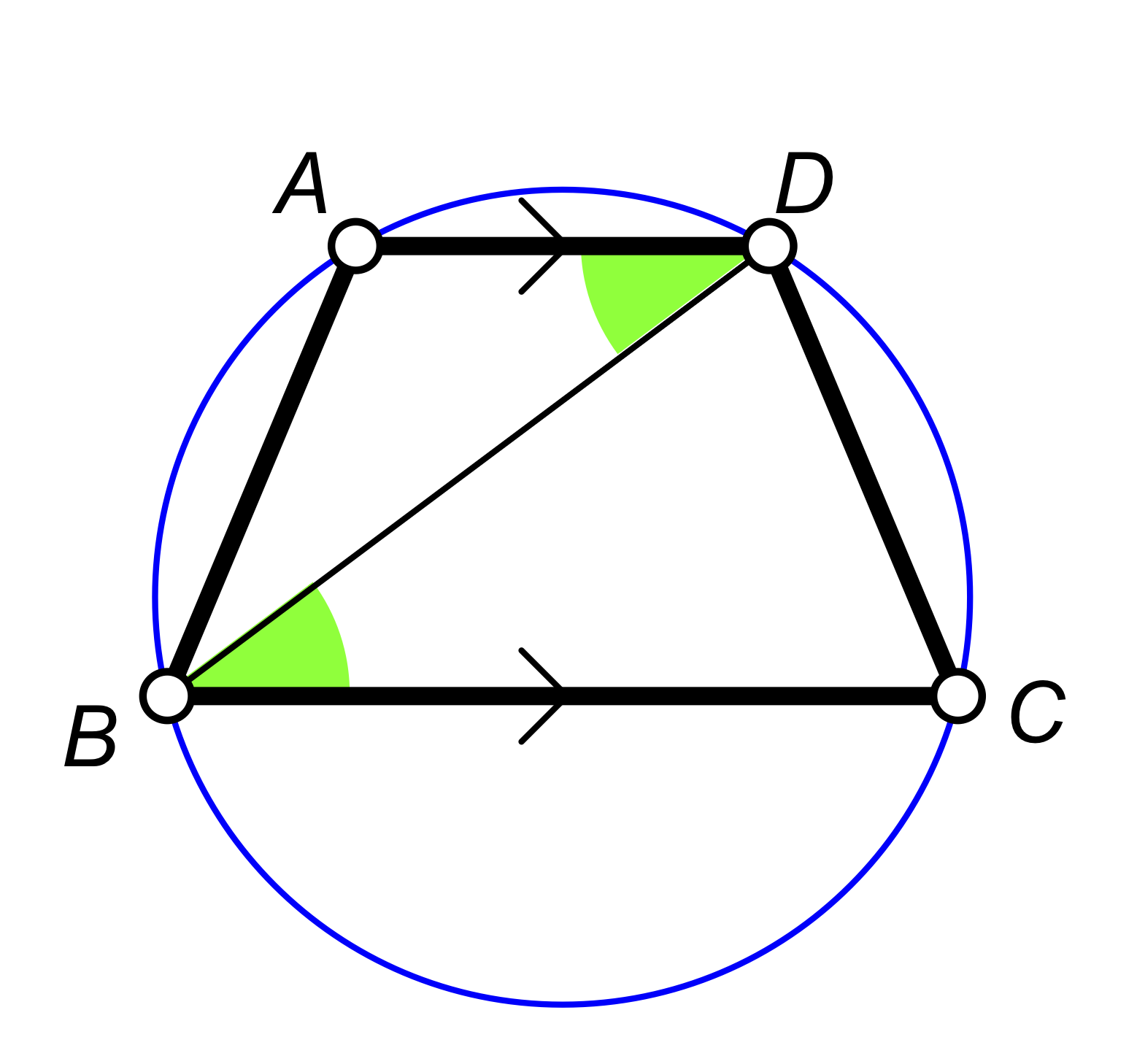}
\caption{$AB=CD$}
\label{fig:gpCyclicTrap}
\end{figure}

\begin{proof}
Let $ABCD$ be a cyclic trapezoid with $AD\parallel BC$ (Figure~\ref{fig:gpCyclicTrap}).
Since $AD\parallel BC$, we must have $\angle ADB=\angle DBC$.
Arcs intercepted by equal inscribed angles have the same measure, so minor arcs $AB$ and $CD$ are congruent.
Equal arcs have equal chords, so $AB=CD$.
\end{proof}

\begin{lemma}
\label{lemma:biTrapPerp}
Let $ABCD$ be a tangential trapezoid with $AD\parallel BC$.
Let $E$ be the incenter of $ABCD$.
Then $AE\perp BE$ (Figure~\ref{fig:gpTangentialTrap}).
\end{lemma}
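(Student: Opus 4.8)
The plan is to exploit the defining property of the incenter together with the parallel-sides hypothesis, reducing everything to the angle sum of a single triangle. Since $E$ is the incenter of the tangential quadrilateral $ABCD$, it is equidistant from all four sidelines, and therefore lies on each of the four interior angle bisectors. In particular, ray $AE$ bisects the interior angle $\angle DAB$ at vertex $A$, and ray $BE$ bisects the interior angle $\angle ABC$ at vertex $B$. Thus
$$\angle EAB = \tfrac12\angle DAB \qquad\text{and}\qquad \angle EBA = \tfrac12\angle ABC.$$

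Next I would bring in the hypothesis $AD\parallel BC$. The leg $AB$ is a transversal cutting these two parallel lines, meeting $AD$ at $A$ and $BC$ at $B$. The interior angles $\angle DAB$ and $\angle ABC$ are then the two co-interior (consecutive interior) angles on the same side of the transversal, so
$$\angle DAB + \angle ABC = 180\degrees.$$

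Finally I would apply the angle sum in $\triangle ABE$. Combining the three displayed relations gives
$$\angle AEB = 180\degrees - \angle EAB - \angle EBA = 180\degrees - \tfrac12\bigl(\angle DAB + \angle ABC\bigr) = 180\degrees - 90\degrees = 90\degrees,$$
which is exactly the assertion $AE\perp BE$.

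There is no real obstacle here; the only point demanding care is a correct reading of the configuration, namely identifying $AB$ as a leg (not a parallel side) so that $\angle DAB$ and $\angle ABC$ are precisely the co-interior angles determined by the transversal $AB$ across $AD\parallel BC$. Once that is pinned down, the argument is a two-line angle chase and requires neither coordinates nor the tangency condition beyond the bare fact that $E$ lies on the bisectors at $A$ and $B$. (Indeed, this shows the conclusion holds for the incenter of any trapezoid possessing one, regardless of the other two angles.)
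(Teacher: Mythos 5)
Your proposal is correct and follows essentially the same argument as the paper: $E$ lies on the bisectors at $A$ and $B$, the co-interior angles $\angle BAD+\angle CBA=180\degrees$ from $AD\parallel BC$, so the angle sum in $\triangle AEB$ forces $\angle AEB=90\degrees$. No issues.
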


\begin{figure}[h!t]
\centering
\includegraphics[width=0.35\linewidth]{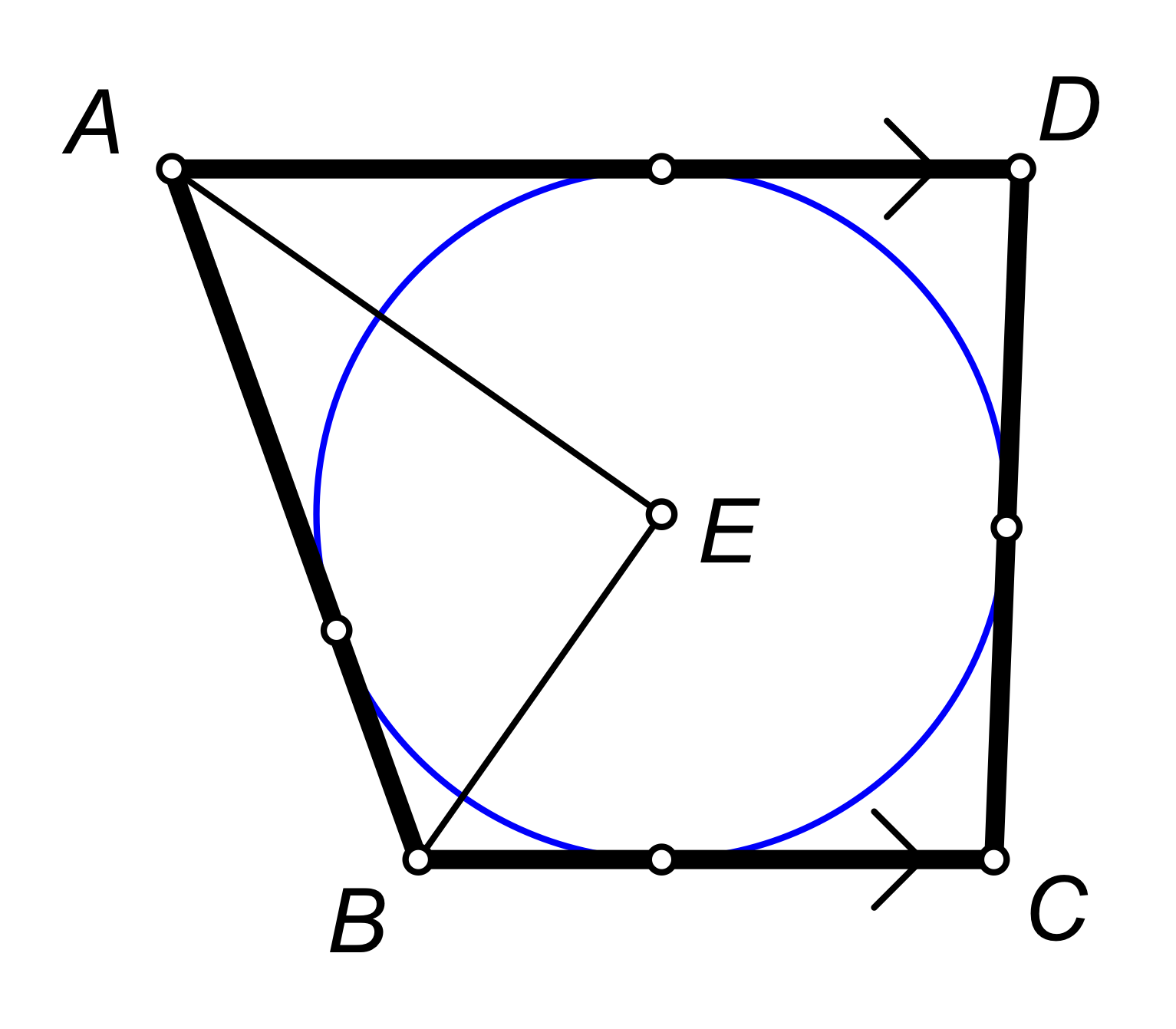}
\caption{$AE\perp BE$}
\label{fig:gpTangentialTrap}
\end{figure}

\begin{proof}
Since $AD\parallel BC$, $\angle BAD+\angle CBA=180\degrees$.
Since $E$ is the incenter, $AE$ bisects $\angle BAD$ and $BE$ bisects $\angle CBA$.
Therefore,
$$\angle EBA+\angle BAE=\frac12\left(\angle CBA+\angle BAD\right)=\frac12(180\degrees)=90\degrees.$$
The sum of the angles of $\triangle AEB$ is $180\degrees$.
Hence $\angle AEB=90\degrees$, so $AE\perp BE$.
\end{proof}

\newpage

\begin{lemma}
\label{lemma:biTrapIsos}
Let $ABCD$ be a bicentric trapezoid with $AD\parallel BC$.
Let $E$ be the incenter of $ABCD$.
Then $AE=DE$ and $BE=CD$ (Figure~\ref{fig:gpBiTrapIsos}).
\end{lemma}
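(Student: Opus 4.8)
The plan is to reduce everything to the bilateral symmetry that the bicentric hypothesis forces on $ABCD$. A bicentric trapezoid is in particular cyclic, so by Lemma~\ref{lemma:cyclicTrapIsosceles} the trapezoid $ABCD$ with $AD\parallel BC$ is isosceles. Hence its legs satisfy $AB=CD$, and the entire figure is symmetric about the line $\ell$ that is the common perpendicular bisector of the two parallel sides $AD$ and $BC$. Reflection in $\ell$ maps $ABCD$ onto itself while interchanging $A\leftrightarrow D$ and $B\leftrightarrow C$.

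First I would show that this reflection fixes the incenter $E$. Since a tangential quadrilateral has a unique inscribed circle, the reflection---which carries $ABCD$ to itself---must carry its incircle to itself and therefore fix the incircle's center; equivalently, $E$ lies on the axis $\ell$. With $E$ on $\ell$ and the reflection swapping $A\leftrightarrow D$ and $B\leftrightarrow C$, the reflection preserves distances, so $EA=ED$ and $EB=EC$, which is the content of the lemma. (The printed second equality $BE=CD$ should presumably read $BE=CE$: in the right triangle $AEB$ of Lemma~\ref{lemma:biTrapPerp} the segment $AB=CD$ is the hypotenuse, so it strictly exceeds the leg $BE$, and hence $BE=CD$ cannot hold.)

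There is no genuinely hard step here: once the trapezoid is known to be isosceles, the conclusion is essentially a one-line symmetry statement. The only point deserving a word of care is the claim that $E$ lies on $\ell$, which I have based on uniqueness of the incircle. If one prefers to avoid that appeal, I would instead construct $E$ as the intersection of the internal bisectors of $\angle DAB$ and $\angle ABC$, and then use the equality of the base angles at $A$ and $B$ with those at $D$ and $C$ (a consequence of isoscelesness) to check directly that the point equidistant from all four sides must lie on the axis of symmetry, again pinning $E$ to $\ell$ and delivering $EA=ED$ and $EB=EC$.
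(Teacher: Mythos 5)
Your proof is correct, but it takes a different route from the paper's. The paper stays local: it invokes Lemma~\ref{lemma:biTrapPerp} to get the right angles $AE\perp BE$ and $DE\perp CE$, invokes Lemma~\ref{lemma:cyclicTrapIsosceles} to get $AB=DC$, and then concludes $\triangle AEB\cong\triangle DEC$ (both are right triangles with equal hypotenuses whose legs from $E$ are tangent to the same incircle), giving $BE=CE$ and $AE=DE$ directly. You instead argue globally by symmetry: cyclicity forces the trapezoid to be symmetric about the common perpendicular bisector $\ell$ of $AD$ and $BC$, uniqueness of the incircle pins $E$ to $\ell$, and the reflection swapping $A\leftrightarrow D$, $B\leftrightarrow C$ then yields both equalities at once. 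Your argument is sound and arguably cleaner, since it sidesteps the congruence step (which in the paper's version is stated a bit tersely --- right angle plus equal hypotenuse alone does not give congruence without also using that $E$ is equidistant from the two legs). One small caution: ``legs equal implies axially symmetric'' is not literally true for trapezoids in general (a slanted parallelogram is a counterexample), so you should lean on cyclicity rather than isoscelesness alone --- the perpendicular bisectors of the parallel chords $AD$ and $BC$ both pass through the circumcenter and are parallel, hence coincide, which gives the reflection directly. You are also right that the statement's ``$BE=CD$'' is a typo for ``$BE=CE$''; the paper's own proof concludes $BE=CE$.
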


\begin{figure}[h!t]
\centering
\includegraphics[width=0.35\linewidth]{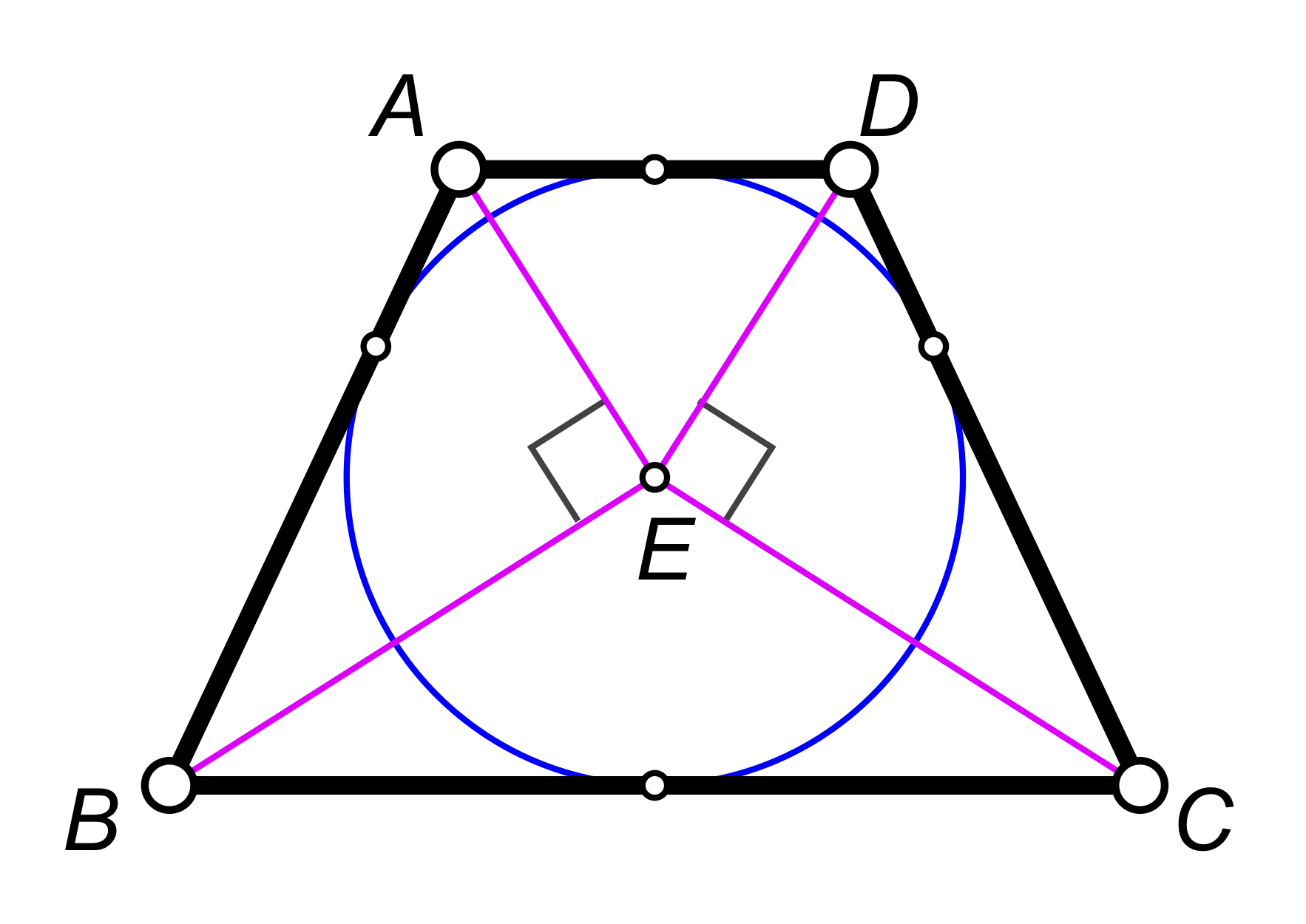}
\caption{$BE=CE$}
\label{fig:gpBiTrapIsos}
\end{figure}

\begin{proof}
By Lemma~\ref{lemma:biTrapPerp}, $AE\perp BE$ and $DE\perp CE$.
By Lemma~\ref{lemma:cyclicTrapIsosceles}, $AB=DC$.
Thus $\triangle AEB\cong \triangle DEC$ and so $BE=CE$.
Similarly, $AE=DE$.
\end{proof}

\begin{lemma}
\label{lemma:bicentricTrapCentroid}
The centroid of a bicentric trapezoid coincides with its incenter.
\end{lemma}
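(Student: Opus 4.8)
The plan is to exploit the symmetry of a bicentric trapezoid and avoid computation entirely. Since a bicentric trapezoid is in particular cyclic, Lemma~\ref{lemma:cyclicTrapIsosceles} shows it is an isosceles trapezoid. Assume $AD\parallel BC$. Then the quadrilateral has an axis of symmetry $\ell$, namely the common perpendicular bisector of the two parallel sides $AD$ and $BC$. Because $AD$ and $BC$ are parallel, $\ell$ is perpendicular to both, and the portion of $\ell$ lying between the two parallel lines is a perpendicular segment whose length equals the distance $h$ between them.

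Next I would place both the centroid $G$ and the incenter $E$ on $\ell$ and pin down their position along it. The midpoints of $AD$ and $BC$ both lie on $\ell$, so the bimedian joining them is exactly $\ell$; since the centroid bisects each bimedian, $G$ is the midpoint of this perpendicular segment of $\ell$, hence at distance $h/2$ from each of the two parallel lines. For the incenter, Lemma~\ref{lemma:biTrapIsos} gives $AE=DE$ and $BE=CE$, so $E$ lies on the perpendicular bisectors of $AD$ and $BC$, i.e. on $\ell$ as well. The incircle is tangent to both $AD$ and $BC$, so $E$ is equidistant from these two lines, the common distance being the inradius; thus $E$ too is at distance $h/2$ from each line. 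Since $G$ and $E$ are both the unique point of $\ell$ equidistant from the two parallel lines, they coincide, which is the claim.

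I do not anticipate a serious obstacle. The one point to state carefully is that the midpoints of the two parallel sides lie on the symmetry axis $\ell$, so that the bimedian joining them coincides with $\ell$; once this is in place, the fact that the centroid bisects this bimedian places $G$ at the midpoint of the perpendicular segment between the parallel lines. The argument above is coordinate-free, but as a safeguard one could instead take $\ell$ to be the $y$-axis, write $B=(-b,0)$, $C=(b,0)$, $A=(-a,h)$, $D=(a,h)$, and simply verify that the average of the four vertices and the point equidistant from the lines $y=0$ and $y=h$ both have second coordinate $h/2$.
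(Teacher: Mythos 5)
Your proof is correct, and it takes a genuinely different route from the paper's. You locate both points independently: the centroid lies on the bimedian joining the midpoints of the parallel sides, which (by the isosceles symmetry from Lemma~\ref{lemma:cyclicTrapIsosceles}) is the common perpendicular bisector $\ell$ of $AD$ and $BC$, so bisecting that bimedian places the centroid on $\ell$ at distance $h/2$ from each parallel line; meanwhile the incenter lies on $\ell$ (by Lemma~\ref{lemma:biTrapIsos}, or just by symmetry) and is at distance $r=h/2$ from each parallel line because the incircle is tangent to both. Since there is only one point of $\ell$ between the two parallel lines equidistant from both, the two points coincide. The paper instead starts from the bimedian intersection point $I$ and proves directly that $I$ is equidistant from all four sides, which requires handling the legs as well: it uses the median to the hypotenuse of the right triangles $AIB$ and $DIC$, a congruence argument, and the equal-tangents property to get $IU=IZ=IV=IX$. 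Your argument is shorter and avoids all computation involving the non-parallel sides, essentially because you take the existence of the incircle as given (the tangential hypothesis) rather than re-deriving the equidistance from the legs; the paper's longer route has the side benefit of exhibiting explicitly that the centroid is equidistant from all four sides. The only step worth stating carefully in your write-up is the one you already flag: that the perpendicular bisectors of the two parallel sides coincide (both pass through the circumcenter and are perpendicular to the common direction), so the bimedian in question really is perpendicular to the parallel sides and has length $h$.
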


\begin{figure}[h!t]
\centering
\includegraphics[width=0.4\linewidth]{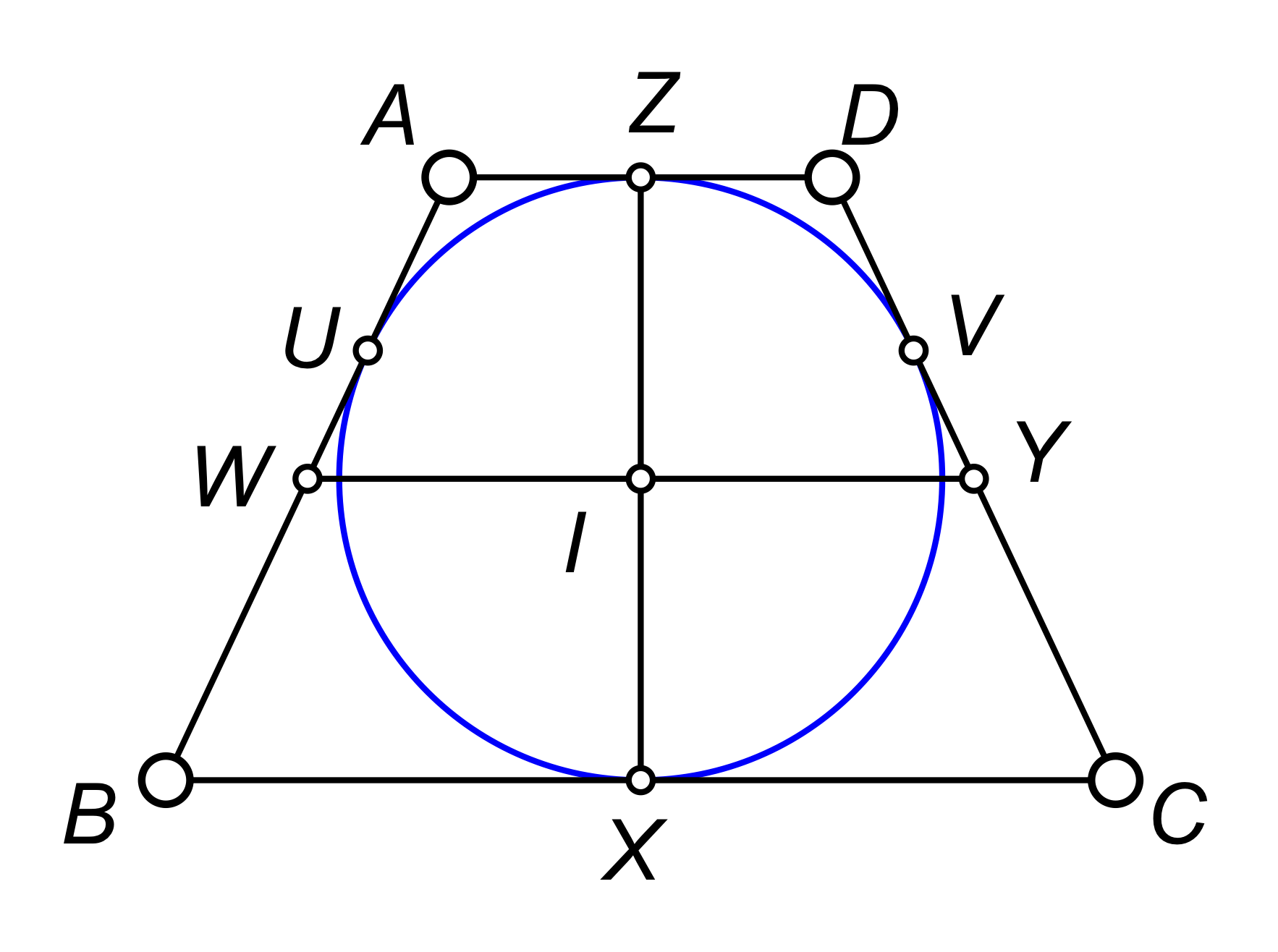}
\caption{$I$ is the centroid of bicentric trapezoid $ABCD$}
\label{fig:gpBicentricTrapCentroid}
\end{figure}

\begin{proof}
Let the bicentric trapezoid be $ABCD$ with $AD\parallel BC$.
Let the midpoints of the sides be $W$, $X$, $Y$, and $Z$ as shown in Figure~\ref{fig:gpBicentricTrapCentroid}.
Since a bicentric quadrilateral is cyclic, by Lemma~\ref{lemma:cyclicTrapIsosceles}, $AB=DC$.
By definition, the centroid of quadrilateral $ABCD$ is point $I$, the intersection of $WY$ and $XZ$.
Since $WY\parallel AD\parallel BC$, and $WY$ bisects both $AB$ and $CD$, it must also bisect $XZ$.
Therefore, $IX=IZ$.
Since a bicentric quadrilateral is tangential, by Lemma~\ref{lemma:biTrapPerp}, $AI\perp BI$.
Hence $\triangle AIB$ is a right triangle and $IW$ is the median to the hypotenuse. Thus, $IW=\frac12AB$.
Similarly, $IY=\frac12CD$. Consequently, $IW=IY$.

Let $IU$ be the altitude to the hypotenuse of right triangle $AIB$.
Similarly, $\triangle CID$ is a right triangle and let $IV$ be the altitude to its hypotenuse.
Since $XZ$ is the perpendicular bisector of $AD$ and $BC$, we can conclude that $IA=ID$ and $IB=IC$.
Thus, $\triangle AIB\cong\triangle DIC$. Corresponding parts of congruent figures are congruent,
so $IU=IV$.
The two tangents to a circle from an external point are equal, so $AU=AZ$.
Triangles $AIU$ and $AIZ$ are congruent since $\angle IUA=\angle AZI=90\degrees$ and $AU=AZ$.
Hence, $IU=IZ$.

We have now shown that $IU=IZ=IV=IX$, so $I$ is the incenter of  $ABCD$.
\end{proof}

\newpage

\relbox{Relationship $[ABCD]=8[FGHI]$}

\begin{theorem}
\label{thm-bicentricTrapX402}
Let $E$ be the centroid of a bicentric trapezoid $ABCD$.
Let $F$, $G$, $H$, and $I$ be the $X_{402}$ points of $\triangle EAB$, $\triangle EBC$, 
$\triangle ECD$,  and $\triangle EDA$, respectively (Figure~\ref{fig:gpBicentricTrapX402}).
Then
$$[ABCD]=8[FGHI].$$
\end{theorem}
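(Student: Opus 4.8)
The plan is to sidestep the formidable $X_{402}$ barycentric formula entirely and instead show that $F,G,H,I$ are the images of the four side-midpoints of $ABCD$ under a single homothety centered at $E$, after which the area ratio is immediate from Varignon's theorem.

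First I would use Lemma~\ref{lemma:bicentricTrapCentroid} to replace the centroid $E$ by the incenter, so that the full bicentric-trapezoid structure is available. Writing $AD\parallel BC$ and recalling Lemma~\ref{lemma:biTrapPerp} (together with its mirror statement $DE\perp CE$, which is already established in the proof of Lemma~\ref{lemma:biTrapIsos}), the two radial triangles $\triangle EAB$ and $\triangle ECD$ have their right angle at $E$. By Lemma~\ref{lemma:biTrapIsos} we have $EB=EC$ and $EA=ED$, so the remaining two radial triangles $\triangle EBC$ and $\triangle EDA$ are isosceles with apex $E$.

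The crucial step is to locate $X_{402}$ uniformly in all four triangles. In the two right triangles, $X_{402}=X_5$ by Lemma~\ref{lemma:dpX402}, and by Lemma~\ref{lemma:midpointMedian} this nine-point center is the midpoint of the median from the right-angle vertex $E$ to the midpoint of the hypotenuse; since each hypotenuse is a side of $ABCD$, $X_{402}$ is the midpoint of the segment joining $E$ to that side's midpoint. In the two isosceles triangles, Theorem~\ref{thm:isoscelesRatio} (Table~\ref{table:diagonalPointIsoscelesRatios}, entry $n=402$ with ratio $\tfrac12$) gives $X_{402}M/EM=\tfrac12$ for $M$ the base midpoint, so again $X_{402}$ is the midpoint of the segment joining the apex $E$ to the midpoint of the base, which is a side of $ABCD$. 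Thus, letting $W,X,Y,Z$ denote the midpoints of $AB,BC,CD,DA$ respectively, we obtain $F=\tfrac12(E+W)$, $G=\tfrac12(E+X)$, $H=\tfrac12(E+Y)$, and $I=\tfrac12(E+Z)$.

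These four relations say precisely that $FGHI$ is the image of the Varignon quadrilateral $WXYZ$ under the homothety centered at $E$ with ratio $\tfrac12$, with vertex order preserved, whence $[FGHI]=\tfrac14[WXYZ]$. Closing with Lemma~\ref{lemma:Varignon}, which gives $[WXYZ]=\tfrac12[ABCD]$, yields $[FGHI]=\tfrac18[ABCD]$, i.e.\ $[ABCD]=8[FGHI]$. I expect the main obstacle to be verifying the uniform description of $X_{402}$: the right-triangle and isosceles-triangle cases rest on genuinely different prior results (the nine-point-center identity versus the tabulated isosceles ratio), and one must confirm that both yield the same ``midpoint of $E$ and the side-midpoint'' formula so that a single homothety governs all four points. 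Establishing at the outset that the four radial triangles are exactly two right and two isosceles, from the bicentric-trapezoid lemmas, is the other point that must be handled carefully.
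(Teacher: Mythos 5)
Your proposal is correct and follows essentially the same route as the paper's proof: identify the four radial triangles as two right triangles at $E$ and two isosceles triangles with apex $E$, locate $X_{402}$ as the midpoint of the segment from $E$ to each side-midpoint (via Lemma~\ref{lemma:dpX402} with Lemma~\ref{lemma:midpointMedian} in the right triangles, and the $n=402$ entry of Table~\ref{table:diagonalPointIsoscelesRatios} in the isosceles ones), and conclude by the homothety onto the Varignon parallelogram. The only cosmetic difference is that you invoke Lemma~\ref{lemma:bicentricTrapCentroid} explicitly to pass from the centroid to the incenter, a step the paper leaves implicit.
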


\begin{figure}[h!t]
\centering
\includegraphics[width=0.4\linewidth]{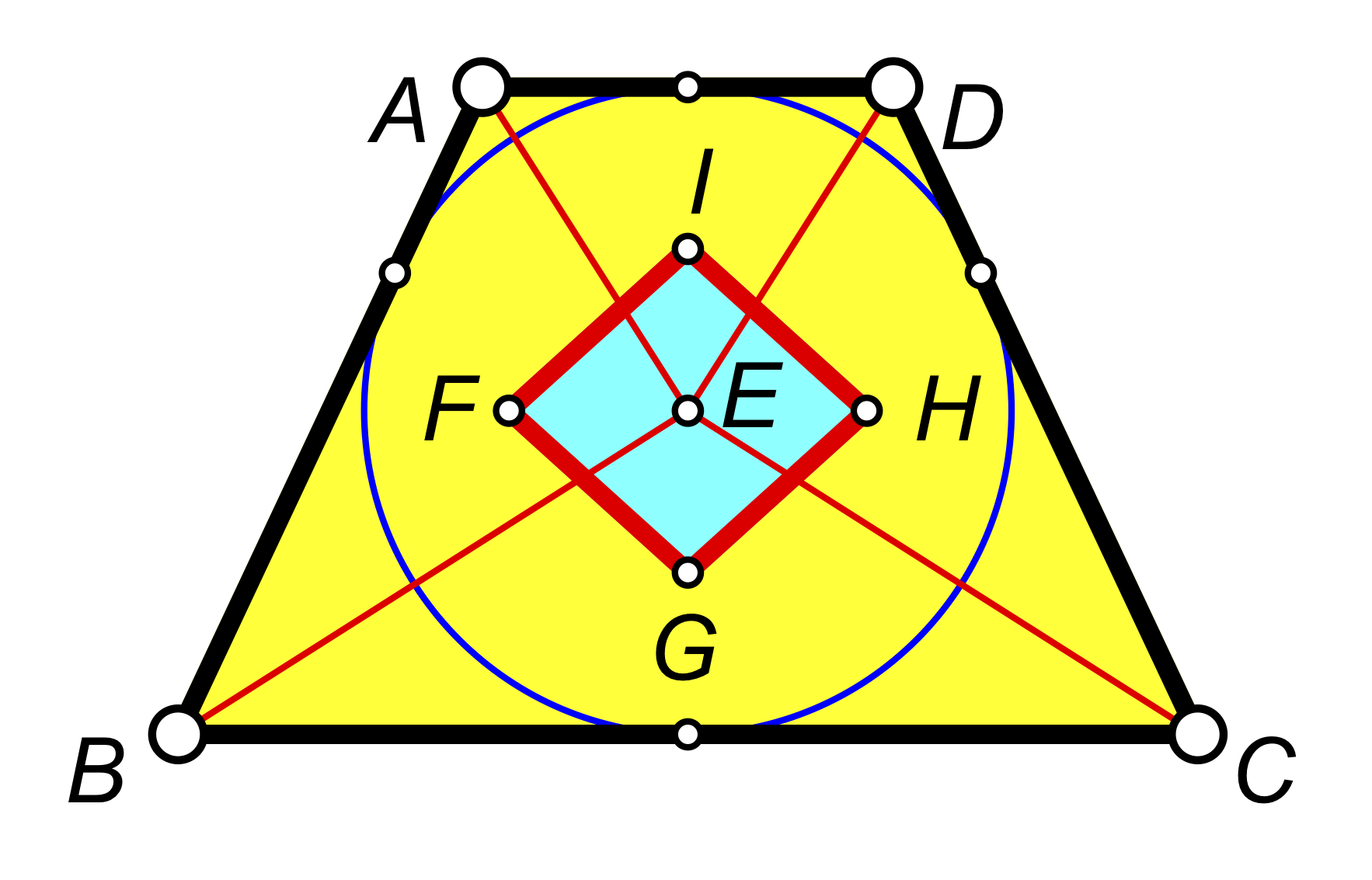}
\caption{bicentric trapezoid, $X_{402}\implies [ABCD]=8[FGHI]$}
\label{fig:gpBicentricTrapX402}
\end{figure}

\begin{proof}

Let $W$, $X$, $Y$, and $Z$ be the midpoints of the sides of the bicentric trapezoid
as shown in Figure~\ref{fig:gpBicentricTrapX402proof}.

\begin{figure}[h!t]
\centering
\includegraphics[width=0.4\linewidth]{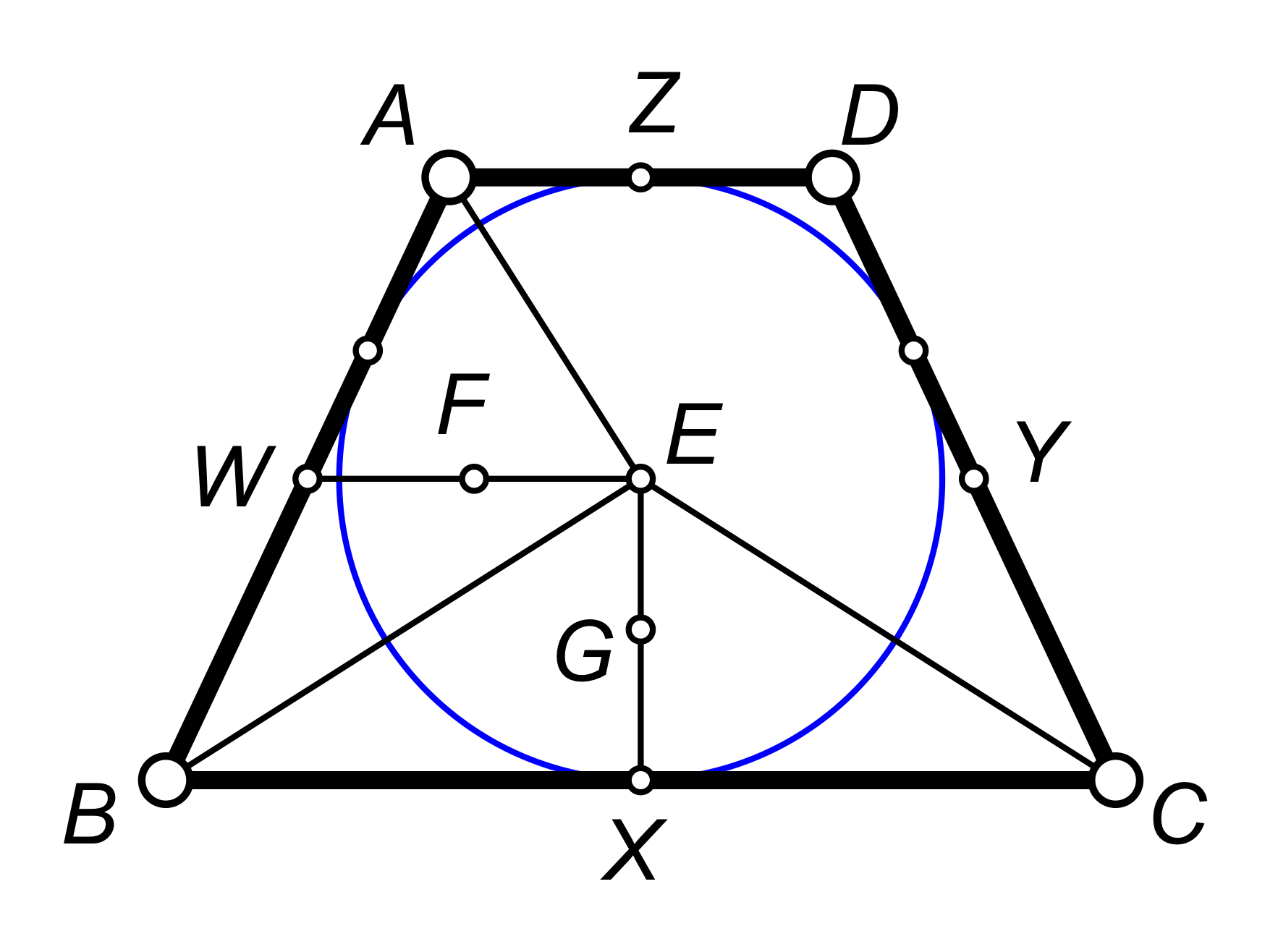}
\caption{}
\label{fig:gpBicentricTrapX402proof}
\end{figure}

Since $E$ is the centroid of $ABCD$, it lies on the bimedian $XZ$ which is the perpendicular bisector
of $BC$. Therefore $EB=EC$ and $\triangle EBC$ is isosceles.
By Theorem~\ref{thm:isoscelesRatio} and Table~\ref{table:diagonalPointIsoscelesRatios},
$G$ is the midpoint of $EX$.
By Lemma~\ref{lemma:biTrapPerp}, $EA\perp EB$, so $\triangle AEB$ is a right triangle.
Note that $W$ is the midpoint of the hypotenuse.
By Lemma~\ref{lemma:dpX402}, the $X_{402}$ point, $F$, coincides with the $X_5$ point.
By Lemma~\ref{lemma:midpointMedian}, the $X_5$ point coincides with the midpoint of
the median to the hypotenuse. Therefore, $F$ is the midpoint of $EW$.

Similarly, $H$ is the midpoint of $EY$ and $I$ is the midpoint of $EZ$.
Thus, quadrilateral $FGHI$ is homothetic to quadrilateral $WXYZ$ with ratio of similarity $\frac12$.
Thus $[WXYZ]=4[FGHI]$. But $WXYZ$ is the Varignon parallelogram of $ABCD$, so $[ABCD]=2[WXYZ]$.
Consequently, $[ABCD]=2[WXYZ]=8[FGHI]$.
\end{proof}

\newpage

\relbox{Relationship $[ABCD]=\frac12[FGHI]$}

\begin{proposition}[$X_{74}$ Property of a Right Triangle]
\label{proposition:gpX74rightTriangle}
Let $ABC$ be a right triangle with right angle at C.
Let $P$ be the $X_{74}$ point of $\triangle ABC$.
Then $BCAP$ is an orthogonal kite and $CP=2ab/c$ (Figure~\ref{fig:gpX74rightTriangle}).
\end{proposition}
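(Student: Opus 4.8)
The plan is to show that, when $\triangle ABC$ has its right angle at $C$, the triangle center $X_{74}$ is precisely the reflection $P$ of the vertex $C$ across the hypotenuse $AB$. Once this identification is in hand, both claims of the proposition follow with almost no further work, so the entire content of the proof is concentrated in locating $X_{74}$ for a right triangle.

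To carry this out I would first record the barycentric coordinates of $X_{74}$ listed in \cite{ETC}, whose first coordinate is $a^2/\bigl(2a^4-a^2b^2-a^2c^2-(b^2-c^2)^2\bigr)$, the other two obtained by cyclic permutation. Imposing the right-angle condition $c^2=a^2+b^2$ and simplifying, each denominator collapses: the first and second become $-2a^2b^2$, while the third becomes $4a^2b^2$. Clearing denominators then gives the clean projective form
$$X_{74}=\bigl(2a^2:2b^2:-c^2\bigr).$$
As a sanity check this triple satisfies the circumcircle equation $a^2yz+b^2zx+c^2xy=0$, consistent with the known fact that $X_{74}$ lies on the circumcircle. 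Next I would verify that this is the reflection of $C$ in $AB$: placing $C$ at the origin with $A=(b,0)$ and $B=(0,a)$, the line $AB$ has equation $ax+by=ab$, and reflecting $C=(0,0)$ in it yields $P=\bigl(2a^2b/c^2,\;2ab^2/c^2\bigr)$. Reading off the normalized barycentric coordinates of $P$ with respect to $A$, $B$, $C$ gives exactly $(2a^2:2b^2:-c^2)$, so $P=X_{74}$.

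With $P$ established as the mirror image of $C$ in the line $AB$, the geometry is immediate. The segment $AB$ is the perpendicular bisector of $CP$, so $A$ and $B$ are equidistant from $C$ and $P$; hence $PA=CA=b$ and $PB=CB=a$, which shows that $BCAP$ is a kite with axis of symmetry the diagonal $AB$. The reflection carries $\angle ACB$ to $\angle APB$, so $\angle APB=\angle ACB=90^\circ$; thus the two vertices $C$ and $P$ lying off the axis are both right angles, and $BCAP$ is an orthogonal (right) kite. Finally, $CP$ equals twice the distance from $C$ to $AB$, and since the altitude from the right angle to the hypotenuse of $\triangle ABC$ has length $ab/c$ (from $\tfrac12 ab=\tfrac12 c\cdot h$), we conclude $CP=2ab/c$.

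The only genuinely computational step is the simplification of the $X_{74}$ denominators under $c^2=a^2+b^2$; this is routine algebra, and the real point of the proof is recognizing that the resulting point admits the simple synthetic description as the reflection of the right-angle vertex in the hypotenuse. I expect no serious obstacle beyond bookkeeping, although one should double-check the vertex ordering $B,C,A,P$ so that the equal sides $\{CA,AP\}$ and $\{PB,BC\}$ are correctly adjacent and the quadrilateral is the non-self-intersecting kite claimed.
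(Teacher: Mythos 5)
Your proof is correct and follows essentially the same route as the paper's: both reduce the ETC barycentrics of $X_{74}$ under the condition $c^2=a^2+b^2$ and then read off the metric claims, the only difference being that you make the identification of $X_{74}$ as the reflection of $C$ in the hypotenuse $AB$ explicit and derive the kite properties synthetically, whereas the paper verifies $BP=a$ and $AP=b$ directly with the barycentric distance formula. Note that your simplified coordinates $\bigl(2a^2:2b^2:-c^2\bigr)$ are the correct ones -- the paper prints $\bigl(2a^2:2b^2:c^2\bigr)$, whose last coordinate has the wrong sign (that triple does not satisfy the circumcircle equation $a^2yz+b^2zx+c^2xy=0$ and would not give $BP=a$), although the paper's subsequent conclusions agree with the correctly signed point.
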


\begin{figure}[h!t]
\centering
\includegraphics[width=0.3\linewidth]{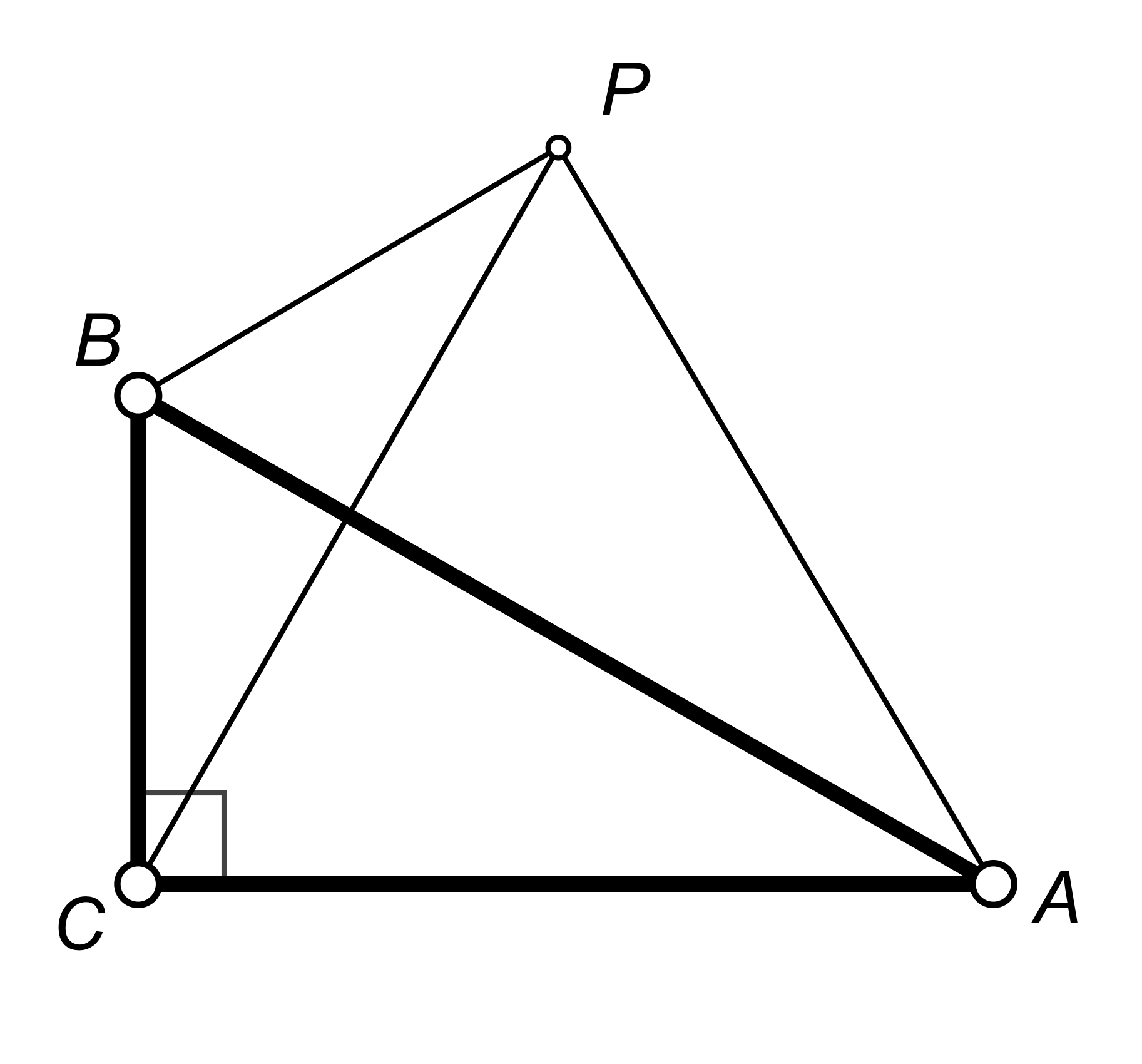}
\caption{$X_{74}$ point of a right triangle}
\label{fig:gpX74rightTriangle}
\end{figure}

\begin{proof}
According to \cite{ETC74}, the barycentric coordinates for the $X_{74}$ point of a triangle are
$$\Bigl(a^2 \left(a^4-2 a^2 b^2+a^2 c^2+b^4+b^2 c^2-2 c^4\right) \left(a^4+a^2 b^2-2 a^2 c^2-2 b^4+b^2 c^2+c^4\right):\,:\Bigr).$$
With the condition that $a^2+b^2=c^2$, this simplifies to
$$P=\Bigl(2a^2:2b^2:c^2\Bigr).$$
Using the Distance Formula and the condition $a^2+b^2=c^2$, we find that $BP=a$. Similarly, $AP=b$.
Thus, $BCAP$ is an orthogonal kite.
The length $CP$ is therefore twice the length of the altitude from $A$, which has length $ab/c$.
\end{proof}

\begin{proposition}[$X_{74}$ Property of an Isosceles Triangle]
\label{proposition:gpX74isoscelesTriangle}
Let $ABC$ be an isosceles triangle with $AC=BC$.
Let $P$ be the $X_{74}$ point of $\triangle ABC$.
Then $BCAP$ is a cyclic kite and $CP=b^2/CM$ where $M$ is the midpoint of $AB$(Figure~\ref{fig:gpX74isoscelesTriangle}).
\end{proposition}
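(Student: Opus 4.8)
The plan is to show that $P=X_{74}$ is the point of the circumcircle of $\triangle ABC$ diametrically opposite the apex $C$; once this is established, both assertions follow almost at once. The two ingredients I will combine are the known fact that $X_{74}$ lies on the circumcircle, together with the reflective symmetry of the isosceles triangle, which forces $P$ onto the axis of symmetry. Thus the approach is essentially synthetic, with a short barycentric computation used only to pin down $P$ unambiguously.

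First I would record that $X_{74}$ lies on the circumcircle of $\triangle ABC$ (this is listed in \cite{ETC}, and can be verified by substituting the barycentric coordinates of $X_{74}$ stated in the proof of Proposition~\ref{proposition:gpX74rightTriangle} into the circumcircle equation $a^2yz+b^2zx+c^2xy=0$). Next I would locate $P$ on the axis of symmetry. Since $CA=CB$, reflection in the perpendicular bisector $\ell$ of $AB$ maps $\triangle ABC$ to itself (interchanging $A$ and $B$, fixing $C$), and hence fixes every triangle center, so $P\in\ell$. Equivalently, substituting $b=a$ into the coordinates of $X_{74}$ collapses the first two entries and yields
\begin{equation*}
P=\bigl(2a^2:2a^2:-c^2\bigr),
\end{equation*}
whose first two coordinates agree, so $P$ lies on the median from $C$, which in this isosceles triangle is exactly $\ell=CM$. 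In particular $PA=PB$, and combined with $CA=CB$ this makes $BCAP$ a kite with axis $CP$; being inscribed in the circumcircle, it is a cyclic kite.

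It then remains to compute the length $CP$. The line $\ell=CM$ is the perpendicular bisector of the chord $AB$, so it passes through the circumcenter $O$; hence the chord $CP$, which lies along $\ell$, is a diameter and $CP=2R$. Using $R=\dfrac{(CA)(CB)(AB)}{4[ABC]}$ together with $b=CA=CB$ and $[ABC]=\tfrac12\,AB\cdot CM$, one obtains
\begin{equation*}
CP=2R=\frac{b^2\cdot AB}{2\cdot\tfrac12\,AB\cdot CM}=\frac{b^2}{CM},
\end{equation*}
which is the desired formula.

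The one genuine subtlety, and the step I would treat most carefully, is confirming that $P\neq C$—so that $CP$ is a true diameter rather than a degenerate (zero-length) chord—and therefore that $P$ really is the \emph{antipode} of $C$ and not $C$ itself. The barycentric simplification settles this at once, since $(2a^2:2a^2:-c^2)$ is never proportional to $C=(0:0:1)$; it also conveniently exhibits the negative third coordinate, reflecting that the antipode of the apex lies on the opposite side of $AB$ from $C$. Everything else is routine.
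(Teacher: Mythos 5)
Your proof is correct, but it takes a genuinely different route from the paper's. Both arguments start from the same barycentric simplification $P=(2a^2:2a^2:-c^2)$ under $a=b$, but from there the paper stays computational: it evaluates $CM\cdot MP$ with the Distance Formula, finds it equals $c^2/4=AM\cdot MB$, and concludes cyclicity by the converse of the intersecting-chords theorem (the kite property and the formula $CP=b^2/CM$ are left implicit, the latter following from $CP=CM+MP=(CM^2+AM^2)/CM$). You instead import the global fact that $X_{74}$ lies on the circumcircle and combine it with the reflective symmetry to identify $P$ as the antipode of the apex $C$; cyclicity, the kite property $PA=PB$, $CA=CB$, and $CP=2R=b^2/CM$ then all drop out at once. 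Your identification of $P$ as the antipode is a strictly stronger statement than the proposition asserts and makes the geometry more transparent; the trade-off is that you lean on the external fact that $X_{74}$ is a circumcircle point, whereas the paper's power-of-a-point computation is self-contained given the coordinates. Your attention to the non-degeneracy $P\neq C$ is a point the paper glosses over entirely.
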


\begin{figure}[h!t]
\centering
\includegraphics[width=0.25\linewidth]{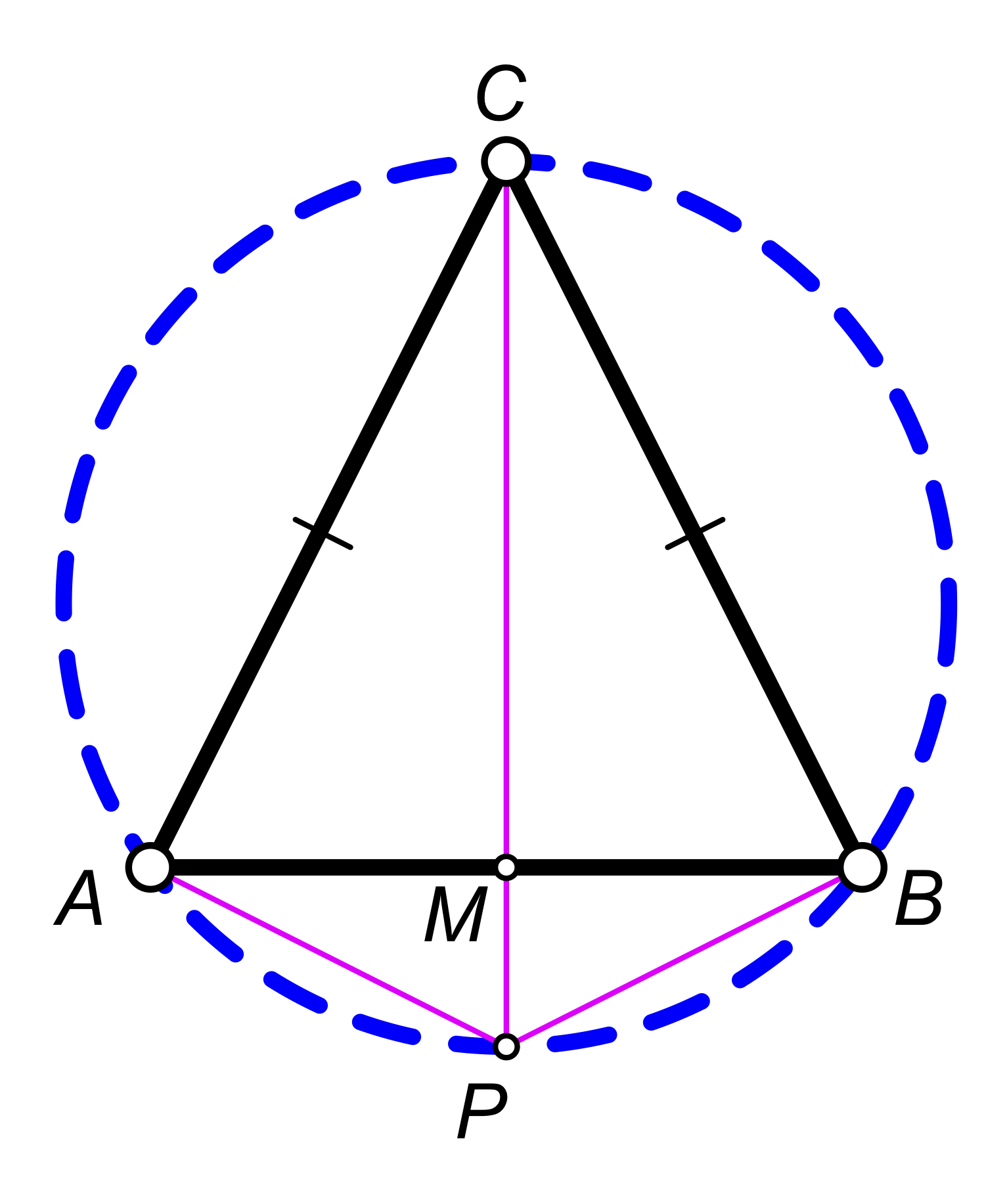}
\caption{$X_{74}$ point of a right triangle}
\label{fig:gpX74isoscelesTriangle}
\end{figure}

\begin{proof}
The barycentric coordinates for the $X_{74}$ point of a triangle were given in the proof of
Proposition~\ref{proposition:gpX74rightTriangle}.
With the condition that $a=b$, this simplifies to
$$P=\Bigl(2b^2,2b^2,-c^2\Bigr).$$
Let $M$ be the midpoint of $AB$, so that $M=(1:1:0)$ and $AM=c/2$.
Using the Distance Formula and the condition $a^2+b^2=c^2$, we find that $CM\times MP$
simplifies to $c^2/4$. But this is equal to $AM\times BM$.
Thus, $BCAP$ is cyclic.
\end{proof}

Note that all kites are tangential, so $BCAP$ is actually a bicentric kite.

\begin{theorem}
\label{thm-bicentricTrapX74}
Let $E$ be the centroid of a bicentric trapezoid $ABCD$.
Let $F$, $G$, $H$, and $I$ be the $X_{74}$ points of $\triangle EAB$, $\triangle EBC$, 
$\triangle ECD$,  and $\triangle EDA$, respectively (Figure~\ref{fig:gpBicentricTrapX74}).
Then $FGHI$ is a kite and
$$[ABCD]=\frac12[FGHI].$$
\end{theorem}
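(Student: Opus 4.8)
The plan is to exploit the mirror symmetry of a bicentric trapezoid, locate each of the four $X_{74}$ points explicitly, and then read off the area from the perpendicular diagonals of the resulting kite.

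First I would fix coordinates adapted to the symmetry. Since $AD\parallel BC$ and a bicentric trapezoid is isosceles (Lemma~\ref{lemma:cyclicTrapIsosceles}), it has an axis of symmetry perpendicular to the parallel sides; by Lemma~\ref{lemma:bicentricTrapCentroid} the centroid $E$ coincides with the incenter, which lies on this axis. So I place $E$ at the origin with the axis as the $y$-axis. The incircle, of radius $r$ and center $E$, is tangent to both parallel sides, forcing them onto the lines $y=r$ and $y=-r$, and symmetry gives
\[
A=(-p,r),\quad D=(p,r),\quad B=(-q,-r),\quad C=(q,-r)
\]
for some $p,q>0$. By Lemma~\ref{lemma:biTrapPerp} we have $AE\perp BE$, and $\vec{EA}\cdot\vec{EB}=pq-r^2$ yields the single relation $pq=r^2$ (equivalently, the tangency condition). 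The area is $[ABCD]=\tfrac12(AD+BC)\cdot 2r=2r(p+q)$.

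Next I would obtain the kite essentially for free from symmetry. Reflection in the $y$-axis interchanges $\triangle EAB$ with $\triangle ECD$ and fixes each of $\triangle EBC$ and $\triangle EDA$; since $X_{74}$ is a triangle center it is carried along. Hence $F$ and $H$ are mirror images (so $FH$ is horizontal and bisected by the axis), while $G$ and $I$ lie on the axis. Consequently $GF=GH$ and $IF=IH$, so $FGHI$ is an orthodiagonal kite with diagonals $FH\perp GI$. The substantive step is then to pin down the four centers. By Lemma~\ref{lemma:biTrapPerp} both $\triangle EAB$ and $\triangle ECD$ are right-angled at $E$, so by Proposition~\ref{proposition:gpX74rightTriangle} their $X_{74}$ points are the reflections of $E$ across the hypotenuses $AB$ and $CD$; reflecting the origin across line $AB$ and simplifying with $pq=r^2$ gives $|x_F|=4r^2/(p+q)$, whence $FH=8r^2/(p+q)$. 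By Lemma~\ref{lemma:biTrapIsos} both $\triangle EBC$ and $\triangle EDA$ are isosceles with apex $E$, bases $BC,AD$ and base midpoints $X=(0,-r)$, $Z=(0,r)$ with $EX=EZ=r$; Proposition~\ref{proposition:gpX74isoscelesTriangle} supplies the intersecting-chord relations $EX\cdot XG=q^2$ and $EZ\cdot ZI=p^2$, with $G,I$ on the far side of their base from $E$. Thus $y_G=-(q^2+r^2)/r$ and $y_I=(p^2+r^2)/r$, so using $r^2=pq$,
\[
GI=y_I-y_G=\frac{p^2+q^2+2r^2}{r}=\frac{(p+q)^2}{r}.
\]

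Finally I would combine these. Since $FGHI$ is orthodiagonal, Lemma~\ref{lemma:orthodiag} gives
\[
[FGHI]=\tfrac12\,FH\cdot GI=\tfrac12\cdot\frac{8r^2}{p+q}\cdot\frac{(p+q)^2}{r}=4r(p+q)=2[ABCD],
\]
which is the desired relation $[ABCD]=\tfrac12[FGHI]$. I expect the main obstacle to be the bookkeeping in the third paragraph: correctly applying the two $X_{74}$ propositions to fix not merely the distances but the \emph{directions} of the centers — that $F,H$ land on the far side of the legs, and that $G,I$ lie beyond the two parallel sides, away from $E$ — together with the reflection computation for $x_F$. Once those positions are secured, both the kite property and the area identity follow immediately, the latter from the kite-diagonal formula and the relation $pq=r^2$.
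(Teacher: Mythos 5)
Your proof is correct, and it reaches the conclusion by a genuinely different final computation than the paper, although both arguments rest on exactly the same foundation: Lemmas~\ref{lemma:cyclicTrapIsosceles}, \ref{lemma:biTrapPerp} and \ref{lemma:bicentricTrapCentroid} to identify $E$ and make $\triangle EAB$, $\triangle ECD$ right-angled and $\triangle EBC$, $\triangle EDA$ isosceles, and then Propositions~\ref{proposition:gpX74rightTriangle} and \ref{proposition:gpX74isoscelesTriangle} to pin down the four $X_{74}$ points. Where the paper stays with lengths and angles --- it computes $EF=2r$, $EI=(ab+b^2)/r$, $EG=(ab+a^2)/r$ via the intersecting-chords theorem, identifies $\angle IEF=\angle B$ with $\sin B=2r/(a+b)$, and sums the four triangle areas around $E$ --- you introduce Cartesian coordinates on the axis of symmetry, extract the actual positions of $F,G,H,I$ (using that the right-triangle proposition makes $F$ the reflection of $E$ in $AB$), and finish with the perpendicular-diagonal formula $[FGHI]=\tfrac12\,FH\cdot GI$. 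Your route buys two things the paper's proof does not deliver explicitly: the symmetry argument actually \emph{proves} the kite claim in the theorem statement (the paper's proof never verifies it), and the diagonal lengths $FH=8r^2/(p+q)$, $GI=(p+q)^2/r$ are cleaner invariants than the four edge lengths from $E$. The paper's route avoids the reflection computation and any choice of coordinates. One small caution: when the trapezoid is very lopsided the segment $FH$ can lie entirely below $G$, so $FGHI$ is a non-convex kite and Lemma~\ref{lemma:orthodiag} as stated (for convex quadrilaterals with intersecting diagonals) does not literally apply; the identity $[FGHI]=\tfrac12\,FH\cdot GI$ still holds under the paper's signed-area convention $[PQRS]=[PQR]+[RSP]$, but you should justify it that way rather than by citing the lemma. (The paper's own decomposition around $E$ has the same implicit reliance on signed areas.)
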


\begin{figure}[h!t]
\centering
\includegraphics[width=0.4\linewidth]{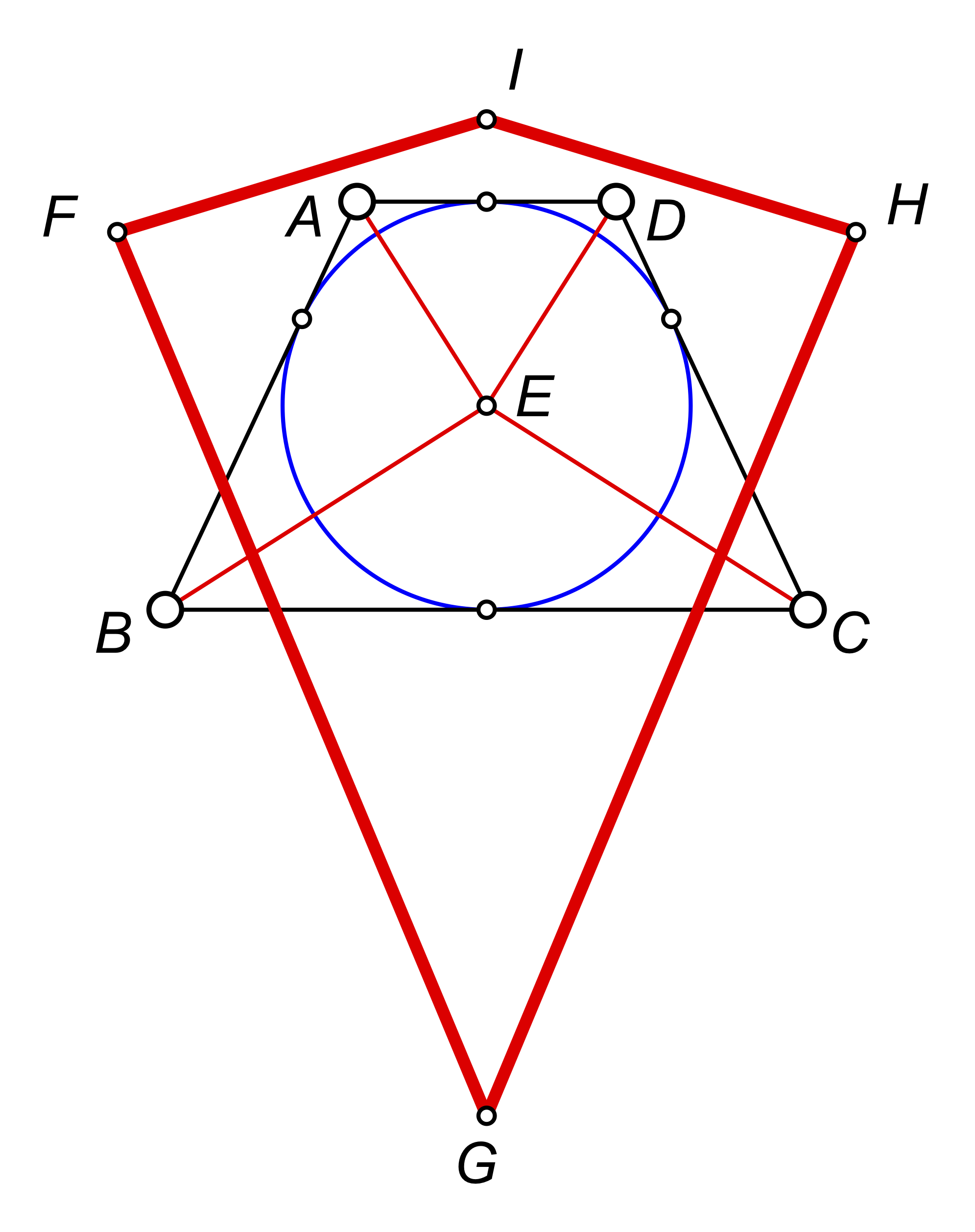}
\caption{Bicentric trapezoid, $X_{74}$ points $\implies [ABCD]=\frac12[FGHI]$}
\label{fig:gpBicentricTrapX74}
\end{figure}

\begin{proof}
Let $BC=2a$, $AD=2b$, and let $r$ and $p$ be the inradius and the semiperimeter of $ABCD$.
Let $X$, $Y$, $Z$, and $T$ be the points where the incircle touches the sides of $ABCD$
as shown in Figure~\ref{fig:gpBiTrap1}.

\begin{figure}[h!t]
\centering
\includegraphics[scale=0.6]{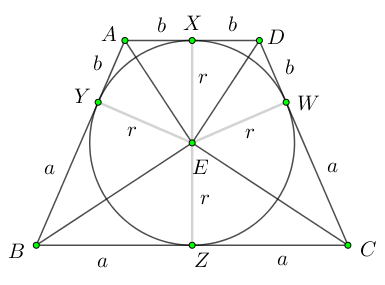}
\caption{bicentric trapezoid and incircle}
\label{fig:gpBiTrap1}
\end{figure}

We have $[ABCD]=rp=2r(a+b)$ and $r^2=ab$. We will now calculate the area of $FGHI$.
See Figure~\ref{fig:gpBiTrap2}.

\begin{figure}[h!t]
\centering
\includegraphics[scale=0.5]{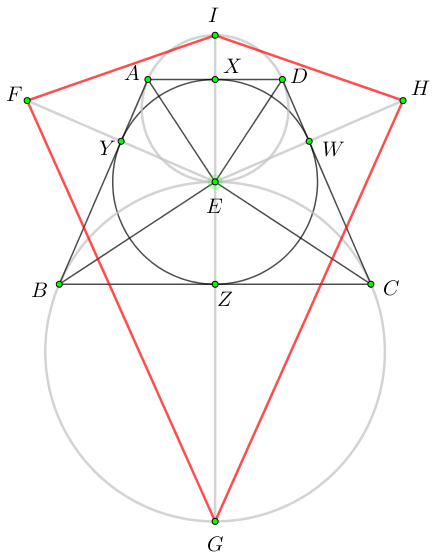}
\caption{Bicentric trapezoid and incircle}
\label{fig:gpBiTrap2}
\end{figure}

By Proposition~\ref{proposition:gpX74isoscelesTriangle}, we have $I\in \odot(AED)$ and $G\in\odot(BCE)$.
Therefore, by the intersecting chords theorem, we get
$$
   IX\cdot XE=AX\cdot XD \quad \Rightarrow \quad IX=\frac{AX\cdot XD}{EX}=\frac{b^2}{r}
$$
and
$$
   GZ\cdot ZE=BZ\cdot ZC \quad \Rightarrow \quad GZ=\frac{BZ\cdot ZC}{ZE}=\frac{a^2}{r}.
$$
Hence,
$$
 EI=EX+XI=r+\frac{b^2}{r}=\frac{r^2+b^2}{r}=\frac{ab+b^2}{r}  
$$
and
$$
   EG=EZ+ZG=r+\frac{a^2}{r}=\frac{r^2+a^2}{r}=\frac{ab+a^2}{r}.
$$
By Proposition~\ref{proposition:gpX74rightTriangle}, we have $EF=2r$, so
$$
   [IEF]=\frac{1}{2}EI\cdot EF\cdot \sin\left(\angle IEF\right)=\frac{1}{2}
\left(\frac{ab+b^2}{r}\right)\cdot 2r \sin B=b(a+b)\sin\left(\angle IEF\right).
$$
Since $\angle BYE=\angle EZB=90\degrees$, quadrilateral $BYEZ$ is cyclic and so
$\angle IEF=\angle B$.
The value $\sin B$ is the height of $A$ above $BC$ divided by $AB$, so $\sin B=2r/(a+b)$.
Using this relation, we obtain
$$
   [IEF]=b(a+b)\cdot \frac{2r}{a+b}=2rb.
$$
Similarly, we have $[FEG]=2ra$. Finally,
$$
   [FGHI]=2\left([IEF]+[FEG]\right)=2\left(2rb+2ra\right)=4r(a+b)=2[ABCD],
$$
and we are done.
\end{proof}

\begin{lemma}
\label{lemma:isoscelesX477}
The $X_{477}$ point of an isosceles triangle coincides with its $X_{74}$ point.
\end{lemma}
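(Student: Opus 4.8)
The plan is to follow the barycentric-coordinate method already used for the $X_{74}$ propositions. First I would record the general fact that every triangle center of an isosceles triangle lies on its axis of symmetry. Taking (without loss of generality) the equal sides to be $AC=BC$, so that in the usual notation $a=b$, this axis is the perpendicular bisector $CM$ of $AB$, whose barycentric equation is $x=y$. Hence both $X_{74}$ and $X_{477}$ have coordinates of the form $(t:t:s)$ once $a=b$ is imposed, and to prove that the two points coincide it is enough to match the single ratio $s:t$.

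For $X_{74}$ the computation is already available in the excerpt. The barycentric coordinates recorded in the proof of Proposition~\ref{proposition:gpX74rightTriangle}, specialized by $a=b$ exactly as in Proposition~\ref{proposition:gpX74isoscelesTriangle}, give
$$X_{74}=\bigl(2b^2:2b^2:-c^2\bigr),$$
so the target ratio is $s:t=-c^2:2b^2$. The remaining step is to take the barycentric coordinates of $X_{477}$ from the Encyclopedia of Triangle Centers~\cite{ETC}, substitute $a=b$, and simplify. I expect the first two coordinates to become equal (confirming that the point lies on $CM$) and the resulting triple to be proportional to $(2b^2:2b^2:-c^2)$; verifying this proportionality, i.e.\ that the apex coordinate divided by a base coordinate reduces to $-c^2/(2b^2)$, completes the proof.

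The main obstacle is simply the bulk of the algebra: the center function for $X_{477}$ is a high-degree rational expression in $a$, $b$, and $c$, so clearing denominators and simplifying the specialization $a=b$ is best delegated to exact symbolic computation in Mathematica, as the paper does elsewhere. A cleaner alternative worth checking would be a synthetic shortcut: since the triple $(2b^2:2b^2:-c^2)$ satisfies the circumcircle equation $a^2yz+b^2zx+c^2xy=0$ when $a=b$, the point $X_{74}$ lies on the arc $AB$ not containing $C$, and the axis of symmetry meets that arc in a single point; if one can show independently that $X_{477}$ also lies on the circumcircle, the coincidence would follow with no coordinate computation at all. Absent a convenient description of $X_{477}$ that makes its position on the circumcircle transparent, however, the direct coordinate specialization is the reliable route.
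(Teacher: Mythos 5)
Your proposal matches the paper's proof: the paper simply specializes the ETC barycentric coordinates of $X_{477}$ to $a=b$, obtains $(2b^2:2b^2:-c^2)$, and observes these are the same coordinates found for $X_{74}$ in Proposition~\ref{proposition:gpX74isoscelesTriangle}. Your extra remarks about the symmetry axis and a possible circumcircle argument are fine but not needed; the direct coordinate specialization you default to is exactly the paper's route.
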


\begin{proof}
With the condition that $a=b$, the barycentric coordinates for the $X_{477}$ point simplifies to
$$P=\Bigl(2b^2,2b^2,-c^2\Bigr).$$
These are the same coordinates as the $X_{74}$ point in an isosceles triangle.
\end{proof}

\begin{lemma}
\label{lemma:rightTriangleX477}
The $X_{477}$ point of right triangle is the reflection of its $X_{74}$ point
about the median to the hypotenuse (Figure~\ref{fig:gpRightTriangleX477}).
\end{lemma}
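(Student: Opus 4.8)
The plan is to verify the statement by an explicit coordinate computation that exploits the fact that, in a right triangle, the median to the hypotenuse lies along a diameter of the circumcircle. First I would set up Cartesian coordinates with the right angle at $C$, placing $C=(0,0)$, $A=(b,0)$, and $B=(0,a)$, so that $CA=b$, $CB=a$, and the hypotenuse $AB$ has length $c=\sqrt{a^2+b^2}$. The midpoint of the hypotenuse is then $M=(b/2,a/2)$; since a right triangle is inscribed in the circle of radius $c/2$ centered at $M$, the point $M$ is the circumcenter, and the line $CM$ passes through the circumcenter and is therefore a diameter line. By Proposition~\ref{proposition:gpX74rightTriangle} the $X_{74}$ point sits at
$$P=\frac{2ab}{c^2}\,(a,b),$$
which one checks lies on the circumcircle (it is the circumcircle point pinned down by $CP=2ab/c$ and the orthogonal kite $BCAP$).

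The key structural observation is that reflection in a diameter line carries the circumcircle to itself. Since $CM$ is such a line and $X_{74}$ lies on the circumcircle, its reflection $R$ in $CM$ is again a circumcircle point, which already makes it plausible that $R$ is a named center. I would then compute $R$ outright by reflecting $P$ in the line through the origin with direction $(b,a)$, obtaining
$$R=\frac{2ab}{c^4}\bigl(a(3b^2-a^2),\,b(3a^2-b^2)\bigr),$$
and converting back to barycentric coordinates with respect to $\triangle ABC$ gives
$$R=\bigl(2a^2(3b^2-a^2):2b^2(3a^2-b^2):3c^4-16a^2b^2\bigr).$$

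To finish, I would take the barycentric coordinates of $X_{477}$ from the Encyclopedia of Triangle Centers, substitute the right-triangle constraint $c^2=a^2+b^2$, and simplify; the claim is precisely that the resulting projective triple coincides with $R$. As a consistency check, one can confirm that both $X_{74}$ and the specialized $X_{477}$ satisfy the circumcircle equation $a^2yz+b^2zx+c^2xy=0$, as they must after the reflection. The main obstacle is purely computational: the published coordinates of $X_{477}$ are high-degree polynomials in $a$, $b$, $c$, so the specialization and the comparison with $R$ are best carried out with symbolic software, in the same spirit as the other analytic proofs in this paper. I expect no conceptual difficulty, since the diameter observation reduces the geometry to the identification of two points on the circumcircle.
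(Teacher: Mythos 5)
Your proposal is correct, and the intermediate steps check out: with $C=(0,0)$, $A=(b,0)$, $B=(0,a)$, the point $X_{74}$ is indeed the reflection of $C$ in the hypotenuse, your formula for its mirror image $R$ in the line $CM$ is right, and the barycentric conversion (including the third coordinate $3c^4-16a^2b^2$) is correct. The route differs from the paper's in one structural respect: the paper never computes the reflected point. Working entirely in barycentrics, it shows (i) $CX_{74}=CX_{477}=2ab/c$ and (ii) the midpoint of $X_{74}X_{477}$ has equal first and second coordinates, hence lies on the line $CM$ (equation $x=y$); together these force $CM$ to be the perpendicular bisector of the segment $X_{74}X_{477}$, which is exactly the reflection statement. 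You instead compute the reflection of $X_{74}$ explicitly in Cartesian coordinates and propose to match the resulting projective triple against the ETC coordinates of $X_{477}$ specialized to $c^2=a^2+b^2$. Both versions ultimately lean on a symbolic simplification of the high-degree $X_{477}$ coordinates, so neither is more elementary; the paper's two scalar checks are somewhat lighter algebraically than comparing a full projective triple, while your version buys an explicit formula for the reflected point and the pleasant sanity check that everything stays on the circumcircle (your mirror $CM$ is a diameter line --- in fact the Euler line of the right triangle, since it joins the orthocenter $C$ to the circumcenter $M$). Either way the final verification is a routine computer-algebra step, consistent with the paper's stated methodology.
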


\begin{figure}[h!t]
\centering
\includegraphics[width=0.35\linewidth]{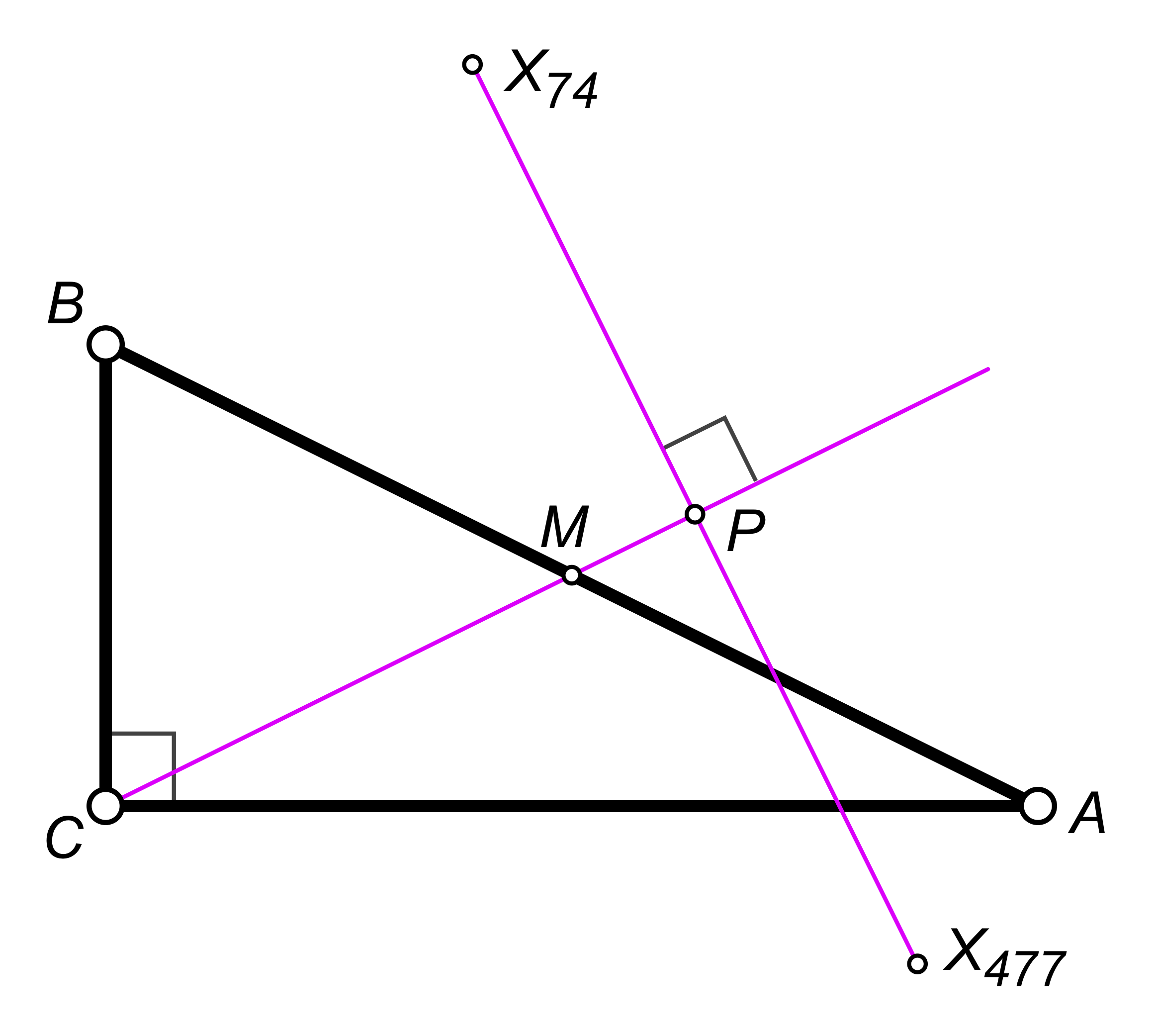}
\caption{$X_{74}$ and $X_{477}$ points in a right triangle}
\label{fig:gpRightTriangleX477}
\end{figure}

\begin{proof}
Let the right triangle be $\triangle ABC$ with right angle at $C$.
Let $M=(1:1:0)$ be the midpoint of the hypotenuse.
The equation of line $CM$ is $x=y$.
The distance formula shows that $CX_{74}=2ab/c$ and $CX_{477}=2ab/c$, so $CX_{74}=CX_{477}$.
We need only show that the midpoint, $P$, of $X_{74}X_{477}$ lies on $CM$.
Calculating the midpoint of $X_{74}$ and $X_{477}$ subject to the condition
$a^2=b^2+c^2$, we find that
$$P=\left(4a^2b^2:4a^2b^2:a^4-6a^2b^2+b^4\right).$$
Since the $x$ and $y$ components are equal, this proves that $P$ lies on $CM$.
\end{proof}


\begin{theorem}
\label{thm-bicentricTrapX477}
Let $E$ be the centroid of a bicentric trapezoid $ABCD$.
Let $F$, $G$, $H$, and $I$ be the $X_{477}$ points of $\triangle EAB$, $\triangle EBC$, 
$\triangle ECD$,  and $\triangle EDA$, respectively (Figure~\ref{fig:gpBicentricTrapX477}).
Then $FGHI$ is a kite with $FI=HI$, $FG=HG$, and
$$[ABCD]=\frac12[FGHI].$$
\end{theorem}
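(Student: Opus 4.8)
The plan is to deduce this from the $X_{74}$ version already proved in Theorem~\ref{thm-bicentricTrapX74}, by tracking exactly how each of the four central points moves when the center function is switched from $X_{74}$ to $X_{477}$. I would first fix the ambient symmetry: by Lemma~\ref{lemma:bicentricTrapCentroid} the centroid $E$ equals the incenter, and a bicentric trapezoid with $AD\parallel BC$ is symmetric about the perpendicular bisector $\ell$ of its parallel sides, a line through $E$ whose reflection $\sigma_\ell$ interchanges $A\leftrightarrow D$ and $B\leftrightarrow C$. Writing $F_{74},G_{74},H_{74},I_{74}$ for the $X_{74}$ central quadrilateral, Theorem~\ref{thm-bicentricTrapX74} gives $[F_{74}G_{74}H_{74}I_{74}]=2[ABCD]$, so it will suffice to prove $[FGHI]=[F_{74}G_{74}H_{74}I_{74}]$.

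Next I would classify the radial triangles. Since $E$ lies on $\ell$, it lies on the perpendicular bisectors of the parallel sides, so $\triangle EBC$ and $\triangle EDA$ are isosceles, while Lemma~\ref{lemma:biTrapPerp} makes $\triangle EAB$ and $\triangle ECD$ right triangles with the right angle at $E$. For the two isosceles triangles, Lemma~\ref{lemma:isoscelesX477} shows $X_{477}$ coincides with $X_{74}$, so $G=G_{74}$ and $I=I_{74}$; moreover, being triangle centers of isosceles triangles whose axis of symmetry is $\ell$, the points $G$ and $I$ lie on $\ell$. For the two right triangles, Lemma~\ref{lemma:rightTriangleX477} shows that $F$ and $H$ are the reflections of $F_{74}$ and $H_{74}$ in the medians $EW$ and $EY$ to the hypotenuses $AB$ and $CD$, where $W,Y$ are their midpoints. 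Since $X_{477}$ is a triangle center it commutes with $\sigma_\ell$, which carries $\triangle EAB$ onto $\triangle ECD$; hence $\sigma_\ell(F)=H$, so $F$ and $H$ are mirror images in $\ell$ and $FGHI$ is a kite with axis $GI\subset\ell$, giving $GF=GH$ and $IF=IH$ as claimed.

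The crux is to show that passing from $F_{74}$ to $F$ leaves the area untouched, and for this the key fact is that the median $EW$ is perpendicular to $\ell$. Indeed $A$ and $B$ lie on the two parallel sidelines, each tangent to the incircle and so each at distance $r$ from $E$ but on opposite sides of it; since $\ell$ is perpendicular to both sidelines, $A$ and $B$ project orthogonally onto $\ell$ to points equidistant from $E$ on opposite sides, whence their midpoint $W$ projects onto $\ell$ at $E$ itself. This says precisely $EW\perp\ell$. A reflection in a line through $E$ orthogonal to $\ell$ preserves distance to $\ell$, so $d(F,\ell)=d(F_{74},\ell)$, and symmetrically $d(H,\ell)=d(H_{74},\ell)$. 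I expect this perpendicularity to be the main obstacle to state cleanly without coordinates, but the tangency-and-projection argument above settles it.

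Finally I would read off the area from the diagonals. The kite $FGHI$ has perpendicular diagonals $GI\subset\ell$ and $FH\perp\ell$, so $[FGHI]=\tfrac12\,GI\cdot FH=GI\cdot d(F,\ell)$. Because $G=G_{74}$, $I=I_{74}$ and $d(F,\ell)=d(F_{74},\ell)$, this equals the same expression for the $X_{74}$ kite, giving $[FGHI]=[F_{74}G_{74}H_{74}I_{74}]=2[ABCD]$, that is, $[ABCD]=\tfrac12[FGHI]$. If one prefers to avoid any convexity concern, the same conclusion follows from the paper's signed-area convention: placing $\ell$ along the $y$-axis, the signed area of such a kite depends only on the common $x$-coordinate of $F$ and on the heights of $G$ and $I$, none of which is changed by the reflection in $EW$.
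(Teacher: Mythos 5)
Your proof is correct and takes essentially the same route as the paper's: reduce to the $X_{74}$ case of Theorem~\ref{thm-bicentricTrapX74} via Lemma~\ref{lemma:isoscelesX477} (so $G$, $I$ are unchanged) and Lemma~\ref{lemma:rightTriangleX477} (so $F$, $H$ are reflected in the hypotenuse medians), then note that this reflection preserves the diagonal $FH$ and hence the orthodiagonal area $\tfrac12\,FH\cdot GI$. The only real difference is that you explicitly justify the claim that the median $EW$ is parallel to $BC$ (equivalently perpendicular to the symmetry axis) via the tangency-and-projection argument, and you spell out the kite symmetry $FI=HI$, $FG=HG$ — both of which the paper's proof asserts or leaves implicit.
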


\begin{figure}[h!t]
\centering
\includegraphics[width=0.4\linewidth]{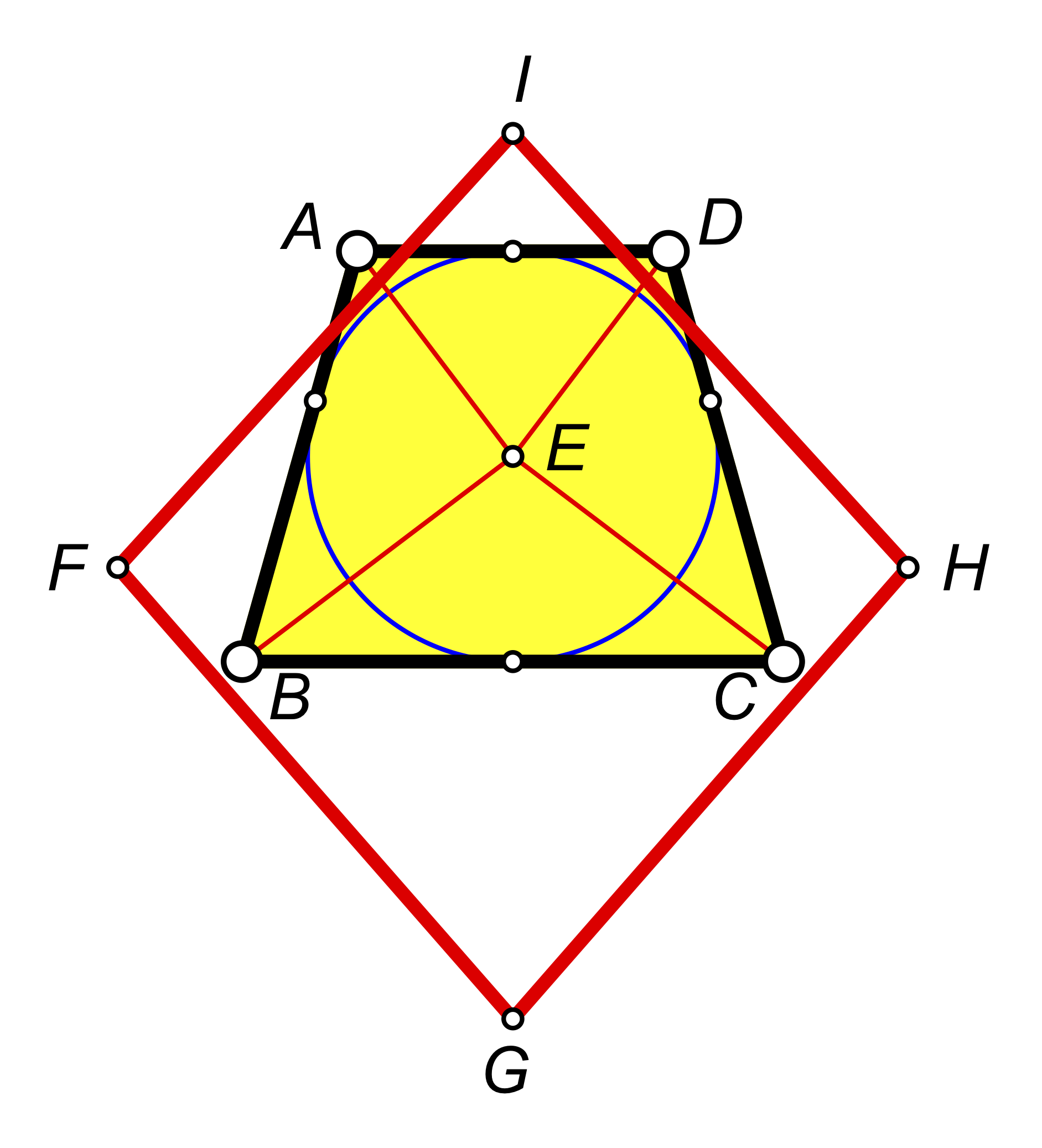}
\caption{Bicentric trapezoid, $X_{477}$ points $\implies [ABCD]=\frac12[FGHI]$}
\label{fig:gpBicentricTrapX477}
\end{figure}

\begin{proof}
Let $L$ be the line through $E$ parallel to $BC$.
Note that $\triangle AEB$ is a right triangle and the median to the hypotenuse is parallel to $BC$.
Let $F'$, $G'$, $H'$, and $I'$ be the $X_{74}$ points of
the radial triangles (Figure~\ref{fig:gpBicentricTrapX477X74}).

\begin{figure}[h!t]
\centering
\includegraphics[width=0.35\linewidth]{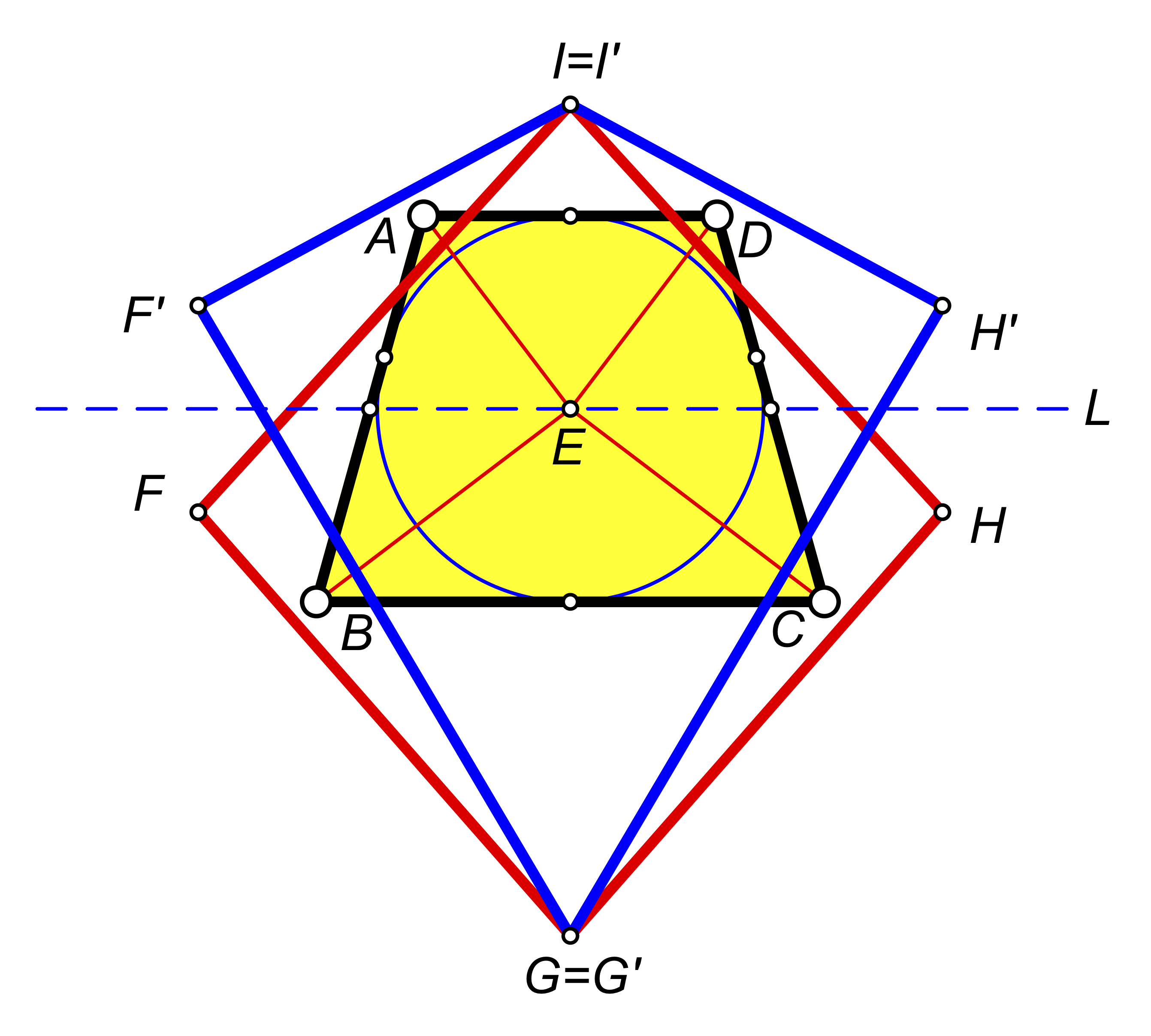}
\caption{Bicentric trapezoid with $X_{477}$ points and $X_{74}$ points}
\label{fig:gpBicentricTrapX477X74}
\end{figure}

By Lemma~\ref{lemma:isoscelesX477}, $G'$ coincides with $G$ and $I'$ coincides with $I$.
By Lemma~\ref{lemma:rightTriangleX477}, $F'$ is the reflection of $F$ about $L$
and $H'$ is the reflection of $H$ about $L$.
Thus, $FH=F'H'$.
By Lemma~\ref{lemma:orthodiag},
$$[FGHI]=\frac12FH\cdot GI=\frac12F'H'\cdot G'I'=[F'G'H'I'].$$
Thus,
$$[ABCD]=\frac12[F'G'H'I']=\frac12[FGHI]$$
by Theorem~\ref{thm-bicentricTrapX74}.
\end{proof}

\relbox{Relationship $[ABCD]=2[FGHI]$}

\begin{theorem}
\label{thm-bicentricTrapX122}
Let $E$ be the centroid of a bicentric trapezoid $ABCD$.
Let $n$ be 122, 123, 127, or 339.
Let $F$, $G$, $H$, and $I$ be the $X_{n}$ points of $\triangle EAB$, $\triangle EBC$, 
$\triangle ECD$,  and $\triangle EDA$, respectively.
Then $FGHI$ is the Varignon parallelogram of $ABCD$ (Figure~\ref{fig:gpBicentricTrapX127}) and
$$[ABCD]=2[FGHI].$$
\end{theorem}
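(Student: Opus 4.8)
The plan is to show that each of the four centers $F$, $G$, $H$, $I$ lands exactly on the midpoint of the side of $ABCD$ it sits against, so that $FGHI$ is literally the Varignon parallelogram and the area relation follows at once from Lemma~\ref{lemma:Varignon}. The whole argument rests on a single combinatorial observation: the four indices $122$, $123$, $127$, $339$ are precisely values of $n$ that appear \emph{simultaneously} in the list of Theorem~\ref{thm:atE} (where $X_n$ is the midpoint of the hypotenuse of a right triangle) and in the list of Lemma~\ref{lemma:dpIsoscelesTriangleMidpoint} (where $X_n$ is the midpoint of the base of an isosceles triangle). This dual property is exactly what is needed, because the radial triangles of a bicentric trapezoid split into two right triangles and two isosceles triangles.

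First I would fix the orientation $AD\parallel BC$ and invoke Lemma~\ref{lemma:bicentricTrapCentroid} to replace the centroid $E$ by the incenter, so that the incenter-based lemmas apply. Then Lemma~\ref{lemma:biTrapPerp} gives $AE\perp BE$ and $DE\perp CE$, making $\triangle EAB$ and $\triangle ECD$ right triangles with their right angles at $E$ and with hypotenuses $AB$ and $CD$; and Lemma~\ref{lemma:biTrapIsos} gives $EB=EC$ and $EA=ED$, making $\triangle EBC$ and $\triangle EDA$ isosceles with apex $E$ and bases $BC$ and $DA$.

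Next I would apply the two midpoint lemmas to the appropriate triangles. For each $n\in\{122,123,127,339\}$, Theorem~\ref{thm:atE} places the $X_n$ point of the right triangle $\triangle EAB$ at the midpoint of its hypotenuse $AB$, so $F$ is the midpoint of $AB$; similarly $H$ is the midpoint of $CD$. Lemma~\ref{lemma:dpIsoscelesTriangleMidpoint} places the $X_n$ point of the isosceles triangle $\triangle EBC$ at the midpoint of its base $BC$, so $G$ is the midpoint of $BC$; similarly $I$ is the midpoint of $DA$. Hence $F$, $G$, $H$, $I$ are the midpoints of the four sides taken in order, so $FGHI$ is the Varignon parallelogram of $ABCD$, and Lemma~\ref{lemma:Varignon} immediately yields $[FGHI]=\frac12[ABCD]$, that is, $[ABCD]=2[FGHI]$.

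The argument carries no computational content once these lemmas are in hand, so the only genuine obstacle is the index bookkeeping: one must verify that $122$, $123$, $127$, $339$ really occur in both precomputed lists, and that the two special vertices in each radial triangle (the right-angle vertex and the isosceles apex) both coincide with $E$ — which is exactly what Lemmas~\ref{lemma:biTrapPerp} and~\ref{lemma:biTrapIsos} guarantee. I would also take care that the orientation convention $AD\parallel BC$ makes $AB$, $CD$ the legs of the trapezoid (giving the right triangles) and $BC$, $DA$ the parallel bases (giving the isosceles triangles), so that the four midpoints come out in the cyclic order $F,G,H,I$ needed for $FGHI$ to be the Varignon parallelogram rather than a crossed quadrilateral.
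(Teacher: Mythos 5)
Your proposal is correct and follows essentially the same route as the paper: right triangles at $E$ from Lemma~\ref{lemma:biTrapPerp} combined with Theorem~\ref{thm:atE} for $F$ and $H$, isosceles triangles from Lemma~\ref{lemma:biTrapIsos} combined with Lemma~\ref{lemma:dpIsoscelesTriangleMidpoint} for $G$ and $I$, then Lemma~\ref{lemma:Varignon}. If anything you are a bit more careful than the printed proof, which cites Lemma~\ref{lemma:dpIsoscelesTriangleMidpoint} where the isosceles property itself is really Lemma~\ref{lemma:biTrapIsos}.
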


\begin{figure}[h!t]
\centering
\includegraphics[width=0.4\linewidth]{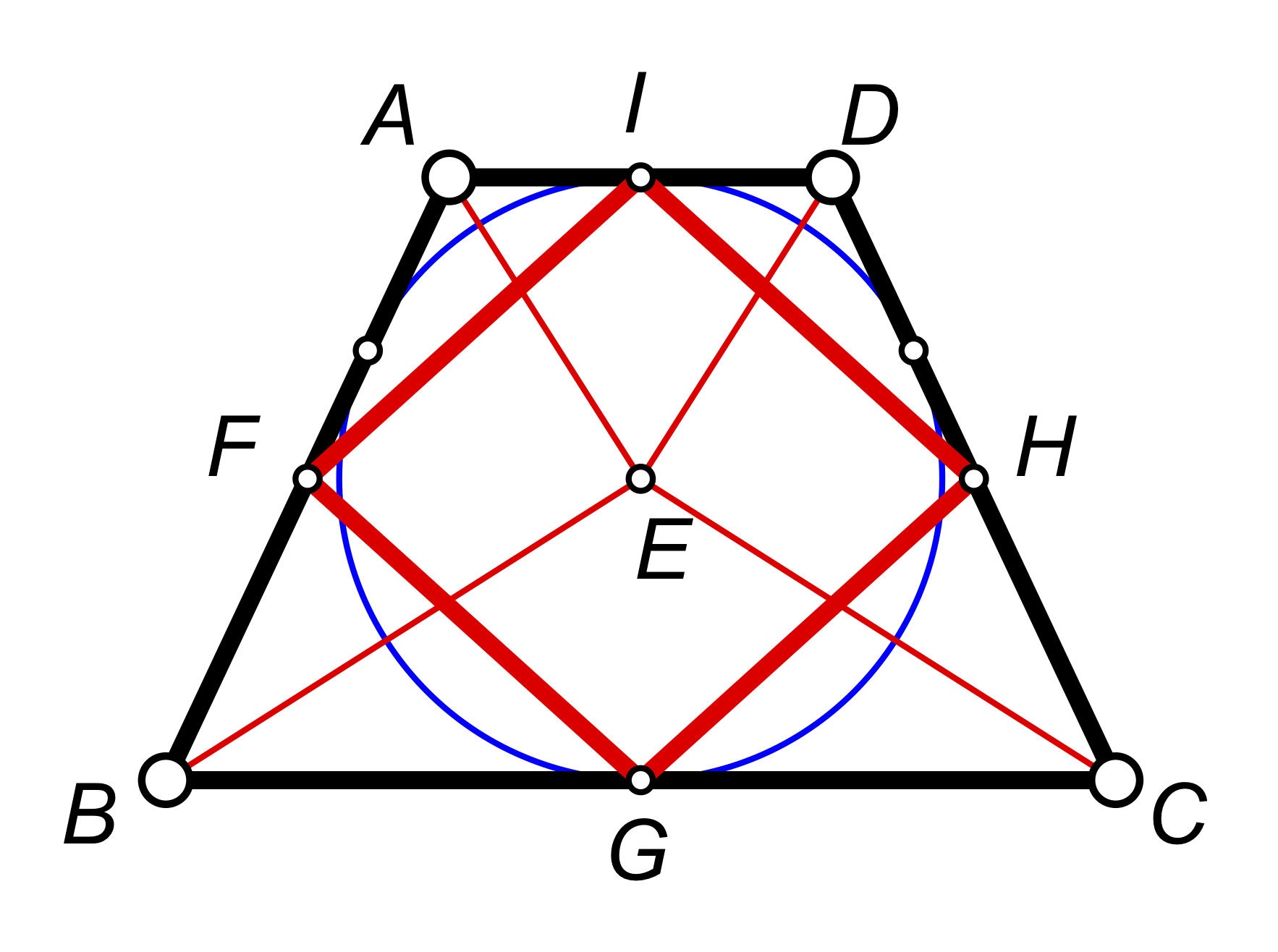}
\caption{Bicentric trapezoid, $X_{127}$ points $\implies [ABCD]=\frac12[FGHI]$}
\label{fig:gpBicentricTrapX127}
\end{figure}

\begin{proof}
By Lemma~\ref{lemma:biTrapPerp}, $\triangle AEB$ is a right triangle.
By Lemma~\ref{thm:atE}, $F$ is the midpoint of $AB$.
Similarly, $H$ is the midpoint of $CD$.
By Lemma~\ref{lemma:dpIsoscelesTriangleMidpoint}, $\triangle BEC$ is isosceles.
By Lemma~\ref{lemma:dpIsoscelesTriangleMidpoint}, $G$ is the midpoint of $BC$.
Similarly, $I$ is the midpoint of $AD$.
Hence $FGHI$ is the Varignon parallelogram of $ABCD$ and
$$[ABCD]=2[FGHI].$$
\end{proof}

\newpage


\section{Anticenter}

In this section, we examine central quadrilaterals formed from the anticenter
of the reference quadrilateral. Note that only cyclic quadrilaterals have anticenters.
A \emph{maltitude} of a quadrilateral is a line from the midpoint of one side
perpendicular to the opposite side (Figure~\ref{fig:apMaltitude}).

\begin{figure}[h!t]
\centering
\includegraphics[width=0.4\linewidth]{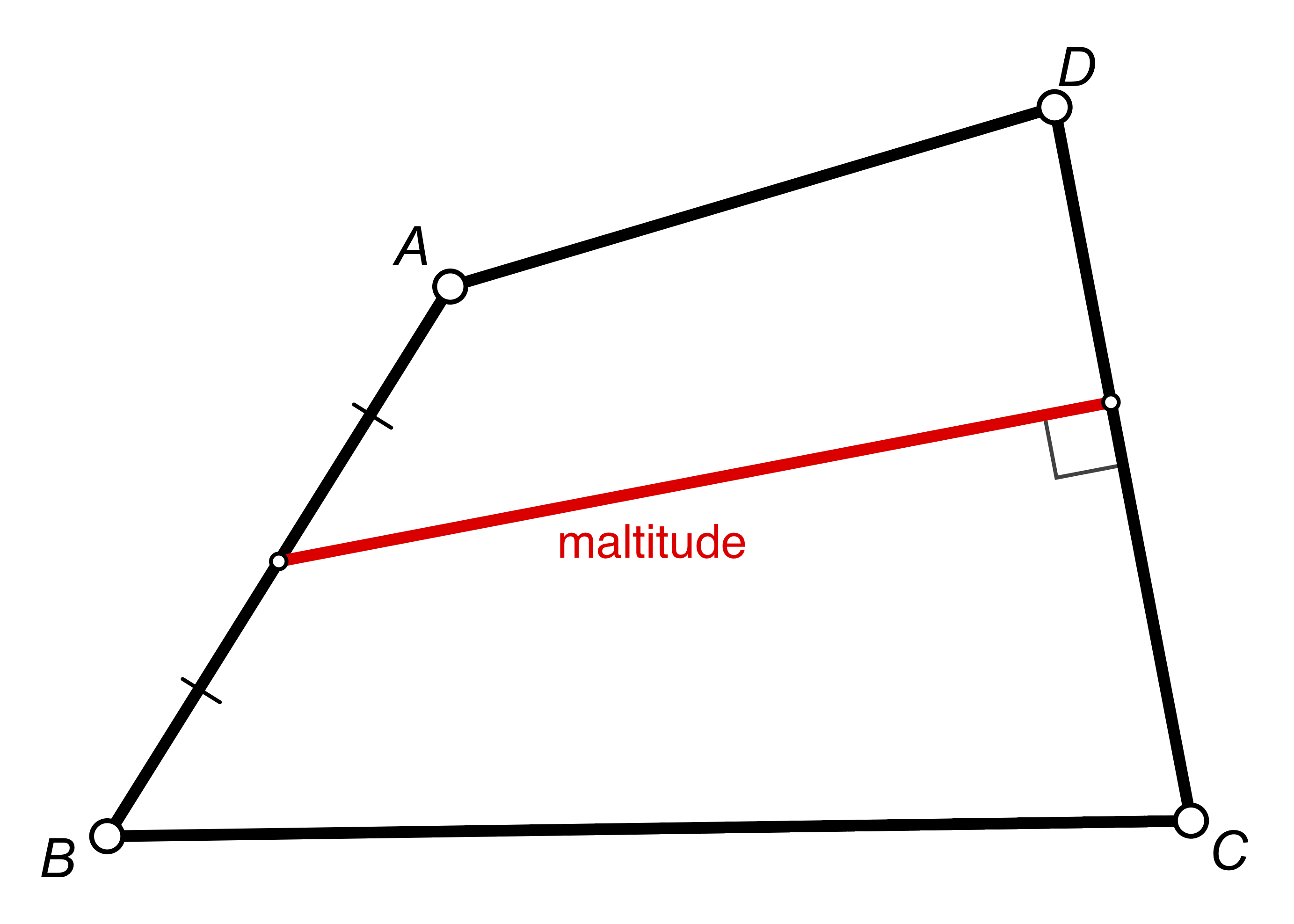}
\caption{Maltitude of a quadrilateral}
\label{fig:apMaltitude}
\end{figure}

The four maltitudes of a cyclic quadrilateral
concur at a point called the \emph{anticenter} of the quadrilateral (Figure~\ref{fig:apAnticenter}).

\begin{figure}[h!t]
\centering
\includegraphics[width=0.4\linewidth]{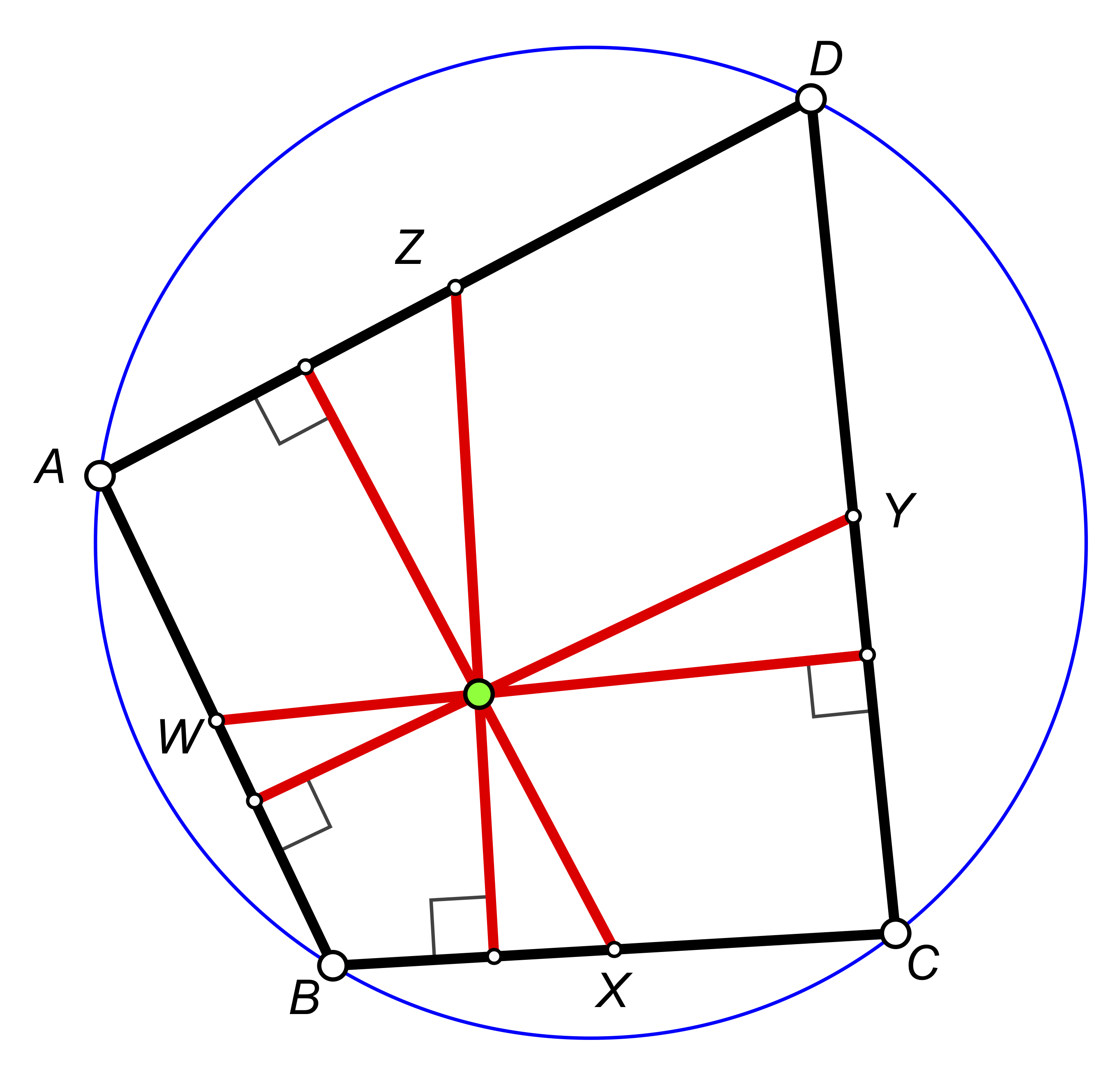}
\caption{Anticenter of a cyclic quadrilateral}
\label{fig:apAnticenter}
\end{figure}

The following result is well known, \cite{MathWorld-Brahmagupta}.

\begin{lemma}[Brahmagupta's Theorem]
The anticenter of a cyclic orthodiagonal quadrilateral coincides with the diagonal point
(Figure~\ref{fig:apBrahmagupta}).
\end{lemma}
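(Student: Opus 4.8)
The plan is to reduce the statement to the classical form of Brahmagupta's theorem: in a cyclic quadrilateral with perpendicular diagonals, the perpendicular dropped from the diagonal point to any side bisects the opposite side. Once this is established for one side, the identical argument applied to each of the four sides shows that the diagonal point lies on all four maltitudes, so it must be the common point of concurrence, that is, the anticenter.

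First I would fix the configuration: let $ABCD$ be cyclic and orthodiagonal, let $E=AC\cap BD$ be the diagonal point, and note that $\angle AEB=90\degrees$. I would drop the perpendicular from $E$ to side $AD$, call its foot $P$, and let $M$ be the point where line $EP$, extended through $E$, meets the opposite side $BC$. The goal is to prove that $M$ is the midpoint of $BC$; then the line $EM$, being perpendicular to $AD$ and passing through the midpoint of $BC$, is precisely the maltitude from the midpoint of $BC$, and it contains $E$.

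The core of the argument is an angle chase. In the right triangle $AED$ (right angle at $E$) one has $\angle ADB=90\degrees-\angle DAC$, since $E$ lies on both diagonals. Using inscribed angles in the circumcircle, $\angle ACB=\angle ADB$ (both subtend arc $AB$) and $\angle DBC=\angle DAC$ (both subtend arc $DC$). The vertical-angle relation at $E$ gives $\angle MEC=\angle AEP=90\degrees-\angle DAC$, which equals $\angle MCE=\angle ACB$; hence triangle $MEC$ is isosceles and $ME=MC$. Since $M$ lies inside $\angle BEC=90\degrees$, we get $\angle MEB=90\degrees-\angle MEC=\angle DAC=\angle DBC=\angle MBE$, so triangle $MEB$ is isosceles and $ME=MB$. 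Therefore $MB=ME=MC$, and $M$ is the midpoint of $BC$, as required.

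What remains is bookkeeping: applying the same argument to the sides $AB$, $BC$, $CD$, and $DA$ in turn shows that $E$ lies on each of the four maltitudes, so they concur at $E$ and $E$ is the anticenter. I do not expect a genuine obstacle here; the only care needed is in the angle chase, namely tracking which arcs subtend which inscribed angles and orienting the figure so that the vertical-angle and supplementary-angle relations at $E$ come out correctly. If the synthetic chase becomes awkward for some vertex labeling, I would fall back on Cartesian coordinates with $E$ at the origin and the diagonals along the axes, where perpendicularity and the midpoint condition reduce to immediate linear computations, or on the barycentric distance and area formulas stated earlier.
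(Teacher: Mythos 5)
Your proof is correct. Note that the paper itself gives no proof of this lemma; it is stated as well known with a citation to MathWorld, so there is nothing to compare against. What you have written is the standard classical argument for Brahmagupta's theorem: the angle chase showing $\angle MEC=\angle MCE$ and $\angle MEB=\angle MBE$ correctly yields $MB=ME=MC$, so the line through $E$ perpendicular to $AD$ passes through the midpoint of $BC$ and is therefore the maltitude from that midpoint; repeating this for each side puts the diagonal point on all four maltitudes, hence at the anticenter. The one configurational point you rely on --- that the foot $P$ lies on segment $AD$ and the opposite ray from $E$ enters $\angle BEC$ and meets segment $BC$ --- does hold for a convex cyclic orthodiagonal quadrilateral, since both base angles of the right triangle $AED$ are acute, so the argument goes through as stated.
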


\begin{figure}[h!t]
\centering
\includegraphics[width=0.4\linewidth]{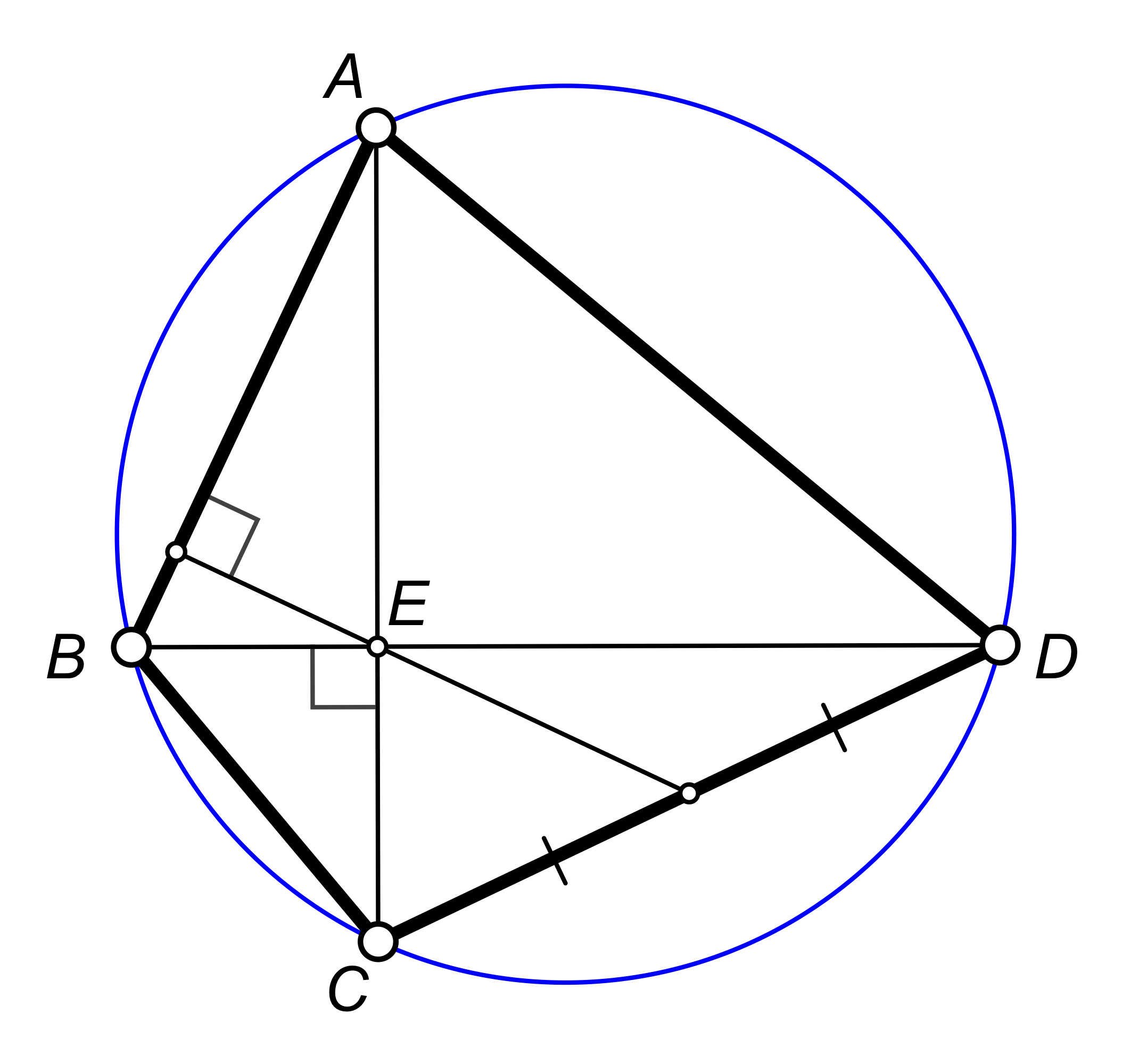}
\caption{Brahmagupta's Theorem}
\label{fig:apBrahmagupta}
\end{figure}

The following result is well known, \cite{QA-P2}.

\begin{lemma}
The anticenter of a cyclic quadrilateral coincides with the Poncelet point.
\end{lemma}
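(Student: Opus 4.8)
The plan is to place the circumcenter $O$ of the cyclic quadrilateral at the origin and treat the vertices $A,B,C,D$ as position vectors, each of length $R$, the circumradius. My claim is that both the anticenter and the Poncelet point equal the single point $M=\tfrac12(A+B+C+D)$, so I would prove the lemma by identifying each of these two centers with $M$ separately. This reduces the whole statement to two short vector verifications.

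First I would locate the anticenter. The midpoint of side $AB$ is $\tfrac12(A+B)$, and the maltitude dropped from it is the line through that midpoint perpendicular to the opposite side $CD$. To check that $M$ lies on this maltitude I compute $M-\tfrac12(A+B)=\tfrac12(C+D)$ and verify that it is orthogonal to the direction $D-C$ of side $CD$: indeed $(C+D)\cdot(D-C)=|D|^2-|C|^2=R^2-R^2=0$. The same computation, applied cyclically, shows that $M$ lies on all four maltitudes, so $M$ is their common point, namely the anticenter.

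Next I would identify the Poncelet point. The key observation is that every component triangle of $ABCD$ is inscribed in the same circumcircle, so each has circumcenter $O$ and circumradius $R$; consequently the orthocenter of, say, $\triangle BCD$ is $H_{BCD}=B+C+D$ with $O$ at the origin. Since $A,B,C,D$ are concyclic, the fourth vertex $A$ lies on the circumcircle of $\triangle BCD$, so by Lemma~\ref{lemma:9pt} the nine-point circle of $\triangle BCD$ bisects the segment $AH_{BCD}$, that is, it passes through $\tfrac12(A+H_{BCD})=\tfrac12(A+B+C+D)=M$. Running the same argument over the four choices of component triangle shows that $M$ lies on all four nine-point circles, so $M$ is their common point, the Poncelet point.

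Since the anticenter and the Poncelet point have both been shown to equal $M$, they coincide, which is the assertion. The computations are short, so the step I would be most careful about is the derivation of the anticenter: both the perpendicularity identity and the formula $H_{BCD}=B+C+D$ rely essentially on all four vertices being equidistant from $O$, so I would stress that equal circumradius is exactly the feature forcing the two centers together. As a self-contained check that avoids Lemma~\ref{lemma:9pt}, one can instead verify directly that $|M-N|=R/2$ for each nine-point center $N=\tfrac12(P+Q+R)$, since $M-N$ equals half the omitted vertex and therefore has length $R/2$.
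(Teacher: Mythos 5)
Your argument is correct, and it is worth noting that the paper itself offers no proof of this lemma at all: it is stated as ``well known'' with a citation to the Encyclopedia of Quadri-Figures, so your write-up actually supplies a proof where the paper has none. The two vector computations are both sound: $(C+D)\cdot(D-C)=|D|^2-|C|^2=0$ correctly places $M=\tfrac12(A+B+C+D)$ on each maltitude (and incidentally re-proves that the maltitudes concur), and the identification of the Poncelet point via $H_{BCD}=B+C+D$ together with Lemma~\ref{lemma:9pt} is exactly in the spirit of the paper, which uses that same lemma in its proof of Lemma~\ref{lemma-ppHjelmslevMidpoint}. The only loose end is logical rather than computational: you show that $M$ is \emph{a} common point of the four nine-point circles, whereas the Poncelet point is defined as \emph{the} common point. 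Since the four nine-point centers $\tfrac12(P+Q+R)$ are the vertices of a quadrilateral homothetic to $ABCD$ (so not collinear), four distinct circles of radius $R/2$ with these centers cannot share two points, and the common point is unique; a one-line remark to this effect would close the gap. Your closing observation that $|M-N|=R/2$ gives an even cleaner, self-contained verification and could well replace the appeal to Lemma~\ref{lemma:9pt} entirely.
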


Our computer study examined the central quadrilaterals formed by the anticenter.
Since the anticenter of a cyclic quadrilateral coincides with the Poncelet point,
we omit results for cyclic quadrilaterals.
\void{
Since the anticenter of a cyclic orthodiagonal quadrilateral coincides with the diagonal point,
we omit results for cyclic orthodiagonal quadrilaterals.
}
We checked the central quadrilateral for all the first 1000 triangle centers (omitting points
at infinity) and all reference quadrilateral shapes listed in Table~\ref{table:quadrilaterals}
that are cyclic.

No other results were found.
\medskip
	
\begin{table}[ht!]
\caption{}
\begin{center}
\begin{tabular}{|l|l|p{2.2in}|}
\hline
\multicolumn{3}{|c|}{\color{blue}\textbf{\large \strut Central Quadrilaterals formed by the Anticenter}}\\ \hline
\textbf{Quadrilateral Type}&\textbf{Relationship}&\textbf{centers}\\ \hline
\multicolumn{3}{|c|}{No relationships were found}\\
\hline
\end{tabular}
\end{center}
\end{table}


\section{Orthocenter}

In this section, we examine central quadrilaterals formed from the orthocenter
of the reference quadrilateral. Note that only cyclic quadrilaterals have orthocenters.
We define an \emph{altitude} of a quadrilateral to be a line from a vertex to the orthocenter
of the triangle formed by the other three vertices. The four altitudes of a cyclic quadrilateral
concur at a point that we will call the \emph{orthocenter} of the quadrilateral.
(These are nonstandard definitions.)

Our computer study examined the central quadrilaterals formed by the orthocenter.
Since the orthocenter of a rectangle coincides with the diagonal point,
we omit results for rectangles.
We checked the central quadrilateral for all the first 1000 triangle centers (omitting points
at infinity) and all reference quadrilateral shapes listed in Table~\ref{table:quadrilaterals}
that are cyclic.

No other results were found
\medskip

\begin{table}[ht!]
\caption{}
\begin{center}
\begin{tabular}{|l|l|p{2.2in}|}
\hline
\multicolumn{3}{|c|}{\textbf{\color{blue}\large \strut Central Quadrilaterals formed by the Orthocenter}}\\ \hline
\textbf{Quadrilateral Type}&\textbf{Relationship}&\textbf{centers}\\ \hline
\multicolumn{3}{|c|}{No relationships were found.}\\
\hline
\end{tabular}
\end{center}
\end{table}

\newpage


\section{Incenter}

In this section, we examine central quadrilaterals formed from the incenter
of the reference quadrilateral. Note that only tangential quadrilaterals have incenters.
A \emph{tangential quadrilateral} in one in which a circle can be inscribed, touching all
four sides. The center of this circle is called the \emph{incenter} of the quadrilateral.
The circle is called the \emph{incircle}.

Our computer study examined the central quadrilaterals formed by the incenter.
Since the incenter of a rhombus coincides with the diagonal point,
we omit results for rhombi.
In a bicentric trapezoid, the incenter coincides with the centroid (Lemma~\ref{lemma:bicentricTrapCentroid}),
so we have excluded results for bicentric trapezoids that are true
when the radiator is the centroid.
We checked the central quadrilateral for all the first 1000 triangle centers (omitting points
at infinity) and all reference quadrilateral shapes listed in Table~\ref{table:quadrilaterals}
that are tangential.

No other results were found.
\medskip

\begin{table}[ht!]
\caption{}
\begin{center}
\begin{tabular}{|l|l|p{2.2in}|}
\hline
\multicolumn{3}{|c|}{\textbf{\color{blue}\large \strut Central Quadrilaterals formed by the Incenter}}\\ \hline
\textbf{Quadrilateral Type}&\textbf{Relationship}&\textbf{centers}\\ \hline
\multicolumn{3}{|c|}{No relationships were found.}\\
\hline
\end{tabular}
\end{center}
\end{table}


\section{Midpoint of the 3rd Diagonal}

In this section, we examine central quadrilaterals formed from the midpoint of the 3rd diagonal
of the reference quadrilateral. If $ABCD$ is a convex quadrilateral with no two sides parallel,
let $AB$ meet $CD$ at $P$ and let $BC$ meet $DA$ at $Q$. Then line segment $PQ$ is called
the \emph{3rd diagonal} of $ABCD$. Note that this line segment only exists when the reference quadrilateral
is not a trapezoid.

Our computer study examined the central quadrilaterals formed by the midpoint of the 3rd diagonal.
We checked the central quadrilateral for all the first 1000 triangle centers (omitting points
at infinity) and all reference quadrilateral shapes listed in Table~\ref{table:quadrilaterals}
that are not trapezoids.

No other results were found.
\medskip

\begin{table}[ht!]
\caption{}
\begin{center}
\begin{tabular}{|l|l|p{2.2in}|}
\hline
\multicolumn{3}{|c|}{\textbf{\color{blue}\large \strut Central Quadrilaterals formed by}}\\
\multicolumn{3}{|c|}{\textbf{\color{blue}\large \strut the midpoint of the 3rd diagonal}}\\ \hline
\textbf{Quadrilateral Type}&\textbf{Relationship}&\textbf{centers}\\ \hline
\multicolumn{3}{|c|}{No relationships were found.}\\
\hline
\end{tabular}
\end{center}
\end{table}



\newpage
\goodbreak

\end{document}